\newcommand{\black}{\color{black}}
\theoremstyle{plain}
\newtheorem{theorem}[equation]{Theorem}
\newtheorem{lemma}[equation]{Lemma}
\theoremstyle{definition}
\newtheorem{definition}[equation]{Definition}
\numberwithin{equation}{section} 
\def\C{\mathbb{C}}
\def\N{\mathbb{N}}
\def\D{\mathcal{D}}
\def\F{\mathscr{F}}
\def\G{\mathscr{G}}
\def\I{\mathbb{I}}
\def\R{\mathbb{R}}
\def\Rn{\mathbb{R}^n}
\def\Z{\mathbb{Z}}
\def\loc{\operatorname{loc}}
\def\supp{\operatorname{supp}}
\def\BMO{\operatorname{BMO}}
\def\CMO{\operatorname{CMO}}
\def\rs{\operatorname{rs}}
\def\d{\operatorname{d}}
\def\rd{\operatorname{rd}}
\def\ird{\operatorname{ird}}
\def\ch{\operatorname{ch}}
\def\sk{\operatorname{sk}}
\renewcommand{\emptyset}{\text{\textup{\O}}}
\newcommand{\tinyemptyset}{\mbox{\tiny \textup{\O}}}
\begin{document}

\title{A compact extension of Journ\'{e}'s $T1$ theorem on product spaces}

\author{Mingming Cao}
\address{Mingming Cao\\
Instituto de Ciencias Matem\'aticas CSIC-UAM-UC3M-UCM\\
Con\-se\-jo Superior de Investigaciones Cient{\'\i}ficas\\
C/ Nicol\'as Cabrera, 13-15\\
E-28049 Ma\-drid, Spain} \email{mingming.cao@icmat.es}

\author{K\^{o}z\^{o} Yabuta}
\address{K\^{o}z\^{o} Yabuta\\
Research Center for Mathematics and Data Science\\
Kwansei Gakuin University\\
Gakuen 2-1, Sanda 669-1337\\
Japan} \email{kyabuta3@kwansei.ac.jp}

\author{Dachun Yang}
\address{Dachun Yang\\
Laboratory of Mathematics and Complex Systems (Ministry of Education of China) \\
School of Mathematical Sciences\\
Beijing Normal University\\
Beijing 100875\\
The People's Republic of China} \email{dcyang@bnu.edu.cn}

\thanks{The first author acknowledges financial support from Spanish Ministry of Science and Innovation through the Ram\'{o}n y Cajal  2021 (RYC2021-032600-I), through the ``Severo Ochoa Programme for Centres of Excellence in R\&D'' (CEX2019-000904-S), and through PID2019-107914GB-I00, and from the Spanish National Research Council through the ``Ayuda extraordinaria a Centros de Excelencia Severo Ochoa'' (20205CEX001). The third author was supported by the National Natural Science Foundation of China (Grant Nos. 12371093 and 12071197), the National Key Research and Development Program of China (Grant No. 2020YFA0712900).
}

\date{February 15, 2024}

\subjclass[2020]{42B20, 42B25, 42B35}


\keywords{
Bi-parameter singular integrals,
Compactness,
$T1$ theorem,
Dyadic analysis}

\begin{abstract}
We prove a compact version of the $T1$ theorem for bi-parameter singular integrals. That is, if a bi-parameter singular integral operator $T$ admits the compact full and partial kernel representations, and satisfies the weak compactness property, the diagonal $\CMO$ condition, and the product $\CMO$ condition, then $T$ can be extended to a compact operator on $L^p(w)$ for all $1<p<\infty$ and $w \in A_p(\R^{n_1} \times \R^{n_2})$. Even in the unweighted setting, it is the first time to give a compact extension of Journ\'{e}'s $T1$ theorem on product spaces.
\end{abstract}

\maketitle

\section{Introduction}
\subsection{Motivation and the main theorem}
A central theme in the theory of singular integrals is to establish the boundedness of operators on various function spaces. In the 1980s, David and Journ\'{e} \cite{DJ} proved the original $T1$ theorem, which gave a characterization of the $L^2$ boundedness for Calder\'{o}n--Zygmund operators based on testing conditions. More specifically, they showed that a singular integral operator $T$ associated with a Calder\'{o}n--Zygmund kernel is bounded on $L^2(\Rn)$ if and only if it satisfies the weak boundedness property and $T1, T^*1 \in \BMO(\Rn)$. It is a deep result of crucial importance because many examples of operators, such as the Calder\'{o}n commutators and pseudo-differential operators, can be covered by this result. Besides, the $L^2$ boundedness is usually fundamental to the further analysis, such as achieving $L^p$ boundedness, the endpoint weak (1, 1) estimate, and some weighted estimates.

The theory of multi-parameter singular integrals originated in the work of Fefferman and Stein \cite{FS}, who carefully treated bi-parameter operators of convolution type. Soon after, using the vector-valued Calder\'{o}n--Zygmund theory, Journ\'{e} \cite{Jou} first established a bi-parameter $T1$ theorem for a class of general bi-parameter singular integral operators which are not necessarily of convolution type. Recently, Pott and Villarroya \cite{PV} introduced a new class of bi-parameter singular integral operators, where the vector-valued formulations are replaced by mixed type conditions directly assumed on operators. This machinery was refined by Martikainen  \cite{Mar}, who proved a bi-parameter representation of singular integrals by dyadic shifts, and then obtained, as a pleasant byproduct, a bi-parameter $T1$ theorem. It was observed in \cite{Grau} that the full and partial kernel assumptions in \cite{Mar} are actually equivalent to the vector-valued formulations of Journ\'{e} \cite{Jou}. For more work about the multi-parameter theory and applications, see \cite{CF80, CF85, Fef87, Fef88, FL, MPTT}.

Regarding the compactness of singular integrals, Villarroya \cite{Vil} first presented a new $T1$ theorem to characterize the compactness for a large class of singular integral operators, in the spirit of David and Journ\'{e} \cite{DJ}. The study of the compactness of singular integrals has found useful applications in the field of partial differential equations and geometric measure theory. For example, Fabes et al. \cite{FJR} applied the compactness of layer potentials to solve the Dirichlet and Neumann boundary value problems, while Hofmann et al. \cite[Section 4.6]{HMT} characterized regular SKT domains via the compactness of layer potentials and commutators.

However, the $T1$ theorem to deduce compactness of multi-parameter singular integrals has been open in the last several years. The purpose of this paper is to develop a general theory of the compactness for a class of bi-parameter singular integrals. To this end, the first problem is how to impose appropriate hypotheses on operators in the bi-parameter case because the general bi-parameter singular integrals (cf. Definition \ref{def:SIO}) are not compact on $L^p(\R^{n_1} \times \R^{n_2})$. For example, consider
\begin{align*}
B := \{f \in L^2(\R \times \R): \|f\|_{L^2(\R \times \R)} < 1\} 
\quad\text{ and }\quad 
\mathcal{H} := H \otimes H, 
\end{align*}
where $H$ denotes the Hilbert transform on the real line.  It is not hard to check that $\mathcal{H}(B)=B$. Since $\overline{B}$ is not a compact set in $L^2(\R \times \R)$, we conclude that $\mathcal{H}$ is not compact on $L^2(\R \times \R)$. In view of the work \cite{Mar, Vil}, we will use mixed type conditions involving the size and the H\"{o}lder estimates of the kernel, a weak compactness property, and $\CMO$ conditions. Having formulated proper assumptions on operators, we have to deal with the proof. The proof in the one-parameter setting \cite{Vil} seems too complicated to be generalized to the bi-parameter case because we have to deal with the much added complexity of the bi-parameter situation, especially some interesting mixed type phenomena. Fortunately, dyadic analysis provides us an effective approach.  Furthermore, the multilinear extension will be treated in our forthcoming papers \cite{CLSY, CY}, and in the multi-parameter setting we establish a compact $T1$ theorem for singular integral operators associated with Zygmund dilations \cite{CCLLYZ}.

To set the stage, let us give the definition of the compactness of operators. Given normed spaces $\mathscr{X}$ and $\mathscr{Y}$, a linear operator $T: \mathscr{X} \to \mathscr{Y}$ is said to be \emph{compact} if for all bounded sets $A \subset \mathscr{X}$, the set $T(A)$ is relatively compact in $\mathscr{Y}$, i.e. $\overline{T(A)}$ is compact in $\mathscr{Y}$. Equivalently, $T$ is compact if for all bounded sequences $\{x_n\} \subset \mathscr{X}$, the sequence $\{T(x_n)\}$ has a convergent subsequence in $\mathscr{Y}$. More definitions and notation are given in Sections \ref{sec:SIO} and \ref{sec:pre}.

Now we state our main result as follows.
\begin{theorem}\label{thm:compact}
Let $T$ be a bi-parameter singular integral operator (cf. Definition \ref{def:SIO}). Assume that $T$ satisfies the following hypotheses: 
\begin{list}{\rm (\theenumi)}{\usecounter{enumi}\leftmargin=1.2cm \labelwidth=1cm \itemsep=0.2cm \topsep=.2cm \renewcommand{\theenumi}{H\arabic{enumi}}}

\item\label{list-1} $T$ admits the compact full kernel representation (cf. Definition \ref{def:full}),

\item\label{list-2} $T$ admits the compact partial kernel representation (cf. Definition \ref{def:partial}),

\item\label{list-3} $T$ satisfies the weak compactness property (cf. Definition \ref{def:WCP}),

\item\label{list-4} $T$ satisfies the diagonal $\CMO$ condition (cf. Definition \ref{def:diag-CMO}),

\item\label{list-5} $T$ satisfies the product $\CMO$ condition (cf. Definition \ref{def:prod-BMO}).
\end{list}
Then $T$ is compact on $L^p(w)$ for all $p \in (1, \infty)$ and $w \in A_p(\R^{n_1} \times \R^{n_2})$. 
\end{theorem}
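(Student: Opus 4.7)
\medskip
\noindent\textbf{Proof proposal.} My plan is to follow the dyadic route pioneered by Martikainen \cite{Mar} for the $L^p$ version of Journ\'e's theorem, but to upgrade every ingredient to a \emph{quantitatively compact} statement. The guiding principle is that compactness of $T$ should be reduced, via a compact bi-parameter dyadic representation formula, to compactness of each dyadic model operator, plus a summability statement that is strong enough to pass compactness to the limit in operator norm on $L^p(w)$. Concretely, I would establish a representation of the shape
\begin{equation*}
\langle T f, g\rangle \;=\; C_T \, \mathbb{E}_{\omega_1,\omega_2}\!
\sum_{i_1,j_1,i_2,j_2 \ge 0}
2^{-\max(i_1,j_1)\delta_1/2}\,
2^{-\max(i_2,j_2)\delta_2/2}\,
\langle S^{i_1 j_1, i_2 j_2}_{\omega_1,\omega_2} f, g\rangle,
\end{equation*}
where the $S^{i_1 j_1, i_2 j_2}$ are bi-parameter dyadic shifts (plus partial paraproducts and a full product paraproduct) built from the random grids $\mathcal{D}_{\omega_1}\otimes\mathcal{D}_{\omega_2}$. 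Hypotheses (\ref{list-1})--(\ref{list-3}) are used to justify this expansion in the same way as in \cite{Mar}; the novelty is that the \emph{compactness data} of the kernel (the extra decay packed into \ref{list-1}, \ref{list-2}) and the CMO conditions (\ref{list-4}, \ref{list-5}) will force each shift coefficient $\langle T\psi^{1}_I\psi^{2}_K,\psi^{1}_J\psi^{2}_L\rangle$ to carry, besides the usual geometric decay, an extra ``compactness gain'' $F(I,J,K,L)$ which tends to $0$ as the cubes escape to infinity, shrink to a point, or separate.

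The main work then is a compactness theorem for the building blocks. For a fixed pair of grids I would show that each bi-parameter dyadic shift $S^{i_1 j_1, i_2 j_2}$, when its coefficients inherit the compactness gain $F$, is compact from $L^p(w)$ to $L^p(w)$ for every $1<p<\infty$ and every $w \in A_p(\mathbb{R}^{n_1}\times\mathbb{R}^{n_2})$. The natural tool is a bi-parameter Fr\'echet--Kolmogorov criterion: a bounded set $\mathcal{F}\subset L^p(w)$ is relatively compact iff it is uniformly tight in each variable and uniformly equicontinuous under translations in each variable. I would verify these three conditions by splitting the shift according to whether cubes are far from the origin, very small, very large, or very close together, and by using $F(I,J,K,L)\to 0$ in each regime to produce the required uniform smallness. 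For the paraproduct pieces I would approximate the CMO symbols by smooth compactly supported $b_n \to b$ in $\mathrm{BMO}$ and invoke Calder\'on--Zygmund theory to obtain $\|P_{b_n}-P_b\|_{L^p(w)\to L^p(w)}\to 0$; since each $P_{b_n}$ is a Hilbert--Schmidt-like compact operator, norm limits give compactness of $P_b$.

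Finally I would assemble the pieces. Compact operators form a norm-closed two-sided ideal, so if I can truncate the above series at level $N$ in each of $(i_1,j_1,i_2,j_2)$, show the truncated operator $T_N$ is compact (as a finite weighted sum of compact model operators), and prove $\|T-T_N\|_{L^p(w)\to L^p(w)}\to 0$ as $N\to\infty$, then $T$ is compact. The norm convergence is inherited from the absolute summability of the coefficients $2^{-\max(i_1,j_1)\delta_1/2}\,2^{-\max(i_2,j_2)\delta_2/2}$ combined with the uniform $L^p(w)$-bounds on the shifts from the $L^p$ theory of \cite{Mar}, extended to weights as in the bi-parameter literature.

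I expect the main obstacle to be the \emph{mixed partial--full} regime of bi-parameter analysis: when the cubes $(I,J)$ in the first parameter behave like a one-parameter singular integral while $(K,L)$ in the second parameter sit on the diagonal. Here the full-kernel compactness gain from (\ref{list-1}) is not directly available, and one has to combine the partial-kernel compactness (\ref{list-2}) with the diagonal CMO hypothesis (\ref{list-4}) to extract a compactness factor in the second parameter without losing the one-parameter gain in the first. Handling this asymmetry carefully inside the Fr\'echet--Kolmogorov verification, and making sure the compactness gain survives the averaging over random dyadic grids, is where the bulk of the technical effort should go; everything else is a disciplined bi-parameter bookkeeping exercise on top of the scheme of \cite{Vil, Mar}.
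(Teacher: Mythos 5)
Your proposal takes a genuinely different route from the paper, and it is worth comparing the two before pointing out where yours is incomplete. The paper does \emph{not} prove (or use) a compact representation theorem. Instead it splits the problem: (i) weighted boundedness of $T$ on $L^r(v)$ for all $r$ and $v\in A_r$ is imported from the known (non-compact) theory of Martikainen and Holmes--Petermichl--Wick; (ii) compactness is proved only on unweighted $L^2$, by showing $\lim_{N\to\infty}\|P_N^{\perp}\circ T\|_{L^2\to L^2}=0$ for the finite-dimensional Haar projections $P_N$, which reduces everything to direct estimates of the matrix coefficients $\langle T(h_{I_1}\otimes h_{I_2}),h_{J_1}\otimes h_{J_2}\rangle$ in a fixed grid, organized by the relative size and position of $I_i$ and $J_i$; (iii) the two are combined by a weighted interpolation theorem for compact operators (Cwikel--Kalton) together with the factorization $w^{1/p}=w_0^{(1-\theta)/p_0}w_1^{\theta/p_1}$ with $w_1\equiv 1$. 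This avoids random grids, goodness of cubes, and any representation formula for the compactness part, and it never needs a compactness criterion on weighted spaces. Your plan, by contrast, front-loads all the difficulty into a compact bi-parameter representation formula and then asks for compactness of each model operator directly on $L^p(w)$.

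The genuine gaps in your route are the following. First, the compact representation formula is asserted, not proved, and it is the hardest step: you must extract from hypotheses (H1)--(H5) a coefficient gain $F(I,J,K,L)\to 0$ that is \emph{uniform} over the random parameter $\omega$ and still summable over the complexities $(i_1,j_1,i_2,j_2)$; this is precisely the technical content the paper's case analysis supplies in a fixed grid, and redoing it inside the probabilistic framework is not a routine upgrade. Second, an expectation $\mathbb{E}_{\omega}$ of compact operators over a continuum of grids is not automatically compact; you need a uniform total-boundedness (or uniform tail) statement in $\omega$, e.g.\ $\sup_{\omega}\|P_N^{\perp}S_{\omega}\|\to 0$, which you do not address. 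Third, your treatment of the paraproducts is circular as stated: for a smooth compactly supported symbol $b_n$ the bi-parameter paraproduct $\Pi_{b_n}$ is still an infinite sum over all scales and is not ``Hilbert--Schmidt-like''; its compactness is exactly the statement one must prove (the paper does this via $\|P_N^{\perp}\Pi_b\|_{L^2\to L^2}\lesssim\|P_N^{\perp}b\|_{\BMO_{\D}}$ and the definition of product $\CMO$), and moreover it is not obvious that every $b$ in the product $\CMO$ class of Definition \ref{def:prod-BMO} can be approximated in product $\BMO$ by smooth compactly supported functions. Finally, verifying a Fr\'echet--Kolmogorov criterion on $L^p(w)$ for bi-parameter $A_p$ weights is itself nontrivial and unnecessary: once you have $L^2$ compactness and weighted boundedness, the interpolation argument of Theorem \ref{thm:inter} gives the full weighted result for free, and you should incorporate that reduction into your plan in any case.
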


Our main theorem contains several novelties. First, we introduce a general framework for operators and kernels in order to establish the compactness of bi-parameter singular integrals. Even in the unweighted case, we, for the first time, not only extend the result in \cite{Vil} to the case of bi-parameter singular integrals, but also improve Journ\'{e}'s $T1$ theorem on product spaces (cf. \cite{Jou}) to the corresponding compact version. A main technique to achieve this is the weighted interpolation for compact operators (cf. Theorem \ref{thm:inter}), for which the idea is essentially contained in the Rubio de Francia extrapolation for compact operators in papers \cite{COY, HL}. Indeed, inspired by \cite{COY, HL}, to show Theorem \ref{thm:compact}, we just need to establish the weighted boundedness of $T$ on $L^r(v)$ for all $r \in (1, \infty)$ and $v \in A_r(\R^{n_1} \times \R^{n_2})$, and the compactness of $T$ on $L^2(\R^{n_1} \times \R^{n_2})$. The latter is the core of our proof because the former is somehow involved in the recent literature (see e.g. \cite{HPW, LMV}). Second, applying the method of dyadic analysis, our proof is totally different from the one in \cite{Vil}, where the wavelet decomposition was used. As we will see below, back to the one-parameter situation, our argument is much more natural and simpler than that in \cite{Vil}. Third, in the bi-parameter scenario, our result of $L^2(\R^{n_1} \times \R^{n_2})$ compactness (see \eqref{reduction-1} below) refines \cite[Corollary 2.5]{Mar} and our proof does not need a representation theorem, probabilistic arguments, or goodness of cubes. It is worth mentioning that the proof of \eqref{reduction-1} does not depend on \cite[Corollary 2.5]{Mar}, and that the representation theorem in \cite{Mar} is just used to obtain the $L^r(v)$ boundedness of $T$ aforementioned. Finally, when dealing with the setting $I_1 \subsetneq J_1$ or/and $I_2 \subsetneq J_2$, we split the pair $\langle T (h_{I_1} \otimes h_{I_2}), h_{J_1} \otimes h_{J_2} \rangle$ into paraproducts and global terms (see \eqref{STH} or \eqref{TH4}). The product $\CMO$ condition is used to show the compactness of full paraproducts, while global terms are handled by distinguishing that $I_i$ is close to the boundary of $J_i$ (denoted by $I_i \in \mathcal{B}_i^{k_i}(J_i)$) or $I_i$ is far away from the boundary of $J_i$ (denoted by $I_i \in \mathcal{G}_i^{k_i}(J_i)$). If $I_i \in \mathcal{B}_i^{k_i}(J_i)$, we have the decay factor $2^{-k_i \frac{n_i}{2}}$ and $\# \mathcal{B}_i^{k_i}(J_i) \lesssim 2^{k_i(\frac{n_i}{2} - \frac{1}{4})}$; if $I_i \in \mathcal{G}_i^{k_i}(J_i)$, we obtain the decay factor $2^{-k_i (\frac{n_i}{2} + \frac{\delta_i}{2})}$ and $\# \mathcal{G}_i^{k_i}(J_i) \lesssim 2^{k_i \frac{n_i}{2}}$. More details are given in Sections  \ref{sec:SI}, \ref{sec:NI}, and \ref{sec:Inside}.

\subsection{Bi-parameter singular integral operators}\label{sec:SIO}
Once we have stated our main result, we present the precise definitions of all the previous concepts so to have a rigorous idea of what we are handling. First of all, we proceed to define bi-parameter singular integral operators and compact full and partial kernel representations.

\begin{definition}\label{def:FF}
Let $\mathscr{F}$ consist of all triples $(F_1, F_2, F_3)$ of bounded functions $F_1, F_2, F_3: [0, \infty) \to [0, \infty)$ satisfying
\begin{align*}
\lim_{t \to 0} F_1(t)
=\lim_{t \to \infty} F_2(t)
= \lim_{t \to \infty} F_3(t)
=0.
\end{align*}
For $i=1, 2$, let $\mathscr{F}_i$ be the collection of all bounded functions $F_i: \mathcal{Q}_{n_i} \to [0, \infty)$ satisfying
\begin{align*}
\lim_{\ell(I_i) \to 0} F_i(I_i)
= \lim_{\ell(I_i) \to \infty} F_i(I_i)
= \lim_{|c_{I_i}| \to \infty} F_i(I_i)
=0, 
\end{align*}
where $\mathcal{Q}_{n_i}$ is the family of all cubes in $\R^{n_i}$, $i=1, 2$.
\end{definition}

Throughout this article, we always assume that $\delta_1, \delta_2 \in (0, 1]$ are fixed numbers. For convenience, we use $\ell^{\infty}$ metrics on $\R^{n_1}$ and $\R^{n_2}$.

\begin{definition}\label{def:full}
A linear operator $T$ admits a \emph{compact full kernel representation} if the following hold.
If $f = f_1 \otimes f_2$ and $g = g_1 \otimes g_2$ with $f_1, g_1 :\R^{n_1} \rightarrow \C$, $f_2, g_2 :\R^{n_2} \rightarrow \C$,
$\supp(f_1) \cap \supp(g_1) = \emptyset$, and $\supp(f_2) \cap \supp(g_2) = \emptyset$, then
\begin{align*}
\langle Tf, g \rangle
= \int_{\R^{n_1+n_2}} \int_{\R^{n_1+n_2}} K(x,y) f(y) g(x) \, dx \, dy,
\end{align*}
where the kernel $K: (\R^{n_1+n_2} \times \R^{n_1+n_2}) \setminus \big\{(x,y) \in \R^{n_1+n_2} \times \R^{n_1+n_2}: x_1=y_1 \text{ or } x_2=y_2\big\} \rightarrow \C$ satisfies
\begin{list}{\rm (\theenumi)}{\usecounter{enumi}\leftmargin=1.2cm \labelwidth=1cm \itemsep=0.2cm \topsep=.2cm \renewcommand{\theenumi}{\arabic{enumi}}}

\item\label{full-1} the size condition
\begin{align*}
|K(x, y)| \leq \frac{F_1(x_1, y_1)}{|x_1-y_1|^{n_1}} \frac{F_2(x_2, y_2)}{|x_2-y_2|^{n_2}}.
\end{align*}

\item\label{full-2} the H\"{o}lder conditions
\begin{align*}
|&K(x, y) - K((x_1, x'_2), y) - K((x'_1, x_2), y) + K(x', y)|
\\
&\leq \bigg(\frac{|x_1-x'_1|}{|x_1-y_1|} \bigg)^{\delta_1} \frac{F_1(x_1, y_1)}{|x_1 - y_1|^{n_1}}
\bigg(\frac{|x_2-x'_2|}{|x_2-y_2|}\bigg)^{\delta_2} \frac{F_2(x_2, y_2)}{|x_2 - y_2|^{n_2}}
\end{align*}
whenever $|x_1-x'_1| \leq |x_1-y_1|/2$ and $|x_2-x'_2| \leq |x_2-y_2|/2$,
\begin{align*}
|&K(x, y) - K((x_1, x'_2), y) - K(x, (y'_1, y_2)) + K((x_1, x'_2), (y'_1, y_2))|
\\
&\leq \bigg(\frac{|y_1-y'_1|}{|x_1-y_1|}\bigg)^{\delta_1} \frac{F_1(x_1, y_1)}{|x_1 - y_1|^{n_1}}
\bigg(\frac{|x_2-x'_2|}{|x_2-y_2|}\bigg)^{\delta_2} \frac{F_2(x_2, y_2)}{|x_2-y_2|^{n_2}}
\end{align*}
whenever $|y_1-y'_1| \leq |x_1-y_1|/2$ and $|x_2-x'_2| \leq |x_2-y_2|/2$, 
\begin{align*}
|&K(x, y) - K(x, (y_1, y'_2)) - K((x'_1, x_2), y) + K((x'_1, x_2), (y_1, y'_2))|
\\
&\leq \bigg(\frac{|x_1-x'_1|}{|x_1-y_1|}\bigg)^{\delta_1} \frac{F_1(x_1, y_1)}{|x_1 -y_1|^{n_1}}
\bigg(\frac{|y_2-y'_2|}{|x_2-y_2|}\bigg)^{\delta_2} \frac{F_2(x_2, y_2)}{|x_2 - y_2|^{n_2}}
\end{align*}
whenever $|x_1-x'_1| \leq |x_1-y_1|/2$ and $|y_2-y'_2| \leq |x_2-y_2|/2$, and 
\begin{align*}
|&K(x, y) - K(x, (y_1, y'_2)) - K(x, (y'_1, y_2)) + K(x, y')|
\\
&\leq \bigg(\frac{|y_1-y'_1|}{|x_1-y_1|} \bigg)^{\delta_1} \frac{F_1(x_1, y_1)}{|x_1 - y_1|^{n_1}}
\bigg(\frac{|y_2-y'_2|}{|x_2-y_2|} \bigg)^{\delta_2} \frac{F_2(x_2, y_2)}{|x_2 - y_2|^{n_2}}
\end{align*}
whenever $|y_1-y'_1| \leq |x_1-y_1|/2$ and $|y_2-y'_2| \leq |x_2-y_2|/2$. 

\item\label{full-3} the mixed size-H\"{o}lder conditions
\begin{align*}
|K(x, y) - K((x'_1, x_2), y)|
\leq \bigg(\frac{|x_1-x_1'|}{|x_1-y_1|}\bigg)^{\delta_1} \frac{F_1(x_1, y_1)}{|x_1 -y_1|^{n_1}}
\frac{F_2(x_2, y_2)}{|x_2-y_2|^{n_2}}
\end{align*}
whenever $|x_1 - x'_1| \leq |x_1-y_1|/2$, 
\begin{align*}
|K(x, y) - K((x_1, x'_2), y)|
\leq \frac{F_1(x_1, y_1)}{|x_1-y_1|^{n_1}}
\bigg(\frac{|x_2 - x'_2|}{|x_2-y_2|}\bigg)^{\delta_2} \frac{F_2(x_2, y_2)}{|x_2 - y_2|^{n_2}}
\end{align*}
whenever $|x_2 - x'_2| \leq |x_2-y_2|/2$,
\begin{align*}
|K(x, y) - K(x, (y'_1, y_2))|
\leq \bigg(\frac{|y_1-y_1'|}{|x_1-y_1|}\bigg)^{\delta_1} \frac{F_1(x_1, y_1)}{|x_1 - y_1|^{n_1}}
\frac{F_2(x_2, y_2)}{|x_2-y_2|^{n_2}}
\end{align*}
whenever $|y_1 - y'_1| \leq |x_1-y_1|/2$, and
\begin{align*}
|K(x, y) - K(x, (y_1, y'_2))|
\leq \frac{F_1(x_1, y_1)}{|x_1-y_1|^{n_1}}
\bigg(\frac{|y_2-y'_2|}{|x_2-y_2|}\bigg)^{\delta_2} \frac{F_2(x_2, y_2)}{|x_2 - y_2|^{n_2}}
\end{align*}
whenever $|y_2 - y'_2| \leq |x_2-y_2|/2$.

\item\label{full-4} the function $F_i$ above is given by
\begin{align*}
F_i(x_i, y_i) := F_{i, 1}(|x_i - y_i|) F_{i, 2}(|x_i - y_i|) F_{i, 3}(|x_i + y_i|),
\end{align*}
where $(F_{i, 1}, F_{i, 2}, F_{i, 3}) \in \mathscr{F}$, $i=1, 2$.
\end{list}

We say that a linear operator $T$ admits a \emph{full kernel representation} if both $F_1$ and $F_2$ in \eqref{full-1}--\eqref{full-3} above are replaced by a uniform constant $C \ge 1$.
\end{definition}

\begin{definition}\label{def:partial}
A linear operator $T$ admits a \emph{compact partial kernel representation on the first parameter} if the following hold.
If $f = f_1 \otimes f_2$ and $g = g_1 \otimes g_2$ with $\supp(f_1) \cap \supp(g_1) = \emptyset$, then
\begin{align*}
\langle Tf, g \rangle
= \int_{\R^{n_1}} \int_{\R^{n_1}} K_{f_2, g_2}(x_1, y_1) f_1(y_1) g_1(x_1) \, dx_1 \, dy_1,
\end{align*}
where the kernel $K_{f_2, g_2}: (\R^{n_1} \times \R^{n_1}) \setminus \big\{(x_1, y_1) \in \R^{n_1} \times \R^{n_1}: x_1 = y_1 \big\} \rightarrow \C$ satisfies
\begin{list}{\rm (\theenumi)}{\usecounter{enumi}\leftmargin=1.2cm \labelwidth=1cm \itemsep=0.2cm \topsep=.2cm \renewcommand{\theenumi}{\arabic{enumi}}}

\item[(i)]\label{partial-1} the size condition
\begin{align*}
|K_{f_2, g_2}(x_1,y_1)|
\leq C(f_2, g_2) \frac{F_1(x_1, y_1)}{|x_1-y_1|^{n_1}}.
\end{align*}

\item[(ii)]\label{partial-2} the H\"{o}lder conditions
\begin{align*}
|K_{f_2, g_2}(x_1, y_1) - K_{f_2, g_2}(x'_1, y_1)|
\leq C(f_2, g_2) \bigg(\frac{|x_1-x'_1|}{|x_1-y_1|}\bigg)^{\delta_1} \frac{F_1(x_1, y_1)}{|x_1 - y_1|^{n_1}}
\end{align*}
whenever $|x_1-x'_1| \leq |x_1-y_1|/2$, and
\begin{align*}
|K_{f_2, g_2}(x_1, y_1) - K_{f_2, g_2}(x_1, y'_1)|
\leq C(f_2, g_2) \bigg(\frac{|y_1-y'_1|}{|x_1-y_1|}\bigg)^{\delta_1} \frac{F_1(x_1, y_1)}{|x_1 - y_1|^{n_1}}
\end{align*}
whenever $|y_1-y'_1| \leq |x_1-y_1|/2$.

\item[(iii)]\label{partial-3} the function $F_1$ above is given by
\begin{align*}
F_1(x_1, y_1) := F_{1, 1}(|x_1 - y_1|) F_{1, 2}(|x_1 - y_1|) F_{1, 3}(|x_1 + y_1|),
\end{align*}
where $(F_{1, 1}, F_{1, 2}, F_{1, 3}) \in \mathscr{F}$.

\item[(iv)]\label{partial-4} the bound $C(f_2, g_2)$ above verifies
\begin{align*}
C(\mathbf{1}_{I_2}, \mathbf{1}_{I_2})
+ C(\mathbf{1}_{I_2}, a_{I_2})
+ C(a_{I_2}, \mathbf{1}_{I_2})
\le F_2(I_2) \, |I_2|,
\end{align*}
for all cubes $I_2 \subset \R^{n_2}$ and all functions $a_{I_2}$ satisfying $\supp(a_{I_2}) \subset I_2$, $|a_{I_2}| \le 1$, and $\int a_{I_2} =0$, where $F_2 \in \mathscr{F}_2$.
\end{list}

Analogously, one can define a \emph{compact partial kernel representation on the second parameter} when $\supp(f_2) \cap \supp(g_2) = \emptyset$.
\end{definition}

\begin{definition}\label{def:SIO}
let $T$ be a linear operator. 
\begin{itemize}
\item We say that $T$ admits the \emph{compact partial kernel representation} if it admits a compact partial kernel representation on both the first and second parameters.

\item We say that a linear operator $T$ admits the \emph{partial kernel representation} if both $F_1$ and $F_2$ in compact partial kernel representations are replaced by a uniform constant $C \ge 1$.

\item If $T$ admits the full and partial kernel representations, we call $T$ a \emph{bi-parameter singular integral operator}.
\end{itemize} 
\end{definition}

Next, let us given compactness and cancellation assumptions for bi-parameter singular integral operators.

\begin{definition}\label{def:WCP}
We say that $T$ satisfies the \emph{weak compactness property} if
\begin{align*}
|\langle T(\mathbf{1}_{I_1} \otimes \mathbf{1}_{I_2}), \mathbf{1}_{I_1} \otimes \mathbf{1}_{I_2} \rangle|
\leq F_1(I_1) |I_1| \, F_2(I_2) |I_2|,
\end{align*}
for all cubes $I_1 \subset \R^{n_1}$ and $I_2 \subset \R^{n_2}$, where $F_1 \in \mathscr{F}_1$ and $F_2 \in \mathscr{F}_2$. We say that $T$ satisfies the \emph{weak boundedness property} if both $F_1$ and $F_2$ above are replaced by a uniform constant $C \ge 1$.
\end{definition}

\begin{definition}\label{def:diag-CMO}
We say that $T$ satisfies the \emph{diagonal $\CMO$ condition} if for every $i=1, 2$, for every cube $I_i \subset \R^{n_i}$, and for every function $a_{I_i}$ such that $\supp(a_{I_i}) \subset I_i$, $|a_{I_i}| \le 1$, and $\int_{\R^{n_i}} a_{I_i} dx_i =0$, there holds
\begin{align*}
&|\langle T(\mathbf{1}_{I_1} \otimes \mathbf{1}_{I_2}), \mathbf{1}_{I_1} \otimes a_{I_2} \rangle|
+ |\langle T(\mathbf{1}_{I_1} \otimes \mathbf{1}_{I_2}), a_{I_1} \otimes \mathbf{1}_{I_2} \rangle|
\\ 
&\quad + |\langle T(a_{I_1} \otimes \mathbf{1}_{I_2}), \mathbf{1}_{I_1} \otimes \mathbf{1}_{I_2} \rangle|
+ |\langle T(\mathbf{1}_{I_1} \otimes a_{I_2}), \mathbf{1}_{I_1} \otimes \mathbf{1}_{I_2} \rangle| 
\\
&\leq F_1(I_1) |I_1| \, F_2(I_2) |I_2|,
\end{align*}
where $F_1 \in \mathscr{F}_1$ and $F_2 \in \mathscr{F}_2$. We say that $T$ satisfies the \emph{diagonal $\BMO$ condition} if both $F_1$ and $F_2$ above are replaced by a uniform constant $C \ge 1$.
\end{definition}

Finally, let us define the product $\BMO$ space. Let $\D_0$ be the standard dyadic grid on $\Rn$:
\[
\D_0 := \big\{2^{-k}([0, 1)^n + m): k \in \Z, \, m \in \Z^n \big\}.
\]
Let $\Omega :=(\{0, 1\}^n)^{\Z}$ and let $\mathbb{P}$ be the natural probability measure on $\Omega$: each component $\omega_j$ has an equal probability $2^{-n}$ of taking any of the $2^n$ values in $\{0, 1\}^n$, and all components are independent of each other. Given $\omega=(\omega_j)_{j \in \Z} \in \Omega$, the \emph{random dyadic grid} $\D_{\omega}$ on $\Rn$ is defined by
\begin{align*}
\D_{\omega} := \bigg\{Q+\omega := Q + \sum_{j: 2^{-j}< \ell(Q)} 2^{-j} \omega_j : Q \in \D_0\bigg\}.
\end{align*}
By a \emph{dyadic grid} $\D$ we mean that $\D=\D_{\omega}$ for some $\omega \in \Omega$.

Given a dyadic grid $\D_{n_i}$ on $\R^{n_i}$, $i=1, 2$, we denote the collection of dyadic rectangles by $\D=\D_{n_1} \times \D_{n_2}$. We say that a locally integrable function $b$ belongs to the \emph{dyadic product $\BMO$ space} $\BMO_{\D}$ if
\begin{align*}
\|b\|_{\BMO_{\D}}
:= \sup_{\Omega} \bigg(\frac{1}{|\Omega|} 
\sum_{\substack{I_1 \times I_2 \in \D \\ I_1 \times I_2 \subset \Omega}}
|\langle b, h_{I_1} \otimes h_{I_2} \rangle|^2\bigg)^{\frac12}
< \infty,
\end{align*}
where the supremum is taken over all sets $\Omega \subset \R^{n_1} \times \R^{n_2}$ such that $|\Omega|<\infty$ and for every $x \in \Omega$ there exists an $I_1 \times I_2 \in \D$ so that $x \in I_1 \times I_2 \subset \Omega$. The \emph{product $\BMO$ space} $\BMO(\R^{n_1} \times \R^{n_2})$ is defined as the collection of all locally integrable functions $b$ satisfying 
\begin{align*}
\|b\|_{\BMO(\R^{n_1} \times \R^{n_2})} 
:= \sup_{\D} \|b\|_{\BMO_{\D}} < \infty, 
\end{align*}
where the supremum over all dyadic grids $\D=\D_{n_1} \times \D_{n_2}$.

We say that $b$ belongs to the \emph{dyadic product $\CMO$ space $\CMO_{\D}$} if $b \in \BMO_{\D}$ and satisfies 
\begin{align*}
\lim_{N \to \infty} \|P_N^{\perp} b\|_{\BMO_{\D}} = 0,  
\end{align*}
where 
\begin{align*}
P_N^{\perp} b
:= \sum_{\substack{J_1 \not\in \D_{n_1}(N) \\ \text{or } J_2 \not\in \D_{n_2}(N)}}
\langle b, h_{J_1} \otimes h_{J_2} \rangle \, h_{J_1} \otimes h_{J_2}.
\end{align*}
The notation $\D_{n_1}(N)$ and $\D_{n_2}(N)$ are defined in Section \ref{sec:notation}. The \emph{product $\CMO$ space $\CMO(\R^{n_1} \times \R^{n_2})$} is defined as the collection of all functions $b \in \BMO(\R^{n_1} \times \R^{n_2})$ satisfying 
\begin{align*}
\lim_{N \to \infty} \sup_{\D} \|P_N^{\perp} b\|_{\BMO_{\D}} = 0. 
\end{align*}

Given a linear operator $T$ acting on functions defined on the product space $\R^{n_1} \times \R^{n_2}$, we define its \emph{adjoint} $T^*$ and its \emph{partial adjoints} $T^*_1$ and $T^*_2$ by
\begin{align*}
\langle T(f_1 \otimes f_2), g_1 \otimes g_2 \rangle
&=\langle T^*(g_1 \otimes g_2), f_1 \otimes f_2 \rangle
\\
&=\langle T^*_1(g_1 \otimes f_2), f_1 \otimes g_2 \rangle
=\langle T^*_2(f_1 \otimes g_2), g_1 \otimes f_2 \rangle.
\end{align*}

\begin{definition}\label{def:prod-BMO}
A linear operator $T$ satisfies the \emph{product $\BMO$ condition} if
\begin{align*}
b \in \BMO(\R^{n_1} \times \R^{n_2}) \quad\text{ for all } \, b \in \{T1, T^*1, T^*_1 1, T^*_2 1\}.
\end{align*}
We say that $T$ satisfies the \emph{product $\CMO$ condition} if
\begin{align*}
b \in \CMO(\R^{n_1} \times \R^{n_2}) \quad\text{ for all } \, b \in \{T1, T^*1, T^*_1 1, T^*_2 1\}.
\end{align*}
\end{definition}

This paper is organized as follows. Section \ref{sec:pre} contains some preliminaries, definitions, and technical results to show our main  theorem. Section \ref{sec:reduction} is devoted to presenting crucial reductions, while auxiliary results are presented in Section \ref{sec:aux}. Finally, in Section \ref{sec:proof}, we give main estimates for matrix elements involved in the expansion of $\langle Tf, g \rangle$.

\section{Preliminaries}\label{sec:pre}

\subsection{Notation}\label{sec:notation}
\begin{itemize}
\item For convenience, we use $\ell^{\infty}$ metrics on $\R^{n_1}$ and $\R^{n_2}$.

\item Given a cube $I \subset \R^{n_i}$, $i=1, 2$, we denote its center by $c_I$ and its edge length by $\ell(I)$. For any $\lambda>0$, we denote by $\lambda I$ the cube with the center $c_I$ and edge length $\lambda \ell(I)$. The measure of $I$ is simply denoted by $|I|$ no matter what dimension we are in.

\item Write $\I_i=[-\frac12, \frac12]^{n_i}$ and $\lambda \I_i=[-\frac{\lambda}{2}, \frac{\lambda}{2}]^{n_i}$, $i=1, 2$.

\item Let $\D_{n_i}$ denote a general dyadic grid in $\R^{n_i}$, $i=1, 2$.

\item For every $i=1, 2$ and $I \in \D_{n_i}$, set $\ch(I):=\{I' \in \D_{n_i}: I' \subset I, \ell(I')=\ell(I)/2\}$ and let $I^{(k)}$ denote the unique dyadic cube $J \in \D_{n_i}$ so that $I \subset J$ and $\ell(J)=2^k \ell(I)$.

\item Given cubes $I, J \subset \R^{n_i}$, $i=1, 2$, we denote by $I \Cup J$ the cube with minimal edge length containing $I \cup J$.  If there is more than one cube satisfying this condition, we will simply select one.

\item Given cubes $I, J \subset \R^{n_i}$, $i=1, 2$, we respectively define the relative size and the relative distance between them by
\begin{align*}
\rs(I, J) := \frac{\min\{\ell(I), \, \ell(J)\}}{\max\{\ell(I), \, \ell(J)\}}
\quad\text{ and }\quad
\rd(I, J) := \frac{\d(I, J)}{\max\{\ell(I), \, \ell(J)\}}.
\end{align*}
Moreover, the inner relative distance between $I$ and $J$ is defined by
\begin{align*}
\ird(I, J) :=1+ \frac{\d(I, J)}{\min\{\ell(I), \, \ell(J)\}}. 
\end{align*}

\item For each $i=1, 2$ and $N \in \N_+$, we define $\D_{n_i}(N)$ as the family of all $I \in \D_{n_i}$ with $2^{-N} \leq  \ell(I) \leq 2^N$ and $\rd(I, 2^N \I_i) \leq N$.

\item For each $i=1, 2$, $I \not\in \D_{n_i}(N)$ means $I \in \D_{n_i} \setminus \D_{n_i}(N)$.

\item Given a cube $I \subset \R^{n_i}$ and $f \in L^1_{\loc}(\R^{n_i})$, $i=1, 2$, we write $\langle f \rangle_I := \frac{1}{|I|} \int_I f \, dx$.

\item We shall use $a\lesssim b$ and $a \simeq b$ to mean, respectively, that $a\leq C b$ and $0<c\leq a/b\leq C$, where the constants $c$ and $C$ are harmless positive constants, not necessarily the same at each occurrence, which depend only on dimension and the constants appearing in the hypotheses of theorems.
\end{itemize}

\subsection{Haar functions}
Let $h_I$ be an $L^2(\Rn)$ normalized \emph{Haar function} related to $I \in \mathcal{D}_n$, where $\mathcal{D}_n$ is a dyadic grid on $\Rn$. This means the following. Given $I = I_1 \times \cdots \times I_n$, $h_I$ is one of the $2^n$ functions $h_I^{\eta}$, $\eta=(\eta_1, \ldots, \eta_n) \in \{0, 1\}^n$, defined by
\begin{align*}
h_I^{\eta} = h_{I_1}^{\eta_1} \otimes \cdots \otimes h_{I_n}^{\eta_n},
\end{align*}
where $h_{I_i}^0 = |I_i|^{-\frac12} \mathbf{1}_{I_i}$ and $h_{I_i}^1 = |I_i|^{-\frac12}(\mathbf{1}_{I_i^-} - \mathbf{1}_{I_i^+})$ for every $i = 1, \ldots, n$. Here $I_i^-$ and $I_i^+$ are the left and right halves of the interval $I_i$ respectively. Note that for any $\eta \neq 0$, the Haar function is cancellative : $\int_{\Rn} h_I^{\eta} \, dx = 0$. We usually suppress the presence of $\eta$ and simply write $h_I$ for some $h_I^{\eta}$, $\eta \neq 0$.

All the cancellative Haar functions on $\Rn$ form an orthonormal basis of $L^2(\Rn)$. Thus, for each $g \in L^2(\Rn)$, we may write
\begin{align*}
g = \sum_{I \in \D_n} \sum_{\eta \in \{0, 1 \}^n \setminus \{0\}} \langle g, h_I^\eta \rangle h_I^\eta.
\end{align*}
However, we suppress the finite $\eta$ summation and just write $g = \sum_{I \in \D_n} \langle g, h_I \rangle h_I$. We may expand a function $f \in L^2(\R^{n_1} \times \R^{n_2})$ using the corresponding product basis:
\begin{align}\label{expand}
f = \sum_{\substack{I_1 \in \D_{n_1} \\ I_2 \in \D_{n_2}}}
\langle f, h_{I_1} \otimes h_{I_2} \rangle \, h_{I_1} \otimes h_{I_2}.
\end{align}

\subsection{Weights}
A measurable function $w$ on $\R^{n_1} \times \R^{n_2}$ is called a \emph{weight} if it is a.e. positive and locally integrable. Given $p \in (1, \infty)$, we say that a weight $w$ on $\R^{n_1} \times \R^{n_2}$ belongs to the \emph{bi-parameter weight class} $A_p(\R^{n_1} \times \R^{n_2})$ if
\begin{align*}
[w]_{A_p(\R^{n_1} \times \R^{n_2})}
:= \sup_{R} \bigg(\fint_R w \, dx \bigg) \bigg(\fint_R w^{-\frac{1}{p-1}} dx \bigg)^{p-1} < \infty,
\end{align*}
where the supremum is taken over rectangles $R=I_1 \times I_2$ with $I_i \subset \R^{n_i}$ being a cube, $i=1, 2$.

The basic bi-parameter weighted theory can be found in \cite[Chapter IV]{GR}. Let us present an interpolation theorem for compact operators below, which is a particular case of \cite[Theorem 9]{CK}.

\begin{theorem}\label{thm:inter}
Let $1<p_i<\infty$ and $w_i \in A_{p_i}(\R^{n_1} \times \R^{n_2})$, $i=0, 1$. Assume that $T$ is a linear operator such that $T$ is bounded on $L^{p_0}(w_0)$ and compact on $L^{p_1}(w_1)$. Then $T$ is compact on $L^p(w)$, where $\frac1p = \frac{1-\theta}{p_0} + \frac{\theta}{p_1}$, $w^{\frac1p} = w_0^{\frac{1-\theta}{p_0}}  w_1^{\frac{\theta}{p_1}}$, and $0<\theta<1$.
\end{theorem}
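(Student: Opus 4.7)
The plan is to reduce this assertion to a general complex interpolation theorem for compact operators. First, I would observe that $(L^{p_0}(w_0), L^{p_1}(w_1))$ forms a compatible Banach couple, both spaces being continuously embedded in $L^1_{\loc}(\R^{n_1}\times\R^{n_2})$. The Stein--Weiss interpolation theorem on complex interpolation of weighted Lebesgue spaces (see, e.g., Bergh--L\"ofstr\"om) identifies the intermediate space $[L^{p_0}(w_0), L^{p_1}(w_1)]_\theta$ with $L^p(w)$ precisely under the relations imposed on $(p,w,\theta)$, namely $\frac1p=\frac{1-\theta}{p_0}+\frac{\theta}{p_1}$ and $w^{1/p}=w_0^{(1-\theta)/p_0}w_1^{\theta/p_1}$. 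This reduces the problem to an abstract statement on compatible Banach couples.

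Second, I would invoke Cwikel's interpolation theorem for compact operators in the complex method: if a linear operator $T$ is bounded from $A_0+A_1$ into itself (viewed as a compatible couple), acts boundedly on $A_0$, and is compact on $A_1$, then $T$ is compact on $[A_0,A_1]_\theta$ for every $\theta\in(0,1)$. Applied with $A_j=L^{p_j}(w_j)$, the conclusion is immediate. The compatibility hypotheses are routine in the weighted $L^p$ setting, since bounded, compactly supported functions sit densely in both endpoint spaces, and $T$ extends consistently to the sum space because in the only scenarios of interest it is a bi-parameter singular integral already defined on a common dense class.

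The step I expect to be the main obstacle is not the interpolation itself but verifying that the weighted endpoint pair genuinely falls within the scope of the abstract compact interpolation theorem: one has to check that the couple is regular (dense intersection, Hausdorff ambient sum) and that the Stein--Weiss identification is an equality of Banach spaces up to equivalent norms, which suffices here because compactness is invariant under equivalent norms. An alternative, more self-contained strategy that avoids citing Cwikel is to approximate $T$ on $L^{p_1}(w_1)$ by finite-rank operators $T_N$ with $\|T-T_N\|_{L^{p_1}(w_1)\to L^{p_1}(w_1)}\to 0$, and then interpolate the differences $T-T_N$ by the weighted Riesz--Thorin theorem against the fixed bound $\|T-T_N\|_{L^{p_0}(w_0)\to L^{p_0}(w_0)}\le 2\|T\|_{L^{p_0}(w_0)\to L^{p_0}(w_0)}$; this yields $T_N\to T$ in operator norm on $L^p(w)$, and since norm limits of compact operators are compact, the conclusion follows again. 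Either route delivers precisely the statement of Theorem~\ref{thm:inter}.
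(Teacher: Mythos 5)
Your first route is essentially the paper's own: the paper offers no proof of Theorem \ref{thm:inter} beyond observing that it is a particular case of \cite[Theorem 9]{CK}, and your reduction via the Stein--Weiss identification $[L^{p_0}(w_0), L^{p_1}(w_1)]_\theta = L^p(w)$ followed by a compact-interpolation theorem for the couple is exactly that. One caveat: the statement you attribute to Cwikel --- that one-sided compactness always interpolates in the complex method for an arbitrary compatible couple --- is not a theorem; in that generality it is a long-standing open problem. What is known, and what \cite[Theorem 9]{CK} supplies, is the result under additional structure on the couple, e.g.\ when $A_0$ and $A_1$ are Banach lattices of measurable functions on a common measure space (which weighted $L^p$ spaces are). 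So your ``verification that the couple falls within the scope of the abstract theorem'' is not a routine regularity check; it is precisely where the lattice hypothesis of Cwikel--Kalton must be invoked.

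Your alternative ``self-contained'' route has a genuine gap. Writing $T = T_N + (T - T_N)$ with $T_N$ finite rank and $\|T - T_N\|_{L^{p_1}(w_1) \to L^{p_1}(w_1)} \to 0$, you propose to interpolate $T - T_N$ between the two endpoints. But the rank-one building blocks of $T_N$ have the form $f \mapsto \langle f, g \rangle h$ with $g \in (L^{p_1}(w_1))^*$ and $h \in L^{p_1}(w_1)$, and there is no reason for such an operator to be bounded on $L^{p_0}(w_0)$, since the weights $w_0$ and $w_1$ (and the exponents) are unrelated. Hence the claimed bound $\|T - T_N\|_{L^{p_0}(w_0) \to L^{p_0}(w_0)} \le 2\|T\|_{L^{p_0}(w_0) \to L^{p_0}(w_0)}$ is unjustified --- $T_N$ need not even map $L^{p_0}(w_0)$ into itself --- and the weighted Riesz--Thorin step cannot be applied to $T - T_N$. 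Constructing approximants that are simultaneously controlled on both endpoint spaces is exactly the nontrivial content of the Cwikel--Kalton argument, so this route does not in fact bypass the citation.
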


\subsection{Technical lemmas} 

We next give two useful lemmas.
\begin{lemma}\label{lem:improve}
Given $(F_{i, 1}, F_{i, 2}, F_{i, 3}) \in \F$, let
\begin{align*}
F_i(x_i, y_i) := F_{i, 1}(|x_i - y_i|) F_{i, 2}(|x_i - y_i|) F_{i, 3}(|x_i + y_i|),  \quad i=1, 2.
\end{align*}
\begin{list}{\rm (\theenumi)}{\usecounter{enumi}\leftmargin=1.2cm \labelwidth=1cm \itemsep=0.2cm \topsep=.2cm \renewcommand{\theenumi}{\roman{enumi}}}

\item\label{size-imp} For each $i=1, 2$, there exists a triple $(F'_{i, 1}, F'_{i, 2}, F'_{i, 3}) \in \F$ such that $F'_{i, 1}$ is monotone increasing, $F'_{i, 2}$ and $F'_{i, 3}$ are monotone decreasing, and
\begin{align}\label{FF-imp-1}
F_i(x_i, y_i)
\le F'_i(x_i, y_i)
:= F'_{i, 1}(|x_i-y_i|) F'_{i, 2}(|x_i-y_i|) F'_{i, 3} \bigg(1 + \frac{|x_i+y_i|}{1+|x_i-y_i|}\bigg).
\end{align}

\item\label{Holder-imp}
Assume that $F_{1, 1}$ and $F_{2, 1}$ are monotone increasing. Then for each $i=1, 2$, there exist $\delta'_i \in (0, \delta_i)$ and $(F'_{i, 1}, F'_{i, 2}, F'_{i, 3}) \in \F$ such that $F'_{i, 1}$ is monotone increasing, $F'_{i, 2}$ and $F'_{i, 3}$ are monotone decreasing, and
\begin{align}\label{FF-imp-2}
F_i(x_i, y_i) \frac{|y_i-y'_i|^{\delta_i}}{|x_i-y_i|^{n_i+\delta_i}}
\le F'_i(x_i, y_i) \frac{|y_i-y'_i|^{\delta'_i}}{|x_i-y_i|^{n_i+\delta'_i}},
\end{align}
whenever $|y_i-y'_i| \leq |x_i-y_i|/2$, where
\begin{align}\label{def:Fxy}
F'_i(x_i, y_i)
:= F'_{i, 1}(|y_i-y'_i|) F'_{i, 2}(|x_i-y_i|) F'_{i, 3} \bigg(1 + \frac{|x_i+y_i|}{1+|x_i-y_i|}\bigg).
\end{align}
\end{list}
\end{lemma}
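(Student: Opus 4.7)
The plan is to handle both parts by explicit construction via monotone envelopes of the given data, combined with a convenient rewriting of the argument of $F_{i,3}$. Part (i) is essentially formal, and the real work is in part (ii).

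\emph{Part (i).} First I would define
\begin{align*}
F'_{i,1}(t) := \sup_{0 \le \sigma \le t} F_{i,1}(\sigma),
\qquad F'_{i,2}(t) := \sup_{\sigma \ge t} F_{i,2}(\sigma),
\end{align*}
which are manifestly monotone increasing (resp. decreasing), bounded, and dominate $F_{i,1}$ (resp. $F_{i,2}$) pointwise. The defining limits $F'_{i,1}(0+)=0$ and $F'_{i,2}(\infty)=0$ follow routinely from the corresponding limits for $F_{i,1}$ and $F_{i,2}$. For the third factor I would use the algebraic identity
\begin{align*}
|x_i+y_i| = (\tau-1)(1+|x_i-y_i|) \ge \tau - 1,
\qquad \tau := 1 + \frac{|x_i+y_i|}{1+|x_i-y_i|},
\end{align*}
and set $F'_{i,3}(\tau) := \sup_{\sigma \ge \max(\tau-1,\,0)} F_{i,3}(\sigma)$; this is bounded, monotone decreasing, tends to $0$ as $\tau\to\infty$ since $F_{i,3}(\sigma)\to 0$, and satisfies $F_{i,3}(|x_i+y_i|) \le F'_{i,3}(\tau)$ by $|x_i+y_i|\ge \tau-1$.

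\emph{Part (ii).} I would first apply (i) to reduce, without loss of generality, to the case where the given $F_{i,1},F_{i,2},F_{i,3}$ already have the monotonicity properties required in the conclusion. Pick any $\delta'_i \in (0,\delta_i)$ (say $\delta'_i := \delta_i/2$) and set $\beta := \delta_i - \delta'_i > 0$. Cancelling common factors, the estimate \eqref{FF-imp-2} reduces to
\begin{align*}
F_{i,1}(u) F_{i,2}(u) F_{i,3}(v) \Big(\frac{s}{u}\Big)^{\beta}
\le F'_{i,1}(s) F'_{i,2}(u) F'_{i,3}(\tau),
\qquad s \le u/2,
\end{align*}
where $s := |y_i-y'_i|$, $u := |x_i-y_i|$, $v := |x_i+y_i|$. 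The main idea is to exploit the hypothesis that $F_{i,1}$ is monotone increasing by splitting $F_{i,1}(u) = F_{i,1}(u)^{1/2} \cdot F_{i,1}(u)^{1/2}$, absorbing one square root into $F'_{i,1}(s)$ (together with the decay factor $(s/u)^\beta$) and the other into $F'_{i,2}(u)$ (together with $F_{i,2}(u)$). Concretely, I would set
\begin{align*}
F'_{i,1}(s) &:= \sup_{0 < r \le s}\ \sup_{u' \ge 2r} F_{i,1}(u')^{1/2} \Big(\frac{r}{u'}\Big)^{\beta},
\\
F'_{i,2}(u) &:= \sup_{t \ge u} F_{i,1}(t)^{1/2}\, F_{i,2}(t),
\end{align*}
and take $F'_{i,3}$ exactly as in part (i). When $s \le u/2$, the pair $(r,u')=(s,u)$ is admissible in the supremum defining $F'_{i,1}(s)$, hence
\begin{align*}
F'_{i,1}(s) F'_{i,2}(u) F'_{i,3}(\tau)
\ge F_{i,1}(u)^{1/2} \Big(\frac{s}{u}\Big)^{\beta} \cdot F_{i,1}(u)^{1/2} F_{i,2}(u) \cdot F_{i,3}(v),
\end{align*}
which is the required inequality. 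Monotonicity of $F'_{i,1}$ and $F'_{i,2}$ is built into the construction; boundedness is immediate; and $F'_{i,2}(u)\to 0$ as $u\to\infty$ follows from $F_{i,2}(t)\to 0$ combined with the uniform boundedness of $F_{i,1}$.

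\emph{Main obstacle.} The delicate point is verifying that $F'_{i,1}(s) \to 0$ as $s \to 0$. For each fixed $u'$ the quantity $F_{i,1}(u')^{1/2}(r/u')^\beta$ vanishes as $r\to 0$, but the supremum is taken over $u'\ge 2r$, which expands. I plan to split the range of $u'$ into a near regime $u' \in [2r,\delta]$, where $F_{i,1}(u')^{1/2} \le \varepsilon^{1/2}$ is small because $F_{i,1}(t)\to 0$ as $t\to 0$, and a far regime $u' > \delta$, where $F_{i,1}(u')^{1/2}$ is merely bounded but the factor $(r/u')^\beta \le (r/\delta)^\beta$ is forced to be small by taking $r$ small relative to the fixed threshold $\delta$. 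A standard $\varepsilon$-$\delta$ argument combining the two regimes yields $\sup_{u' \ge 2r} F_{i,1}(u')^{1/2}(r/u')^{\beta} \to 0$ as $r\to 0$, and therefore $F'_{i,1}(s) \to 0$ as $s \to 0$, completing the construction.
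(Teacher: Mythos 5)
Your proof is correct, and it follows the same general strategy as the paper (regularizing by taking monotone supremum envelopes), but the implementation is genuinely different in part (ii). The paper forms the three‑variable function $F_i(x_i,y_i,y_i'):=F_i(x_i,y_i)\,(|y_i-y_i'|/|x_i-y_i|)^{\varepsilon_i}$, proves the three vanishing limits for it (the limit as $|y_i-y_i'|\to 0$ via a contradiction/subsequence argument split according to whether $|y_k-y_k'|/|x_k-y_k|$ tends to $0$), and then defines each $F'_{i,j}$ as a supremum of the cube root of this single product over the relevant coordinate. You instead build the envelopes factor by factor, splitting $F_{i,1}(u)=F_{i,1}(u)^{1/2}\cdot F_{i,1}(u)^{1/2}$ and distributing one half (with the decay factor $(s/u)^{\beta}$) into $F'_{i,1}$ and the other half (with $F_{i,2}$) into $F'_{i,2}$, and you verify $F'_{i,1}(s)\to 0$ by a direct two‑regime $\varepsilon$--$\delta$ argument; this is the same dichotomy ($u$ small versus $s/u$ small) as the paper's contradiction argument, made quantitative. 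Your construction is more explicit and, notably, never actually invokes the monotonicity hypothesis on $F_{i,1}$ (only $\lim_{t\to 0}F_{i,1}(t)=0$), which is harmless. Two cosmetic remarks: the ``WLOG by part (i)'' reduction is not quite literal, since after applying (i) the third factor is evaluated at $1+|x_i+y_i|/(1+|x_i-y_i|)$ rather than $|x_i+y_i|$, so the resulting bound is no longer of the form assumed in (ii) --- but your argument does not actually use this reduction, as you build $F'_{i,3}$ directly from the original $F_{i,3}$; and in part (i) you should take the supremum over $0<\sigma\le t$ rather than $0\le\sigma\le t$ in $F'_{i,1}$, since membership in $\F$ only controls the limit as $t\to 0$ and not the value at $0$.
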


\begin{proof}
By the property of $(F_{i, 1}, F_{i, 2}, F_{i, 3}) \in \F$, we have
\begin{align}\label{imp-1}
\lim_{|x_i-y_i| \to 0} F_i(x_i, y_i)
= \lim_{|x_i-y_i| \to \infty} F_i(x_i, y_i)
= \lim_{|x_i+y_i| \to \infty} F_i(x_i, y_i)
=0.
\end{align}
Note that $|x_i+y_i| \ge \frac{|x_i+y_i|}{1+|x_i-y_i|}$. This along with \eqref{imp-1} gives
\begin{align}\label{imp-2}
\lim_{|x_i-y_i| \to 0} F_i(x_i, y_i)
= \lim_{|x_i-y_i| \to \infty} F_i(x_i, y_i)
= \lim_{1+\frac{|x_i+y_i|}{1+|x_i-y_i|} \to \infty} F_i(x_i, y_i)
=0.
\end{align}
Define
\begin{align*}
F'_{i, 1}(t) &:= \sup_{|x_i-y_i| \le t} F_i(x_i, y_i)^{\frac13}, 
\\
F'_{i, 2}(t) &:= \sup_{|x_i-y_i| \ge t} F_i(x_i, y_i)^{\frac13},
\\
F'_{i, 3}(t) &:= \sup_{1+\frac{|x_i+y_i|}{1+|x_i-y_i|} \ge t} F_i(x_i, y_i)^{\frac13}.
\end{align*}
Then it is easy to see that $F'_{i, 1}$ is monotone increasing while $F'_{i, 2}$ and $F'_{i, 3}$ are monotone decreasing. Moreover, it follows from \eqref{imp-2} that $(F'_{i, 1}, F'_{i, 2}, F'_{i, 3}) \in \F$,
\begin{align*}
F'_{i, j}(|x_i-y_i|) \ge F_i(x_i, y_i)^{\frac13}, \ j=1,2, \text{ and }
F'_{i, 3} \bigg(1+\frac{|x_i+y_i|}{1+|x_i-y_i|} \bigg) \ge F_i(x_i, y_i)^{\frac13},
\end{align*}
which immediately imply \eqref{FF-imp-1}.

To show part \eqref{Holder-imp}, we choose $\delta'_i \in (0, \delta_i)$ and write $\varepsilon_i := \delta_i - \delta'_i$ and
\begin{align}\label{def:Fxyy}
F_i(x_i, y_i, y'_i)
:= F_i(x_i, y_i) \frac{|y_i-y'_i|^{\varepsilon_i}}{|x_i-y_i|^{\varepsilon_i}}
\quad\text{ for all } \, |y_i-y'_i| \leq |x_i-y_i|/2.
\end{align}
We claim that
\begin{align}\label{imp-3}
\lim_{|y_i-y'_i| \to 0} F_i(x_i, y_i, y'_i)
= \lim_{|x_i-y_i| \to \infty} F_i(x_i, y_i, y'_i)
= \lim_{1+\frac{|x_i+y_i|}{1+|x_i-y_i|} \to \infty} F_i(x_i, y_i, y'_i)
=0.
\end{align}
Indeed, the last two limits follow from \eqref{imp-2}. To see the first one, we assume that there exists a sequence $\{(x_k, y_k, y'_k)\}_{k \in \N}$ such that $|y_k-y'_k|<\frac12 |x_k-y_k|$ with $\lim_{k \to \infty} |y_k-y'_k|=0$, but
\begin{align}\label{infF}
\inf_k F_i(x_k, y_k, y'_k)>0.
\end{align}
If $\liminf_{k \to \infty} \frac{|y_k-y'_k|}{|x_k-y_k|}=0$, then by definition, $\liminf_{k \to \infty} F_i(x_k, y_k, y'_k)=0$, which contradicts \eqref{infF}. If $\liminf_{k \to \infty} \frac{|y_k-y'_k|}{|x_k-y_k|} > 0$, then there exist a constant $C_0>0$ and a subsequence (which we relabel) $\{(x_k, y_k, y'_k)\}_{k \in \N}$ such that $|y_k-y'_k| \ge C_0 |x_k-y_k|$. Since $F_{i, 1}$ is monotone increasing and $\lim_{k \to \infty} |y_k-y'_k|=0$, one has
\begin{align*}
0 \le F_i(x_k, y_k, y'_k)
\le F_i(x_k, y_k)
\lesssim F_{i, 1}(|x_k-y_k|) \le F_{i, 1}(C_0^{-1} |y_k-y'_k|) \to 0,
\end{align*}
as $k \to \infty$. Thus, $\lim_{k \to \infty} F_i(x_k, y_k, y'_k) =0$, which contradicts \eqref{infF}. Therefore, the first limit in \eqref{imp-3} holds.

Now we define
\begin{align*}
F'_{i, 1}(t) &:= \sup_{|y_i-y'_i| \le t} F_i(x_i, y_i, y'_i)^{\frac13},
\\ 
F'_{i, 2}(t) &:= \sup_{|x_i-y_i| \ge t} F_i(x_i, y_i, y'_i)^{\frac13},
\\
F'_{i, 3}(t) &:= \sup_{1+\frac{|x_i+y_i|}{1+|x_i-y_i|} \ge t} F_i(x_i, y_i, y'_i)^{\frac13}.
\end{align*}
Then one can check that $F'_{i, 1}$ is monotone increasing while $F'_{i, 2}$ and $F'_{i, 3}$ are monotone decreasing. By \eqref{imp-3}, we see that $(F'_{i, 1}, F'_{i, 2}, F'_{i, 3}) \in \F$,
\begin{align*}
F'_{i, 1}(|y_i-y'_i|) \ge F_i(x_i, y_i, y'_i)^{\frac13}, \qquad 
F'_{i, 2}(|x_i-y_i|) \ge F_i(x_i, y_i, y'_i)^{\frac13}, 
\end{align*}
and 
\begin{align*}
F'_{i, 3} \bigg(1+\frac{|x_i+y_i|}{1+|x_i-y_i|} \bigg) \ge F_i(x_i, y_i, y'_i)^{\frac13}.
\end{align*}
As a consequence, this, \eqref{def:Fxy}, and \eqref{def:Fxyy} imply that
\begin{align*}
F_i(x_i, y_i) \frac{|y_i-y'_i|^{\delta_i}}{|x_i-y_i|^{n_i+\delta_i}}
= F_i(x_i, y_i, y'_i) \frac{|y_i-y'_i|^{\delta'_i}}{|x_i-y_i|^{n_i+\delta'_i}}
\le F'_i(x_i, y_i) \frac{|y_i-y'_i|^{\delta'_i}}{|x_i-y_i|^{n_i+\delta'_i}},
\end{align*}
which shows \eqref{FF-imp-2} and hence completes the proof of the lemma.
\end{proof}

\begin{lemma}\label{lem:diag}
Fix $i=1, 2$. Let $r>1$ and $1<s<1+\frac{1}{n_i}$. Assume that a bounded and decreasing function $F_{i, 2}: [0, \infty) \to [0, \infty)$ satisfies  $\lim_{t \to \infty} F_{i, 2}(t)=0$. Then there exists a bounded and decreasing function $\widetilde{F}_{i, 2}:[0, \infty) \to [0, \infty)$ satisfying $\lim_{t \to \infty} \widetilde{F}_{i, 2}(t)=0$ such that for all cubes $I_i, J_i \in \R^{n_i}$ with $\ell(I_i) = \ell(J_i)$, $I_i \cap J_i = \emptyset$, and $\d(I_i, J_i)=0$,
\begin{align}\label{eq:diag-1}
\mathscr{I}_1
&:= \bigg(\fint_{I_i} \fint_{J_i} F_{i, 2}(|x_i-y_i|)^r \, dx_i \, dy_i \bigg)^{\frac1r}
\lesssim \widetilde{F}_{i, 2}(\ell(I_i)),
\end{align}
and
\begin{align}\label{eq:diag-2}
\mathscr{I}_2
&:= \bigg(\fint_{I_i} \fint_{J_i} \frac{1}{|x_i-y_i|^{s n_i}} \, dx_i \, dy_i  \bigg)^{\frac1s}
\lesssim |I_i|^{-1}.
\end{align}
\end{lemma}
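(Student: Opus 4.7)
The plan is to treat \eqref{eq:diag-2} and \eqref{eq:diag-1} separately. The second is a pure $\ell^\infty$-geometric integrability estimate whose only input is the hypothesis $s<1+1/n_i$, while the first is a two-scale splitting of the domain followed by the construction of $\widetilde F_{i,2}$ as a decreasing envelope.

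For \eqref{eq:diag-2}, I would first rescale by $x_i=\ell\tilde x$ and $y_i=\ell\tilde y$ with $\ell:=\ell(I_i)$: since $\fint$ is scale invariant and $|x_i-y_i|^{-sn_i}=\ell^{-sn_i}|\tilde x-\tilde y|^{-sn_i}$, the bound reduces to $\int_{\widetilde I}\int_{\widetilde J}|\tilde x-\tilde y|^{-sn_i}\,d\tilde x\,d\tilde y\lesssim_{n_i,s}1$ for unit cubes sharing (at least) a face. Taking the worst case $\widetilde I=[-1,0]\times[0,1]^{n_i-1}$, $\widetilde J=[0,1]^{n_i}$ and writing $\tilde x_1=-t_1$, $\tilde y_1=t_2$, $u=\tilde y'-\tilde x'$, gives $|\tilde x-\tilde y|_\infty=\max(t_1+t_2,|u|_\infty)$. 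Splitting the $u$-integral along the $\ell^\infty$ shells $|u|_\infty\lessgtr t_1+t_2$ and using $s>1$ in both pieces shows
\begin{align*}
\int_{[-1,1]^{n_i-1}}\max(t_1+t_2,|u|_\infty)^{-sn_i}\,du\lesssim (t_1+t_2)^{(1-s)n_i-1}.
\end{align*}
Integrating in $(t_1,t_2)\in[0,1]^2$ then reduces to $\int_0^2 t^{(1-s)n_i}\,dt$, which converges precisely when $s<1+1/n_i$. Undoing the rescaling gives $\mathscr I_2\lesssim\ell^{-n_i}=|I_i|^{-1}$.

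For \eqref{eq:diag-1}, given $\alpha\in(0,1/2)$ I would split
\begin{align*}
\fint_{I_i}\!\fint_{J_i}\!F_{i,2}(|x_i-y_i|)^r\,dx_i\, dy_i\le F_{i,2}(\alpha\ell)^r+\|F_{i,2}\|_\infty^r\,P(\alpha),
\end{align*}
where the first piece uses monotonicity of $F_{i,2}$ on $\{|x_i-y_i|\ge\alpha\ell\}$ and $P(\alpha):=\fint_{I_i}\!\fint_{J_i}\mathbf 1_{\{|x_i-y_i|<\alpha\ell\}}$. A direct product measure count in the adjacent-cubes geometry---the ``normal'' direction across the shared face contributes $\alpha^2$, each of the other $n_i-1$ tangential directions contributes $\alpha$---gives $P(\alpha)\lesssim\alpha^{n_i+1}$. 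Subadditivity of $t\mapsto t^{1/r}$ then yields $\mathscr I_1\le F_{i,2}(\alpha\ell)+C\|F_{i,2}\|_\infty\alpha^{(n_i+1)/r}$. Choosing $\alpha=(1\vee\ell)^{-1/2}$ makes the right-hand side vanish as $\ell\to\infty$, and I would finally set $\widetilde F_{i,2}(t):=\sup_{\ell\ge t}\Phi(\ell)$, where $\Phi(\ell)$ denotes this upper bound; by construction $\widetilde F_{i,2}$ is bounded, monotone decreasing, and vanishes at infinity, giving \eqref{eq:diag-1}.

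The main obstacle is the probability bound $P(\alpha)\lesssim\alpha^{n_i+1}$: the extra power of $\alpha$ beyond the naive $\alpha^{n_i}$ comes precisely from the fact that $I_i\cap J_i=\emptyset$ forces $x_i$ and $y_i$ to lie on opposite sides of the shared face, producing a triangle of area $\sim\alpha^2\ell^2$ in the normal direction rather than a strip of area $\sim\alpha\ell^2$. Reassuringly, this exponent $n_i+1$ is exactly what matches the convergence threshold $s<1+1/n_i$ in Step 1, and configurations where $I_i$ and $J_i$ share only a lower-dimensional face produce strictly smaller $P(\alpha)$ and strictly better integrability in $\mathscr I_2$, so the shared-face configuration handled above is the extremal one.
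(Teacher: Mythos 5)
Your proposal is correct, but it takes a genuinely different route from the paper's proof. For \eqref{eq:diag-1}, the paper integrates in $x_i$ over the level sets $J_i(t)=\{x_i\in J_i:\d(x_i,I_i)=t\}$, uses $\sigma_i(J_i(t))\le\ell(J_i)^{n_i-1}$ and the monotone bound $F_{i,2}(|x_i-y_i|)\le F_{i,2}(\d(x_i,I_i))$ to reduce to the one-dimensional integral $\ell(J_i)^{-1}\int_0^{\ell(J_i)}F_{i,2}(t)^r\,dt$, and then decomposes dyadically to read off the \emph{explicit} majorant $\widetilde F_{i,2}(t)=\sum_{k\ge0}2^{-k/r}F_{i,2}(2^{-k}t)$; you instead split at the threshold $|x_i-y_i|\gtrless\alpha\ell$, combine the monotonicity bound on the far piece with the measure count $P(\alpha)\lesssim\alpha^{n_i+1}$ on the near piece, optimize $\alpha$, and take a decreasing envelope. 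Both are valid; the paper's version has the advantage that the explicit series form of $\widetilde F_{i,2}$ is reused verbatim in Section \ref{sec:aux} (where $\widetilde F_{i,2}(t):=\sum_k2^{-k\theta_i}F_{i,2}(2^{-k}t)$ appears in the definitions of $\widetilde F_i(I_i,J_i)$), whereas your envelope only certifies the existence statement of the lemma --- which is all the lemma asks for. For \eqref{eq:diag-2} the two arguments are morally the same (the threshold $s<1+\tfrac1{n_i}$ is exactly the face-adjacency exponent); the paper parametrizes by the spheres $I_i(x_i,t)$ while you rescale and compute directly. Two small points to tidy: your choice $\alpha=(1\vee\ell)^{-1/2}$ equals $1$ for $\ell\le1$, which leaves the stipulated range $\alpha\in(0,\tfrac12)$ --- harmless, since for such $\ell$ one just bounds $\Phi(\ell)\lesssim\|F_{i,2}\|_\infty$, but worth saying; and the reduction to the shared-face configuration should be justified (in the $\ell^\infty$ metric, cubes meeting along a $k$-dimensional face give $P(\alpha)\lesssim\alpha^{2n_i-k}$ and convergence of $\mathscr I_2$ for $s<2-k/n_i$, both of which are worst at $k=n_i-1$), which is exactly the heuristic you state and is correct.
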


\begin{proof}
Given $t>0$ and $x_i \in J_i$, we let
\begin{align*}
J_i(t) := \{x_i \in J_i: \d(x_i, I_i) = t\}
\quad\text{ and }\quad
I_i(x_i, t) := \{y_i \in I_i: |y_i-x_i|=t \}.
\end{align*}
Let $\sigma_i$ be the Lebesgue measure in $\R^{n_i-1}$. Observe that
\begin{align}
\label{surface-1} & \sigma_i(J_i(t)) \le \ell(J_i)^{n_i-1} \quad\text{if}\quad 0<t < \ell(J_i),
\\
\label{surface-2} &\sigma_i(I_i(x_i, t)) \lesssim t^{n_i-1} \quad\text{if}\quad \d(x_i, I_i) < t < \d(x_i, I_i) + \ell(I_i).
\end{align}
Since $F_{i, 2}$ is monotone decreasing, we use \eqref{surface-1} to obtain
\begin{align*}
\mathscr{I}_1^r
&\le \fint_{J_i} F_{i, 2}(\d(x_i, I_i))^r \, dx_i
=\frac{1}{|J_i|} \int_0^{\ell(J_i)} \int_{J_i(t)} F_{i, 2}(t)^r \, d\sigma_i \, dt
\\
&\lesssim \frac{\ell(J_i)^{n_i-1}}{|J_i|} \int_0^{\ell(J_i)} F_{i, 2}(t)^r  \, dt
=\sum_{k=0}^{\infty} \ell(J_i)^{-1} \int_{2^{-k-1} \ell(J_i)}^{2^{-k} \ell(J_i)} F_{i, 2}(t)^r \, dt
\\
&\lesssim \sum_{k=0}^{\infty} 2^{-k} F_{i, 2}(2^{-k} \ell(J_i))^r
\le \bigg(\sum_{k=0}^{\infty} 2^{-k/r} F_{i, 2}(2^{-k} \ell(J_i)) \bigg)^r.
\end{align*}
Choosing
\begin{align*}
\widetilde{F}_{i, 2}(t)
:= \sum_{k=0}^{\infty} 2^{-k/r} F_{i, 2}(2^{-k}t),
\end{align*}
we see that $\widetilde{F}_{i, 2}$ is bounded and decreasing, and satisfies $\lim_{t \to \infty} \widetilde{F}_{i, 2}(t)=0$. Thus, \eqref{eq:diag-1} holds. Similarly, it follows from \eqref{surface-2} that
\begin{align*}
\mathscr{I}_2^s &= \frac{1}{|I_i|}
\fint_{J_i} \bigg(\int_{\d(x_i, I_i)}^{\d(x_i, I_i) + \ell(I_i)} \int_{I_i(x_i, t)} t^{-sn_i} \, d\sigma_i \, dt \bigg) dx_i
\\
& \lesssim \frac{1}{|I_i|} \fint_{J_i} \bigg(\int_{\d(x_i, I_i)}^{\infty} t^{-n_i(s-1)} \frac{dt}{t} \bigg) dx_i
\\
&\lesssim \frac{1}{|I_i|} \fint_{J_i} \d(x_i, I_i)^{-n_i(s-1)} \, dx_i
\\
&= \frac{1}{|I_i| |J_i|} \int_0^{\ell(J_i)} \int_{J_i(t)} t^{-n_i(s-1)} d\sigma_i \, dt
\\
&\lesssim |I_i|^{-1} \ell(J_i)^{-1} \int_0^{\ell(J_i)} t^{-n_i(s-1)} dt
\\
&\lesssim |I_i|^{-1} \ell(J_i)^{-n_i(s-1)}
=|I_i|^{-1} |J_i|^{1-s}
=|I_i|^{-s}.
\end{align*}
This shows \eqref{eq:diag-2}.
\end{proof}

\section{Reductions of the proof}\label{sec:reduction}
Let $T$ be a bi-parameter singular integral operator satisfying the hypotheses \eqref{list-1}--\eqref{list-5}. Obviously, $T$ satisfies the weak boundedness property, the diagonal $\BMO$ condition, and the product $\BMO$ condition. Thus, \cite[Theorem 2.4]{Mar} holds, which together with \cite[Proposition 7.6]{HPW} implies that
\begin{align}\label{T-Lp}
\text{$T$ is bounded on $L^r(v)$ for all $r \in (1, \infty)$ and $v \in A_r(\R^{n_1} \times \R^{n_2})$}.
\end{align}
We are going to prove that
\begin{align}\label{reduction-1}
\text{$T$ is compact on $L^2(\R^{n_1} \times \R^{n_2})$.}
\end{align}
Assuming \eqref{reduction-1} momentarily, we can conclude the proof of Theorem \ref{thm:compact} as follows. Let $p \in (1, \infty)$, $w \in A_p(\R^{n_1} \times \R^{n_2})$, $p_1=2$, and $w_1 \equiv 1$. We claim that there exist $\theta \in (0, 1)$, $p_0=p_0(\theta) \in (1, \infty)$, and $w_0=w_0(\theta) \in A_{p_0}(\R^{n_1} \times \R^{n_2})$ such that
\begin{align}\label{wpp}
\frac1p=\frac{1-\theta}{p_0} + \frac{\theta}{p_1}
\quad\text{ and }\quad
w^{\frac1p} = w_0^{\frac{1-\theta}{p_0}} w_1^{\frac{\theta}{p_1}}.
\end{align}
Indeed, \eqref{wpp} can be shown as in the proof of \cite[Lemma 4.4]{HL} (or \cite[Lemma 4.1]{COY} for the multilinear generalization). Although the latter is given for cubes, in view of the reverse H\"{o}lder  inequality for $A_p(\R^{n_1} \times \R^{n_2})$ weights from \cite[p. 458]{GR}, the argument still holds for rectangles. The details are left to the reader.

Observe that \eqref{T-Lp} applied to $w_0 \in A_{p_0}(\R^{n_1} \times \R^{n_2})$ gives that
\begin{align}\label{TL-1}
\text{$T$ is bounded on $L^{p_0}(w_0)$}.
\end{align}
Moreover, \eqref{reduction-1} can be rewritten as
\begin{align}\label{TL-2}
\text{$T$ is compact on $L^{p_1}(w_1)$.}
\end{align}
Consequently,  it follows from Theorem \ref{thm:inter} and \eqref{wpp}--\eqref{TL-2} that $T$ is compact on $L^p(w)$. This shows Theorem \ref{thm:compact}.

It remains to show \eqref{reduction-1}. For convenience, set $\D_i := \D_{n_i}$, $i=1, 2$. Given $N \in \N_+$, we define the \emph{projection operator} as
\begin{align*}
P_N f
:= \sum_{\substack{J_1 \in \D_1(N) \\ J_2 \in \D_2(N)}}
\langle f, h_{J_1} \otimes h_{J_2} \rangle \, h_{J_1} \otimes h_{J_2},
\end{align*}
and its \emph{orthogonal operator} as
\begin{align*}
P_N^{\perp} f
:= \sum_{\substack{J_1 \not\in \D_1(N) \\ \text{or } J_2 \not\in \D_2(N)}}
\langle f, h_{J_1} \otimes h_{J_2} \rangle \, h_{J_1} \otimes h_{J_2}.
\end{align*}
Note that $P_N$ is a finite-dimensional operator, which along with \cite[Proposition 15.1]{FHHMZ} implies that $P_N$ is a compact operator. In view of the fact that $P_N^{\perp} \circ T = T - P_N \circ T$, the estimate \eqref{reduction-1} is reduced to showing that
\begin{align}\label{reduction-2}
\lim_{N \to \infty} \|P_N^{\perp} \circ T\|_{L^2 \to L^2} =0.
\end{align}

Let $f, g \in L^2(\R^{n_1} \times \R^{n_2})$, By definition and \eqref{expand}, we rewrite
\begin{align*}
\langle P_{2N}^{\perp} (Tf), g \rangle
=\sum_{\substack{I_1 \in \D_1 \\ I_2 \in \D_2}} 
\sum_{\substack{J_1 \not\in \D_1(2N) \\ \text{or } J_2 \not\in \D_2(2N)}}
\mathscr{G}_{J_1 J_2}^{I_1 I_2} \, f_{I_1 I_2} \, g_{J_1 J_2},
\end{align*}
where
\begin{align*}
\mathscr{G}_{J_1 J_2}^{I_1 I_2} := \langle T (h_{I_1} \otimes h_{I_2}), h_{J_1} \otimes h_{J_2} \rangle, \
f_{I_1 I_2} := \langle f, h_{I_1} \otimes h_{I_2} \rangle, \text{\ and\ }
g_{J_1 J_2} := \langle g, h_{J_1} \otimes h_{J_2}  \rangle.
\end{align*}
In what follows, we only focus on the sum where $\ell(I_1) \le \ell(J_1)$ and $\ell(I_2) \le \ell(J_2)$ because other three cases can be shown with minor modifications. For $i=1, 2$, according to relative distance between $I_i$ and $J_i$, we split $\ell(I_i) \le \ell(J_i)$ into the following three cases:
\begin{itemize}

\item $\ell(I_i) \le \ell(J_i)$ and $\rd(I_i, J_i) \ge 1$;

\item $\ell(I_i) \le \ell(J_i)$, $\rd(I_i, J_i) < 1$, and $I_i \cap J_i = \emptyset$;

\item $I_i \subset J_i$.

\end{itemize}
Considering symmetry,  to obtain \eqref{reduction-2}, it suffices to demonstrate that given $\varepsilon>0$, there exists an $N_0=N_0(\varepsilon) >1$ such that
\begin{align}\label{reduction-3}
\max_{1 \le j \le 6} |\mathscr{I}_j^N|
\lesssim \varepsilon \|f\|_{L^2} \|g\|_{L^2} \, \,
\quad\text{ for all } N \ge N_0,
\end{align}
where the implicit constant is independent of $\varepsilon$, $N$, $f$, and $g$, 
\begin{align*}
\mathscr{I}_1^N
&:= \sum_{\substack{J_1 \not\in \D_1(2N) \\ \text{or } J_2 \not\in \D_2(2N)}}
\sum_{\substack{I_1 \in \D_1 \\ \ell(I_1) \le \ell(J_1) \\ \rd(I_1, J_1) \ge 1}}
\sum_{\substack{I_2 \in \D_2 \\ \ell(I_2) \le \ell(J_2) \\ \rd(I_2, J_2) \ge 1}}
\G_{J_1 J_2}^{I_1 I_2} \, f_{I_1 I_2} \, g_{J_1 J_2},
\\
\mathscr{I}_2^N
&:= \sum_{\substack{J_1 \not\in \D_1(2N) \\ \text{or } J_2 \not\in \D_2(2N)}}
\sum_{\substack{I_1 \in \D_1 \\ \ell(I_1) \le \ell(J_1) \\ \rd(I_1, J_1) \ge 1}}
\sum_{\substack{I_2 \in \D_2 \\ \ell(I_2) \le \ell(J_2) \\ \rd(I_2, J_2) < 1  \\ I_2 \cap J_2 = \tinyemptyset}}
\G_{J_1 J_2}^{I_1 I_2} \, f_{I_1 I_2} \, g_{J_1 J_2},
\\
\mathscr{I}_3^N
&:= \sum_{\substack{J_1 \not\in \D_1(2N) \\ \text{or } J_2 \not\in \D_2(2N)}}
\sum_{\substack{I_1 \in \D_1 \\ \ell(I_1) \le \ell(J_1) \\ \rd(I_1, J_1) \ge 1}}
\sum_{\substack{I_2 \in \D_2 \\ I_2 \subset J_2}}
\G_{J_1 J_2}^{I_1 I_2} \, f_{I_1 I_2} \, g_{J_1 J_2},
\\ 
\mathscr{I}_4^N
&:= \sum_{\substack{J_1 \not\in \D_1(2N) \\ \text{or } J_2 \not\in \D_2(2N)}}
\sum_{\substack{I_1 \in \D_1 \\ \ell(I_1) \le \ell(J_1) \\ \rd(I_1, J_1) < 1  \\ I_1 \cap J_1 = \tinyemptyset}}
\sum_{\substack{I_2 \in \D_2 \\ \ell(I_2) \le \ell(J_2) \\ \rd(I_2, J_2) < 1  \\ I_2 \cap J_2 = \tinyemptyset}}
\G_{J_1 J_2}^{I_1 I_2} \, f_{I_1 I_2} \, g_{J_1 J_2},
\\
\mathscr{I}_5^N
&:= \sum_{\substack{J_1 \not\in \D_1(2N) \\ \text{or } J_2 \not\in \D_2(2N)}}
\sum_{\substack{I_1 \in \D_1 \\ \ell(I_1) \le \ell(J_1) \\ \rd(I_1, J_1) < 1  \\ I_1 \cap J_1 = \tinyemptyset}}
\sum_{\substack{I_2 \in \D_2 \\ I_2 \subset J_2}}
\G_{J_1 J_2}^{I_1 I_2} \, f_{I_1 I_2} \, g_{J_1 J_2},
\end{align*} 
and
\begin{align*}
\mathscr{I}_6^N
:= \sum_{\substack{J_1 \not\in \D_1(2N) \\ \text{or } J_2 \not\in \D_2(2N)}}
\sum_{\substack{I_1 \in \D_1 \\ I_1 \subset J_1}}
\sum_{\substack{I_2 \in \D_2 \\ I_2 \subset J_2}}
\G_{J_1 J_2}^{I_1 I_2} \, f_{I_1 I_2} \, g_{J_1 J_2}.
\end{align*}

\section{Auxiliary results}\label{sec:aux}
The goal of this section is to present some auxiliary results in order to demonstrate \eqref{reduction-3}. 
In what follows, we always assume that $I_i, J_i \in \D_i := \D_{n_i}$ with $\ell(I_i) \le \ell(J_i)$, $i=1, 2$. For each $i=1, 2$, let $F_i \in \F_i$ and $(F_{i, 1}, F_{i, 2}, F_{i, 3}) \in \F$ satisfy that $F_{i, 1}$ is increasing,  $F_{i, 2}$ and $F_{i, 3}$ are decreasing. Define 
\begin{align}
F_i(I_i, J_i) 
&:= F_{i, 1}(\ell(I_i)) F_{i, 2}(\ell(I_i \Cup J_i)) F_{i, 3}(\rd(I_i \Cup J_i, \I_i)), 
\\
\widetilde{F}_i(I_i, J_i)
&:= F_{i, 1}(\ell(I_i)) \widetilde{F}_{i, 2}(\ell(I_i)) \widetilde{F}_{i, 3}(I_i \Cup J_i), 
\\
\widetilde{F}_i(I_i)
&:= F_{i, 1}(\ell(I_i)) \widetilde{F}_{i, 2}(\ell(I_i)) \widetilde{F}_{i, 3}(I_i), 
\\
\widehat{F}_i(I_i) 
&:= \widetilde{F}_i(I_i) + F_i(I_i) + \sum_{I'_i \in \ch(I_i)} F_i(I'_i),  
\end{align}
where 
\begin{align*}
\widetilde{F}_{i, 2}(t)
:= \sum_{k=0}^{\infty} 2^{-k \theta_i} F_{i, 2}(2^{-k} t)
\quad\text{ and }\quad 
\widetilde{F}_{i, 3}(I_i)
:= \sum_{k=0}^{\infty} 2^{-k \delta_i} F_{i, 3}(\rd(2^k I_i, \I_i)).
\end{align*}
The auxiliary parameters $\theta_1, \theta_2 \in (0, 1)$ are harmless and small enough. 

Since any dilation of functions in $\F$, $\F_1$, and $\F_2$ still belongs to the original space, we will often omit all universal constants appearing in the argument involving these functions.

\subsection{Limiting results} 
We first give some estimates for quantities above.  

\begin{lemma}\label{lem:FE}
For $i=1, 2$, define $F_i(I_i, J_i)$, $\widetilde{F}_i(I_i, J_i)$, $\widetilde{F}_i(I_i)$, and $\widehat{F}_i(I_i)$ as above. Then the following statements hold: 
\begin{list}{\rm (\theenumi)}{\usecounter{enumi}\leftmargin=1.2cm \labelwidth=1cm \itemsep=0.2cm \topsep=.2cm \renewcommand{\theenumi}{\alph{enumi}}}

\item\label{list:FE-1} For any $\varepsilon>0$, there exists an $N_0 \ge 1$ so that for all $N \ge N_0$, $I_i \in \D_i$, and $J_i \not\in \D_i(2N)$, there holds $F_i(I_i, J_i) < \varepsilon$ or $\rd(I_i, J_i) \ge 2N^{1/8}$.

\item\label{list:FE-2} For any $\varepsilon>0$, there exists an $N_0 \ge 1$ such that for all $N \ge N_0$, $I_i \in \D_i$, and $J_i \not\in \D_i(2N)$ satisfying $\rd(I_i, J_i)<1$, there holds $\widetilde{F}_i(I_i, J_i) < \varepsilon$ or $\rs(I_i, J_i) \le 2^{-N}$. 

\item\label{list:FE-3} If $I_i \in \D_i(N)$ and $J_i \not\in \D_i(2N)$ with $\ell(J_i)=2^{k_i} \ell(I_i)$, then $|k_i| \ge N$ or $\rd(I_i, J_i) \ge 2N$.

\item\label{list:FE-4} One has 
\begin{align*}
\lim_{N \to \infty} \sup_{I_i \not\in \D_i(N)} \widetilde{F}_i(I_i)
= \lim_{N \to \infty} \sup_{I_i \not\in \D_i(N)} \widehat{F}_i(I_i) 
= 0. 
\end{align*}
\end{list}
\end{lemma}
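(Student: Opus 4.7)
The plan for all four parts is to extract quantitative smallness from the defining limits built into the classes $\mathscr{F}$ and $\mathscr{F}_i$, combined with the geometric trichotomy governing $J_i \notin \D_i(2N)$ (or, for part (d), $I_i \notin \D_i(N)$): either $\ell(J_i)$ is very small, or $\ell(J_i)$ is very large, or $\ell(J_i)$ lies in $[2^{-2N}, 2^{2N}]$ but $\rd(J_i, 2^{2N}\I_i) > 2N$. The three factors $F_{i,1}, F_{i,2}, F_{i,3}$ making up $F_i(I_i, J_i)$ are tailored so that exactly one of them becomes small in each of these cases. Throughout, we will repeatedly use that $F_{i,1}$ is monotone increasing with $F_{i,1}(0^+)=0$, while $F_{i,2}, F_{i,3}, \widetilde{F}_{i,2}, \widetilde{F}_{i,3}$ are decreasing and tend to $0$ at infinity.

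For part (a), I would fix $\varepsilon>0$ and run the above three cases. Cases (i)--(ii) give $\ell(I_i) \le \ell(J_i) < 2^{-2N}$ (so $F_{i,1}(\ell(I_i)) < \varepsilon^{1/3}$) and $\ell(I_i \Cup J_i) \ge \ell(J_i) > 2^{2N}$ (so $F_{i,2}(\ell(I_i \Cup J_i)) < \varepsilon^{1/3}$) respectively, with no need to invoke the second alternative in the statement. Case (iii) is the only one where the alternative $\rd(I_i, J_i) \ge 2N^{1/8}$ is used: assuming to the contrary $\rd(I_i, J_i) < 2N^{1/8}$, one has $\d(I_i, J_i) \lesssim N^{1/8}\ell(J_i) \le N^{1/8} 2^{2N}$, hence $\ell(I_i \Cup J_i) \lesssim N^{1/8} 2^{2N}$, while the failure of the distance condition on $J_i$ gives $|c_{J_i}| \gtrsim N \cdot 2^{2N}$; dividing yields $\rd(I_i \Cup J_i, \I_i) \gtrsim N^{7/8}$, so $F_{i,3}$ of that is small. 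The specific exponents $N^{1/8}$ and $N^{7/8}$ are bookkeeping artifacts and may be replaced by any polynomial rates.

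Part (b) runs the same trichotomy on $J_i$ under the extra hypothesis $\rd(I_i, J_i) < 1$ and the complementary alternative $\rs(I_i,J_i) > 2^{-N}$; together these force $\ell(I_i)$ and $\ell(J_i)$ to be comparable up to a factor $2^N$, with $\ell(I_i \Cup J_i) \lesssim \ell(J_i)$. Cases (i) and (ii) then transfer smallness from $F_{i,1}(\ell(I_i))$ and $\widetilde{F}_{i,2}(\ell(I_i))$ just as in part (a). Case (iii) is the heart of the argument: control the series $\widetilde{F}_{i,3}(I_i \Cup J_i) = \sum_{k \ge 0} 2^{-k\delta_i} F_{i,3}(\rd(2^k(I_i \Cup J_i), \I_i))$ by truncating at $K = K(\varepsilon)$. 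Boundedness of $F_{i,3}$ combined with the geometric factor disposes of the tail $k > K$; on the head $k \le K$, the estimate $|c_{J_i}| \gtrsim N \cdot 2^{2N}$ and $2^k \ell(I_i \Cup J_i) \lesssim 2^K \cdot 2^{2N}$ yield $\rd(2^k(I_i \Cup J_i), \I_i) \gtrsim N/2^K$, which is arbitrarily large once $N$ is taken large in terms of $K$. Part (c) is then the most elementary of all: assuming $|k_i| < N$, the hypotheses $\ell(J_i) = 2^{k_i}\ell(I_i)$ and $I_i \in \D_i(N)$ give $\ell(J_i) \in [2^{-2N}, 2^{2N}]$, so $J_i \notin \D_i(2N)$ can only come from $\rd(J_i, 2^{2N}\I_i) > 2N$, which unwinds to $|c_{J_i}| \gtrsim N \cdot 2^{2N}$; since $\rd(I_i, 2^N\I_i)\le N$ gives $|c_{I_i}| \lesssim N \cdot 2^N$, a short computation of $\d(I_i, J_i)/\max\{\ell(I_i), \ell(J_i)\}$ produces $\rd(I_i, J_i) \ge 2N$ after absorbing the implicit constants.

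For part (d), I would again split $I_i \notin \D_i(N)$ into the three parallel cases. For the $\widetilde{F}_i$ piece, the first two cases kill $F_{i,1}(\ell(I_i))$ and $\widetilde{F}_{i,2}(\ell(I_i))$ directly; the third is dispatched by the same head/tail truncation as in part (b), applied to $\widetilde{F}_{i,3}(I_i) = \sum_k 2^{-k\delta_i} F_{i,3}(\rd(2^k I_i, \I_i))$, using $|c_{I_i}| \gtrsim N \cdot 2^N$ and $\ell(I_i) \le 2^N$ to produce $\rd(2^k I_i, \I_i) \gtrsim N/2^K$ on the head. The extra terms $F_i(I_i) + \sum_{I'_i \in \ch(I_i)} F_i(I'_i)$ defining $\widehat{F}_i(I_i)$ then tend to zero uniformly in $I_i \notin \D_i(N)$ straight from $F_i \in \mathscr{F}_i$: in each of the three cases one of $\ell(I_i) \to 0$, $\ell(I_i) \to \infty$, or $|c_{I_i}| \to \infty$ holds, and each property is inherited by every child $I'_i \in \ch(I_i)$ up to bounded factors. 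The main subtlety I expect is the clean execution of the head/tail truncation for $\widetilde{F}_{i,3}$ in parts (b) and (d): the threshold $K=K(\varepsilon)$ must be fixed purely in terms of $\varepsilon$ to dispose of the tail uniformly over all admissible cubes, and only then can $N$ be taken large in terms of $K$; once this order of quantifiers is respected, everything else reduces to a direct unwinding of the defining limits.
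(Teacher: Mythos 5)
Your proposal is correct and follows essentially the same route as the paper: the trichotomy on $J_i \notin \D_i(2N)$ (small side length, large side length, or far from the origin), matching each case to one of the three factors $F_{i,1}, F_{i,2}, F_{i,3}$, with the dyadic head/tail truncation of $\widetilde{F}_{i,3}$ in parts (b) and (d). The only differences are cosmetic — you argue case (iii) of part (a) by contraposition where the paper runs further subcases, you fix the truncation threshold $K(\varepsilon)$ first where the paper takes $k_N = [\log_2\sqrt{N}]$, and you supply the details of part (d) that the paper dismisses as "follows from definition" — and your remark on the order of quantifiers for the truncation is exactly the right point of care.
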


\begin{proof}
We begin with showing part \eqref{list:FE-1}. Let $I_i \in \D_i$ and $J_i \not\in \D_i(2N)$. If $\ell(J_i)<2^{-2N}$, then
\begin{align}\label{FFJ-1}
F_i(I_i, J_i)
\lesssim F_{i1}(\ell(I_i))
\le F_{i, 1}(\ell(J_i))
\le F_{i, 1}(2^{-2N}).
\end{align}
If $\ell(J_i)>2^{2N}$, then
\begin{align}\label{FFJ-2}
F_i(I_i, J_i)
\lesssim F_{i2}(\ell(I_i \Cup J_i))
\le F_{i, 2}(\ell(J_i))
\le F_{i, 2}(2^{2N}).
\end{align}
If $2^{-2N} \le \ell(J_i) \le 2^{2N}$ and $\rd(J_i, 2^{2N} \I_i) > 2N$, then $|c_{J_i}| \ge 2N 2^{2N}$. If $\rd(I_i \Cup J_i, \I_i) > N^{\frac14}$, we have
\begin{align}\label{FFJ-3}
F_i(I_i, J_i)
\lesssim F_{i, 3}(\rd(I_i \Cup J_i, \I_i))
\le F_{i, 3}(N^{\frac14}).
\end{align}
If $\rd(I_i \Cup J_i, \I_i) \le N^{\frac14}$, then $|c_{I_i \Cup J_i}| \le 2N^{\frac14}(1+\ell(I_i \Cup J_i))$. If $\ell(I_i \Cup J_i) > N^{\frac14} 2^{2N+1}$, then
\begin{align}\label{FFJ-4}
\rd(I_i, J_i)
=\ell(J_i)^{-1} \d(I_i, J_i)
\ge \ell(J_i)^{-1} \ell(I_i \Cup J_i)/3
\ge 2N^{\frac14}/3
\ge 2N^{\frac18},
\end{align}
whenever $N \ge 3^8$. If $\ell(I_i \Cup J_i) \le N^{\frac14} 2^{2N+1}$, then
\begin{align*}
\d(I_i, J_i)
&\simeq |c_{I_i} - c_{J_i}|
\ge |c_{J_i}| - |c_{I_i \Cup J_i}| - |c_{I_i} - c_{I_i \Cup J_i}|
\\
&\ge 2N 2^{2N} - 2N^{\frac14} (1+\ell(I_i \Cup J_i)) - \ell(I_i \Cup J_i)
\\
&\ge 2N 2^{2N} - 3N^{\frac14} (1+N^{\frac14} 2^{2N+1})
\\
&\ge 2N 2^{2N} - 9N^{\frac12} 2^{2N}
\ge N^{\frac12} 2^{2N},
\end{align*}
which implies that
\begin{align}\label{FFJ-5}
\rd(I_i, J_i)
=\ell(J_i)^{-1} \d(I_i, J_i)
\gtrsim N^{\frac12}
\ge 2N^{\frac18},
\end{align}
provided $N>1$ large enough. In view of the properties of $F_{ij}$, $j=1, 2, 3$, the desired conclusion immediately follows from \eqref{FFJ-1}--\eqref{FFJ-5}.

To prove part \eqref{list:FE-2}, let $I_i \in \D_i$ and $J_i \not\in \D_i(2N)$ be such that $\rd(I_i, J_i)<1$. First, considering the case $\ell(J_i)<2^{-2N}$, we have
\begin{align*}
\widetilde{F}_i(I_i, J_i)
\lesssim F_{i, 1}(\ell(I_i))
\le F_{i, 1}(2^{-2N}) \to 0
\quad\text{ as } \, N \to \infty.
\end{align*}
We next treat the case $\ell(J_i)>2^{2N}$. If $\ell(I_i)>2^N$, then
\begin{align*}
\widetilde{F}_i(I_i, J_i)
\lesssim
\widetilde{F}_{i, 2}(\ell(I_i))
\le \widetilde{F}_{i, 2}(2^N) \to 0
\quad\text{ as } \, N \to \infty;
\end{align*}
otherwise, $\ell(I_i) \le 2^N \le 2^{-N} \ell(J_i)$.

If $2^{-2N} \le \ell(J_i) \le 2^{2N}$ and $\rd(J_i, 2^{2N} \I_i) > 2N$, then
\begin{align*}
\d(J_i, \I_i) \ge \d(J_i, 2^{2N} \I_i) > 2N 2^{2N}.
\end{align*}
It follows $\rd(I_i, J_i)<1$ that $I_i \subset 5J_i$. Hence, for any $k \ge 0$,
\begin{align*}
\d(2^k \cdot 5 J_i, \I_i)
\ge \d(J_i, \I_i) - 2^k \cdot 5 \ell(J_i)/2
> 2N 2^{2N} - 2^k \cdot 5 \cdot 2^{2N}/2,
\end{align*}
which in turn implies that
\begin{align*}
\rd(2^k(I_i \Cup J_i), \I_i)
\ge \rd(2^k \cdot 5J_i, \I_i)
\ge \frac{\d(2^k \cdot 5 J_i, \I_i)}{2^k \cdot 5 \cdot 2^{2N}}
>\frac25 2^{-k} N - \frac12.
\end{align*}
Let $N \ge 9$ and $k_N := [\log_2 \sqrt{N}]$. Thus, for all $0 \le k \le k_N$,
\[
\rd(2^k(I_i \Cup J_i), \I_i) \ge \frac{2\sqrt{N}}{5} - \frac12 \ge \frac{\sqrt{N}}{5}.
\]
Using this estimate and the monotonicity of $F_{i, 3}$, we conclude that
\begin{align*}
\widetilde{F}_i(I_i, J_i)
\lesssim \sum_{k=0}^{k_N} 2^{-k \delta_i} F_{i, 3}(\sqrt{N}/5)
+ \sum_{k=k_N+1}^{\infty} 2^{-k \delta_i}
\lesssim F_{i, 3}(\sqrt{N}) + 2^{-k_N \delta_i} \to 0,
\end{align*}
as $N \to \infty$. This completes the proof of part \eqref{list:FE-2}.

Let us next justify part \eqref{list:FE-3}. If $\ell(J_i)<2^{-2N}$, then $2^{-N} \le \ell(I_i)=2^{-k_i} \ell(J_i) \le 2^{-k_i-2N}$, hence $k_i \le -N$. If $\ell(J_i) > 2^{2N}$, then $2^{2N} < \ell(J_i)= 2^{k_i} \ell(I_i) \le 2^{k_i+N}$, hence $k_i \ge N$. If $2^{-2N} \le \ell(J_i) \le 2^{2N}$ and $\rd(J_i, 2^{2N} \I_i) > 2N$, then $\d(J_i, 2^{2N} \I_i) > 2N 2^{2N}$. The condition $I_i \in \D_i(N)$ gives $\d(I_i, 2^N \I_i) \le N 2^N$, which in turn implies $I_i \subset (N+2)2^N \I_i \subset 2^{2N} \I_i$ and
\begin{align*}
\rd(I_i, J_i)
=\frac{\d(I_i, J_i)}{\max\{\ell(I_i), \ell(J_i)\}}
\ge \frac{\d(J_i, 2^{2N} \I_i)}{2^{2N}}
\ge 2N.
\end{align*}
This shows part \eqref{list:FE-3}, while part \eqref{list:FE-4} just follows from definition. 
\end{proof}

Let us next present some estimates for integrals, which will be used below frequently. 

\begin{lemma}\label{lem:PQR}
For each $i=1, 2$, let $(F_{i, 1}, F_{i, 2}, F_{i, 3}) \in \mathscr{F}$ satisfy that $F_{i, 1}$ is increasing, $F_{i, 2}$ and $F_{i, 3}$ are decreasing. Let $I_i, J_i \in \D_i$ with $\ell(I_i) \le \ell(J_i)$, $i=1, 2$. Then the following hold: 
\begin{enumerate}
\item[(a)] If $\rd(I_i, J_i) \ge 1$, then 
\begin{align}\label{def:P}
\mathscr{P}_i(I_i, J_i)
:= \int_{I_i} \int_{J_i} \frac{F_i(x_i, y_i) \ell(I_i)^{\delta_i}}{|x_i-y_i|^{n_i+\delta_i}}  \, dx_i \, dy_i 
\lesssim F_i(I_i, J_i) \frac{\rs(I_i, J_i)^{\frac{n_i}{2}+\delta_i}}{\rd(I_i, J_i)^{n_i + \delta_i}} 
|I_i|^{\frac12} |J_i|^{\frac12},  
\end{align}
where
\begin{align}\label{def:FF1}
F_i(x_i, y_i)
:= F_{i, 1}(|y_i - c_{I_i}|) F_{i, 2}(|x_i - y_i|) F_{i, 3}\bigg(1 + \frac{|x_i + y_i|}{1 + |x_i - y_i|}\bigg).
\end{align}

\item[(b)] There holds  
\begin{equation}\label{def:Q} 
\mathscr{Q}_i(I_i)
:= \int_{I_i} \int_{3I_i \setminus I_i} \frac{F_i(x_i, y_i)}{|x_i-y_i|^{n_i}}  dx_i \, dy_i 
\lesssim \widetilde{F}_i(I_i) |I_i|
\le \widetilde{F}_i(I_i, J_i) |I_i|, 
\end{equation}
where
\begin{align}\label{def:FF2}
F_i(x_i, y_i)
:= F_{i, 1}(|x_i - y_i|) F_{i, 2}(|x_i - y_i|) F_{i, 3}\bigg(1 + \frac{|x_i + y_i|}{1 + |x_i - y_i|}\bigg).
\end{align}

\item[(c)] If $\rd(I_i, J_i) < 1$ and $I_i \cap J_i = \emptyset$, then 
\begin{equation}\label{def:QIJ}
\mathscr{Q}_i(I_i, J_i)
:= \int_{I_i} \int_{J_i \setminus 3I_i}  F_i(x_i, y_i)
\frac{\ell(I_i)^{\delta_i}}{|x_i - c_{I_i}|^{n_i + \delta_i}} \, dx_i \, dy_i
\lesssim \frac{\widetilde{F}_i(I_i, J_i) |I_i|}{\ird(I_i, J_i)^{\delta_i}},
\end{equation}
where $F_i(x_i, y_i)$ is given in \eqref{def:FF1}. 

\item[(d)] There holds 
\begin{align}\label{def:R}
\mathscr{R}_i(I_i)
:= \int_{I_i} \int_{(3I_i)^c} F_i(x_i, y_i) 
\frac{\ell(I_i)^{\delta_i}}{|x_i - y_i|^{n_i+\delta_i}} \, dx_i \, dy_i
\lesssim \widetilde{F}_i(I_i) |I_i|,
\end{align}
where $F_i(x_i, y_i)$ is given in \eqref{def:FF1}.

\item[(e)] Given $I_i \subsetneq J_i$, let $J_{I_i} \in \ch(J_i)$ be such that $I_i \subset J_{I_i}$. Then, 
\begin{align}\label{def:RIJ}
\mathscr{R}_i(I_i, J_i)
:= \int_{I_i} \int_{J_{I_i}^c}  
\frac{F_i(x_i, y_i) \ell(I_i)^{\delta_i}}{|x_i - y_i|^{n_i+\delta_i}} \, dx_i \, dy_i
\lesssim \widetilde{F}_i(I_i, J_i) |I_i| [\ell(I_i)/\ell(J_i)]^{\frac{\delta_i}{2}},
\end{align}
whenever $d_i := \d(I_i, J_{I_i}^c) > \ell(I_i)^{\frac12} \ell(J_i)^{\frac12}$, where $F_i(x_i, y_i)$ is given in \eqref{def:FF1}. 
\end{enumerate} 
\end{lemma}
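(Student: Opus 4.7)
My plan is to handle all five parts through a common three-step recipe: (i) replace the pointwise factor $F_i(x_i,y_i)$ by an upper bound depending only on $I_i$ and $J_i$, using the monotonicity of $F_{i,1}$ (increasing), $F_{i,2}$ and $F_{i,3}$ (decreasing); (ii) perform an elementary integration of the polynomial kernel, splitting the outer region into dyadic shells when necessary; (iii) sum the resulting geometric series, absorbing the dyadic indices into the definitions of $\widetilde{F}_{i,2}$ and $\widetilde{F}_{i,3}$. Parts (a), (d), (e) are pure shell estimates; part (b) is a truly diagonal estimate that leans on Lemma \ref{lem:diag}; and part (c) is a hybrid.

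For part (a), the hypothesis $\rd(I_i,J_i)\ge 1$ forces $|x_i-y_i|\simeq \d(I_i,J_i)\simeq \rd(I_i,J_i)\,\ell(J_i)\simeq \ell(I_i\Cup J_i)$ uniformly for $(x_i,y_i)\in I_i\times J_i$, as well as $|y_i-c_{I_i}|\lesssim \ell(I_i\Cup J_i)$ and $1+|x_i+y_i|/(1+|x_i-y_i|)\gtrsim \rd(I_i\Cup J_i,\I_i)$. Monotonicity then bounds $F_i(x_i,y_i)\lesssim F_i(I_i,J_i)$, and the remaining integral collapses to
\[
|I_i|\,|J_i|\,\frac{\ell(I_i)^{\delta_i}}{(\rd(I_i,J_i)\,\ell(J_i))^{n_i+\delta_i}},
\]
which rewrites into the claimed form using the identity $|I_i|=|I_i|^{1/2}|J_i|^{1/2}\rs(I_i,J_i)^{n_i/2}$. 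Parts (d) and (e) are analogous, but the outer region is now $(3I_i)^c$ or $J_{I_i}^c$, which I decompose into the dyadic shells $2^{k+1}I_i\setminus 2^k I_i$; on the $k$th shell $|x_i-y_i|\simeq 2^k\ell(I_i)$, the polynomial kernel contributes $2^{-k\delta_i}$, and the telescoping sum of the $F_{i,2}$ and $F_{i,3}$ factors collapses into $\widetilde{F}_{i,2}(\ell(I_i))\widetilde{F}_{i,3}(I_i)$. In (e), the extra hypothesis $d_i>\ell(I_i)^{1/2}\ell(J_i)^{1/2}$ enters only through the elementary inequality $\ell(I_i)^{\delta_i}/d_i^{\delta_i}\le (\ell(I_i)/\ell(J_i))^{\delta_i/2}$.

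Part (b) is the truly diagonal estimate: $3I_i\setminus I_i$ is a union of at most $3^{n_i}-1$ cubes $J'_i$ of side $\ell(I_i)$ adjacent to $I_i$ (with $\d(I_i,J'_i)=0$). Applying H\"older's inequality with exponents $r>1$ and $1<s<1+1/n_i$ on each pair $I_i\times J'_i$ and invoking Lemma \ref{lem:diag}, estimate \eqref{eq:diag-1} controls the $r$-average of $F_{i,2}(|x_i-y_i|)$ by $\widetilde{F}_{i,2}(\ell(I_i))$, and \eqref{eq:diag-2} controls the $s$-average of $|x_i-y_i|^{-n_i}$ by $|I_i|^{-1}$, so the double integral gains exactly $\widetilde{F}_{i,2}(\ell(I_i))\,|I_i|$. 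Monotonicity of $F_{i,1}$ yields $F_{i,1}(|x_i-y_i|)\le F_{i,1}(2\ell(I_i))\lesssim F_{i,1}(\ell(I_i))$, while the argument of $F_{i,3}$ is comparable to $\rd(I_i,\I_i)$ on $3I_i\times 3I_i$, so that factor is absorbed by the $k=0$ term of $\widetilde{F}_{i,3}(I_i)$. Part (c) then hybridizes (b) with an annular decomposition: split $J_i\setminus 3I_i$ into $(2^{k+1}I_i\setminus 2^k I_i)\cap J_i$ for $k\ge 1$, so that $|y_i-c_{I_i}|\simeq 2^k\ell(I_i)$ and the kernel contributes $2^{-k(n_i+\delta_i)}/\ell(I_i)^{n_i}$. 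Stopping the $k$-sum at the first shell whose scale exceeds $\d(I_i,J_i)$ produces the decay factor $\ird(I_i,J_i)^{-\delta_i}$, and the remaining $F$-factor sum is absorbed into $\widetilde{F}_i(I_i,J_i)$ as in (d).

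The main technical obstacle I anticipate is the pointwise comparison of $F_{i,3}(1+|x_i+y_i|/(1+|x_i-y_i|))$ with $F_{i,3}(\rd(2^k I_i,\I_i))$ on each dyadic shell, since the inner argument is not a monotone function of the cube location and requires a short geometric lemma (analogous to the one used in the proof of part (b) of Lemma \ref{lem:FE}). This is precisely the purpose of the geometric series $\widetilde{F}_{i,3}(I_i)=\sum_{k\ge 0}2^{-k\delta_i}F_{i,3}(\rd(2^k I_i,\I_i))$: it simultaneously absorbs the shell index $k$ and the polynomial decay $2^{-k\delta_i}$ coming from the kernel, so that once the pointwise comparison is verified on each shell the summation is immediate. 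All remaining steps reduce to routine annular estimates combined with the monotonicity of $F_{i,1}$ and $F_{i,2}$.
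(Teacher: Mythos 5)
Your proposal is correct and follows essentially the same route as the paper: the pointwise monotone comparison $F_i(x_i,y_i)\lesssim F_i(I_i,J_i)$ plus direct integration for (a), the covering of $3I_i\setminus I_i$ by adjacent cubes combined with H\"older's inequality and Lemma \ref{lem:diag} for (b), and dyadic shell decompositions with the geometric lower bound $1+|x_i+y_i|/(1+|x_i-y_i|)\gtrsim \rd(2^kI_i,\I_i)$ feeding the series $\widetilde F_{i,3}$ for (c)--(e), with the hypothesis $d_i>\ell(I_i)^{1/2}\ell(J_i)^{1/2}$ entering exactly as you say. Two cosmetic slips: in (a) the bound you actually need for the increasing factor is $|y_i-c_{I_i}|\le\ell(I_i)/2$ (immediate since $y_i\in I_i$), not $|y_i-c_{I_i}|\lesssim\ell(I_i\Cup J_i)$; and in (c) the shell variable is $|x_i-c_{I_i}|$, not $|y_i-c_{I_i}|$.
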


\begin{proof}
Let us first show \eqref{def:P}. For any $x_i \in J_i$ and $y_i \in I_i$,
\begin{align}\label{PP-1}
|x_i - y_i|
\ge \d(I_i, J_i)
\ge \ell(J_i)
\ge \ell(I_i)
\ge 2|y_i - c_{I_i}|,
\end{align}
and
\begin{align}\label{PP-2}
|x_i - y_i| 
\le \ell(I_i \Cup J_i) 
\le \d(I_i, J_i) + \ell(I_i) + \ell(J_i) 
\le 3\d(I_i, J_i) 
\le 3|x_i - y_i|.
\end{align}
Moreover, $x_i, y_i \in I_i \Cup J_i$ implies $\frac12(x_i + y_i) \in I_i \Cup J_i$, and
\begin{align}\label{PP-3}
1+\frac{|x_i + y_i|}{1 + |x_i - y_i|}
\ge \frac{2\d(I_i \Cup J_i, \I_i)}{1 + \ell(I_i \Cup J_i)}
\ge \frac{\d(I_i \Cup J_i, \I_i)}{\max\{\ell(I_i \Cup J_i), 1\}}
=\rd(I_i \Cup J_i, \I_i).
\end{align}
Then, one has 
\begin{align*}
F_i(x_i, y_i)
\le F_{i, 1}(\ell(I_i)) F_{i, 2}(\ell(I_i \Cup J_i)) F_{i, 3}(\rd(I_i \Cup J_i, \I_i))
= F_i(I_i, J_i).
\end{align*}
which along with \eqref{PP-1}--\eqref{PP-3} gives 
\begin{align*}
\mathscr{P}_i(I_i, J_i)
&\lesssim \frac{\ell(I_i)^{\delta_i}}{\ell(I_i \Cup J_i)^{n_i+\delta_i}} F_i(I_i, J_i) |I_i| |J_i|
\\
&= \bigg(\frac{\ell(I_i)}{\ell(J_i)}\bigg)^{\frac{n_i}{2}+\delta_i}
\bigg(\frac{\ell(J_i)}{\ell(I_i \Cup J_i)}\bigg)^{n_i + \delta_i}  F_i(I_i, J_i) |I_i|^{\frac12} |J_i|^{\frac12}
\\
&\lesssim F_i(I_i, J_i) \frac{\rs(I_i, J_i)^{\frac{n_i}{2}+\delta_i}}{\rd(I_i, J_i)^{n_i + \delta_i}} 
|I_i|^{\frac12} |J_i|^{\frac12}.
\end{align*}

To prove \eqref{def:Q}, we first note that for all $x_i \in 3I_i \setminus I_i$ and $y_i \in I_i$, $|x_i - y_i| \le 2 \ell(I_i)$ and
\begin{align*}
1 + \frac{|x_i + y_i|}{1 + |x_i - y_i|}
&\ge \frac{|x_i + y_i| + |x_i - y_i|}{1 + |x_i - y_i|}
\\
&\ge \frac{2|y_i|}{1 + 2\ell(I_i)}
\ge \frac{\d(I_i, \I_i)}{2\max\{\ell(I_i), 1\}}
= \frac12 \rd(I_i, \I_i),
\end{align*}
which imply that
\begin{align}\label{eq:fxy}
F_i(x_i, y_i)
\le F_{i, 1}(\ell(I_i)) F_{i, 2}(|x_i - y_i|)  F_{i, 3}(\rd(I_i, \I_i)).
\end{align}
Observe also that there exists a sequence of cubes $\{J_{k_i}\}_{k_i=1}^{3^{n_i}-1}$ in $\R^{n_i}$ such that
\begin{align}\label{cover}
3I_i \setminus I_i = \bigcup_{k_i=1}^{3^{n_i}-1} J_{k_i},
\quad\text{ where } \quad
\d(I_i, J_{k_i}) =0 \text{ and }
\ell(J_{k_i}) = \ell(I_i).
\end{align}
Then it follows from \eqref{eq:fxy}, H\"{o}lder's inequality, \eqref{cover}, and Lemma \ref{lem:diag} that
\begin{align*}
\mathscr{Q}_i(I_i)
&\lesssim F_{i, 1}(\ell(I_i)) F_{i, 3}(\rd(I_i, \I_i))
\int_{I_i} \int_{3I_i \setminus I_i} \frac{F_{i, 2}(|x_i - y_i|)}{|x_i - y_i|^{n_i}}  dx_i \, dy_i
\\
&\le F_{i, 1}(\ell(I_i)) F_{i, 3}(\rd(I_i, \I_i)) |I_i|^2
\bigg(\frac{1}{|I_i|^2} \int_{I_i} \int_{3I_i \setminus I_i} \frac{dx_i \, dy_i}{|x_i - y_i|^{s n_i}}  \bigg)^{\frac1s}
\\
&\quad\times \bigg(\frac{1}{|I_i|^2} \int_{I_i} \int_{3I_i \setminus I_i} 
F_{i, 2}(|x_i - y_i|)^{s'}  dx_i \, dy_i \bigg)^{\frac{1}{s'}}
\\
&\lesssim F_{i, 1}(\ell(I_i)) \widetilde{F}_{i, 2}(\ell(I_i)) F_{i, 3}(\rd(I_i, \I_i)) |I_i|
\\
&\le \widetilde{F}_i(I_i) |I_i|
\le \widetilde{F}_i(I_i, J_i) |I_i|,
\end{align*}
provided $\rd(I_i, \I_i) \ge \rd(I_i \Cup J_i, \I_i)$, where $1<s<1+\frac{1}{n_i}$.

Let us next turn to the proof of \eqref{def:QIJ}. Let $x_i \in J_i \setminus 3I_i$ and $y_i \in I_i$. Then $|x_i - y_i| \ge \ell(I_i)$ and $|x_i - c_{I_i}| > \ell(I_i) \ge 2|y_i - c_{I_i}|$. It follows from $\rd(I_i, J_i)<1$ that $\d(I_i, J_i) < \ell(J_i)$ and
\begin{align*}
|x_i - y_i|
\le \ell(J_i) + \d(I_i, J_i) + \ell(I_i)
\le 3\ell(J_i).
\end{align*}
Note that $|y_i| \ge \d(I_i, \I_i) \ge \d(I_i \Cup J_i, \I_i)$. Hence, we obtain
\begin{align*}
1+ \frac{|x_i + y_i|}{1 + |x_i - y_i|}
&\ge \frac{1 + 2|y_i|}{1 + |x_i - y_i|}
\ge \frac{2\d(I_i \Cup J_i, \I_i)}{1 + 3\ell(J_i)}
\\
&=\frac{\max\{\ell(I_i \Cup J_i), 1\}}{1 + 3\ell(J_i)} \rd(I_i \Cup J_i, \I_i)
\ge \frac14 \rd(I_i \Cup J_i, \I_i),
\end{align*}
which implies that
\begin{align}\label{FXY-1}
F_i(x_i, y_i)
&\lesssim F_{i, 1}(\ell(I_i)) F_{i, 2}(\ell(I_i)) F_{i, 3}(\rd(I_i \Cup J_i, \I_i))
\\ \nonumber
&\le F_{i, 1}(\ell(I_i)) \widetilde{F}_{i, 2}(\ell(I_i)) \widetilde{F}_{i, 3}(I_i \Cup J_i)
= \widetilde{F}_i(I_i, J_i).
\end{align}
In addition,
\begin{align}\label{FXY-2}
|x_i - c_{I_i}|
\ge \ell(I_i)/2 + \d(I_i, J_i)
\ge \ell(I_i) \ird(I_i, J_i)/2.
\end{align}
Letting
\begin{align*}
R_{k_i} := \big\{x_i \in \R^{n_i}: 
2^{k_i} \ell(I_i) < |x_i - c_{I_i}| \le 2^{k_i + 1} \ell(I_i)\big\}, \quad k_i \ge k_i^0,
\end{align*}
where $k_i^0 := \max\{0, \, \log_2 (\ird(I_i, J_i)/2)\}$, we invoke \eqref{FXY-1} and \eqref{FXY-2} to conclude
\begin{align*}
\mathscr{Q}_i(I_i, J_i)
&\le \sum_{k_i=k_i^0}^{\infty} \int_{I_i} \int_{R_{k_i}}  F_i(x_i, y_i)
\frac{\ell(I_i)^{\delta_i}}{|x_i - c_{I_i}|^{n_i + \delta_i}}  dx_i \, dy_i
\\
&\lesssim \sum_{k_i=k_i^0}^{\infty} 2^{-k_i \delta_i}  \widetilde{F}_i(I_i, J_i) |I_i|
\lesssim \frac{\widetilde{F}_i(I_i, J_i) |I_i|}{\ird(I_i, J_i)^{\delta_i}}.
\end{align*}

To proceed, note that for all $x_i \in (3I_i)^c$ and $y_i \in I_i$,
\begin{align}\label{TH-2}
|y_i - c_{I_i}| \le \ell(I_i) \le |x_i - y_i|.
\end{align}
Besides, for any $k_i \ge 0$ and $x_i, y_i \in \R^{n_i}$ satisfying $2^{k_i} \ell(I_i) \le |x_i - y_i| < 2^{k_i+1} \ell(I_i)$,
\begin{multline}\label{TH-3}
2\bigg(1 + \frac{|x_i + y_i|}{1 + |x_i - y_i|}\bigg) 
\ge 1 + \frac{1 + |x_i + y_i| + |x_i - y_i|}{1 + |x_i - y_i|}
\ge 1 + \frac{2|y_i|}{1 + |x_i - y_i|}
\\ 
\ge 1 + \frac{2\d(2^{k_i} I_i, \I_i)}{1 + 2^{k_i + 1} \ell(I_i)}
\ge \frac12 \frac{\d(2^{k_i} I_i, \I_i)}{\max\{2^{k_i} \ell(I_i), 1\}}
=\frac12 \rd(2^{k_i} I_i, \I_i).
\end{multline}
Thus, it follows from \eqref{TH-2}--\eqref{TH-3} and the monotonicity of $F_{i, 1}$, $F_{i, 2}$, and $F_{i, 3}$ that
\begin{align}\label{TH-4}
\mathscr{R}_i(I_i)
&= \sum_{k_i=0}^{\infty} \int_{I_i} \int_{2^{k_i} \ell(I_i) \le |x_i - y_i| < 2^{k_i+1}\ell(I_i)}
F_i(x_i, y_i) \frac{\ell(I_i)^{\delta_i}}{|x_i - y_i|^{n_i + \delta_i}} \, dx_i \, dy_i
\\ \nonumber
&\lesssim |I_i| F_{i, 1}(\ell(I_i)) F_{i, 2}(\ell(I_i))
\sum_{k_i = 0}^{\infty} 2^{-k_i \delta_i} F_{i, 3}(\rd(2^{k_i} I_i, \I_i))
\le \widetilde{F}_i(I_i) |I_i|.
\end{align}
This shows \eqref{def:R}. 

Finally, note that $B(c_{I_i}, d_i) \subset J_{I_i}$ and $d_i \le \ell(J_i)$. For all $x_i \in B(c_{I_i}, d_i)^c$ and $y_i \in I_i$, 
\begin{align}\label{TH-21}
2 |y_i - c_{I_i}| 
\le \ell(I_i) 
\le \ell(I_i)^{\frac12} \ell(J_i)^{\frac12} 
\le d_i 
\le |x_i - c_{I_i}|,  
\end{align}
which gives 
\begin{align}
\ell(I_i)/2
&\le |x_i - c_{I_i}|/2
\le |x_i - c_{I_i}| - |y_i - c_{I_i}| 
\\ \nonumber
&\le |x_i - y_i|
\le |x_i - c_{I_i}| + |y_i - c_{I_i}| 
\le 2 |x_i - c_{I_i}|.  
\end{align}
Then, for any $k_i \ge 0$ and $x_i, y_i \in \R^{n_i}$ with $2^{k_i} d_i \le |x_i - c_{I_i}| < 2^{k_i+1} d_i$, we have 
\begin{align}\label{TH-31}
1 + \frac{|x_i + y_i|}{1 + |x_i - y_i|} 
&\ge \frac{2|y_i|}{1 + |x_i - y_i|}
\ge \frac{2|y_i|}{1 + 2|x_i - c_{I_i}|}
\\ \nonumber
&\ge \frac{2 \d(2^{k_i} J_i, \I_i)}{1 + 2^{k_i+2} \ell(J_i)}
\ge \frac13 \frac{\d(2^{k_i} J_i, \I_i)}{\max\{2^{k_i} \ell(J_i), 1\}}
=\frac13 \rd(2^{k_i} J_i, \I_i).
\end{align}
Thus, it follows from \eqref{TH-21}--\eqref{TH-31} and the monotonicity of $F_{i, 1}$, $F_{i, 2}$, and $F_{i, 3}$ that
\begin{align*}
\mathscr{R}_i(I_i, J_i)
&\le \sum_{k_i=0}^{\infty} \int_{I_i} \int_{2^{k_i} d_i \le |x_i - c_{I_i}| < 2^{k_i+1}d_i}
F_i(x_i, y_i) \frac{\ell(I_i)^{\delta_i}}{|x_i - y_i|^{n_i + \delta_i}} \, dx_i \, dy_i
\\
&\lesssim |I_i| (\ell(I_i)/d_i)^{\delta_i} F_{i, 1}(\ell(I_i)) F_{i, 2}(\ell(I_i)) 
\sum_{k_i=0}^{\infty} 2^{-k_i \delta_i} F_{i, 3}(\rd(2^{k_i} J_i, \I_i))
\\
&\le \widetilde{F}_i(I_i, J_i) |I_i| \, \big[\ell(I_i)/\ell(J_i)\big]^{\frac{\delta_i}{2}}.
\end{align*}
The proof of \eqref{def:RIJ} is complete. 
\end{proof}

\subsection{$\BMO$ estimates}
For each $i=1, 2$, let $\D_i$ be dyadic grid  on $\R^{n_i}$, and denote $\D = \D_1 \times \D_2$. Given $i=1, 2$ and $b_i \in \BMO_{\D_i}$, we define the paraproducts on $\R^{n_i}$ by 
\begin{align*}
\Pi_{b_i} f_i  
:= \sum_{I_i \in \D_i} \langle b_i, h_{I_i} \rangle \langle f_i \rangle_{I_i} \, h_{I_i} 
\quad\text{ and }\quad  
\Pi_{b_i}^* f 
:= \sum_{I_i \in \D_i} \langle b_i, h_{I_i} \rangle \langle f_i, h_{I_i} \rangle \, \frac{\mathbf{1}_{I_i}}{|I_i|}. 
\end{align*}
For $b \in \BMO_{\D}$, we define the bi-parameter paraproducts by 
\begin{align*}
\Pi_b f 
&:= \sum_{\substack{I_1 \in \D_1 \\ I_2 \in \D_2}} 
\langle b, h_{I_1} \otimes h_{I_2} \rangle 
\langle f \rangle_{I_1 \times I_2}  \, h_{I_1} \otimes h_{I_2}, 
\\
\Pi_b^* f 
&:= \sum_{\substack{I_1 \in \D_1 \\ I_2 \in \D_2}} 
\langle b, h_{I_1} \otimes h_{I_2} \rangle 
\langle f , h_{I_1} \otimes h_{I_2} \rangle 
\frac{\mathbf{1}_{I_1} \otimes \mathbf{1}_{I_2}}{|I_1 \times I_2|}.
\end{align*}

\begin{lemma}\label{lem:Pi}
The following statements hold: 
\begin{enumerate}

\item For every $i=1, 2$ and $b_i \in \CMO_{\D_i}$, both $\Pi_{b_i}$ and $\Pi^*_{b_i}$ are compact on $L^2(\R^{n_i})$. 

\item For any $b \in \CMO_{\D}$, both $\Pi_b$ and $\Pi^*_b$ are compact on $L^2(\R^{n_1} \times \R^{n_2})$. 
\end{enumerate}
\end{lemma}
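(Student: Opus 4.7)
The plan is to realize $\Pi_b$ and $\Pi_b^*$ as operator-norm limits of finite-rank operators on $L^2$, and hence as compact operators, by exploiting the fact that the class of compact operators is norm-closed inside the bounded operators. The natural approximation is to truncate $b$ to its Haar coefficients indexed by the finite families $\D_i(N)$, respectively $\D_1(N) \times \D_2(N)$, so that the truncated paraproduct has finite-dimensional range, while the tail is controlled through the $\CMO$ hypothesis via the standard $\BMO$-bound for paraproducts.

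For part (1), fix $i \in \{1, 2\}$ and $b_i \in \CMO_{\D_i}$. Decompose $b_i = P_N b_i + P_N^{\perp} b_i$, where $P_N b_i := \sum_{I_i \in \D_i(N)} \langle b_i, h_{I_i} \rangle h_{I_i}$; linearity of the paraproduct gives $\Pi_{b_i} = \Pi_{P_N b_i} + \Pi_{P_N^{\perp} b_i}$, and likewise for $\Pi_{b_i}^*$. Since $\D_i(N)$ is finite, the range of $\Pi_{P_N b_i}$ is contained in the span of $\{h_{I_i} : I_i \in \D_i(N)\}$, while the range of $\Pi_{P_N b_i}^*$ is contained in the span of $\{\mathbf{1}_{I_i}/|I_i| : I_i \in \D_i(N)\}$, both finite-dimensional subspaces of $L^2(\R^{n_i})$. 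Thus the truncated operators are finite-rank, hence compact. By the classical $\BMO$-bound for dyadic paraproducts,
\begin{align*}
\|\Pi_{P_N^{\perp} b_i}\|_{L^2 \to L^2} + \|\Pi_{P_N^{\perp} b_i}^*\|_{L^2 \to L^2} \lesssim \|P_N^{\perp} b_i\|_{\BMO_{\D_i}},
\end{align*}
and the right-hand side tends to $0$ as $N \to \infty$ because $b_i \in \CMO_{\D_i}$. Therefore $\Pi_{b_i}$ and $\Pi_{b_i}^*$ are norm limits of compact operators, hence compact.

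Part (2) proceeds in the same spirit with $P_N b := \sum_{J_1 \in \D_1(N), \, J_2 \in \D_2(N)} \langle b, h_{J_1} \otimes h_{J_2} \rangle \, h_{J_1} \otimes h_{J_2}$. The ranges of $\Pi_{P_N b}$ and $\Pi_{P_N b}^*$ are contained in the finite-dimensional spans of the tensor Haar system and of the tensor indicator system indexed by $\D_1(N) \times \D_2(N)$, respectively, so both truncations are finite-rank. Applying the bi-parameter paraproduct estimate $\|\Pi_c\|_{L^2 \to L^2} + \|\Pi_c^*\|_{L^2 \to L^2} \lesssim \|c\|_{\BMO_{\D}}$ with $c = P_N^{\perp} b$, and using $\|P_N^{\perp} b\|_{\BMO_{\D}} \to 0$ from the very definition of $\CMO_{\D}$, yields norm convergence of the truncations to $\Pi_b$ and $\Pi_b^*$. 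The only nontrivial ingredient is the bi-parameter paraproduct bound by $\|\cdot\|_{\BMO_{\D}}$, which is classical and rests on product $H^1$-$\BMO$ duality; beyond invoking it, the whole argument is a bookkeeping exercise identifying the truncated paraproducts as finite-rank operators.
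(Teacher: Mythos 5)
Your proof is correct and follows essentially the same route as the paper: since $P_N \circ \Pi_b = \Pi_{P_N b}$, your decomposition $b = P_N b + P_N^{\perp} b$ is identical to the paper's splitting $\Pi_b = P_N \Pi_b + \Pi_{P_N^{\perp} b}$, with the finite-rank truncation handled the same way and the tail controlled by the standard $\BMO$ paraproduct bound (the paper cites \cite[Proposition 6.1]{HPW}) together with the definition of $\CMO_{\D}$.
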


\begin{proof}
We just present the proof in the bi-parameter case because the proof in the one-parameter is similar. Let $f, g \in L^2(\R^{n_1} \times \R^{n_2})$ and $N \in \N$. Then we have 
\begin{align*}
\langle P_N^{\perp} (\Pi_b f), g \rangle
&=\sum_{\substack{I_1 \in \D_1 \\ I_2 \in \D_2}} \langle b, h_{I_1} \otimes h_{I_2} \rangle 
\langle f \rangle_{I_1 \times I_2} 
\langle P_N^{\perp}g, h_{I_1} \otimes h_{I_2} \rangle
\\
&=\sum_{\substack{I_1 \not\in \D_1(N) \\ \text{or } I_2 \not\in \D_2(N)}} 
\langle b, h_{I_1} \otimes h_{I_2} \rangle \langle f \rangle_{I_1 \times I_2}  
\langle g, h_{I_1} \otimes h_{I_2} \rangle
\\
&=\sum_{\substack{I_1 \in \D_1 \\ I_2 \in \D_2}} 
\langle P_N^{\perp} b, h_{I_1} \otimes h_{I_2} \rangle 
\langle f \rangle_{I_1 \times I_2}  
\langle g, h_{I_1} \otimes h_{I_2} \rangle
=\langle \Pi_{P_N^{\perp} b} f, g \rangle,  
\end{align*}
which along with \cite[Proposition 6.1]{HPW} implies 
\begin{align}\label{PN-1}
|\langle P_N^{\perp} (\Pi_b f), g \rangle|
=|\langle \Pi_{P_N^{\perp} b} f, g \rangle|
\lesssim \|P_N^{\perp} b\|_{\BMO_{\D}} \|f\|_{L^2} \|g\|_{L^2}, 
\end{align}
where the implicit constant is independent of $N$. Recall that 
\begin{align}\label{PN-12}
b \in \CMO_{\D} \iff 
\lim_{N \to \infty} \|P_N^{\perp} b\|_{\BMO_{\D}} =0.
\end{align}
Hence, \eqref{PN-1} and \eqref{PN-12} yield 
\begin{align*}
\lim_{N \to \infty} \|P_N \Pi_b - \Pi_b \|_{L^2 \to L^2}
=\lim_{N \to \infty} \|P_N^{\perp} \Pi_b \|_{L^2 \to L^2} =0. 
\end{align*}
Since $P_N$ is a finite dimensional operator, we conclude that $P_N \Pi_b$ is a compact operator on $L^2(\Rn)$. Therefore, $\Pi_b$ is compact on $L^2(\R^{n_1} \times \R^{n_2})$. By duality, $\Pi_b^*$ is also compact on $L^2(\R^{n_1} \times \R^{n_2})$. 
\end{proof}

For each $i=1, 2$ and $N \ge 1$, define the projection operator on $\R^{n_i}$ and its orthogonal operator as 
\begin{align*}
P_N f_i := \sum_{I_i \in \D_i(N)} \langle f_i, h_{I_i} \rangle \, h_{I_i} 
\quad\text{ and }\quad 
P_N^{\perp} f_i := \sum_{I_i \not\in \D_i(N)} \langle f_i, h_{I_i} \rangle \, h_{I_i}. 
\end{align*}
Henceforward, given $I_i \subsetneq J_i$, let $J_{I_i} \in \ch(J_i)$ be such that $I_i \subset J_{I_i}$, and denote $\phi_{I_i J_i} := (h_{J_i} - \langle h_{J_i} \rangle_{I_i}) \mathbf{1}_{J_{I_i}^c}$. Note that $\supp(\phi_{I_i J_i}) \subset J_{I_i}^c \subset I_i^c$ and $\|\phi_{I_i J_i}\|_{L^{\infty}} \lesssim |J_i|^{-\frac12}$.

\begin{lemma}\label{lem:HP}
Let $I_1, J_1 \in \D_1$. Assume that all functions appearing in the hypotheses \eqref{list-1}--\eqref{list-5} satisfy properties \eqref{list:P1}--\eqref{list:P5} in Section \ref{sec:proof}. The following statements hold: 
\begin{enumerate} 
\item[(a)] If $\rd(I_1, J_1) \ge 1$, then $b_{I_1 J_1} := \langle T^*(h_{J_1} \otimes 1), h_{I_1} \rangle \in \CMO_{\D_2}$ with 
\begin{align}\label{B1}
\|b_{I_1 J_1}\|_{\BMO_{\D_2}}
\lesssim F_1(I_1, J_1) \frac{\rs(I_1, J_1)^{\frac{n_1}{2} + \delta_1}}{\rd(I_1, J_1)^{n_1 + \delta_1}}, 
\end{align} 
and
\begin{align}\label{PB1}
\|P_N^{\perp} b_{I_1 J_1}\|_{\BMO_{\D_2}}
\lesssim F_1(I_1, J_1) \frac{\rs(I_1, J_1)^{\frac{n_1}{2} + \delta_1}}{\rd(I_1, J_1)^{n_1 + \delta_1}}
\sup_{I_2 \not\in \D_2(N)} \widehat{F}_2(I_2). 
\end{align}

\item[(b)] If $\rd(I_1, J_1) < 1$ and $I_1 \cap J_1 = \emptyset$, then $b_{I_1 J_1} := \langle T^*(h_{J_1} \otimes 1), h_{I_1} \rangle \in \CMO_{\D_2}$ with 
\begin{align}\label{B2}
\|b_{I_1 J_1}\|_{\BMO_{\D_2}}
\lesssim \widetilde{F}_1(I_1, J_1) 
\frac{\rs(I_1, J_1)^{\frac{n_1}{2}}}{\ird(I_1, J_1)^{\delta_1}}, 
\end{align} 
and
\begin{align}\label{PB2}
\|P_N^{\perp} b_{I_1 J_1}\|_{\BMO_{\D_2}}
\lesssim \widetilde{F}_1(I_1, J_1) 
\frac{\rs(I_1, J_1)^{\frac{n_1}{2}}}{\ird(I_1, J_1)^{\delta_1}}
\sup_{I_2 \not\in \D_2(N)} \widehat{F}_2(I_2). 
\end{align}

\item[(c)] There holds $b_{I_1} := \langle T^*(h_{I_1} \otimes 1), h_{I_1}\rangle \in \CMO_{\D_2}$ with 
\begin{align}\label{B3}
\|b_{I_1}\|_{\BMO_{\D_2}} \lesssim \widehat{F}_1(I_1)
\quad\text{and}\quad 
\|P_N^{\perp} b_{I_1}\|_{\BMO_{\D_2}} 
\lesssim \widehat{F}_1(I_1) \sup_{I_2 \not\in \D_2(N)} \widehat{F}_2(I_2). 
\end{align}

\item[(d)] If $I_1 \subsetneq J_1$, then $b_{I_1 J_1} := \langle T^*(\phi_{I_1 J_1} \otimes 1), h_{I_1} \rangle \in \CMO_{\D_2}$ with   
\begin{align}\label{B4}
\|b_{I_1 J_1}\|_{\BMO_{\D_2}}
\lesssim \widetilde{F}_1(I_1) \rs(I_1, J_1)^{\frac{n_1}{2}}, 
\end{align} 
and
\begin{align}\label{PB4}
\|P_N^{\perp} b_{I_1 J_1}\|_{\BMO_{\D_2}}
\lesssim \widetilde{F}_1(I_1) \rs(I_1, J_1)^{\frac{n_1}{2}}
\sup_{I_2 \not\in \D_2(N)} \widehat{F}_2(I_2). 
\end{align}

\item[(e)] If $I_1 \subsetneq J_1$ with $\d(I_1, J_{I_1}^c) > \ell(I_1)^{\frac12} \ell(J_1)^{\frac12}$, then $b_{I_1 J_1} := \langle T^*(\phi_{I_1 J_1} \otimes 1), h_{I_1} \rangle \in \CMO_{\D_2}$ with   
\begin{align}\label{B5}
\|b_{I_1 J_1}\|_{\BMO_{\D_2}}
\lesssim \widetilde{F}_1(I_1, J_1) \rs(I_1, J_1)^{\frac{n_1}{2} + \frac{\delta_1}{2}}, 
\end{align} 
and
\begin{align}\label{PB5}
\|P_N^{\perp} b_{I_1 J_1}\|_{\BMO_{\D_2}}
\lesssim \widetilde{F}_1(I_1, J_1) \rs(I_1, J_1)^{\frac{n_1}{2} + \frac{\delta_1}{2}}
\sup_{I_2 \not\in \D_2(N)} \widehat{F}_2(I_2). 
\end{align}
\end{enumerate} 
\end{lemma}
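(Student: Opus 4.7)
The plan is to prove Lemma~\ref{lem:HP} by a unified Haar testing strategy on $\R^{n_2}$. By adjointness,
\[
\langle b_{I_1 J_1}, h_{I_2}\rangle = \langle T(h_{I_1} \otimes h_{I_2}), \psi_{I_1 J_1} \otimes 1\rangle,
\]
where $\psi_{I_1 J_1} \in \{h_{J_1}, \phi_{I_1 J_1}\}$ according to the case, and I would aim at a pointwise Haar bound $|\langle b_{I_1 J_1}, h_{I_2}\rangle| \lesssim A(I_1, J_1) \, \widehat{F}_2(I_2) \, |I_2|^{1/2}$ with the $(I_1, J_1)$-factor $A(I_1, J_1)$ matching the factor displayed in each of (B1)--(B5). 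This pointwise bound will then be converted to the $\BMO_{\D_2}$/$\CMO_{\D_2}$ norm estimates via the Carleson-sum characterization of dyadic BMO. The key decomposition is to split the constant function $1$ in the second parameter as $1 = \mathbf{1}_{3I_2} + \mathbf{1}_{(3I_2)^c}$, giving a ``near'' piece (overlapping $x_2$-supports, handled by the partial kernel on the first parameter from Definition~\ref{def:partial}) and a ``far'' piece (disjoint $x_2$-supports, handled by the full kernel from Definition~\ref{def:full}).

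For the far piece, I would use double cancellation of $h_{I_1}$ (or $\phi_{I_1 J_1}$) in $y_1$ and $h_{I_2}$ in $y_2$ via the mixed H\"older estimate of Definition~\ref{def:full}(2), which factorizes the integral as (first-parameter integral)$\cdot \mathscr{R}_2(I_2)$. Lemma~\ref{lem:PQR} evaluates the first factor to $A(I_1, J_1) |I_1|^{1/2}|J_1|^{1/2}$ (via $\mathscr{P}_1$ in (a), $\mathscr{Q}_1(I_1, J_1)$ in (b), $\mathscr{R}_1$ in (c)--(d), and $\mathscr{R}_1(I_1, J_1)$ in (e)), and the second factor to $\widetilde{F}_2(I_2) |I_2|$, yielding a far bound $A(I_1, J_1)\widetilde{F}_2(I_2) |I_2|^{1/2}$ after normalization. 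For the near piece in cases (a), (b), (d), (e), where $\supp\psi_{I_1 J_1} \cap \supp h_{I_1} = \emptyset$, the partial kernel on the first parameter applies; the rescaled function $|I_2|^{1/2} h_{I_2}$ is an admissible cancellative function on $3I_2$, so Definition~\ref{def:partial}(iv) gives $C(h_{I_2}, \mathbf{1}_{3I_2}) \lesssim F_2(3I_2)|I_2|^{1/2}$, and the first-parameter cancellation through Lemma~\ref{lem:PQR} contributes the same $A(I_1, J_1)$-factor, giving a near bound $A(I_1, J_1) F_2(3I_2) |I_2|^{1/2}$.

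The diagonal case (c), where $h_{J_1} = h_{I_1}$ forces overlapping first-parameter supports, demands a separate treatment. I would expand $h_{I_1}$ in both copies as $|I_1|^{-1/2}(\mathbf{1}_{I_1^-} - \mathbf{1}_{I_1^+})$, decomposing the pairing into four child--child interactions. The diagonal terms (same child on both sides) are bounded by the weak compactness property \eqref{list-3} combined with the diagonal $\CMO$ condition \eqref{list-4}, where the cancellation $a_{I_2} = |I_2|^{1/2} h_{I_2}$ is what activates the stronger $\CMO$ bound; the off-diagonal (adjacent-children) terms have disjoint first-parameter supports and are bounded by the partial kernel together with the $\mathscr{Q}_1$-type estimates of Lemma~\ref{lem:PQR}. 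Summing, the case (c) Haar bound is dominated by $\widehat{F}_1(I_1) \widehat{F}_2(I_2) |I_2|^{1/2}$, matching (B3).

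Combining near and far in each case, and using that $F_2(3I_2)$, $\widetilde{F}_2(I_2)$, $F_2(I_2)$, and $F_2(I_2')$ for children $I_2' \in \ch(I_2)$ are all controlled by $\widehat{F}_2(I_2)$, yields the pointwise Haar bound. The $\BMO_{\D_2}$ bounds (B1)--(B5) then follow from $\|b\|_{\BMO_{\D_2}}^2 \simeq \sup_{J_2}|J_2|^{-1}\sum_{I_2 \subseteq J_2} |\langle b, h_{I_2}\rangle|^2$ together with the Carleson summability of $\widehat{F}_2(I_2)^2 |I_2|$, which is built into the geometric-scale expansions $\widetilde{F}_{2,2}(t) = \sum_k 2^{-k\theta_2} F_{2,2}(2^{-k}t)$ and $\widetilde{F}_{2,3}(I_2) = \sum_k 2^{-k\delta_2} F_{2,3}(\rd(2^k I_2, \I_2))$. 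For the $\CMO$ bounds (PB1)--(PB5), restricting the Carleson sum to $I_2 \not\in \D_2(N)$ and extracting $\sup_{I_2 \not\in \D_2(N)}\widehat{F}_2(I_2)$, which tends to zero by Lemma~\ref{lem:FE}\eqref{list:FE-4}, produces the required decay factor. The principal obstacle is case (c): the lack of disjoint first-parameter supports precludes a kernel-based argument on the overlap $I_1 \times I_1$, forcing the delicate combined use of weak compactness and diagonal $\CMO$ hypotheses, with the cancellation of $h_{I_2}$ being the key mechanism that upgrades weak compactness to the stronger diagonal $\CMO$ control.
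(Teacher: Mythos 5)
Your decomposition of $\langle b_{I_1J_1},\cdot\rangle$ (near/far split in the second variable, partial kernel for the near part, full kernel plus the $\mathscr{P}_1,\mathscr{Q}_1,\mathscr{R}_1$-type estimates of Lemma \ref{lem:PQR} for the first variable, and the child-by-child expansion with the diagonal $\CMO$ condition in case (c)) matches the paper's treatment of the individual pairings. The genuine gap is in the final step, where you pass from the pointwise bound $|\langle b_{I_1J_1}, h_{I_2}\rangle|\lesssim A(I_1,J_1)\,\widehat F_2(I_2)\,|I_2|^{1/2}$ to the $\BMO_{\D_2}$ norm via ``the Carleson summability of $\widehat F_2(I_2)^2|I_2|$.'' That summability is false: $\widehat F_2$ is merely a \emph{bounded} function whose values tend to $0$ along certain limits of $\ell(I_2)$ and $c_{I_2}$, with no quantitative rate, so $\sum_{I_2\subset J_2}\widehat F_2(I_2)^2|I_2|$ diverges in general (already $\sum_{I_2\subset J_2}|I_2|=\infty$, and e.g.\ $F_{2,1}(t)=1/\log(e+1/t)$ defeats any attempt to sum over scales). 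A uniform Haar-coefficient bound of size $|I_2|^{1/2}$ does not imply membership in dyadic $\BMO$. The paper avoids this entirely by estimating $\langle b_{I_1J_1}, a'_{I_2}\rangle$ for \emph{arbitrary} $\infty$-atoms $a'_{I_2}$ of $\mathrm H^1_{\D_2}$ (the three-way split $1=\mathbf 1_{I_2}+\mathbf 1_{3I_2\setminus I_2}+\mathbf 1_{(3I_2)^c}$ in \eqref{PBJ}), so that \eqref{B1}--\eqref{B5} follow from $\mathrm H^1$--$\BMO$ duality with only $\sup_{I_2}\widehat F_2(I_2)\lesssim 1$ and no summation over scales.

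The same gap propagates to \eqref{PB1}--\eqref{PB5}: ``restricting the Carleson sum to $I_2\notin\D_2(N)$'' is not available, and knowing the Haar coefficients of $f_N=P_N^\perp b_{I_1J_1}$ individually does not bound $\|f_N\|_{\BMO_{\D_2}}$. The paper needs a separate self-improvement argument (\eqref{PN-2}--\eqref{PBA}): it approximates the optimal atom $a_{J_2}$ by a finite Haar expansion, applies H\"older with exponent $2+\alpha_k$, and combines the coefficient bound $A_0A_N|I_2|^{1/2}$ with the John--Nirenberg $L^2$-oscillation estimate for $f_N$ to close the inequality $|\langle f_N,a_{J_2}\rangle|\lesssim (A_0A_N)^{\alpha_k/(2+\alpha_k)}|\langle f_N,a_{J_2}\rangle|^{2/(2+\alpha_k)}$. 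Your proposal contains no substitute for this step. A smaller imprecision: in case (c) the far pieces (second variable on $3I_2\setminus I_2$ and $(3I_2)^c$ with identical first-variable supports $I'_1=I''_1$) are controlled by the compact partial kernel representation \emph{on the second parameter} via $C(\mathbf 1_{I'_1},\mathbf 1_{I'_1})\,\mathscr{Q}_2(I_2)$ and $C(\mathbf 1_{I'_1},\mathbf 1_{I'_1})\,\mathscr{R}_2(I_2)$, not by weak compactness; the weak compactness property is not used in this lemma at all.
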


\begin{proof}

Given $I_2 \in \D_2$, let $a'_{I_2}$ be an arbitrary $\infty$-atom of $\mathrm{H}_{\D_2}^1$, which means that $\supp (a'_{I_2}) \subset I_2$, $\|a'_{I_2}\|_{L^{\infty}} \le |I_2|^{-1}$, and $\int a'_{I_2} \, dx_2 = 0$. Denote $a_{I_2} := a'_{I_2} |I_2|$, which satisfies $\supp (a_{I_2}) \subset I_2$, $\|a_{I_2}\|_{L^{\infty}} \le 1$, and $\int a_{I_2} \, dx_2 = 0$. We split 
\begin{align}\label{PBJ}
\langle b_{I_1 J_1}, a'_{I_2} \rangle
&= \langle T(h_{I_1} \otimes a_{I_2}), h_{J_1} \otimes 1 \rangle \, |I_2|^{-1} 
\\ \nonumber
&= \langle T(h_{I_1} \otimes a_{I_2}), h_{J_1} \otimes \mathbf{1}_{I_2} \rangle \, |I_2|^{-1} 
\\ \nonumber
&\quad+ \langle T(h_{I_1} \otimes a_{I_2}), h_{J_1} \otimes \mathbf{1}_{3I_2 \setminus I_2} \rangle \, |I_2|^{-1} 
\\ \nonumber
&\quad + \langle T(h_{I_1} \otimes a_{I_2}), h_{J_1} \otimes \mathbf{1}_{(3I_2)^c} \rangle \, |I_2|^{-1} 
\\ \nonumber
&=: \mathscr{J}_1 + \mathscr{J}_2 + \mathscr{J}_3. 
\end{align}

First, assume that $\rd(I_1, J_1) \ge 1$. By the compact partial kernel representation, the cancellation of $h_{I_1}$, and \eqref{def:P}, we have 
\begin{align}\label{PBJ-1}
|\mathscr{J}_1|
&\lesssim \mathscr{P}_1(I_1, J_1) |I_1|^{\frac12} |J_1|^{\frac12}  \, C(a_{I_2}, \mathbf{1}_{I_2}) \, |I_2|^{-1} 
\\ \nonumber
&\lesssim F_1(I_1, J_1) \frac{\rs(I_1, J_1)^{\frac{n_1}{2} + \delta_1}}{\rd(I_1, J_1)^{n_1 + \delta_1}} 
F_2(I_2).   
\end{align}
By the compact full kernel representation, the cancellation of $h_{I_1}$, the mixed size-H\"{o}lder and H\"{o}lder conditions of $K$, we obtain that 
\begin{align}\label{PBJ-3}
|\mathscr{J}_2| + |\mathscr{J}_3|
&\lesssim \mathscr{P}_1(I_1, J_1) |I_1|^{-\frac12} |J_1|^{-\frac12} \, 
\mathscr{Q}_2(I_2) |I_2|^{-1}
\\ \nonumber
&\quad+ \mathscr{P}_1(I_1, J_1) |I_1|^{-\frac12} |J_1|^{-\frac12} \, \mathscr{R}_2(I_2) |I_2|^{-1}
\\ \nonumber
&\lesssim F_1(I_1, J_1) \frac{\rs(I_1, J_1)^{\frac{n_1}{2} + \delta_1}}{\rd(I_1, J_1)^{n_1 + \delta_1}}
\widetilde{F}_2(I_2),  
\end{align}
provided \eqref{def:P}, \eqref{def:Q} and \eqref{def:R}. Then, \eqref{PBJ}--\eqref{PBJ-3} imply 
\begin{align}\label{PBJ-34}
|\langle b_{I_1 J_1}, a'_{I_2} \rangle| 
\lesssim F_1(I_1, J_1) \frac{\rs(I_1, J_1)^{\frac{n_1}{2} + \delta_1}}{\rd(I_1, J_1)^{n_1 + \delta_1}}
\widehat{F}_2(I_2), 
\end{align}
which in turn yields \eqref{B1}.

To show \eqref{PB1}, note that by \eqref{B1} and \eqref{PBJ-34}, we have $f_N := P_N^{\perp} b_{I_1 J_1} \in \BMO_{\D_2}$ and 
\begin{align}\label{PN-2}
&|\langle f_N, h_{I_2} \rangle|
= |\langle b_{I_1 J_1}, h_{I_2} \rangle| \mathbf{1}_{\{I \not\in \D_2(N)\}}
\\ \nonumber
&\lesssim A_0 \,A_N |I_2|^{\frac12}
:= F_1(I_1, J_1) \frac{\rs(I_1, J_1)^{\frac{n_1}{2} + \delta_1}}{\rd(I_1, J_1)^{n_1 + \delta_1}} 
\Big(\sup_{I_2 \not\in \D(N)} \widehat{F}_2(I_2)\Big) |I_2|^{\frac12},  
\end{align}
provided that $|I_2|^{-\frac12} h_{I_2}$ is an $\infty$-atom of $\mathrm{H}_{\D_2}^1$. 
Let $J_2 \in \D_2$. It was proved in \cite[p. 35]{J83} that there exist a constant $f_N^{J_2}$ and an $\infty$-atom $a_{J_2} \in \mathrm{H}_{\D_2}^1$ supported on $J_2$ so that 
\begin{align}\label{CFN}
\inf_{c \in \R} \fint_{J_2} |f_N - c| \, dx_2 
=\fint_{J_2} |f_N - f_N^{J_2}| \, dx_2 
=|\langle f_N, a_{J_2}\rangle|. 
\end{align}
Thus, \eqref{PB1} is reduced to showing 
\begin{align}\label{PBA}
|\langle f_N, a_{J_2}\rangle| 
\lesssim A_0\, A_N. 
\end{align}
We may assume $|\langle f_N, a_{J_2}\rangle| \neq 0$, otherwise there is nothing to prove. Since $f_N \in \BMO_{\D_2}$ and $\{h_I\}_{I \in \D_2}$ is an unconditional basis of $\mathrm{H}_{\D_2}^1$, there exists an integer $k \in \N$ such that 
\begin{align}\label{PN-4}
\bigg|\bigg\langle f_N, \, a_{J_2} - \sum_{I \in \D_2(k)} 
\langle a_{J_2}, h_I\rangle \, h_I \bigg\rangle \bigg|
\le \frac12 |\langle f_N, a_{J_2}\rangle|.  
\end{align}
Denote 
\begin{align*}
\phi_k(\alpha) 
:= |J_2|^{\frac12} \bigg(\sum_{I \in \D_2(k)} 
|\langle a_{J_2}, h_I\rangle|^{\frac{2+\alpha}{1+\alpha}} \bigg)^{\frac{1+\alpha}{2+\alpha}} , 
\quad \, \alpha>0. 
\end{align*}
Observe that 
\begin{align*}
\lim_{\alpha \to 0^+} \phi_k(\alpha) 
= |J_2|^{\frac12} \bigg(\sum_{I \in \D_2(k)} |\langle a_{J_2}, h_I\rangle|^2 \bigg)^{\frac12} 
\le |J_2|^{\frac12} \|a_{J_2}\|_{L^2}
\le 1. 
\end{align*}
Thus, there exists some $\alpha_k \in (0, 1)$ small enough so that 
\begin{align}\label{PN-24}
\phi_k(\alpha_k) \le 2. 
\end{align}
It follows from H\"{o}lder's inequality and \eqref{PN-2}--\eqref{PN-24} that 
\begin{align*}
|\langle f_N, a_{J_2} \rangle| 
&\le 2 \bigg|\bigg\langle f_N, \sum_{I \in \D_2(k)} 
\langle a_{J_2}, h_I\rangle \, h_I \bigg\rangle \bigg| 
\le 2 \sum_{I \in \D_2(k)} |\langle a_{J_2}, h_I\rangle| \, |\langle f_N, h_I \rangle| 
\\
&\le 2 \bigg(\sum_{I \in \D_2(k)} |\langle a_{J_2}, h_I\rangle|^{\frac{2+\alpha_k}{1+\alpha_k}} \bigg)^{\frac{1+\alpha_k}{2+\alpha_k}} 
\bigg(\sum_{I \subset J_2} |\langle f_N, h_I \rangle|^{2+\alpha_k} \bigg)^{\frac{1}{2+\alpha_k}}  
\\
&\le 2 \phi_k(\alpha_k) |J_2|^{-\frac12} \big(A_0 A_N |J_2|^{\frac12} \big)^{\frac{\alpha_k}{2+\alpha_k}} 
\bigg(\sum_{I \subset J_2} |\langle f_N - f_N^{J_2}, h_I \rangle|^2 \bigg)^{\frac{1}{2+\alpha_k}}  
\\
&\le 4 (A_0 A_N)^{\frac{\alpha_k}{2+\alpha_k}} 
\bigg(\fint_{J_2} |f_N - f_N^{J_2}|^2 \, dx_2 \bigg)^{\frac{1}{2+\alpha_k}}
\\
&\lesssim (A_0 A_N)^{\frac{\alpha_k}{2+\alpha_k}} 
\bigg(\fint_{J_2} |f_N - f_N^{J_2}| \, dx_2 \bigg)^{\frac{2}{2+\alpha_k}}
\\
&= (A_0 A_N)^{\frac{\alpha_k}{2+\alpha_k}} |\langle f_N, a_{J_2} \rangle|^{\frac{2}{2+\alpha_k}}. 
\end{align*}
Together with that $0 < |\langle f_N, a_{J_2} \rangle| < \infty$, this gives \eqref{PBA}, hence shows \eqref{PB1}.

Second, suppose that $\rd(I_1, J_1) < 1$ and $I_1 \cap J_1 = \emptyset$. By the compact partial kernel representation, the estimates \eqref{def:Q} and \eqref{def:QIJ} imply 
\begin{align}\label{PBJ-4}
|\mathscr{J}_1| 
&\le |\langle T(h_{I_1} \otimes a_{I_2}), 
(h_{J_1} \mathbf{1}_{3I_1 \setminus I_1}) \otimes \mathbf{1}_{I_2} \rangle| |I_2|^{-1}
\\ \nonumber
&\quad+ |\langle T(h_{I_1} \otimes a_{I_2}), 
(h_{J_1} \mathbf{1}_{J_1 \setminus 3I_1}) \otimes \mathbf{1}_{I_2} \rangle| |I_2|^{-1} 
\\ \nonumber
&\lesssim \big[\mathscr{Q}_1(I_1) + \mathscr{Q}_1(I_1, J_1) \big]
|I_1|^{-\frac12} |J_1|^{-\frac12} \, C(a_{I_2}, \mathbf{1}_{I_2}) |I_2|^{-1} 
\\ \nonumber
&\lesssim \bigg[\widetilde{F}_1(I_1) + \frac{\widetilde{F}_1(I_1, J_1)}{\ird(I_1, J_1)^{\delta_1}}\bigg] 
|I_1|^{\frac12} |J_1|^{-\frac12} \, F_2(I_2) 
\\ \nonumber
&\lesssim \widetilde{F}_1(I_1, J_1) \frac{\rs(I_1, J_1)^{\frac{n_1}{2}}}{\ird(I_1, J_1)^{\delta_1}} 
F_2(I_2), 
\end{align}
where we have used that $\ird(I_1, J_1) \le 2$, otherwise $J_1 \cap 3I_1 = \emptyset$. For $\mathscr{J}_2$, we utilize the compact full kernel representation, the size and mixed size-H\"{o}lder conditions of $K$, \eqref{def:Q}, and \eqref{def:QIJ} to arrive at 
\begin{align}\label{PBJ-5}
|\mathscr{J}_2| 
&\le |\langle T(h_{I_1} \otimes a_{I_2}), 
(h_{J_1} \mathbf{1}_{3I_1 \setminus I_1}) \otimes \mathbf{1}_{3I_2 \setminus I_2} \rangle |I_2|^{-1}
\\ \nonumber
&\quad+ |\langle T(h_{I_1} \otimes a_{I_2}), 
(h_{J_1} \mathbf{1}_{J_1 \setminus 3I_1}) \otimes \mathbf{1}_{3I_2 \setminus I_2} \rangle |I_2|^{-1} 
\\ \nonumber
&\lesssim \big[\mathscr{Q}_1(I_1) + \mathscr{Q}_1(I_1, J_1) \big] 
|I_1|^{-\frac12} |J_1|^{-\frac12} \, \mathscr{Q}_2(I_2) |I_2|^{-1} 
\\ \nonumber
&\lesssim \widetilde{F}_1(I_1, J_1) \frac{\rs(I_1, J_1)^{\frac{n_1}{2}}}{\ird(I_1, J_1)^{\delta_1}} 
\widetilde{F}_2(I_2). 
\end{align}
Additionally, by \eqref{def:Q}, \eqref{def:QIJ}, and \eqref{def:R}, 
\begin{align}\label{PBJ-6}
|\mathscr{J}_3| 
&\le |\langle T(h_{I_1} \otimes a_{I_2}), 
(h_{J_1} \mathbf{1}_{3I_1 \setminus I_1}) \otimes \mathbf{1}_{(3I_2)^c} \rangle |I_2|^{-1}
\\ \nonumber
&\quad+ |\langle T(h_{I_1} \otimes a_{I_2}), 
(h_{J_1} \mathbf{1}_{J_1 \setminus 3I_1}) \otimes \mathbf{1}_{(3I_2)^c} \rangle |I_2|^{-1} 
\\ \nonumber
&\lesssim \big[\mathscr{Q}_1(I_1) + \mathscr{Q}_1(I_1, J_1) \big] 
|I_1|^{-\frac12} |J_1|^{-\frac12} \,\mathscr{R}_2(I_2) |I_2|^{-1}
\\ \nonumber
&\lesssim \widetilde{F}_1(I_1, J_1) \frac{\rs(I_1, J_1)^{\frac{n_1}{2}}}{\ird(I_1, J_1)^{\delta_1}} 
\widetilde{F}_2(I_2). 
\end{align}
Consequently, it follows from \eqref{PBJ} and \eqref{PBJ-4}--\eqref{PBJ-6} that  
\begin{align}\label{PBJ-67}
|\langle b_{I_1 J_1}, a'_{I_2} \rangle| 
\lesssim F_1(I_1, J_1) \frac{\rs(I_1, J_1)^{\frac{n_1}{2}}}{\ird(I_1, J_1)^{\delta_1}}
\widehat{F}_2(I_2), 
\end{align}
which implies \eqref{B2} as desired. As argued in \eqref{PBA}, the inequality \eqref{PB2} follows from\eqref{PBJ-67}.

Third, we are going to prove \eqref{B3}. We rewrite 
\begin{align}\label{PBJ-7}
\langle b_{I_1}, a'_{I_2} \rangle
= \sum_{I'_1, \, I''_1 \in \ch(I_1)} \langle h_{I_1} \rangle_{I'_1} \langle h_{I_1} \rangle_{I''_1}  |I_2|^{-1} 
\langle T(\mathbf{1}_{I'_1} \otimes a_{I_2}), \mathbf{1}_{I''_1} \otimes 1 \rangle. 
\end{align}
If $I'_1 \neq I''_1$, then $\d(I'_1, I''_1) < 1$ and $I'_1 \cap I''_1 = \emptyset$, which is similar to the case in \eqref{B2}. Hence, 
\begin{align}\label{PBJ-8}
|\langle T(\mathbf{1}_{I'_1} \otimes a_{I_2}), \mathbf{1}_{I''_1} \otimes 1 \rangle| 
\lesssim \widetilde{F}_1(I'_1, I''_1) |I'_1| \widehat{F}_2(I_2) |I_2|
\lesssim \widetilde{F}_1(I_1) |I_1| \widehat{F}_2(I_2) |I_2|. 
\end{align}
In the case $I'_1 = I''_1$, we use the diagonal $\CMO$ condition, the compact partial kernel representation, \eqref{def:Q}, and \eqref{def:R} to obtain 
\begin{align}\label{PBJ-9}
|\langle T(\mathbf{1}_{I'_1} \otimes a_{I_2}), \mathbf{1}_{I'_1} \otimes 1 \rangle|
&\le |\langle T(\mathbf{1}_{I'_1} \otimes a_{I_2}), \mathbf{1}_{I'_1} \otimes \mathbf{1}_{I_2} \rangle|
\\ \nonumber
&\quad+ |\langle T(\mathbf{1}_{I'_1} \otimes a_{I_2}), 
\mathbf{1}_{I'_1} \otimes \mathbf{1}_{3I_2 \setminus I_2} \rangle|
\\ \nonumber
&\quad + |\langle T(\mathbf{1}_{I'_1} \otimes a_{I_2}), 
\mathbf{1}_{I'_1} \otimes \mathbf{1}_{(3I_2)^c} \rangle|  
\\ \nonumber
&\lesssim F_1(I'_1) |I'_1| \, F_2(I_2) |I_2| 
\\ \nonumber
&\quad+ C(\mathbf{1}_{I'_1}, \mathbf{1}_{I'_1}) \mathscr{Q}_2(I_2) 
\\ \nonumber
&\quad + C(\mathbf{1}_{I'_1}, \mathbf{1}_{I'_1}) \mathscr{R}_2(I_2)
\\ \nonumber
&\lesssim \widehat{F}_1(I_1) |I_1| \widehat{F}_2(I_2) |I_2|. 
\nonumber
\end{align}
Now gathering \eqref{PBJ-7}--\eqref{PBJ-9}, we obtain 
\begin{align}\label{PBJ-91}
|\langle b_{I_1}, a'_{I_2} \rangle|
\lesssim \widehat{F}_1(I_1) \, \widehat{F}_2(I_2). 
\end{align}
Along with the argument in \eqref{PBA}, this gives \eqref{B3}.

To proceed to \eqref{B4} and \eqref{PB4}, we invoke \eqref{def:Q} and \eqref{def:R} to arrive at 
\begin{align*}
|\langle b_{I_1 J_1}, a'_{I_2} \rangle|
&=|\langle T(h_{I_1} \otimes a_{I_2}), \phi_{I_1 J_1} \otimes 1 \rangle| |I_2|^{-1} 
\\
&\le |\langle T(h_{I_1} \otimes a_{I_2}), 
(\phi_{I_1 J_1} \mathbf{1}_{3I_1}) \otimes \mathbf{1}_{I_2} \rangle| |I_2|^{-1}
\\
&\quad + |\langle T(h_{I_1} \otimes a_{I_2}), 
(\phi_{I_1 J_1} \mathbf{1}_{3I_1}) \otimes \mathbf{1}_{3I_2 \setminus I_2} \rangle| |I_2|^{-1}
\\
&\quad + |\langle T(h_{I_1} \otimes a_{I_2}), 
(\phi_{I_1 J_1} \mathbf{1}_{3I_1}) \otimes \mathbf{1}_{(3I_2)^c} \rangle| |I_2|^{-1}
\\
&\quad + |\langle T(h_{I_1} \otimes a_{I_2}), 
(\phi_{I_1 J_1} \mathbf{1}_{(3I_1)^c}) \otimes \mathbf{1}_{I_2} \rangle| |I_2|^{-1}
\\
&\quad + |\langle T(h_{I_1} \otimes a_{I_2}), 
(\phi_{I_1 J_1} \mathbf{1}_{(3I_1)^c}) \otimes \mathbf{1}_{3I_2 \setminus I_2} \rangle| |I_2|^{-1}
\\ 
&\quad + |\langle T(h_{I_1} \otimes h_{I_2}), 
(\phi_{I_1 J_1} \mathbf{1}_{(3I_1)^c}) \otimes \mathbf{1}_{(3I_2)^c} \rangle| |I_2|^{-1}
\\
&\lesssim \mathscr{Q}_1(I_1) |I_1|^{-\frac12} |J_1|^{-\frac12} \, C(a_{I_2}, \mathbf{1}_{I_2}) |I_2|^{-1} 
\\ 
&\quad+ \mathscr{Q}_1(I_1) |I_1|^{-\frac12} |J_1|^{-\frac12} \, \mathscr{Q}_2(I_2) |I_2|^{-1} 
\\
&\quad+ \mathscr{Q}_1(I_1) |I_1|^{-\frac12} |J_1|^{-\frac12} \, \mathscr{R}_2(I_2) |I_2|^{-1}
\\
&\quad+ \mathscr{R}_1(I_1) |I_1|^{-\frac12} |J_1|^{-\frac12} \, C(a_{I_2}, \mathbf{1}_{I_2}) |I_2|^{-1} 
\\
&\quad+ \mathscr{R}_1(I_1) |I_1|^{-\frac12} |J_1|^{-\frac12} \, \mathscr{Q}_2(I_2) |I_2|^{-1} 
\\
&\quad+ \mathscr{R}_1(I_1) |I_1|^{-\frac12} |J_1|^{-\frac12} \, \mathscr{R}_2(I_2) |I_2|^{-1} 
\\
&\lesssim \widetilde{F}_1(I_1) \rs(I_1, J_1)^{\frac{n_1}{2}} 
\widehat{F}_2(I_2),  
\end{align*}
which shows \eqref{B4}. Using the strategy in \eqref{PBA}, one can prove \eqref{PB4}.

Finally, let us demonstrate \eqref{B5} and \eqref{PB5}. In this scenario, $\d(I_1, J_{I_1}^c) \ge \ell(I_1)$. By \eqref{def:Q}, \eqref{def:R}, and \eqref{def:RIJ}, one has 
\begin{align*}
|\langle b_{I_1 J_1}, a'_{I_2} \rangle|
&=|\langle T(h_{I_1} \otimes a_{I_2}), \phi_{I_1 J_1} \otimes 1 \rangle| |I_2|^{-1} 
\\
&\le |\langle T(h_{I_1} \otimes a_{I_2}), 
\phi_{I_1 J_1} \otimes \mathbf{1}_{I_2} \rangle| |I_2|^{-1} 
\\
&\quad + |\langle T(h_{I_1} \otimes a_{I_2}), 
\phi_{I_1 J_1} \otimes \mathbf{1}_{3I_2 \setminus I_2} \rangle| |I_2|^{-1} 
\\
&\quad + |\langle T(h_{I_1} \otimes a_{I_2}), 
\phi_{I_1 J_1} \otimes \mathbf{1}_{(3I_2)^c} \rangle| |I_2|^{-1} 
\\
&\lesssim \mathscr{R}_1(I_1, J_1) |I_1|^{-\frac12} |J_1|^{-\frac12} \, C(a_{I_2}, \mathbf{1}_{I_2})  |I_2|^{-1} 
\\
&\quad+ \mathscr{R}_1(I_1, J_1) |I_1|^{-\frac12} |J_1|^{-\frac12} \, \mathscr{Q}_2(I_2)  |I_2|^{-1} 
\\
&\quad+ \mathscr{R}_1(I_1, J_1) |I_1|^{-\frac12} |J_1|^{-\frac12} \, \mathscr{R}_2(I_2)  |I_2|^{-1} 
\\
&\lesssim \widetilde{F}_1(I_1, J_1) \rs(I_1, J_1)^{\frac{n_1}{2} + \frac{\delta_1}{2}} 
\widehat{F}_2(I_2).  
\end{align*}
This coincides with \eqref{B5}. As above, \eqref{PB5} can be proved.  
\end{proof}

\section{The main estimates}\label{sec:proof}
The goal of this section is to prove \eqref{reduction-3}. Our analysis will revolve around relative size and distance between cubes. Parametrizing these sums is the core of the proof. Throughout this section, we always keep in mind that $\ell(I_1) \le \ell(J_1)$ and $\ell(I_2) \le \ell(J_2)$.

We begin with assuming some extra properties of functions appearing in the hypotheses.
\begin{list}{\rm (\theenumi)}{\usecounter{enumi}\leftmargin=1.2cm \labelwidth=1cm \itemsep=0.2cm \topsep=.2cm \renewcommand{\theenumi}{P\arabic{enumi}}}

\item\label{list:P1} Assume that for each $i=1, 2$ and $j=1, 2, 3$, the function $F_{ij}$ in Definition \ref{def:full} is the same as the one in Definition \ref{def:partial}. Indeed, given $(F_{i, 1}^1, F_{i, 2}^1, F_{i, 3}^1) \in \F$ in Definition \ref{def:full} and $(F_{i, 1}^2, F_{i, 2}^2, F_{i, 3}^2) \in \F$ in Definition \ref{def:partial}, if we define
\begin{align*}
F_{i, j} := \max\{F_{i, j}^1, F_{i, j}^2\}, \quad j=1, 2, 3,
\end{align*}
then the compact full and partial kernel representations hold for $(F_{i, 1}, F_{i, 2}, F_{i, 3}) \in \F$.

\item\label{list:P2} Assume that for each $i=1, 2$, the function $F_i$ in Definition \ref{def:WCP} is the same as that in Definition \ref{def:diag-CMO}. This can be seen as above.

\item\label{list:P3} For each $i=1,2$ and $(F_{i, 1}, F_{i, 2}, F_{i, 3}) \in \F$ in Definitions \ref{def:full} and \ref{def:partial}, we may assume that $F_{i, 1}$ is monotone increasing while $F_{i, 2}$ and $F_{i, 3}$ are monotone decreasing. Indeed, it suffices to define
\begin{align*}
F_{i, 1}^*(t) := \sup_{0 \le s \le t} F_{i, 1}(s), \quad
F_{i, 2}^*(t) := \sup_{s \ge t} F_{i, 2}(s), \quad\text{and}\quad
F_{i, 3}^*(t) := \sup_{s \ge t} F_{i, 3}(s).
\end{align*}
Then it is easy to show that the compact full and partial kernel representations hold for $(F_{i, 1}^*, F_{i, 2}^*, F_{i, 3}^*) \in \F$.

\item\label{list:P4} Since any dilation of functions in $\F$, $\F_1$, and $\F_2$ still belongs to the original space, we will often omit all universal constants appearing in the argument involving these functions.

\item\label{list:P5} In light of Lemma \ref{lem:improve}, we will mostly use alternative estimates for kernels in Definitions \ref{def:full} and \ref{def:partial}. For example, in Definition \ref{def:full}, the size condition can be replaced by
\begin{align*}
|K(x, y)| \leq \frac{F_1(x_1, y_1)}{|x_1-y_1|^{n_1}} \frac{F_2(x_2, y_2)}{|x_2-y_2|^{n_2}},
\end{align*}
where
\begin{align*}
F_i(x_i, y_i) := F_{i, 1}(|x_i - y_i|) F_{i, 2}(|x_i - y_i|) F_{i, 3} \bigg(1+ \frac{|x_i + y_i|}{1+|x_i - y_i|} \bigg),
\end{align*}
while the first H\"{o}lder condition can be replaced by
\begin{align*}
|&K(x, y) - K(x, (y_1, y'_2)) - K(x, (y'_1, y_2)) + K(x, y')|
\\
&\leq \bigg(\frac{|y_1-y'_1|}{|x_1-y_1|}\bigg)^{\delta_1} \frac{F_1(x_1, y_1)}{|x_1 - y_1|^{n_1}}
\bigg(\frac{|y_2-y'_2|}{|x_2-y_2|}\bigg)^{\delta_2} \frac{F_2(x_2, y_2)}{|x_2 - y_2|^{n_2}}
\end{align*}
whenever $|y_1-y'_1| \leq |x_1-y_1|/2$ and $|y_2-y'_2| \leq |x_2-y_2|/2$, where
\begin{align*}
F_i(x_i, y_i) := F_{i, 1}(|y_i - y'_i|) F_{i, 2}(|x_i - y_i|) F_{i, 3} \bigg(1+ \frac{|x_i + y_i|}{1+|x_i - y_i|} \bigg).
\end{align*}
Other conditions can be formulated in a similar way.
\end{list}

Now we consider six cases given in \eqref{reduction-3}. 
For simplicity, denote $L^2 := L^2(\R^{n_1} \times \R^{n_2})$. 

\subsection{$\rd(I_1, J_1) \ge 1$ and $\rd(I_2, J_2) \ge 1$}
Write
\begin{align*}
\mathbf{K}(x, y)
:= K(x, y) - K(x, (y_1, c_{I_2})) - K(x, (c_{I_1}, y_2)) + K(x, (c_{I_1}, c_{I_2})).
\end{align*}
By the compact full kernel representation, the cancellation of $h_{I_1}$ and $h_{I_2}$, the H\"{o}lder condition of $K$, and \eqref{def:P}, we have
\begin{align}\label{Gsep}
|\mathscr{G}_{J_1 J_2}^{I_1 I_2}|
&=\bigg|\int_{I_1 \times I_2} \int_{J_1 \times J_2}
\mathbf{K}(x, y) h_{I_1} \otimes h_{I_2}(y) h_{J_1} \otimes h_{J_2}(x) \, dx \, dy \bigg|
\\ \nonumber
&\lesssim \prod_{i=1}^2 |I_i|^{-\frac12} |J_i|^{-\frac12} \mathscr{P}_i(I_i, J_i)
\lesssim \prod_{i=1}^2 F_i(I_i, J_i) \frac{\rs(I_i, J_i)^{\frac{n_i}{2}+\delta_i}}{\rd(I_i, J_i)^{n_i + \delta_i}}.
\end{align}

Let us turn to the estimate for $\mathscr{I}_1^N$. Given $i=1, 2$, $k_i \ge 0$, $j_i \ge 1$, and $I_i, J_i \in \D_i$, we let
\begin{align*}
J_i(k_i, j_i) 
&:= \big\{I_i \in \D_i: \, \ell(I_i) = 2^{-k_i} \ell(J_i), \, j_i \le \rd(I_i, J_i) < j_i +1\big\}, 
\\
I_i(k_i, j_i) 
&:= \big\{J_i \in \D_i: \, \ell(J_i) = 2^{k_i} \ell(I_i), \, j_i \le \rd(I_i, J_i) < j_i +1\big\}.
\end{align*}
Note that for each $i=1, 2$, $I_i \in J_i(k_i, j_i)$ if and only if $J_i \in I_i(k_i, j_i)$,
\begin{align}\label{car-IJ}
&\# J_i(k_i, j_i) \lesssim 2^{k_i n_i} j_i^{n_i -1},
\quad\text{ and }\quad
\# I_i(k_i, j_i) \lesssim j_i^{n_i-1}.
\end{align}
By \eqref{Gsep} and Lemma \ref{lem:FE} part \eqref{list:FE-1}, there exists an $N_0>1$ so that for all $N \ge N_0$,
\begin{align*}
|\mathscr{I}_1^N|
&\le \sum_{\substack{k_1 \ge 0 \\ k_2 \ge 0}} \sum_{\substack{j_1 \ge 1 \\ j_2 \ge 1}}
\sum_{\substack{J_1 \not\in \D_1(2N) \\ \text{or } J_2 \not\in \D_2(2N)}}
\sum_{\substack{I_1 \in J_1(k_1, j_1) \\ I_2 \in J_2(k_2, j_2)}}
|\G_{J_1 J_2}^{I_1 I_2}| \, |f_{I_1 I_2}| |g_{J_1 J_2}|
\lesssim \sum_{i=0}^2 \mathscr{I}_{1, i}^N,
\end{align*}
where
\begin{align*}
\mathscr{I}_{1, 0}^N
& := \varepsilon \sum_{\substack{k_1 \ge 0 \\ k_2 \ge 0}} 
\sum_{\substack{j_1 \ge 1 \\ j_2 \ge 1}}
\sum_{\substack{J_1 \in \D_1 \\ J_2 \in \D_2}}
\sum_{\substack{I_1 \in J_1(k_1, j_1) \\ I_2 \in J_2(k_2, j_2)}}
\prod_{i=1}^2 2^{-k_i(\frac{n_i}{2} + \delta_i)} j_i^{-n_i - \delta_i} 
|f_{I_1 I_2}| |g_{J_1 J_2}|,
\\ 
\mathscr{I}_{1, i}^N
&:= \sum_{\substack{k_1 \ge 0 \\ k_2 \ge 0}} 
\sum_{\substack{j_1, j_2 \ge 1 \\ j_i \ge N^{1/8}}}
\sum_{\substack{J_1 \in \D_1 \\ J_2 \in \D_2}}
\sum_{\substack{I_1 \in J_1(k_1, j_1) \\ I_2 \in J_2(k_2, j_2)}}
\prod_{i=1}^2 2^{-k_i(\frac{n_i}{2} + \delta_i)} j_i^{-n_i - \delta_i} 
|f_{I_1 I_2}| |g_{J_1 J_2}|, 
\end{align*}
for each $i=1, 2$. It follows from the Cauchy--Schwarz inequality and \eqref{car-IJ} that
\begin{align*}
\mathscr{I}_{1, 0}^N
&\le \varepsilon \sum_{k_1, k_2 \ge 0} \sum_{j_1, j_2 \ge 1}
2^{-k_1(\frac{n_1}{2} + \delta_1)} 2^{-k_2(\frac{n_2}{2} + \delta_2)}
j_1^{-n_1 - \delta_1} j_2^{-n_2 - \delta_2}
\nonumber \\
&\quad\times \bigg(\sum_{\substack{I_1 \in \D_1 \\ I_2 \in \D_2}}
\sum_{\substack{J_1 \in I_1(k_1, j_1) \\ J_2 \in I_2(k_2, j_2)}} |f_{I_1 I_2}|^2 \bigg)^{\frac12}
\bigg( \sum_{\substack{J_1 \in \D_1 \\ J_2 \in \D_2}}
\sum_{\substack{I_1 \in J_1(k_1, j_1) \\ I_2 \in J_2(k_2, j_2)}} |g_{J_1 J_2}|^2 \bigg)^{\frac12}
\nonumber \\
&\lesssim \varepsilon \sum_{k_1, k_2 \ge 0} \sum_{j_1, j_2 \ge 1} 
2^{-k_1 \delta_1} 2^{-k_2 \delta_2} j_1^{-\delta_1 -1} j_2^{-\delta_2 -1}
\nonumber \\
&\quad\times \bigg(\sum_{\substack{I_1 \in \D_1 \\ I_2 \in \D_2}} |f_{I_1 I_2}|^2 \bigg)^{\frac12}
\bigg( \sum_{\substack{J_1 \in \D_1 \\ J_2 \in \D_2}} |g_{J_1 J_2}|^2 \bigg)^{\frac12}
\nonumber \\
&\lesssim \varepsilon \|f\|_{L^2} \|g\|_{L^2}.
\end{align*}
and for each $i=1, 2$, 
\begin{align*}
\mathscr{I}_{1, i}^N
&\lesssim \sum_{k_1, k_2 \ge 0} \sum_{\substack{j_1, j_2 \ge 1 \\ j_i \ge N^{1/8}}}
2^{-k_1 \delta_1} 2^{-k_2 \delta_2} j_1^{-\delta_1 -1} j_2^{-\delta_2 -1}
\|f\|_{L^2} \|g\|_{L^2} 
\nonumber \\
&\lesssim N^{-\delta_i/8} \|f\|_{L^2} \|g\|_{L^2}
\le \varepsilon \|f\|_{L^2} \|g\|_{L^2},  
\end{align*}
provided $N \ge N_0$ sufficiently large. Gathering the estimates above, we conclude that 
\begin{align*}
|\mathscr{I}_1^N|
\lesssim \sum_{i=0}^2 \mathscr{I}_{1, i}^N
\lesssim \varepsilon \|f\|_{L^2} \|g\|_{L^2}.
\end{align*}

\subsection{$\rd(I_1, J_1) \ge 1$, $\rd(I_2, J_2) < 1$, and $I_2 \cap J_2 = \emptyset$}
In this case, $\d(I_2, J_2) < \ell(J_2)$, and hence $I_2 \subset 5J_2 \setminus J_2$.

\begin{lemma}\label{lem:THH-2}
Let $I_1, J_1 \in \D_1$ with $\ell(I_1) \le \ell(J_1)$ so that $\rd(I_1, J_1) \ge 1$, and let $I_2, J_2 \in \D_2$ with $\ell(I_2) \le \ell(J_2)$ so that $\rd(I_2, J_2) < 1$ and $I_2 \cap J_2 = \emptyset$. Then
\begin{align}\label{G2}
|\G_{J_1 J_2}^{I_1 I_2}|
\lesssim  F_1(I_1, J_1) \frac{\rs(I_1, J_1)^{\frac{n_1}{2} + \delta_1}}{\rd(I_1, J_1)^{n_1 + \delta_1}} \,
\widetilde{F}_2(I_2, J_2) \frac{\rs(I_2, J_2)^{\frac{n_2}{2}}}{\ird(I_2, J_2)^{\delta_2}}. 
\end{align}
\end{lemma}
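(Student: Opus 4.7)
The plan is to exploit disjointness of supports in both parameters to apply the compact full kernel representation, and then handle the two parameters with different degrees of cancellation. Since $\rd(I_1, J_1) \ge 1$ forces $I_1 \cap J_1 = \emptyset$, and $I_2 \cap J_2 = \emptyset$ holds by hypothesis, the full kernel representation applies, giving
\begin{align*}
\G_{J_1 J_2}^{I_1 I_2}
= \int_{J_1 \times J_2} \int_{I_1 \times I_2} K(x,y) \, h_{I_1}(y_1) h_{I_2}(y_2) h_{J_1}(x_1) h_{J_2}(x_2) \, dy \, dx.
\end{align*}
I would split the $x_2$-integration according to whether $x_2$ is close to $I_2$, writing $J_2 = (J_2 \setminus 3I_2) \cup (J_2 \cap 3I_2)$ and $\G_{J_1 J_2}^{I_1 I_2} = A + B$ accordingly.

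On the far piece $J_2 \setminus 3I_2$, the plan is to use double cancellation $\int h_{I_1} = \int h_{I_2} = 0$ to replace $K(x,y)$ by the fourth-order difference $K(x,y) - K(x, (y_1, c_{I_2})) - K(x, (c_{I_1}, y_2)) + K(x, (c_{I_1}, c_{I_2}))$, and to apply the full $(\delta_1,\delta_2)$-H\"older hypothesis. The required inequalities $|y_1 - c_{I_1}| \le \ell(I_1)/2 \le |x_1-y_1|/2$ (from $\rd(I_1, J_1) \ge 1$) and $|y_2 - c_{I_2}| \le \ell(I_2)/2 \le |x_2-y_2|/2$ (since $|x_2 - c_{I_2}| \ge 3\ell(I_2)/2$ and $|x_2 - y_2| \simeq |x_2 - c_{I_2}|$ there) both hold. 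The integral factorizes into $\mathscr{P}_1(I_1, J_1)$ in the first parameter and an integral dominated by $\mathscr{Q}_2(I_2, J_2)$ in the second; after normalizing by $\prod_i |I_i|^{-1/2}|J_i|^{-1/2}$ and invoking \eqref{def:P} and \eqref{def:QIJ}, this produces precisely the bound \eqref{G2} for $|A|$.

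On the close piece $J_2 \cap 3I_2$, the H\"older hypothesis $|y_2 - c_{I_2}| \le |x_2-y_2|/2$ can fail, so I would instead use cancellation only in $y_1$: replace $K(x,y)$ by $K(x,y) - K(x, (c_{I_1}, y_2))$ and apply the mixed size--H\"older condition of order $\delta_1$ in the first parameter. The first-parameter integral is again $\mathscr{P}_1(I_1, J_1)$, and since $I_2 \cap J_2 = \emptyset$ we have $J_2 \cap 3I_2 \subset 3I_2 \setminus I_2$, so the second-parameter integral is controlled by $\mathscr{Q}_2(I_2) \lesssim \widetilde{F}_2(I_2)|I_2|$ via \eqref{def:Q}. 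This yields
\begin{align*}
|B| \lesssim F_1(I_1, J_1) \frac{\rs(I_1, J_1)^{n_1/2 + \delta_1}}{\rd(I_1, J_1)^{n_1+\delta_1}} \, \widetilde{F}_2(I_2) \, \rs(I_2, J_2)^{n_2/2}.
\end{align*}
Since $B \ne 0$ forces $J_2 \cap 3I_2 \ne \emptyset$, hence $\d(I_2, J_2) \le \ell(I_2)$ and $\ird(I_2, J_2) \le 2$, one has $\ird(I_2, J_2)^{-\delta_2} \simeq 1$; combined with the monotonicity $\widetilde{F}_2(I_2) \le \widetilde{F}_2(I_2, J_2)$ (since $\widetilde{F}_{2,3}$ is monotone along $I_2 \subset I_2 \Cup J_2$), this bound is subsumed by \eqref{G2}, and adding $|A| + |B|$ completes the proof.

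The main obstacle will be the close region $J_2 \cap 3I_2$: the full double-H\"older bound cannot be applied there because $|y_2 - c_{I_2}| \le |x_2 - y_2|/2$ can fail near the shared boundary of $I_2$ and $J_2$. The price is accepting, in the second parameter, the weaker decay $\widetilde{F}_2(I_2, J_2)\rs(I_2, J_2)^{n_2/2}/\ird(I_2, J_2)^{\delta_2}$ instead of the stronger $F_2(I_2, J_2)\rs^{n_2/2+\delta_2}/\rd^{n_2+\delta_2}$ one obtains in the fully separated case. This is exactly the form of the factor appearing in \eqref{G2}.
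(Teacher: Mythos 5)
Your proposal is correct and follows essentially the same route as the paper: the same split of the $x_2$-integration into $J_2\cap 3I_2$ (equivalently $3I_2\setminus I_2$, by disjointness) and $J_2\setminus 3I_2$, the same single-cancellation/mixed size--H\"older bound via $\mathscr{P}_1(I_1,J_1)\,\mathscr{Q}_2(I_2)$ on the close piece with the observation $\ird(I_2,J_2)\le 2$ there, and the same double-cancellation/H\"older bound via $\mathscr{P}_1(I_1,J_1)\,\mathscr{Q}_2(I_2,J_2)$ on the far piece.
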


\begin{proof}
We split
\begin{align}\label{GTH}
\langle T (h_{I_1} \otimes h_{I_2}), h_{J_1} \otimes h_{J_2} \rangle
&=\langle T(h_{I_1} \otimes h_{I_2}), h_{J_1} \otimes (h_{J_2} \mathbf{1}_{3I_2 \setminus I_2}) \rangle
\\ \nonumber
&\qquad+ \langle T(h_{I_1} \otimes h_{I_2}), h_{J_1} \otimes (h_{J_2} \mathbf{1}_{J_2 \setminus 3I_2}) \rangle.
\end{align}
Note that if $\ird(I_2, J_2)=1+\d(I_2, \partial J_2)/\ell(I_2)>2$, then $J_2 \cap 3I_2=\emptyset$, and hence
\[
\langle T(h_{I_1} \otimes h_{I_2}), h_{J_1} \otimes (h_{J_2} \mathbf{1}_{3I_2 \setminus I_2}) \rangle
=0.
\]
To treat the case $\ird(I_2, J_2) \le 2$, we let $\mathbf{K}_1(x, y) := K(x, y) - K(x, (c_{I_1}, y_2))$. 
We then utilize the compact full kernel representation, the cancellation of $h_{I_1}$, and the H\"{o}lder-size condition to arrive at
\begin{align}\label{GTH-1}
&|\langle T(h_{I_1} \otimes h_{I_2}), h_{J_1} \otimes (h_{J_2} \mathbf{1}_{3I_2 \setminus I_2}) \rangle|
\\ \nonumber
&\quad=\bigg|\int_{I_1 \times I_2} \int_{J_1 \times (3I_2 \setminus I_2)} \mathbf{K}_1(x, y)
h_{I_1} \otimes h_{I_2}(y) h_{J_1} \otimes h_{J_2}(x) \, dx \, dy \bigg|
\\ \nonumber
&\quad\lesssim |I_1|^{-\frac12} |J_1|^{-\frac12} \mathscr{P}_1(I_1, J_1) \,
|I_2|^{-\frac12} |J_2|^{-\frac12} \mathscr{Q}_2(I_2)
\\ \nonumber
&\quad\lesssim F_1(I_1, J_1) \frac{\rs(I_1, J_1)^{\frac{n_1}{2}+\delta_1}}{\rd(I_1, J_1)^{n_1 + \delta_1}}
\widetilde{F}_2(I_2, J_2) \frac{\rs(I_2, J_2)^{\frac{n_2}{2}}}{\ird(I_2, J_2)^{\delta_2}},
\end{align}
where we have used \eqref{def:P}, \eqref{def:Q}, and $\ird(I_2, J_2) \le 2$.

Write
\begin{align*}
\mathbf{K}_2(x, y) :=
K(x, (c_{I_1}, c_{I_2})) - K(x, (c_{I_1}, y_2)) - K(x, (y_1, c_{I_2})) + K(x, y).
\end{align*}
By the compact full kernel representation, the cancellation of $h_{I_1}$ and $h_{I_2}$, and the H\"{o}lder condition, we invoke \eqref{def:P} and \eqref{def:QIJ} to deduce that
\begin{align}\label{GTH-2}
&|\langle T(h_{I_1} \otimes h_{I_2}), h_{J_1} \otimes (h_{J_2} \mathbf{1}_{J_2 \setminus 3I_2}) \rangle|
\\ \nonumber
&\quad=\bigg|\int_{\R^{n_1} \times \R^{n_2}} \int_{\R^{n_1} \times \R^{n_2}} \mathbf{K}_2(x, y) h_{I_1} \otimes h_{I_2}(y)
h_{J_1} \otimes (h_{J_2} \mathbf{1}_{J_2 \setminus 3I_2})(x) \, dx \, dy \bigg|
\\ \nonumber
&\quad\lesssim |I_1|^{-\frac12} |J_1|^{-\frac12} \mathscr{P}_1(I_1, J_1) \, |I_2|^{-\frac12} |J_2|^{-\frac12} \mathscr{Q}_2(I_2, J_2)
\\ \nonumber
&\quad\lesssim F_1(I_1, J_1) \frac{\rs(I_1, J_1)^{\frac{n_1}{2}+\delta_1}}{\rd(I_1, J_1)^{n_1 + \delta_1}}
\widetilde{F}_2(I_2, J_2) \frac{\rs(I_2, J_2)^{\frac{n_2}{2}}}{\ird(I_2, J_2)^{\delta_2}}.
\end{align}
Therefore, \eqref{G2} is a consequence of \eqref{GTH}--\eqref{GTH-2}. 
\end{proof}

To estimate $\mathscr{I}_2^N$, we introduce some notation. Given $i=1, 2$, $k_i \ge 0$, $m_i \ge 1$, and $I_i, J_i  \in \D_i$, we write
\begin{align*}
J_i(k_i, 0, m_i) 
&:= \big\{I_i \in \D_i: \, \ell(I_i) = 2^{-k_i} \ell(J_i), \, \rd(I_i, J_i) < 1, \, m_i \le \ird(I_i, J_i) < m_i + 1\big\}
\\
I_i(k_i, 0, m_i) 
&:= \big\{J_i \in \D_i: \, \ell(J_i) = 2^{k_i} \ell(I_i), \, \rd(I_i, J_i) < 1, \, m_i \le \ird(I_i, J_i) < m_i + 1\big\}.
\end{align*}
Then it is not hard to check that $I_i \in J_i(k_i, 0, m_i)$ if and only if $J_i \in I_i(k_i, 0, m_i)$,
\begin{align}\label{car-IJK}
\#J_i(k_i, 0, m_i) \lesssim  (2^{k_i} + m_i)^{n_i - 1}, \quad\text{ and }\quad
 \# \bigg(\bigcup_{m_i=1}^{2^{k_i}} I_i(k_i, 0, m_i) \bigg) \lesssim 1.
\end{align}
Now invoking Lemma \ref{lem:THH-2} and Lemma \ref{lem:FE} parts \eqref{list:FE-1} and \eqref{list:FE-2} , we obtain for all $N \ge N_0$,
\begin{align*}
|\mathscr{I}_2^N|
&\le \sum_{\substack{k_1 \ge 0 \\ j_1 \ge 1}}
\sum_{\substack{k_2 \ge 0 \\ 1 \le m_2 \le 2^{k_2}}}
\sum_{\substack{J_1 \not\in \D_1(2N) \\ \text{or } J_2 \not\in \D_2(2N)}}
\sum_{\substack{I_1 \in J_1(k_1, j_1) \\ I_2 \in J_2(k_2, 0, m_2)}}
|\G_{J_1 J_2}^{I_1 I_2}| \, |f_{I_1 I_2}| |g_{J_1 J_2}|
\lesssim \sum_{i=1}^3 \mathscr{I}_{2, i}^N,
\end{align*}
where
\begin{align*}
\mathscr{I}_{2, 1}^N
&:= \varepsilon
\sum_{\substack{k_1 \ge 0 \\ j_1 \ge 1}}
\sum_{\substack{k_2 \ge 0 \\ 1 \le m_2 \le 2^{k_2}}}
\sum_{\substack{J_1 \in \D_1 \\ J_2 \in \D_2}}
\sum_{\substack{I_1 \in J_1(k_1, j_1) \\ I_2 \in J_2(k_2, 0, m_2)}}
2^{-k_1(\frac{n_1}{2} + \delta_1)} 2^{-k_2\frac{n_2}{2}} j_1^{-n_1 - \delta_1} m_2^{-\delta_2}
|f_{I_1 I_2}| |g_{J_1 J_2}|,
\\ 
\mathscr{I}_{2, 2}^N
&:= \sum_{\substack{k_1 \ge 0 \\ j_1 \ge N^{1/8}}}
\sum_{\substack{k_2 \ge 0 \\ 1 \le m_2 \le 2^{k_2}}}
\sum_{\substack{J_1 \in \D_1 \\ J_2 \in \D_2}}
\sum_{\substack{I_1 \in J_1(k_1, j_1) \\ I_2 \in J_2(k_2, 0, m_2)}}
2^{-k_1(\frac{n_1}{2} + \delta_1)} 2^{-k_2\frac{n_2}{2}} j_1^{-n_1 - \delta_1} m_2^{-\delta_2}
|f_{I_1 I_2}| |g_{J_1 J_2}|,
\\ 
\mathscr{I}_{2, 3}^N
&:= \sum_{\substack{k_1 \ge 0 \\ j_1 \ge 1}}
\sum_{\substack{k_2 \ge N \\ 1 \le m_2 \le 2^{k_2}}}
\sum_{J\substack{_1 \in \D_1 \\ J_2 \in \D_2}}
\sum_{\substack{I_1 \in J_1(k_1, j_1) \\ I_2 \in J_2(k_2, 0, m_2)}}
2^{-k_1(\frac{n_1}{2} + \delta_1)}2^{-k_2\frac{n_2}{2}}  j_1^{-n_1 - \delta_1} m_2^{-\delta_2}
|f_{I_1 I_2}| |g_{J_1 J_2}|.
\end{align*} 
From the Cauchy--Schwarz inequality, \eqref{car-IJ}, and \eqref{car-IJK}, it follows that
\begin{align*}
\mathscr{I}_{2, 1}^N
&\le \varepsilon
\sum_{k_1, k_2 \ge 0} \sum_{j_1 \ge 1}
2^{-k_1(\frac{n_1}{2} + \delta_1)} 2^{-k_2\frac{n_2}{2}} j_1^{-n_1 - \delta_1}
\nonumber \\
&\quad\times \bigg(\sum_{\substack{I_1 \in \D_1 \\ I_2 \in \D_2}} \sum_{1 \le m_2 \le 2^{k_2}}
\sum_{\substack{J_1 \in I_1(k_1, j_1) \\ J_2 \in I_2(k_2, 0, m_2)}} |f_{I_1 I_2}|^2 \bigg)^{\frac12}
\nonumber \\
&\quad\times \bigg(\sum_{J\substack{_1 \in \D_1 \\ J_2 \in \D_2}} 
\sum_{1 \le m_2 \le 2^{k_2}} m_2^{-2\delta_2}
\sum_{\substack{I_1 \in J_1(k_1, j_1) \\ I_2 \in J_2(k_2, 0, m_2)}}  
|g_{J_1 J_2}|^2 \bigg)^{\frac12}
\nonumber \\
&\lesssim \varepsilon \sum_{k_1, k_2 \ge 0} \sum_{j_1 \ge 1} 2^{-k_1 \delta_1} j_1^{- \delta_1-1}
\bigg(\sum_{\substack{I_1 \in \D_1 \\ I_2 \in \D_2}} |f_{I_1 I_2}|^2 \bigg)^{\frac12}
\nonumber \\
&\quad\times \bigg(2^{-k_2} \sum_{1 \le m_2 \le 2^{k_2}} m_2^{-2\delta_2} \bigg)^{\frac12}
\bigg(\sum_{\substack{J_1 \in \D_1 \\ J_2 \in \D_2}} |g_{J_1 J_2}|^2 \bigg)^{\frac12}
\nonumber \\
&\lesssim \varepsilon
\sum_{k_1, k_2 \ge 0} \sum_{j_1 \ge 1} 
2^{-k_1 \delta_1} 2^{-k_2 \theta \delta_2} j_1^{- \delta_1-1} 
\|f\|_{L^2} \|g\|_{L^2}
\lesssim \varepsilon \|f\|_{L^2} \|g\|_{L^2},
\end{align*}
where we have used the following estimate
\begin{align}\label{MDT}
2^{-k} \sum_{1 \le m \le 2^k} m^{-2\delta} 
=2^{-k} \bigg[\sum_{1 \le m \le 2^{k \theta}} 
+ \sum_{2^{k\theta} < m \le 2^k}\bigg] m^{-2\delta} 
\le 2^{-k+k\theta} + 2^{-2k \theta \delta}
\lesssim 2^{-2k \theta \delta},  
\end{align}
for all $0<\theta<\frac{1}{1+2\delta}$.
Analogously, when $N \ge N_0$ is large enough,
\begin{align*}
\mathscr{I}_{2, 2}^N
&\lesssim \sum_{k_1, k_2 \ge 0} \sum_{j_1 \ge N^{1/8}} 
2^{-k_1 \delta_1} 2^{-k_2 \theta \delta_2} j_1^{- \delta_1-1} 
\|f\|_{L^2} \|g\|_{L^2}
\\
&\lesssim N^{-\delta_1/8} \|f\|_{L^2} \|g\|_{L^2}
\lesssim \varepsilon \|f\|_{L^2} \|g\|_{L^2},
\end{align*}
and
\begin{align*}
\mathscr{I}_{2, 3}^N
&\lesssim \sum_{k_1 \ge 0} \sum_{k_2 \ge N} \sum_{j_1 \ge 0} 
2^{-k_1 \delta_1} 2^{-k_2 \theta \delta_2} j_1^{- \delta_1-1} 
\|f\|_{L^2} \|g\|_{L^2}
\\
&\lesssim 2^{-N \theta \delta_2} 
\|f\|_{L^2} \|g\|_{L^2}
\lesssim \varepsilon \|f\|_{L^2} \|g\|_{L^2}.
\end{align*}
Hence, the estimates above imply that for all $N \ge N_0$ sufficiently large,
\begin{align*}
|\mathscr{I}_2^N| 
\lesssim \sum_{i=1}^3 \mathscr{I}_{2, i}^N
\lesssim \varepsilon \|f\|_{L^2} \|g\|_{L^2}.
\end{align*}

\subsection{$\rd(I_1, J_1) \ge 1$ and $I_2 \subset J_2$}\label{sec:SI} 
We begin with the case $I_2=J_2$. 

\begin{lemma}\label{lem:THH-33}
Let $I_1, J_1 \in \D_1$ with $\ell(I_1) \le \ell(J_1)$ and $\rd(I_1, J_1) \ge 1$, and let $I_2 \in \D_2$. Then
\begin{align*}
|\langle T(h_{I_1} \otimes h_{I_2}), h_{J_1} \otimes h_{I_2} \rangle| 
\lesssim F_1(I_1, J_1) \frac{\rs(I_1, J_1)^{\frac{n_1}{2} + \delta_1}}{\rd(I_1, J_1)^{n_1 + \delta_1}} 
\widehat{F}_2(I_2). 
\end{align*} 
\end{lemma}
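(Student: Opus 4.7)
The plan is to decompose the Haar function $h_{I_2}$ over the children of $I_2$, thereby isolating the interactions in which the two copies of $h_{I_2}$ land on the same child from those landing on distinct children. Writing $h_{I_2} = |I_2|^{-\frac12} \sum_{I_2' \in \ch(I_2)} \epsilon_{I_2'} \mathbf{1}_{I_2'}$ with $\epsilon_{I_2'} \in \{\pm 1\}$ and expanding bilinearly, I arrive at
\[
\langle T(h_{I_1} \otimes h_{I_2}), h_{J_1} \otimes h_{I_2} \rangle = \frac{1}{|I_2|} \sum_{I_2', I_2'' \in \ch(I_2)} \epsilon_{I_2'} \epsilon_{I_2''} \langle T(h_{I_1} \otimes \mathbf{1}_{I_2'}), h_{J_1} \otimes \mathbf{1}_{I_2''} \rangle.
\]
Since $\rd(I_1, J_1) \ge 1$ together with $\ell(I_1) \le \ell(J_1)$ forces $I_1 \cap J_1 = \emptyset$, the compact partial kernel representation in the first parameter is always available, and whenever $I_2' \ne I_2''$ the children are essentially disjoint, so the compact full kernel representation applies as well.

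For the diagonal contributions $I_2' = I_2''$, I will rewrite the pairing via the partial kernel in the first parameter as $\int K_{\mathbf{1}_{I_2'}, \mathbf{1}_{I_2'}}(x_1, y_1) h_{I_1}(y_1) h_{J_1}(x_1) \, dx_1 dy_1$. Using the cancellation $\int h_{I_1} = 0$ together with the H\"older estimate for this partial kernel (centered at $c_{I_1}$), as well as the hypothesis $C(\mathbf{1}_{I_2'}, \mathbf{1}_{I_2'}) \le F_2(I_2') |I_2'|$, reduces the first-variable integral to $\mathscr{P}_1(I_1, J_1)$ of Lemma \ref{lem:PQR}(a). After dividing by $|I_2|$ and summing over the $2^{n_2}$ diagonal indices, this yields a contribution bounded by $F_1(I_1, J_1) \rs(I_1, J_1)^{\frac{n_1}{2}+\delta_1} / \rd(I_1, J_1)^{n_1+\delta_1}$ times $\sum_{I_2' \in \ch(I_2)} F_2(I_2')$, which is the first piece of $\widehat{F}_2(I_2)$.

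For the off-diagonal pairs $I_2' \ne I_2''$, I will invoke the full kernel representation, replace $K(x, y)$ by $K(x, y) - K(x, (c_{I_1}, y_2))$ thanks to $\int h_{I_1}=0$, and combine the mixed size--H\"older estimate in the first variable with the size estimate in the second variable. The resulting double integral factors into a first-variable part controlled by $\mathscr{P}_1(I_1, J_1)$ and a second-variable part controlled by $\mathscr{Q}_2(I_2')$ (Lemma \ref{lem:PQR}(b)), the latter being legitimate because $I_2'' \subset 3I_2' \setminus I_2'$. After normalization by $|I_2|^{-1}$ and a routine comparison $\widetilde{F}_2(I_2') \lesssim \widetilde{F}_2(I_2)$ using the monotonicity built into the constituents of $\widetilde{F}_2$, this off-diagonal sum is dominated by $F_1(I_1, J_1) \rs(I_1, J_1)^{\frac{n_1}{2}+\delta_1} / \rd(I_1, J_1)^{n_1+\delta_1}$ times $\widetilde{F}_2(I_2)$, completing the bound by $\widehat{F}_2(I_2)$. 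The main subtlety lies in this off-diagonal case: distinct children share boundary faces of measure zero, so the supports in the full kernel representation are to be interpreted modulo null sets, and a short monotonicity check using property \eqref{list:P3} is required to pass from $\widetilde{F}_2(I_2')$ to $\widetilde{F}_2(I_2)$.
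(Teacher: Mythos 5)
Your proposal is correct and follows essentially the same route as the paper: expand $h_{I_2}$ over $\ch(I_2)$, handle the diagonal children via the compact partial kernel representation (Hölder condition of $K_{\mathbf{1}_{I_2'},\mathbf{1}_{I_2'}}$ plus the bound $C(\mathbf{1}_{I_2'},\mathbf{1}_{I_2'})\le F_2(I_2')|I_2'|$, giving $\mathscr{P}_1(I_1,J_1)$), and the off-diagonal children via the compact full kernel representation with the mixed size--Hölder condition, giving $\mathscr{P}_1(I_1,J_1)\,\mathscr{Q}_2(I_2')$. The passage from $\widetilde{F}_2(I_2')$ to $\widetilde{F}_2(I_2)$ that you flag is indeed the same routine monotonicity step the paper performs implicitly.
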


\begin{proof}
We decompose
\begin{align*}
\langle T(h_{I_1} \otimes h_{I_2}), h_{J_1} \otimes h_{I_2} \rangle
= \sum_{I'_2, I''_2 \in \ch(I_2)}
\langle h_{I_2} \rangle_{I'_2} \langle h_{I_2} \rangle_{I''_2}
\big\langle T (h_{I_1} \otimes \mathbf{1}_{I'_2}),
h_{J_1} \otimes \mathbf{1}_{I''_2} \big\rangle.
\end{align*}
Consider first the case $I'_2=I''_2$. By the compact partial kernel representation, the H\"{o}lder condition of $K_{\mathbf{1}_{I'_2}, \mathbf{1}_{I'_2}}$, and \eqref{def:P}, we have
\begin{align*}
|\langle T (h_{I_1} \otimes \mathbf{1}_{I'_2}),
h_{J_1} \otimes \mathbf{1}_{I'_2} \rangle|
&\lesssim |I_1|^{-\frac12} |J_1|^{-\frac12} \mathscr{P}_1(I_1, J_1) \, C(\mathbf{1}_{I'_2}, \mathbf{1}_{I'_2})
\\
&\lesssim F_1(I_1, J_1) \frac{\rs(I_1, J_1)^{\frac{n_1}{2} + \delta_1}}{\rd(I_1, J_1)^{n_1 + \delta_1}} \, 
\widehat{F}_2(I_2) |I_2|.
\end{align*}
If $I'_2 \neq I''_2$, then $I''_2 \subset 3I'_2 \setminus I'_2$. We use the compact full kernel representation and the mixed size-H\"{o}lder condition of $K$ to conclude that
\begin{align*}
|\langle T(h_{I_1} \otimes \mathbf{1}_{I'_2}),
h_{J_1} \otimes \mathbf{1}_{I''_2} \rangle|
&\lesssim |I_1|^{-\frac12} |J_1|^{-\frac12} \mathscr{P}_1(I_1, J_1) \, \mathscr{Q}_2(I'_2)
\\
&\lesssim F_1(I_1, J_1)  \frac{\rs(I_1, J_1)^{\frac{n_1}{2} + \delta_1}}{\rd(I_1, J_1)^{n_1 + \delta_1}}
\, \widetilde{F}_2(I_2) |I_2|,
\end{align*}
provided \eqref{def:P} and \eqref{def:Q}. Thus, these inequalities above give the desired estimate. 
\end{proof}

Next, let us deal with the case $I_2 \subsetneq J_2$. Let $J_{I_2} \in \ch(J_2)$ be the unique dyadic cube such that $I_2 \subset J_{I_2}$. Set $\phi_{I_2 J_2} := (h_{J_2} - \langle h_{J_2} \rangle_{I_2}) \mathbf{1}_{J_{I_2}^c}$. Noting that $h_{J_2} = \phi_{I_2 J_2} + \langle h_{J_2} \rangle_{I_2}$, we split
\begin{align}\label{STH}
\langle T (h_{I_1} \otimes h_{I_2}), h_{J_1} \otimes h_{J_2} \rangle
&=\langle T (h_{I_1} \otimes h_{I_2}), h_{J_1} \otimes \phi_{I_2 J_2}\rangle
\\ \nonumber
&\qquad+ \langle T (h_{I_1} \otimes h_{I_2}), h_{J_1} \otimes 1 \rangle \langle h_{J_2} \rangle_{I_2}. 
\end{align}

\begin{lemma}\label{lem:THH-3}
Let $I_1, J_1 \in \D_1$ with $\ell(I_1) \le \ell(J_1)$ and $\rd(I_1, J_1) \ge 1$, and let $I_2 \subsetneq J_2 \in \D_2$. Then
\begin{align}\label{eq:THH-3}
|\langle T (h_{I_1} \otimes h_{I_2}), h_{J_1} \otimes \phi_{I_2 J_2}\rangle|
\lesssim F_1(I_1, J_1) \frac{\rs(I_1, J_1)^{\frac{n_1}{2} + \delta_1}}{\rd(I_1, J_1)^{n_1 + \delta_1}}
\, \widetilde{F}_2(I_2) \rs(I_2, J_2)^{\frac{n_2}{2}}. 
\end{align}
If we assume in addition that $\d(I_2, J_{I_2}^c) > \ell(I_2)^{\frac12} \ell(J_2)^{\frac12}$, then 
\begin{align}\label{eq:THH-3d}
|\langle T (h_{I_1} \otimes h_{I_2}), h_{J_1} \otimes \phi_{I_2 J_2}\rangle|
\lesssim F_1(I_1, J_1) \frac{\rs(I_1, J_1)^{\frac{n_1}{2} + \delta_1}}{\rd(I_1, J_1)^{n_1 + \delta_1}}
\, \widetilde{F}_2(I_2, J_2) \rs(I_2, J_2)^{\frac{n_2}{2} + \frac{\delta_2}{2}}. 
\end{align}
\end{lemma}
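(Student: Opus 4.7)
The plan is to invoke the compact full kernel representation, which is available here because $\rd(I_1,J_1)\ge 1$ forces $I_1 \cap J_1 = \emptyset$, while $\supp(\phi_{I_2 J_2}) \subset J_{I_2}^c$ is automatically disjoint from $\supp(h_{I_2}) \subset I_2 \subset J_{I_2}$. Keeping in mind that $\|\phi_{I_2 J_2}\|_{L^\infty} \lesssim |J_2|^{-1/2}$, my first step would be to split
\[
\phi_{I_2 J_2} = \phi_{I_2 J_2} \mathbf{1}_{3 I_2 \setminus I_2} + \phi_{I_2 J_2} \mathbf{1}_{(3I_2)^c}
\]
(using $\supp(\phi_{I_2 J_2}) \cap I_2 = \emptyset$) and treat the resulting near and far pieces separately.

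For the near piece, the proximity $|x_2 - y_2| \lesssim \ell(I_2)$ prevents the use of H\"older regularity in the second parameter, so I would exploit only the cancellation of $h_{I_1}$, together with the mixed size-H\"older condition of Definition~\ref{def:full}\eqref{full-3} (H\"older in $y_1$ with pivot $c_{I_1}$, size in $y_2$). The resulting product integral factorizes as $|I_1|^{-1/2} |J_1|^{-1/2}\, \mathscr{P}_1(I_1, J_1) \cdot |I_2|^{-1/2} |J_2|^{-1/2}\, \mathscr{Q}_2(I_2)$, which by \eqref{def:P} and \eqref{def:Q} is dominated by the right-hand side of \eqref{eq:THH-3} once one observes $|I_2|^{1/2}|J_2|^{-1/2} = \rs(I_2, J_2)^{n_2/2}$. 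For the far piece, the separation $|x_2 - y_2| \ge \ell(I_2)$ on $(3I_2)^c$ unlocks the cancellation of $h_{I_2}$ as well; I would then apply the two-variable H\"older condition of Definition~\ref{def:full}\eqref{full-2} with pivot point $(c_{I_1}, c_{I_2})$, and bound via \eqref{def:P} and \eqref{def:R} to obtain an identical estimate. Adding the two contributions gives \eqref{eq:THH-3}.

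For the sharpened bound \eqref{eq:THH-3d}, the extra hypothesis $d_2 > \ell(I_2)^{1/2} \ell(J_2)^{1/2} \ge \ell(I_2)$ implies $3 I_2 \subset J_{I_2}$, so $\phi_{I_2 J_2} \mathbf{1}_{3 I_2 \setminus I_2} \equiv 0$ and the entire $x_2$-integral is supported on $J_{I_2}^c \subset (3 I_2)^c$. I would then rerun the H\"older-in-both-variables argument exactly as for the far piece, but replace the use of \eqref{def:R} by the finer estimate \eqref{def:RIJ}, which is applicable precisely because of the hypothesis on $d_2$. This produces the extra decay factor $\rs(I_2, J_2)^{\delta_2/2}$, yielding \eqref{eq:THH-3d}.

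The main potential obstacle is purely organizational: choosing, in each subcase, the pivot points so that the cancellations of $h_{I_1}$ and $h_{I_2}$ are compatible with the form of kernel regularity being invoked, and verifying that the support of $\phi_{I_2 J_2}$ genuinely realizes the integration domain claimed. Once these choices are made, all the required one-dimensional estimates have been prepared in Lemma~\ref{lem:PQR}, and no new ingredient beyond the scheme of Lemmas~\ref{lem:THH-2} and~\ref{lem:THH-33} is needed.
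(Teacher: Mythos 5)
Your proposal is correct and follows essentially the same route as the paper: the same decomposition of $\phi_{I_2 J_2}$ into the $3I_2\setminus I_2$ and $(3I_2)^c$ pieces, the same use of the mixed size--H\"older condition with $\mathscr{P}_1\cdot\mathscr{Q}_2$ for the near piece and the full H\"older condition with $\mathscr{P}_1\cdot\mathscr{R}_2$ for the far piece, and the replacement of \eqref{def:R} by \eqref{def:RIJ} under the extra separation hypothesis to get \eqref{eq:THH-3d}. Your observation that $3I_2\subset J_{I_2}$ kills the near piece in that case is a correct detail the paper leaves implicit.
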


\begin{proof}
We split 
\begin{align}\label{TSS}
\langle T (h_{I_1} \otimes h_{I_2}), h_{J_1} \otimes \phi_{I_2 J_2}\rangle
&= \langle T (h_{I_1} \otimes h_{I_2}), h_{J_1} 
\otimes (\phi_{I_2 J_2} \mathbf{1}_{3I_2 \setminus I_2})\rangle
\\ \nonumber
&\quad+ \langle T (h_{I_1} \otimes h_{I_2}), h_{J_1} 
\otimes (\phi_{I_2 J_2} \mathbf{1}_{(3I_2)^c})\rangle. 
\end{align}
To control the first term, using the compact full kernel representation, the cancellation of $h_{I_1}$, and the mixed size-H\"{o}lder condition of $K$, we arrive at
\begin{align}\label{STH-2}
|\langle &T (h_{I_1} \otimes h_{I_2}), h_{J_1} \otimes 
(\phi_{I_2 J_2} \mathbf{1}_{3 I_2 \setminus I_2}) \rangle|
\\ \nonumber
&\lesssim |I_1|^{-\frac12} |J_1|^{-\frac12} \mathscr{P}_1(I_1, J_1) \,
|I_2|^{-\frac12} |J_2|^{-\frac12} \mathscr{Q}_2(I_2)
\\ \nonumber
&\lesssim F_1(I_1, J_1) \frac{\rs(I_1, J_1)^{\frac{n_1}{2} + \delta_1}}{\rd(I_1, J_1)^{n_1 + \delta_1}}
\widetilde{F}_2(I_2) \rs(I_2, J_2)^{\frac{n_2}{2}},
\end{align}
where \eqref{def:P} and \eqref{def:Q} have been used. For the second term, it follows from the compact full kernel representation, the cancellation of $h_{I_1}$ and $h_{I_2}$, the H\"{o}lder condition of $K$, \eqref{def:P}, and \eqref{def:R} that   
\begin{align}\label{STH-3}
|\langle &T (h_{I_1} \otimes h_{I_2}), h_{J_1} 
\otimes (\phi_{I_2 J_2} \mathbf{1}_{(3 I_2)^c}) \rangle|
\\ \nonumber
&\lesssim |I_1|^{-\frac12} |J_1|^{-\frac12} \mathscr{P}_1(I_1, J_1) \,
|I_2|^{-\frac12} |J_2|^{-\frac12} \mathscr{R}_2(I_2)
\\ \nonumber
&\lesssim F_1(I_1, J_1) \frac{\rs(I_1, J_1)^{\frac{n_1}{2} + \delta_1}}{\rd(I_1, J_1)^{n_1 + \delta_1}}
\widetilde{F}_2(I_2) \rs(I_2, J_2)^{\frac{n_2}{2}}. 
\end{align}
Therefore, \eqref{eq:THH-3} follows from \eqref{TSS}--\eqref{STH-3}. Additionally, \eqref{eq:THH-3d} is just a consequence the compact full kernel representation, the cancellation of $h_{I_1}$ and $h_{I_2}$, the H\"{o}lder condition of $K$, and \eqref{def:RIJ}. 
\end{proof}

Denote 
\begin{align*}
\mathscr{I}_{3, 0}^N
:= \bigg|\sum_{\substack{J_1 \not\in \D_1(2N) \\ \text{or } J_2 \not\in \D_2(2N)}}
\sum_{\substack{I_1 \in \D_1 \\ \ell(I_1) \le \ell(J_1) \\ \rd(I_1, J_1) \ge 1}}
\sum_{\substack{I_2 \in \D_2 \\ I_2 \subsetneq J_2}}
\langle T (h_{I_1} \otimes h_{I_2}), h_{J_1} \otimes 1 \rangle \langle h_{J_2} \rangle_{I_2} \, 
f_{I_1 I_2} \, g_{J_1 J_2} \bigg|. 
\end{align*}

\begin{lemma}\label{lem:hjpi}
For any $\varepsilon>0$, there exists an $N_0>1$ so that for all $N \ge N_0$, 
\begin{align*}
\mathscr{I}_{3, 0}^N
\lesssim \varepsilon \|f\|_{L^2} \|g\|_{L^2}. 
\end{align*}
\end{lemma}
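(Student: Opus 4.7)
The strategy is to recast the inner sum over $I_2 \subsetneq J_2$ as a one-parameter paraproduct in the second variable. I will introduce $F_{I_1}(x_2) := \int f(y_1,x_2) h_{I_1}(y_1)\,dy_1$ and $G_{J_1}(x_2) := \int g(y_1,x_2) h_{J_1}(y_1)\,dy_1$, so that $f_{I_1 I_2} = \langle F_{I_1}, h_{I_2}\rangle$ and $g_{J_1 J_2} = \langle G_{J_1}, h_{J_2}\rangle$. Since $\langle h_{J_2}\rangle_{I_2}\neq 0$ if and only if $I_2 \subsetneq J_2$, the identity
\[
\sum_{I_2 \subsetneq J_2}\langle b_{I_1 J_1}, h_{I_2}\rangle\, \langle h_{J_2}\rangle_{I_2}\, \langle F_{I_1}, h_{I_2}\rangle
= \langle \Pi_{b_{I_1 J_1}}^* F_{I_1}, h_{J_2}\rangle
\]
rewrites $\mathscr{I}_{3,0}^N$ as a sum over $(I_1, J_1, J_2)$ of terms $\langle \Pi_{b_{I_1 J_1}}^* F_{I_1}, h_{J_2}\rangle\, g_{J_1 J_2}$.

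I then split the condition ``$J_1\not\in\D_1(2N)$ or $J_2\not\in\D_2(2N)$'' into Case~A ($J_1\not\in\D_1(2N)$, $J_2$ free) and Case~B ($J_1\in\D_1(2N)$ and $J_2\not\in\D_2(2N)$). Collapsing the $J_2$-sum produces $\langle \Pi_{b_{I_1 J_1}}^* F_{I_1}, G_{J_1}\rangle$ in Case~A and $\langle \Pi_{b_{I_1 J_1}}^* F_{I_1}, P_{2N}^\perp G_{J_1}\rangle$ in Case~B. For Case~A, I apply the standard paraproduct bound $\|\Pi_b^*\|_{L^2\to L^2}\lesssim \|b\|_{\BMO_{\D_2}}$ together with \eqref{B1} of Lemma \ref{lem:HP}, parametrize $(I_1,J_1)$ by relative size $2^{-k_1}$ and relative distance $j_1$ exactly as in the treatment of $\mathscr{I}_1^N$, and extract smallness from Lemma \ref{lem:FE}\eqref{list:FE-1}: either $F_1(I_1,J_1)<\varepsilon$ or $\rd(I_1,J_1)\ge 2N^{1/8}$, the latter giving an $N^{-\delta_1/8}$ tail.

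Case~B is the main obstacle, because $J_1\in\D_1(2N)$ precludes using Lemma \ref{lem:FE}\eqref{list:FE-1}; the smallness must instead come from the $\CMO$ bound \eqref{PB1} combined with the quantitative operator-norm estimate
\[
\|P_{2N}^\perp\,\Pi_b^*\|_{L^2\to L^2}
\lesssim \|P_N^\perp b\|_{\BMO_{\D_2}} + N^{n_2/2}\,2^{-Nn_2/2}\,\|b\|_{\BMO_{\D_2}}.
\]
The plan to prove this is to write $\Pi_b^* = \Pi_{P_N^\perp b}^* + \Pi_{P_N b}^*$. For the first summand, $\|P_{2N}^\perp\Pi_{P_N^\perp b}^*\|_{L^2\to L^2}\le \|\Pi_{P_N^\perp b}^*\|_{L^2\to L^2}\lesssim \|P_N^\perp b\|_{\BMO_{\D_2}}$. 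For the second, the key geometric observation is that for every $I\in\D_2(N)\subset\D_2(2N)$, any $J\supsetneq I$ with $J\not\in\D_2(2N)$ must satisfy $\ell(J)>2^{2N}$, so the Haar expansion $\mathbf{1}_I/|I| = \sum_{J\supsetneq I} h_J(c_I) h_J$ yields $\|P_{2N}^\perp(\mathbf{1}_I/|I|)\|_{L^2}^2\lesssim 2^{-2Nn_2}$. The triangle inequality together with Cauchy--Schwarz and the $\BMO$ bound $\sum_{I\in\D_2(N)} |\langle b, h_I\rangle|^2 \lesssim N^{n_2} 2^{N n_2}\|b\|_{\BMO_{\D_2}}^2$ (obtained by enclosing all cubes of $\D_2(N)$ in a single dyadic cube of side length $\simeq N\cdot 2^N$) then produces the second summand, the exponential decay dominating the polynomial growth.

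Finally I apply this operator-norm estimate with $b=b_{I_1 J_1}$, invoke \eqref{B1} and \eqref{PB1} of Lemma \ref{lem:HP} together with the decay $\sup_{I_2\not\in\D_2(N)}\widehat{F}_2(I_2)\to 0$ from Lemma \ref{lem:FE}\eqref{list:FE-4}, and sum over $(I_1,J_1)$ via the same $(k_1,j_1)$-parametrization used in Case~A. This yields a bound of the form $\bigl(\sup_{I_2\not\in\D_2(N)}\widehat{F}_2(I_2) + N^{n_2/2}\,2^{-Nn_2/2}\bigr)\|f\|_{L^2}\|g\|_{L^2}$, which is $<\varepsilon\|f\|_{L^2}\|g\|_{L^2}$ for $N$ sufficiently large, completing the proof.
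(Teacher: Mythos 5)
Your proposal follows the paper's proof almost exactly: the same paraproduct recasting via \eqref{JDN-1}--\eqref{JDN-2}, the same split into the cases $J_1 \not\in \D_1(2N)$ versus $J_1 \in \D_1(2N)$, $J_2 \not\in \D_2(2N)$, the same use of \eqref{B1}, \eqref{PB1}, and Lemma \ref{lem:FE}, and the same splitting $b_{I_1J_1} = P_N b_{I_1J_1} + P_N^{\perp} b_{I_1J_1}$ in the second case. The one place you deviate is the term with symbol $P_N b_{I_1J_1}$ (the paper's $\mathscr{I}_{3,0}^{N,2,1}$): the paper expands it explicitly as a four-index sum and applies Cauchy--Schwarz together with the cardinality bound \eqref{IDN-2}, whereas you package the same geometry (namely \eqref{IDN-1}, i.e.\ $\ell(J_2) > 2^{2N}$) into the operator-norm estimate $\|P_{2N}^{\perp}\Pi^*_{P_N b}\|_{L^2 \to L^2} \lesssim N^{n_2/2} 2^{-N n_2/2}\|b\|_{\BMO_{\D_2}}$, proved via $\|P_{2N}^{\perp}(\mathbf{1}_{I}/|I|)\|_{L^2} \lesssim 2^{-Nn_2}$ and the crude bound $\sum_{I \in \D_2(N)}|\langle b, h_I\rangle|^2 \lesssim N^{n_2}2^{Nn_2}\|b\|^2_{\BMO_{\D_2}}$. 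Both routes yield the same exponential decay in $N$ (yours saves a factor of $N$), and your variant is correct; it is arguably cleaner in that it isolates the smallness as an operator-norm statement that can then be fed into the \eqref{Nbb}-type Minkowski/orthogonality argument uniformly in $(I_1, J_1)$.
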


\begin{proof}
We split  
\begin{align*}
\mathscr{I}_{3, 0}^N
\le \bigg|\sum_{\substack{J_1 \not\in \D_1(2N) \\ J_2 \in \D_2}}
\sum_{\substack{I_1 \in \D_1 \\ \ell(I_1) \le \ell(J_1) \\ \rd(I_1, J_1) \ge 1}}
\sum_{\substack{I_2 \in \D_2 \\ I_2 \subsetneq J_2}} \cdots \bigg|
+ \bigg|\sum_{\substack{J_1 \in \D_1(2N) \\ J_2 \not\in \D_2(2N)}}
\sum_{\substack{I_1 \in \D_1 \\ \ell(I_1) \le \ell(J_1) \\ \rd(I_1, J_1) \ge 1}}
\sum_{\substack{I_2 \in \D_2 \\ I_2 \subsetneq J_2}} \cdots \bigg|
=: \mathscr{I}_{3, 0}^{N, 1} + \mathscr{I}_{3, 0}^{N, 2}. 
\end{align*}
Write $b_{I_1 J_1} := \langle T^*(h_{J_1} \otimes 1), h_{I_1} \rangle$. Observe that 
\begin{align}\label{JDN-1}
&\sum_{J_2 \not\in \D_2(2N)}
\sum_{\substack{I_2 \in \D_2 \\ I_2 \subsetneq J_2}}
\langle T (h_{I_1} \otimes h_{I_2}), h_{J_1} \otimes 1 \rangle \langle h_{J_2} \rangle_{I_2} \, 
f_{I_1 I_2} \, g_{J_1 J_2}
\\ \nonumber
&= \sum_{I_2 \in \D_2} \langle b_{I_1 J_1}, h_{I_2} \rangle  
\bigg(\sum_{J_2 \in \D_2: J_2 \supsetneq I_2} 
\big\langle P_{2N}^{\perp} (\langle g, h_{J_1} \rangle), h_{J_2} \big\rangle 
\langle h_{J_2} \rangle_{I_2} \bigg)  
\big\langle \langle f, h_{I_1}\rangle, h_{I_2} \big\rangle 
\\ \nonumber
&= \sum_{I_2 \in \D_2} \langle b_{I_1 J_1}, h_{I_2} \rangle 
\big\langle P_{2N}^{\perp} (\langle g, h_{J_1} \rangle) \big\rangle_{I_2} 
\big\langle \langle f, h_{I_1}\rangle, h_{I_2} \big\rangle
\\ \nonumber
&= \big\langle \langle f, h_{I_1}\rangle, \Pi_{b_{I_1 J_1}} 
P_{2N}^{\perp} (\langle g, h_{J_1} \rangle) \big\rangle
= \big\langle h_{J_1} \otimes P_{2N}^{\perp} \Pi_{b_{I_1 J_1}}^* (\langle f, h_{I_1}\rangle),  
g \big\rangle, 
\end{align}
and similarly, 
\begin{align}\label{JDN-2}
\sum_{\substack{I_2, J_2 \in \D_2 \\ I_2 \subsetneq J_2}}
\langle T (h_{I_1} \otimes h_{I_2}), h_{J_1} \otimes 1 \rangle \langle h_{J_2} \rangle_{I_2} \, 
f_{I_1 I_2} \, g_{J_1 J_2}
= \big\langle h_{J_1} \otimes \Pi_{b_{I_1 J_1}}^* (\langle f, h_{I_1}\rangle),  
g \big\rangle. 
\end{align}
By \eqref{JDN-2}, the Cauchy--Schwarz inequality, the orthogonality of $\{h_{J_1}\}_{J_1 \in \D_1}$, and Minkowski's inequality, there holds 
\begin{align}\label{Nbb}
\mathscr{I}_{3, 0}^{N, 1}
&= \bigg|\sum_{J_1 \not\in \D_1(2N)}
\sum_{\substack{I_1 \in \D_1 \\ \ell(I_1) \le \ell(J_1) \\ \rd(I_1, J_1) \ge 1}} 
\big\langle h_{J_1} \otimes \Pi_{b_{I_1 J_1}}^* (\langle f, h_{I_1}\rangle),  
g \big\rangle \bigg| 
\\ \nonumber
&\le \bigg\|\sum_{J_1 \not\in \D_1(2N)}
\sum_{\substack{I_1 \in \D_1 \\ \ell(I_1) \le \ell(J_1) \\ \rd(I_1, J_1) \ge 1}} 
h_{J_1} \otimes \Pi_{b_{I_1 J_1}}^* (\langle f, h_{I_1}\rangle) \bigg\|_{L^2} 
\, \|g\|_{L^2}
\\ \nonumber
&= \Bigg[\sum_{J_1 \not\in \D_1(2N)} \bigg\|
\sum_{\substack{I_1 \in \D_1 \\ \ell(I_1) \le \ell(J_1) \\ \rd(I_1, J_1) \ge 1}} 
\Pi_{b_{I_1 J_1}}^* (\langle f, h_{I_1}\rangle) \bigg\|_{L^2(\R^{n_2})}^2  
\Bigg]^{\frac12} \, \|g\|_{L^2}
\\ \nonumber
&\le \Bigg[ \sum_{J_1 \not\in \D_1(2N)} 
\bigg(\sum_{\substack{I_1 \in \D_1 \\ \ell(I_1) \le \ell(J_1) \\ \rd(I_1, J_1) \ge 1}} 
\big\|\Pi_{b_{I_1 J_1}}^* (\langle f, h_{I_1}\rangle) \big\|_{L^2(\R^{n_2})} \bigg)^2 
\Bigg]^{\frac12} \, \|g\|_{L^2}
\\ \nonumber
&\le \Bigg[ \sum_{J_1 \not\in \D_1(2N)} 
\bigg(\sum_{\substack{I_1 \in \D_1 \\ \ell(I_1) \le \ell(J_1) \\ \rd(I_1, J_1) \ge 1}} 
\|\Pi_{b_{I_1 J_1}}^* \|_{L^2 \to L^2} 
\|\langle f, h_{I_1}\rangle\|_{L^2(\R^{n_2})} \bigg)^2 
\Bigg]^{\frac12} \, \|g\|_{L^2}
\\ \nonumber
&\lesssim \Bigg[ \sum_{J_1 \not\in \D_1(2N)} 
\bigg(\sum_{\substack{k_1 \ge 0 \\ j_1 \ge 1}} \sum_{I_1 \in J_1(k_1, j_1)}  
\|b_{I_1 J_1}\|_{\BMO_{\D_2}} \|\langle f, h_{I_1}\rangle\|_{L^2(\R^{n_2})} \bigg)^2 
\Bigg]^{\frac12} \, \|g\|_{L^2}. 
\end{align}
Then invoking \eqref{B1} and Lemma \ref{lem:FE} part \eqref{list:FE-1}, we obtain that for any $N \ge N_0$, 
\begin{align*}
\mathscr{I}_{3, 0}^{N, 1}
\lesssim \mathscr{I}_{3, 0}^{N, 1, 1} + \mathscr{I}_{3, 0}^{N, 1, 2}, 
\end{align*}
where 
\begin{align*}
\mathscr{I}_{3, 0}^{N, 1, 1}
&:= \varepsilon \Bigg[ \sum_{J_1 \in \D_1} 
\bigg(\sum_{\substack{k_1 \ge 0 \\ j_1 \ge 1}}  \sum_{I_1 \in J_1(k_1, j_1)} 
2^{-k_1(\frac{n_1}{2} + \delta_1)} j_1^{-n_1 - \delta_1} 
\|\langle f, h_{I_1}\rangle\|_{L^2(\R^{n_2})} \bigg)^2 
\Bigg]^{\frac12} \, \|g\|_{L^2}, 
\\ 
\mathscr{I}_{3, 0}^{N, 1, 2}
&:= \Bigg[ \sum_{J_1 \in \D_1} 
\bigg(\sum_{\substack{k_1 \ge 0 \\ j_1 \ge N^{1/8}}} 
\sum_{I_1 \in J_1(k_1, j_1)}
2^{-k_1(\frac{n_1}{2} + \delta_1)} j_1^{-n_1 - \delta_1} 
\|\langle f, h_{I_1}\rangle\|_{L^2(\R^{n_2})} \bigg)^2 
\Bigg]^{\frac12} \, \|g\|_{L^2}. 
\end{align*}
By the Cauchy--Schwarz inequality, we have 
\begin{align}\label{PJK}
&\bigg(\sum_{\substack{k_1 \ge 0 \\ j_1 \ge 1}}  \sum_{I_1 \in J_1(k_1, j_1)} 
2^{-k_1(\frac{n_1}{2} + \delta_1)} j_1^{-n_1 - \delta_1} 
\|\langle f, h_{I_1}\rangle\|_{L^2(\R^{n_2})} \bigg)^2 
\\ \nonumber
&\le \bigg(\sum_{\substack{k_1 \ge 0 \\ j_1 \ge 1}}  \sum_{I_1 \in J_1(k_1, j_1)} 
2^{-k_1(n_1 + \delta_1)} j_1^{-n_1 - \delta_1} \bigg) 
\\ \nonumber
&\quad\times \bigg(\sum_{\substack{k_1 \ge 0 \\ j_1 \ge 1}}  \sum_{I_1 \in J_1(k_1, j_1)} 
2^{-k_1 \delta_1} j_1^{-n_1 - \delta_1} 
\|\langle f, h_{I_1}\rangle\|_{L^2(\R^{n_2})}^2 \bigg) 
\\ \nonumber
&\lesssim \sum_{\substack{k_1 \ge 0 \\ j_1 \ge 1}}  \sum_{I_1 \in J_1(k_1, j_1)} 
2^{-k_1 \delta_1} j_1^{-n_1 - \delta_1} 
\|\langle f, h_{I_1}\rangle\|_{L^2(\R^{n_2})}^2
\end{align}
provided that 
\begin{align*}
\sum_{\substack{k_1 \ge 0 \\ j_1 \ge 1}}  \sum_{I_1 \in J_1(k_1, j_1)} 
2^{-k_1(n_1 + \delta_1)} j_1^{-n_1 - \delta_1}
\lesssim \sum_{\substack{k_1 \ge 0 \\ j_1 \ge 1}} 
2^{-k_1 \delta_1} j_1^{- \delta_1 -1}
\lesssim 1. 
\end{align*}
Furthermore, \eqref{PJK} and \eqref{car-IJ} give 
\begin{align*}
\mathscr{I}_{3, 0}^{N, 1, 1}
&\lesssim \varepsilon \Bigg[ 
\sum_{\substack{k_1 \ge 0 \\ j_1 \ge 1}} 2^{-k_1 \delta_1} j_1^{-n_1 - \delta_1}  
\sum_{I_1 \in \D_1}  \sum_{J_1 \in I_1(k_1, j_1)} 
\|\langle f, h_{I_1} \rangle\|_{L^2(\R^{n_2})}^2   
\Bigg]^{\frac12} \, \|g\|_{L^2} 
\\
&\lesssim \varepsilon \Bigg[ 
\sum_{\substack{k_1 \ge 0 \\ j_1 \ge 1}} 2^{-k_1 \delta_1} j_1^{- \delta_1 -1}  
\sum_{I_1 \in \D_1} \|\langle f, h_{I_1} \rangle\|_{L^2(\R^{n_2})}^2   
\Bigg]^{\frac12} \, \|g\|_{L^2} 
\lesssim \varepsilon \|f\|_{L^2} \|g\|_{L^2}. 
\end{align*}
Much as in the same way, there holds for any $N \ge N_0$ sufficiently large, 
\begin{align*}
\mathscr{I}_{3, 0}^{N, 1, 2}
&\lesssim \Bigg[ 
\sum_{\substack{k_1 \ge 0 \\ j_1 \ge N^{1/8}}} 2^{-k_1 \delta_1} j_1^{- \delta_1 -1}  
\sum_{I_1 \in \D_1} \|\langle f, h_{I_1} \rangle\|_{L^2(\R^{n_2})}^2   
\Bigg]^{\frac12} \, \|g\|_{L^2} 
\\
&\lesssim N^{-\delta_1/16} \|f\|_{L^2} \|g\|_{L^2} 
\le \varepsilon \|f\|_{L^2} \|g\|_{L^2}. 
\end{align*}
Hence, 
\begin{align}\label{30N1}
\mathscr{I}_{3, 0}^{N, 1}
\lesssim \mathscr{I}_{3, 0}^{N, 1, 1} + \mathscr{I}_{3, 0}^{N, 1, 2} 
\lesssim \varepsilon \|f\|_{L^2} \|g\|_{L^2}. 
\end{align}

Let us turn to estimate $\mathscr{I}_{3, 0}^{N, 2}$. In light of \eqref{JDN-1}, there holds 
\begin{align*}
\mathscr{I}_{3, 0}^{N, 2}
= \bigg|\sum_{J_1 \in \D_1(2N)}
\sum_{\substack{I_1 \in \D_1 \\ \ell(I_1) \le \ell(J_1) \\ \rd(I_1, J_1) \ge 1}} 
\big\langle h_{J_1} \otimes P_{2N}^{\perp} \Pi_{b_{I_1 J_1}}^* (\langle f, h_{I_1}\rangle),  
g \big\rangle \bigg| 
\le \mathscr{I}_{3, 0}^{N, 2, 1} + \mathscr{I}_{3, 0}^{N, 2, 2}, 
\end{align*}
where 
\begin{align*}
\mathscr{I}_{3, 0}^{N, 2, 1}
:= \bigg|\sum_{J_1 \in \D_1(2N)}
\sum_{\substack{I_1 \in \D_1 \\ \ell(I_1) \le \ell(J_1) \\ \rd(I_1, J_1) \ge 1}} 
\big\langle h_{J_1} \otimes P_{2N}^{\perp} \Pi_{P_N b_{I_1 J_1}}^* (\langle f, h_{I_1}\rangle),  
g \big\rangle \bigg|, 
\\
\mathscr{I}_{3, 0}^{N, 2, 2}
:= \bigg|\sum_{J_1 \in \D_1(2N)}
\sum_{\substack{I_1 \in \D_1 \\ \ell(I_1) \le \ell(J_1) \\ \rd(I_1, J_1) \ge 1}} 
\big\langle h_{J_1} \otimes P_{2N}^{\perp} \Pi_{P_N^{\perp} b_{I_1 J_1}}^* (\langle f, h_{I_1}\rangle),  
g \big\rangle \bigg|.  
\end{align*}
Note that 
\begin{align}\label{PPbb}
\big\|P_{2N}^{\perp} \Pi_{P_N^{\perp} b_{I_1 J_1}}^*\|_{L^2 \to L^2} 
\le \big\| \Pi_{P_N^{\perp} b_{I_1 J_1}}^*\|_{L^2 \to L^2} 
\lesssim \big\|P_N^{\perp} b_{I_1 J_1}\|_{\BMO_{\D_2}},  
\end{align}
where the implicit constant is independent of $N$, $I_1$, and $J_1$. Then by \eqref{PB1}, \eqref{PPbb}, and Lemma \ref{lem:FE} part \eqref{list:FE-4}, an argument as in \eqref{Nbb} yields that for any $N \ge N_0$ large enough, 
\begin{align}\label{30N22}
\mathscr{I}_{3, 0}^{N, 2, 2}
&\le \Bigg[ \sum_{J_1 \in \D_1} 
\bigg(\sum_{\substack{I_1 \in \D_1 \\ \ell(I_1) \le \ell(J_1) \\ \rd(I_1, J_1) \ge 1}} 
\big\|P_{2N}^{\perp} \Pi_{P_N^{\perp} b_{I_1 J_1}}^*\|_{L^2 \to L^2} 
\|\langle f, h_{I_1}\rangle\|_{L^2(\R^{n_2})} \bigg)^2 
\Bigg]^{\frac12} \, \|g\|_{L^2}
\\ \nonumber
& \lesssim \mathscr{I}_{3, 0}^{N, 1, 1}  
\lesssim \varepsilon \|f\|_{L^2} \|g\|_{L^2}. 
\end{align}
On the other hand, we see that 
\begin{align*}
\mathscr{I}_{3, 0}^{N, 2, 1}
= \bigg|\sum_{\substack{J_1 \in \D_1(2N) \\ J_2 \not\in \D_2(2N)}}
\sum_{\substack{I_1 \in \D_1 \\ \ell(I_1) \le \ell(J_1) \\ \rd(I_1, J_1) \ge 1}} 
\sum_{\substack{I_2 \in \D_2(N) \\ I_2 \subsetneq J_2}}
\langle b_{I_1 J_1}, h_{I_2} \rangle \langle h_{J_2} \rangle_{I_2} 
f_{I_1 I_2} g_{J_1 J_2}\bigg|,
\end{align*}
and 
\begin{align*}
|\langle b_{I_1 J_1}, h_{I_2} \rangle| |\langle h_{J_2} \rangle_{I_2}|
\lesssim \|b_{I_1 J_1}\|_{\BMO_{\D_2}} \|h_{I_2}\|_{\mathrm{H}^1_{\D_2}} |J_2|^{-\frac12}. 
\end{align*}
Observe that for each $i=1, 2$, 
\begin{align}\label{IDN-1}
I_i \in \D_i(N) \text{ and }   
J_i \not\in \D_i(2N) \text{ with } I_i \subsetneq J_i
\quad \Longrightarrow \quad 
\ell(J_i) > 2^{2N}, 
\end{align}
and 
\begin{align}\label{IDN-2}
\# \D_i^{\ell_i}(N) 
:= \#\big\{I_i \in \D_i(N): \ell(I_i) = 2^{\ell_i}\big\} 
\lesssim N^{n_i} 2^{(N- \ell_i) n_i}, \quad\forall \,  \ell_i \le N. 
\end{align}
Hence, \eqref{B1}, \eqref{car-IJ}, and \eqref{IDN-1}--\eqref{IDN-2} imply 
\begin{align}\label{30N21}
\mathscr{I}_{3, 0}^{N, 2, 1}
&\lesssim \sum_{\substack{k_1 \ge 0 \\ j_1 \ge 1}}
\sum_{-N \le \ell_2 \le N} \sum_{k_2 > 2N - \ell_2} 
2^{-k_1(\frac{n_1}{2} + \delta_1)} 2^{-k_2 \frac{n_2}{2}} j_1^{-n_1 - \delta_1} 
\\ \nonumber 
&\quad\times \sum_{\substack{J_1 \in \D_1 \\ I_1 \in J_1(k_1, j_1)}}
\sum_{I_2 \in \D_2^{\ell_2}(N)} |f_{I_1 I_2}| |g_{J_1 I_2^{(k_2)}}|
\\ \nonumber
&\le \sum_{\substack{k_1 \ge 0 \\ j_1 \ge 1}}
\sum_{-N \le \ell_2 \le N} \sum_{k_2 > 2N - \ell_2} 
2^{-k_1(\frac{n_1}{2} + \delta_1)} 2^{-k_2 \frac{n_2}{2}} j_1^{-n_1 - \delta_1} 
\\ \nonumber
&\quad\times \bigg(\sum_{\substack{I_1 \in \D_1 \\ I_2 \in \D_2}}
\sum_{J_1 \in I_1(k_1, j_1)} |f_{I_1 I_2}|^2 \bigg)^{\frac12}  
\bigg(\sum_{\substack{J_1 \in \D_1 \\ J_2 \in \D_2}}
\sum_{\substack{I_1 \in J_1(k_1, j_1) \\ I_2 \in \D_2^{\ell_2}(N) \\ I_2^{(k_2)} = J_2}} 
|g_{J_1 J_2}|^2 \bigg)^{\frac12} 
\\ \nonumber
&\lesssim \sum_{\substack{k_1 \ge 0 \\ j_1 \ge 1}}
\sum_{-N \le \ell_2 \le N} \sum_{k_2 > 2N - \ell_2} 
2^{-k_1 \delta_1} 2^{-k_2 \frac{n_2}{2}} j_1^{- \delta_1 - 1} 
N^{\frac{n_2}{2}} 2^{(N - \ell_2) \frac{n_2}{2}}
\|f\|_{L^2} \|g\|_{L^2}
\\ \nonumber
&\lesssim N^{1 + \frac{n_2}{2}} 2^{-N \frac{n_2}{2}}
\|f\|_{L^2} \|g\|_{L^2}
\le \varepsilon \|f\|_{L^2} \|g\|_{L^2}, 
\end{align}
whenever $N \ge 2$ large enough. Collecting \eqref{30N22} and \eqref{30N21}, we obtain 
\begin{align}\label{30N2}
\mathscr{I}_{3, 0}^{N, 2} 
\le \mathscr{I}_{3, 0}^{N, 2, 1} + \mathscr{I}_{3, 0}^{N, 2, 2} 
\lesssim \varepsilon \|f\|_{L^2} \|g\|_{L^2}.
\end{align}
Consequently, it follows from \eqref{30N1} and \eqref{30N2} that 
\begin{align*}
\mathscr{I}_{3, 0}^N 
\le \mathscr{I}_{3, 0}^{N, 1} + \mathscr{I}_{3, 0}^{N, 2} 
\lesssim \varepsilon \|f\|_{L^2} \|g\|_{L^2}. 
\end{align*}
This completes the proof. 
\end{proof}

Recall that for every $i=1, 2$, $k_i \ge 1$, and $I_i \in \D_i$, $I_i^{(k_i)}$ denotes the unique dyadic cube in $\D_i$ such that $I_i \subsetneq I_i^{(k_i)}$ and $\ell(I_i^{(k_i)}) = 2^{k_i} \ell(I_i)$. Write $I_i^{(0)} := I_i$. 
Given $i=1, 2$, $k_i \ge 0$, and $J_i \in \D_i$, let 
\begin{align*}
\mathcal{G}_i^{k_i}(J_i) 
& := \big\{I_i \in \D_i^{k_i}(J_i): \, \d(I_i, \sk(J_i)) > \ell(I_i)^{\frac12} \ell(J_i)^{\frac12} \big\}, 
\\
\mathcal{B}_i^{k_i}(J_i) 
& := \big\{I_i \in \D_i^{k_i}(J_i): \, \d(I_i, \sk(J_i)) \le \ell(I_i)^{\frac12} \ell(J_i)^{\frac12} \big\}, 
\end{align*}
where 
\begin{align*}
\D_i^{k_i}(J_i) :=\{I_i \in \D_i: I_i^{(k_i)} = J_i\}
\quad\text{ and }\quad 
\sk(J_i) := \bigcup_{J'_i \in \ch(J_i)} \partial J'_i. 
\end{align*}
Observe that for every $i=1, 2$, $k_i \ge 0$, and $J_i \in \D_i$, 
\begin{align}\label{GB}
\# \mathcal{G}_i^{k_i}(J_i)
\le \# \D_i^{k_i}(J_i) = 2^{k_i n_i}
\quad\text{ and }\quad 
\# \mathcal{B}_i^{k_i}(J_i) \lesssim 2^{k_i(n_i - \frac12)}. 
\end{align}
Note that $J_i \in \mathcal{B}_i^0(J_i)$ for each $i=1, 2$. By Lemma \ref{lem:THH-33}, \eqref{STH}, Lemma \ref{lem:THH-3}, and Lemma \ref{lem:FE} parts \eqref{list:FE-1} and  \eqref{list:FE-3}--\eqref{list:FE-4}, we obtain, for all $N \ge N_0$,
\begin{align*}
|\mathscr{I}_3^N|
= \bigg|\sum_{\substack{k_1 \ge 0 \\ k_2 \ge 0}} \sum_{j_1 \ge 1}
\sum_{\substack{J_1 \not\in \D_1(2N) \\ \text{or } J_2 \not\in \D_2(2N)}}
\sum_{\substack{I_1 \in J_1(k_1, j_1) \\ I_2 \in \D_2^{k_2}(J_2)}}
\G_{J_1 J_2}^{I_1 I_2} \, f_{I_1 I_2} \, g_{J_1 J_2} \bigg|
\lesssim \sum_{i=0}^6 \mathscr{I}_{3, i}^N,
\end{align*}
where
\begin{align*}
\mathscr{I}_{3, 1}^N
&:= \varepsilon \sum_{\substack{k_1 \ge 0 \\ k_2 \ge 0}} \sum_{j_1 \ge 1}
\sum_{\substack{J_1 \in \D_1 \\ J_2 \in \D_2}} 
\sum_{\substack{I_1 \in J_1(k_1, j_1) \\ I_2 \in \mathcal{B}_2^{k_2}(J_2)}}
2^{-k_1(\frac{n_1}{2}+\delta_1)} 2^{-k_2 \frac{n_2}{2}}
j_1^{-n_1-\delta_1} |f_{I_1 I_2}| |g_{J_1 J_2}|,
\\
\mathscr{I}_{3, 2}^N
&:= \sum_{\substack{k_1 \ge 0 \\ k_2 \ge 0}} \sum_{j_1 \ge N^{1/8}} 
\sum_{\substack{J_1 \in \D_1 \\ J_2 \in \D_2}}
\sum_{\substack{I_1 \in J_1(k_1, j_1) \\ I_2 \in \mathcal{B}_2^{k_2}(J_2)}}
2^{-k_1(\frac{n_1}{2}+\delta_1)} 2^{-k_2 \frac{n_2}{2}}
j_1^{-n_1-\delta_1} |f_{I_1 I_2}| |g_{J_1 J_2}|,
\\
\mathscr{I}_{3, 3}^N
&:= \sum_{\substack{k_1 \ge 0 \\ k_2 \ge N}} \sum_{j_1 \ge 1} 
\sum_{\substack{J_1 \in \D_1 \\ J_2 \in \D_2}}
\sum_{\substack{I_1 \in J_1(k_1, j_1) \\ I_2 \in \mathcal{B}_2^{k_2}(J_2)}} 
2^{-k_1(\frac{n_1}{2}+\delta_1)} 2^{-k_2 \frac{n_2}{2}}
j_1^{-n_1-\delta_1} |f_{I_1 I_2}| |g_{J_1 J_2}|,
\\
\mathscr{I}_{3, 4}^N
&:= \varepsilon \sum_{\substack{k_1 \ge 0 \\ k_2 \ge 1}} \sum_{j_1 \ge 1}
\sum_{\substack{J_1 \in \D_1 \\ J_2 \in \D_2}} 
\sum_{\substack{I_1 \in J_1(k_1, j_1) \\ I_2 \in \mathcal{G}_2^{k_2}(J_2)}}
2^{-k_1(\frac{n_1}{2}+\delta_1)} 2^{-k_2 (\frac{n_2}{2} + \frac{\delta_2}{2})}
j_1^{-n_1-\delta_1} |f_{I_1 I_2}| |g_{J_1 J_2}|,
\\
\mathscr{I}_{3, 5}^N
&:= \sum_{\substack{k_1 \ge 0 \\ k_2 \ge 1}} \sum_{j_1 \ge N^{1/8}} 
\sum_{\substack{J_1 \in \D_1 \\ J_2 \in \D_2}}
\sum_{\substack{I_1 \in J_1(k_1, j_1) \\ I_2 \in \mathcal{G}_2^{k_2}(J_2)}}
2^{-k_1(\frac{n_1}{2}+\delta_1)} 2^{-k_2 (\frac{n_2}{2} + \frac{\delta_2}{2})}
j_1^{-n_1-\delta_1} |f_{I_1 I_2}| |g_{J_1 J_2}|,
\\ 
\mathscr{I}_{3, 6}^N
&:= \sum_{\substack{k_1 \ge 0 \\ k_2 \ge N}} \sum_{j_1 \ge 1} 
\sum_{\substack{J_1 \in \D_1 \\ J_2 \in \D_2}}
\sum_{\substack{I_1 \in J_1(k_1, j_1) \\ I_2 \in \mathcal{G}_2^{k_2}(J_2)}}
2^{-k_1(\frac{n_1}{2}+\delta_1)} 2^{-k_2 (\frac{n_2}{2} + \frac{\delta_2}{2})}
j_1^{-n_1-\delta_1} |f_{I_1 I_2}| |g_{J_1 J_2}|. 
\end{align*}
It follows from the Cauchy--Schwarz inequality, \eqref{car-IJ}, and \eqref{GB} that
\begin{align*}
\mathscr{I}_{3, 1}^N
&\le \varepsilon \sum_{k_1, k_2 \ge 0} \sum_{j_1 \ge 1} 
2^{-k_1(\frac{n_1}{2}+\delta_1)} 2^{-k_2 \frac{n_2}{2}} j_1^{-n_1-\delta_1}
\\
&\quad\times \bigg(\sum_{\substack{I_1 \in \D_1 \\ I_2 \in \D_2}}
\sum_{\substack{J_1 \in I_1(k_1, j_1) \\ J_2 = I_2^{(k_2)}}}
|f_{I_1 I_2}|^2 \bigg)^{\frac12}
\bigg(\sum_{\substack{J_1 \in \D_1 \\ J_2 \in \D_2}}
\sum_{\substack{I_1 \in J_1(k_1, j_1) \\ I_2 \in \mathcal{B}_2^{k_2}(J_2)}} 
|g_{J_1 J_2}|^2 \bigg)^{\frac12}
\\
&\lesssim \varepsilon \sum_{k_1, k_2 \ge 0} \sum_{j_1 \ge 1}
2^{-k_1 \delta_1} 2^{-\frac{k_2}{4}} j_1^{-\delta_1-1}
\bigg(\sum_{\substack{I_1 \in \D_1 \\ I_2 \in \D_2}} |f_{I_1 I_2}|^2 \bigg)^{\frac12}
\bigg(\sum_{\substack{J_1 \in \D_1 \\ J_2 \in \D_2}} |g_{J_1 J_2}|^2 \bigg)^{\frac12}
\\
&\lesssim \varepsilon \|f\|_{L^2} \|g\|_{L^2}, 
\end{align*}
and 
\begin{align*}
\mathscr{I}_{3, 4}^N
&\le \varepsilon \sum_{k_1, k_2 \ge 0} \sum_{j_1 \ge 1} 
2^{-k_1(\frac{n_1}{2}+\delta_1)} 
2^{-k_2 (\frac{n_2}{2} + \frac{\delta_2}{2})} j_1^{-n_1-\delta_1}
\\
&\quad\times \bigg(\sum_{\substack{I_1 \in \D_1 \\ I_2 \in \D_2}}
\sum_{\substack{J_1 \in I_1(k_1, j_1) \\ J_2 = I_2^{(k_2)}}} 
|f_{I_1 I_2}|^2 \bigg)^{\frac12}
\bigg(\sum_{\substack{J_1 \in \D_1 \\ J_2 \in \D_2}}
\sum_{\substack{I_1 \in J_1(k_1, j_1) \\ I_2 \in \mathcal{G}_2^{k_2}(J_2)}}
|g_{J_1 J_2}|^2 \bigg)^{\frac12}
\\
&\lesssim \varepsilon \sum_{k_1, k_2 \ge 0} \sum_{j_1 \ge 1}
2^{-k_1 \delta_1} 2^{-k_2 \frac{\delta_2}{2}} j_1^{-\delta_1-1}
\bigg(\sum_{\substack{I_1 \in \D_1 \\ I_2 \in \D_2}} |f_{I_1 I_2}|^2 \bigg)^{\frac12}
\bigg(\sum_{\substack{J_1 \in \D_1 \\ J_2 \in \D_2}} |g_{J_1 J_2}|^2 \bigg)^{\frac12}
\\
&\lesssim \varepsilon \|f\|_{L^2} \|g\|_{L^2}
\le \varepsilon \|f\|_{L^2} \|g\|_{L^2}.
\end{align*}
Similarly, for all $N \ge N_0$ large enough, 
\begin{align*}
\mathscr{I}_{3, 2}^N
&\lesssim \sum_{k_1, k_2 \ge 0} \sum_{j_1 \ge N^{1/8}} 
2^{-k_1 \delta_1} 2^{-\frac{k_2}{4}} j_1^{-\delta_1-1}
\|f\|_{L^2} \|g\|_{L^2}
\\
&\lesssim N^{-\delta_1/8} \|f\|_{L^2} \|g\|_{L^2}
\le \varepsilon \|f\|_{L^2} \|g\|_{L^2},
\end{align*}
\begin{align*}
\mathscr{I}_{3, 3}^N
&\lesssim \sum_{k_1 \ge 0, k_2 \ge N} \sum_{j_1 \ge 1} 
2^{-k_1 \delta_1} 2^{-\frac{k_2}{4}} j_1^{-\delta_1-1}
\|f\|_{L^2} \|g\|_{L^2}
\\
&\lesssim 2^{-N/4} \|f\|_{L^2} \|g\|_{L^2}
\le \varepsilon \|f\|_{L^2} \|g\|_{L^2},
\end{align*}
\begin{align*}
\mathscr{I}_{3, 5}^N
&\lesssim \sum_{k_1, k_2 \ge 0} \sum_{j_1 \ge N^{1/8}} 
2^{-k_1 \delta_1} 2^{- k_2 \frac{\delta_2}{2}} j_1^{-\delta_1-1}
\|f\|_{L^2} \|g\|_{L^2} 
\\
&\lesssim N^{-\delta_1/8} \|f\|_{L^2} \|g\|_{L^2}
\le \varepsilon \|f\|_{L^2} \|g\|_{L^2},
\end{align*}
and 
\begin{align*}
\mathscr{I}_{3, 6}^N
&\lesssim \sum_{k_1 \ge 0, k_2 \ge N} \sum_{j_1 \ge 1} 
 2^{-k_1 \delta_1} 2^{-k_2 \frac{\delta_2}{2}} j_1^{-\delta_1-1}
\|f\|_{L^2} \|g\|_{L^2}
\\
&\lesssim 2^{-N\delta_2/2} \|f\|_{L^2} \|g\|_{L^2}
\le \varepsilon \|f\|_{L^2} \|g\|_{L^2}. 
\end{align*}
Consequently, collecting Lemma \ref{lem:hjpi} and these estimates above, we have proved that for any $N \ge N_0$ sufficiently large,
\begin{align*}
|\mathscr{I}_3^N|
\lesssim \sum_{i=0}^6 \mathscr{I}_{3, i}^N
\lesssim \varepsilon \|f\|_{L^2} \|g\|_{L^2}.
\end{align*}

\black 

\subsection{$\rd(I_i, J_i) < 1$ and $I_i \cap J_i = \emptyset$, $i=1, 2$}

\begin{lemma}\label{lem:THH-5}
Let $I_i, J_i \in \D_i$ with $\ell(I_i) \le \ell(J_i)$ such that $\rd(I_i, J_i) < 1$ and $I_i \cap J_i = \emptyset$, $i=1, 2$. Then
\begin{align*}
|\G_{J_1 J_2}^{I_1 I_2}|
\lesssim \prod_{i=1}^2 \widetilde{F}_i(I_i, J_i) \frac{\rs(I_i, J_i)^{\frac{n_i}{2}}}{\ird(I_i, J_i)^{\delta_i}}.
\end{align*}
\end{lemma}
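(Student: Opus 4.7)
The plan is to decompose the outer variable $x=(x_1,x_2)$ according to whether it is near or far from $I_i$ on each parameter, and then apply the compact full kernel representation together with the appropriate cancellations of $h_{I_1}$ and $h_{I_2}$. This is the bi-parameter analogue of the argument in Lemma \ref{lem:THH-2}, with the splitting now performed on \emph{both} parameters.

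Since $I_i \cap J_i = \emptyset$ for $i=1,2$, we may write $h_{J_i} = h_{J_i}\mathbf{1}_{3I_i\setminus I_i} + h_{J_i}\mathbf{1}_{J_i\setminus 3I_i}$ and thereby split $\langle T(h_{I_1}\otimes h_{I_2}), h_{J_1}\otimes h_{J_2}\rangle$ into four summands indexed by $A_1\in\{3I_1\setminus I_1,\,J_1\setminus 3I_1\}$ and $A_2\in\{3I_2\setminus I_2,\,J_2\setminus 3I_2\}$. On each region the supports of the two tensor factors are disjoint on both parameters, so the compact full kernel representation applies. On the far/far region $(J_1\setminus 3I_1)\times(J_2\setminus 3I_2)$ I would use the full bi-parameter H\"older condition applied to the correction
\begin{align*}
\mathbf{K}(x,y) := K(x,y) - K(x,(y_1,c_{I_2})) - K(x,(c_{I_1},y_2)) + K(x,(c_{I_1},c_{I_2})),
\end{align*}
using the cancellations of both $h_{I_1}$ and $h_{I_2}$, producing the product bound $\mathscr{Q}_1(I_1,J_1)\mathscr{Q}_2(I_2,J_2)$ via \eqref{def:QIJ}. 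On each mixed region I would use the appropriate mixed size-H\"older condition, consuming the cancellation of $h_{I_i}$ in the far direction only, yielding the products $\mathscr{Q}_1(I_1)\mathscr{Q}_2(I_2,J_2)$ and $\mathscr{Q}_1(I_1,J_1)\mathscr{Q}_2(I_2)$. On the near/near region I would use only the pointwise size bound of $K$ (no cancellation), yielding $\mathscr{Q}_1(I_1)\mathscr{Q}_2(I_2)$.

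Collecting the four pieces and inserting the normalizations $\|h_{I_i}\|_{L^\infty}\lesssim |I_i|^{-1/2}$, $\|h_{J_i}\|_{L^\infty}\lesssim |J_i|^{-1/2}$, together with \eqref{def:Q} and \eqref{def:QIJ} from Lemma \ref{lem:PQR}, I expect
\begin{align*}
|\G_{J_1J_2}^{I_1I_2}|
\lesssim \prod_{i=1}^2 |I_i|^{-\frac12}|J_i|^{-\frac12}
\Bigl[\mathscr{Q}_i(I_i)\,\mathbf{1}_{\{\ird(I_i,J_i)\le 2\}} + \mathscr{Q}_i(I_i,J_i)\Bigr]
\lesssim \prod_{i=1}^2 \widetilde F_i(I_i,J_i)\,\frac{\rs(I_i,J_i)^{\frac{n_i}{2}}}{\ird(I_i,J_i)^{\delta_i}},
\end{align*}
after using $|I_i|^{\frac12}/|J_i|^{\frac12}=\rs(I_i,J_i)^{\frac{n_i}{2}}$. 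The only mildly subtle point, which I expect to be the main bookkeeping obstacle, is the near/near piece: it is nonzero only when $3I_i\cap J_i\ne\emptyset$, i.e.\ $\ird(I_i,J_i)\le 2$, in which regime $1\lesssim \ird(I_i,J_i)^{-\delta_i}$ so the missing decay factor is harmless and the unified bound absorbs this case. Everything else is a direct tensor-product combination of the one-parameter integral estimates already recorded in Lemma \ref{lem:PQR}.
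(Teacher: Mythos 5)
Your proposal is correct and follows essentially the same route as the paper: the identical four-fold splitting of $h_{J_1}\otimes h_{J_2}$ over $(3I_i\setminus I_i)$ and $(J_i\setminus 3I_i)$, the same choice of size, mixed size--H\"{o}lder, and full H\"{o}lder conditions on the respective regions, and the same observation that the near pieces vanish unless $\ird(I_i,J_i)\le 2$, which absorbs the missing $\ird^{-\delta_i}$ factor. No gaps.
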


\begin{proof}
We split
\begin{align*}
\langle T (h_{I_1} \otimes h_{I_2}), h_{J_1} \otimes h_{J_2} \rangle
&=\langle T(h_{I_1} \otimes h_{I_2}),
(h_{J_1} \mathbf{1}_{3I_1 \setminus I_1}) \otimes (h_{J_2} \mathbf{1}_{3I_2 \setminus I_2}) \rangle
\\
&\quad+ \langle T(h_{I_1} \otimes h_{I_2}),
(h_{J_1} \mathbf{1}_{3I_1 \setminus I_1})  \otimes (h_{J_2} \mathbf{1}_{J_2 \setminus 3I_2}) \rangle
\\
&\quad+ \langle T(h_{I_1} \otimes h_{I_2}),
(h_{J_1} \mathbf{1}_{J_1 \setminus 3I_1})  \otimes (h_{J_2} \mathbf{1}_{3I_2 \setminus I_2}) \rangle
\\
&\quad+ \langle T(h_{I_1} \otimes h_{I_2}),
(h_{J_1} \mathbf{1}_{J_1 \setminus 3I_1})  \otimes (h_{J_2} \mathbf{1}_{J_2 \setminus 3I_2}) \rangle.
\end{align*}
Note that if $\ird(I_i, J_i)=1+\d(I_i, J_i)/\ell(I_i)>2$ for some $i=1, 2$, then $J_i \cap 3I_i=\emptyset$, and hence
\begin{align*}
\langle T(h_{I_1} \otimes h_{I_2}),
(h_{J_1} \mathbf{1}_{3I_1 \setminus I_1}) \otimes (h_{J_2} \mathbf{1}_{3I_2 \setminus I_2}) \rangle
=0.
\end{align*}
Thus to treat the first term, it suffices to consider the case $\ird(I_i, J_i) \le 2$ for each $i=1, 2$. It follows from the compact full kernel representation, the size condition of $K$, and \eqref{def:Q} that
\begin{align*}
|\langle &T(h_{I_1} \otimes h_{I_2}),
(h_{J_1} \mathbf{1}_{3I_1 \setminus I_1})  \otimes (h_{J_2} \mathbf{1}_{3I_2 \setminus I_2}) \rangle|
\\
&\lesssim  \prod_{i=1}^2 |I_i|^{-\frac12} |J_i|^{-\frac12} \mathscr{Q}_i(I_i)
\lesssim \prod_{i=1}^2 \widetilde{F}_i(I_i, J_i)
\frac{\rs(I_i, J_i)^{\frac{n_i}{2}}}{\ird(I_i, J_i)^{\delta_2}}. 
\end{align*}

To estimate the second term, as above, it suffices to analyze the case $\ird(I_1, J_1) \le 2$. We use the compact full kernel representation and the mixed size-H\"{o}lder condition of $K$ to obtain
\begin{align*}
|\langle &T(h_{I_1} \otimes h_{I_2}),
(h_{J_1} \mathbf{1}_{3I_1 \setminus I_1})  \otimes (h_{J_2} \mathbf{1}_{J_2 \setminus 3I_2}) \rangle|
\\
&\lesssim  |I_1|^{-\frac12} |J_1|^{-\frac12} \mathscr{Q}_1(I_1) \,
|I_2|^{-\frac12} |J_2|^{-\frac12} \mathscr{Q}_2(I_2, J_2)
\\
&\lesssim \widetilde{F}_1(I_1, J_1)  |I_1|^{\frac12} |J_1|^{-\frac12}
\widetilde{F}_2(I_2, J_2) \frac{|I_2|^{\frac12} |J_2|^{-\frac12}}{\ird(I_2, J_2)^{\delta_2}}
\\
&\lesssim \prod_{i=1}^2 \widetilde{F}_i(I_i, J_i)
\frac{\rs(I_i, J_i)^{\frac{n_i}{2}}}{\ird(I_i, J_i)^{\delta_i}},
\end{align*}
where \eqref{def:QIJ} and \eqref{def:Q} were utilized. Analogously,
\begin{align*}
|\langle T(h_{I_1} \otimes h_{I_2}),
(h_{J_1} \mathbf{1}_{J_1 \setminus 3I_1})  \otimes (h_{J_2} \mathbf{1}_{3I_2 \setminus I_2}) \rangle|
\lesssim \prod_{i=1}^2 \widetilde{F}_i(I_i, J_i)
\frac{\rs(I_i, J_i)^{\frac{n_i}{2}}}{\ird(I_i, J_i)^{\delta_i}}.
\end{align*}

To control the last term, in light of \eqref{def:QIJ}, we apply the compact full kernel representation, the cancellation of $h_{I_1}$ and $h_{I_2}$, and the H\"{o}lder condition to deduce
\begin{align*}
|\langle &T(h_{I_1} \otimes h_{I_2}),
(h_{J_1} \mathbf{1}_{J_1 \setminus 3I_1})  \otimes (h_{J_2} \mathbf{1}_{J_2 \setminus 3I_2}) \rangle|
\\
&\lesssim \prod_{i=1}^2 |I_i|^{-\frac12} |J_i|^{-\frac12} \mathscr{Q}_i(I_i, J_i)
\lesssim \prod_{i=1}^2 \widetilde{F}_i(I_i, J_i)
\frac{\rs(I_i, J_i)^{\frac{n_i}{2}}}{\ird(I_i, J_i)^{\delta_i}}.
\end{align*}
Therefore, the desired inequality is a consequence of these estimates.
\end{proof}

By Lemma \ref{lem:THH-5} and Lemma \ref{lem:FE} part \eqref{list:FE-2}, we have for all $N \ge N_0$ sufficiently large,
\begin{align*}
|\mathscr{I}_4^N| 
\le \sum_{\substack{k_1 \ge 0 \\ k_2 \ge 0}}
\sum_{\substack{1 \le m_1 \le 2^{k_1} \\ 1 \le m_2 \le 2^{k_2}}}
\sum_{\substack{J_1 \not\in \D_1(2N) \\ \text{or } J_2 \not\in \D_2(2N)}}
\sum_{\substack{I_1 \in J_1(k_1, 0, m_1) \\ I_2 \in J_2(k_2, 0, m_2)}}
|\G_{J_1 J_2}^{I_1 I_2}| \, |f_{I_1 I_2}| |g_{J_1 J_2}|
\lesssim \sum_{i=0}^2 \mathscr{I}_{4, i}^N,
\end{align*}
where
\begin{align*}
\mathscr{I}_{4, 0}^N
&:= \varepsilon
\sum_{\substack{k_1 \ge 0 \\ k_2 \ge 0}}
\sum_{\substack{1 \le m_1 \le 2^{k_1} \\ 1 \le m_2 \le 2^{k_2}}}
\sum_{\substack{J_1 \in \D_1 \\ J_2 \in \D_2}}
\sum_{\substack{I_1 \in J_1(k_1, 0, m_1) \\ I_2 \in J_2(k_2, 0, m_2)}}
2^{-k_1 \frac{n_1}{2}} 2^{-k_2\frac{n_2}{2}} m_1^{-\delta_1} m_2^{-\delta_2}
|f_{I_1 I_2}| |g_{J_1 J_2}|,
\\ 
\mathscr{I}_{4, i}^N
&:= \sum_{\substack{k_1, k_2 \ge 0 \\ k_i \ge N}}
\sum_{\substack{1 \le m_1 \le 2^{k_1} \\ 1 \le m_2 \le 2^{k_2}}}
\sum_{\substack{J_1 \in \D_1 \\ J_2 \in \D_2}}
\sum_{\substack{I_1 \in J_1(k_1, 0, m_1) \\ I_2 \in J_2(k_2, 0, m_2)}}
2^{-k_1 \frac{n_1}{2}} 2^{-k_2\frac{n_2}{2}} m_1^{-\delta_1} m_2^{-\delta_2}
|f_{I_1 I_2}| |g_{J_1 J_2}|,
\end{align*}
for each $i=1, 2$. It follows from the Cauchy--Schwarz inequality, \eqref{car-IJK}, and \eqref{MDT} that
\begin{align*}
\mathscr{I}_{4, 0}^N
&\le \varepsilon
\sum_{k_1, k_2 \ge 0} 2^{-k_1 \frac{n_1}{2}} 2^{-k_2\frac{n_2}{2}}
\bigg(\sum_{\substack{I_1 \in \D_1 \\ I_2 \in \D_2}}
\sum_{\substack{1 \le m_1 \le 2^{k_1} \\ J_1 \in I_1(k_1, 0, m_1)}}
\sum_{\substack{1 \le m_2 \le 2^{k_2} \\ J_2 \in I_2(k_2, 0, m_2)}} |f_{I_1 I_2}|^2 \bigg)^{\frac12}
\\
&\quad\times
\bigg(\sum_{\substack{1 \le m_1 \le 2^{k_1} \\ 1 \le m_2 \le 2^{k_2}}} 
m_1^{-2\delta_1} m_2^{-2\delta_2}
\sum_{\substack{J_1 \in \D_1 \\ J_2 \in \D_2}}
\sum_{\substack{I_1 \in J_1(k_1, 0, m_1) \\ I_2 \in J_2(k_2, 0, m_2)}} 
|g_{J_1 J_2}|^2 \bigg)^{\frac12}
\\
&\le \varepsilon
\sum_{k_1, k_2 \ge 0} \bigg(2^{-k_1} \sum_{1 \le m_1 \le 2^{k_1}} m_1^{-2\delta_1} \bigg)^{\frac12}
\bigg(\sum_{\substack{I_1 \in \D_1 \\ I_2 \in \D_2}} |f_{I_1 I_2}|^2 \bigg)^{\frac12}
\\
&\quad\times
\bigg(2^{-k_2} \sum_{1 \le m_2 \le 2^{k_2}} m_2^{-2\delta_2} \bigg)^{\frac12}
\bigg(\sum_{\substack{J_1 \in \D_1 \\ J_2 \in \D_2}} |g_{J_1 J_2}|^2 \bigg)^{\frac12}
\\
&\lesssim \varepsilon \sum_{k_1, k_2 \ge 0} 
2^{-k_1 \theta \delta_1} 2^{-k_2 \theta \delta_2}  \|f\|_{L^2} \|g\|_{L^2}
\lesssim \varepsilon \|f\|_{L^2} \|g\|_{L^2},
\end{align*}
where \eqref{MDT} was used in the last inequality. Likewise, for each $i=1, 2$, 
\begin{align*}
\mathscr{I}_{4, i}^N
&\lesssim
\sum_{\substack{k_1, k_2 \ge 0 \\ k_i \ge N}}  
2^{-k_1 \theta \delta_1} 2^{-k_2 \theta \delta_2}  \|f\|_{L^2} \|g\|_{L^2} 
\lesssim 2^{-N \theta \delta_i} \|f\|_{L^2} \|g\|_{L^2}
\le \varepsilon \|f\|_{L^2} \|g\|_{L^2},
\end{align*}
provided $N \ge 1$ large enough. Consequently, we achieve 
\begin{align*}
|\mathscr{I}_4^N|
\lesssim \sum_{i=0}^2 \mathscr{I}_{4, i}^N
\lesssim \varepsilon \|f\|_{L^2} \|g\|_{L^2}.
\end{align*}

\subsection{$\rd(I_1, J_1) < 1$, $I_1 \cap J_1 = \emptyset$, and $I_2 \subset J_2$}\label{sec:NI}  
First, consider the case $I_2=J_2$. 

\begin{lemma}\label{lem:THH-66}
Let $I_1, J_1 \in \D_1$ with $\ell(I_1) \le \ell(J_1)$ be such that $\rd(I_1, J_1) < 1$ and $I_1 \cap J_1 = \emptyset$, and let $I_2 \in \D_2$. Then, 
\begin{align*}
|\langle T (h_{I_1} \otimes h_{I_2}), h_{J_1} \otimes h_{I_2} \rangle|
\lesssim \widetilde{F}_1(I_1, J_1) \frac{\rs(I_1, J_1)^{\frac{n_1}{2}}}{\ird(I_1, J_1)^{\delta_1}} \, 
\widehat{F}_2(I_2). 
\end{align*}
\end{lemma}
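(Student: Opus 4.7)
The plan is to mirror the proof of Lemma \ref{lem:THH-33}, but replacing the separated first-parameter estimate $\mathscr{P}_1(I_1,J_1)$ by the disjoint-but-nearby counterparts $\mathscr{Q}_1(I_1)+\mathscr{Q}_1(I_1,J_1)$ from Lemma \ref{lem:PQR}(b)--(c). Since $h_{I_2}$ appears on both input and output, I begin with the child decomposition
\begin{align*}
\langle T(h_{I_1}\otimes h_{I_2}), h_{J_1}\otimes h_{I_2}\rangle
=\sum_{I'_2,\,I''_2\in\ch(I_2)}\langle h_{I_2}\rangle_{I'_2}\langle h_{I_2}\rangle_{I''_2}\,
\langle T(h_{I_1}\otimes\mathbf{1}_{I'_2}), h_{J_1}\otimes\mathbf{1}_{I''_2}\rangle,
\end{align*}
and treat the diagonal $I'_2=I''_2$ and off-diagonal $I'_2\neq I''_2$ terms separately.

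For the diagonal piece I invoke the compact partial kernel representation on the first parameter (available since $I_1\cap J_1=\emptyset$), which reduces the pairing to a one-dimensional integral against $K_{\mathbf{1}_{I'_2},\mathbf{1}_{I'_2}}$ with $C(\mathbf{1}_{I'_2},\mathbf{1}_{I'_2})\lesssim F_2(I'_2)|I'_2|\lesssim \widehat{F}_2(I_2)|I_2|$. Because $\rd(I_1,J_1)<1$, I split the $x_1$-integration as $J_1=[J_1\cap(3I_1\setminus I_1)]\cup[J_1\setminus 3I_1]$: on the first piece (nonempty only when $\ird(I_1,J_1)\le 2$, in which case $\ird(I_1,J_1)^{-\delta_1}\simeq 1$) the size condition produces a factor of $\mathscr{Q}_1(I_1)$, while on the second piece the cancellation of $h_{I_1}$ together with the partial-kernel Hölder estimate produces $\mathscr{Q}_1(I_1,J_1)$. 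Their sum is $\lesssim \widetilde{F}_1(I_1,J_1)|I_1|/\ird(I_1,J_1)^{\delta_1}$, and after accounting for $|I_1|^{-1/2}|J_1|^{-1/2}$ one obtains the target estimate with the $\widehat{F}_2(I_2)$ factor carried by the partial-kernel constant.

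For the off-diagonal piece $I'_2\neq I''_2$, the sibling containment $I''_2\subset 3I'_2\setminus I'_2$ supplies the second-parameter separation needed to invoke the compact full kernel representation combined with the mixed size-Hölder condition (Hölder or size in $x_1$ paired with size in $x_2$), again splitting $J_1$ as above. The $x_2$-integration yields $\mathscr{Q}_2(I'_2)\lesssim \widetilde{F}_2(I_2)|I_2|$, the $x_1$-integration yields $\mathscr{Q}_1(I_1)+\mathscr{Q}_1(I_1,J_1)$, and a routine collection of normalising constants together with $|I_1|^{1/2}|J_1|^{-1/2}=\rs(I_1,J_1)^{n_1/2}$ and $|\langle h_{I_2}\rangle_{I'_2}\langle h_{I_2}\rangle_{I''_2}|\simeq|I_2|^{-1}$ closes the estimate. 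The only technical subtlety is the bookkeeping for the annulus contribution $\mathscr{Q}_1(I_1)$, which by itself carries no $\ird$-decay; but since that region contributes only under $\ird(I_1,J_1)\le 2$, the bound $\widetilde{F}_1(I_1)\lesssim \widetilde{F}_1(I_1,J_1)/\ird(I_1,J_1)^{\delta_1}$ is valid there and matches the target, exactly as in the bookkeeping of Lemma \ref{lem:THH-5}.
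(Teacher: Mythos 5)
Your proposal is correct and follows essentially the same route as the paper: the same child decomposition of $h_{I_2}$, the same splitting of $J_1$ into $3I_1\setminus I_1$ (nonempty only when $\ird(I_1,J_1)\le 2$, which absorbs the $\ird^{-\delta_1}$ factor) and $J_1\setminus 3I_1$, the partial kernel representation with $C(\mathbf{1}_{I'_2},\mathbf{1}_{I'_2})\lesssim\widehat F_2(I_2)|I_2|$ on the diagonal, the full kernel representation with $\mathscr{Q}_2(I'_2)$ off the diagonal, and the bounds \eqref{def:Q} and \eqref{def:QIJ} for the first-parameter integrals. No gaps.
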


\begin{proof}
We rewrite
\begin{align*}
\langle &T (h_{I_1} \otimes h_{I_2}), h_{J_1} \otimes h_{I_2} \rangle
\\
&= \langle T (h_{I_1} \otimes h_{I_2}), (h_{J_1} \mathbf{1}_{3I_1 \setminus I_1}) \otimes h_{I_2} \rangle
+ \langle T (h_{I_1} \otimes h_{I_2}), (h_{J_1} \mathbf{1}_{J_1 \setminus 3I_1}) \otimes h_{I_2} \rangle
\\
&= \sum_{I'_2, I''_2 \in \ch(I_2)} \langle h_{I_2} \rangle_{I'_2} \langle h_{I_2} \rangle_{I''_2}
\big[\langle T (h_{I_1} \otimes \mathbf{1}_{I'_2}),
(h_{J_1} \mathbf{1}_{3I_1 \setminus I_1}) \otimes \mathbf{1}_{I''_2} \rangle
\\
&\quad+ \langle T (h_{I_1} \otimes \mathbf{1}_{I'_2}),
(h_{J_1} \mathbf{1}_{J_1 \setminus 3I_1}) \otimes \mathbf{1}_{I''_2} \rangle \big].
\end{align*}
In the case $I'_2=I''_2$, by the compact partial kernel representation, the size condition of $K_{\mathbf{1}_{I'_2}, \mathbf{1}_{I'_2}}$, and \eqref{def:Q}, there holds 
\begin{align*}
|\langle T(h_{I_1} \otimes \mathbf{1}_{I'_2}),
(h_{J_1} \mathbf{1}_{3I_1 \setminus I_1}) \otimes \mathbf{1}_{I'_2} \rangle|
&\lesssim |I_1|^{-\frac12} |J_1|^{-\frac12} \mathscr{Q}_1(I_1)
C(\mathbf{1}_{I'_2}, \mathbf{1}_{I'_2})
\\
&\lesssim \widetilde{F}_1(I_1, J_1) \frac{\rs(I_1, J_1)^{\frac{n_1}{2}}}{\ird(I_1, J_1)^{\delta_1}} \, 
\widehat{F}_2(I_2) |I_2|,
\end{align*}
where we have used that $\ird(I_1, J_1) \le 2$, otherwise $J_1 \cap 3I_1 = \emptyset$. Applying the compact partial kernel representation and the H\"{o}lder condition of $K_{\mathbf{1}_{I'_2}, \mathbf{1}_{I'_2}}$, we conclude that
\begin{align*}
|\langle T (h_{I_1} \otimes \mathbf{1}_{I'_2}),
(h_{J_1} \mathbf{1}_{(3I_1)^c}) \otimes \mathbf{1}_{I'_2} \rangle|
&\lesssim |I_1|^{-\frac12} |J_1|^{-\frac12} \mathscr{Q}_1(I_1, J_1) \, C(\mathbf{1}_{I'_2}, \mathbf{1}_{I'_2})
\\
&\lesssim \widetilde{F}_1(I_1, J_1) \frac{\rs(I_1, J_1)^{\frac{n_1}{2}}}{\ird(I_1, J_1)^{\delta_1}} \, 
\widehat{F}_2(I_2) |I_2|,
\end{align*}
where the last inequality follows from \eqref{def:QIJ}.

Let us turn to the case $I'_2 \neq I''_2$. Note that $I''_2 \subset 3I'_2 \setminus I'_2$. By the compact full kernel representation, the size condition, and \eqref{def:Q}, we obtain 
\begin{align*}
|\langle T (h_{I_1} \otimes \mathbf{1}_{I'_2}),
(h_{J_1} \mathbf{1}_{3I_1 \setminus I_1}) \otimes \mathbf{1}_{I''_2} \rangle|
&\lesssim |I_1|^{-\frac12} |J_1|^{-\frac12} \mathscr{Q}_1(I_1) \mathscr{Q}_2(I'_2)
\\
&\lesssim \widetilde{F}_1(I_1, J_1)  |I_1|^{\frac12} |J_1|^{-\frac12} \, \widetilde{F}_2(I'_2) |I'_2|
\\
&\lesssim \widetilde{F}_1(I_1, J_1)  \frac{\rs(I_1, J_1)^{\frac{n_1}{2}}}{\ird(I_1, J_1)^{\delta_1}}
\widetilde{F}_2(I_2) |I_2|,
\end{align*}
because $\ird(I_1, J_1) \le 2$, otherwise $J_1 \cap 3I_1 = \emptyset$. As above, the compact full kernel representation and the mixed size-H\"{o}lder condition yield
\begin{align*}
|\langle T (h_{I_1} \otimes \mathbf{1}_{I'_2}),
(h_{J_1} \mathbf{1}_{J_1 \setminus 3I_1}) \otimes \mathbf{1}_{I''_2} \rangle|
&\lesssim |I_1|^{-\frac12} |J_1|^{-\frac12} \mathscr{Q}_1(I_1, J_1)  \, \mathscr{Q}_2(I'_2)
\\
&\lesssim \widetilde{F}_1(I_1, J_1)  \frac{\rs(I_1, J_1)^{\frac{n_1}{2}}}{\ird(I_1, J_1)^{\delta_1}}
\widetilde{F}_2(I_2) |I_2|,
\end{align*}
where \eqref{def:Q} and \eqref{def:QIJ} were used. The desired inequality immediately follows from these estimates above.
\end{proof}

Next, we treat the case $I_2 \subsetneq J_2$. Recall the decomposition \eqref{STH}.

\begin{lemma}\label{lem:THH-6}
Let $I_1, J_1 \in \D_1$ with $\ell(I_1) \le \ell(J_1)$, $\rd(I_1, J_1) < 1$, and $I_1 \cap J_1 = \emptyset$, and let $I_2 \subsetneq J_2 \in \D_2$. Then  
\begin{align}\label{eq:THH-6}
|\langle T (h_{I_1} \otimes h_{I_2}),
h_{J_1} \otimes \phi_{I_2 J_2} \rangle| 
\lesssim \widetilde{F}_1(I_1, J_1) \frac{\rs(I_1, J_1)^{\frac{n_1}{2}}}{\ird(I_1, J_1)^{\delta_1}}
\, \widetilde{F}_2(I_2) \rs(I_2, J_2)^{\frac{n_2}{2}}. 
\end{align}
If we assume in addition that $\d(I_2, J_{I_2}^c) > \ell(I_2)^{\frac12} \ell(J_2)^{\frac12}$, then 
\begin{align}\label{eq:THH-6d}
|\langle T (h_{I_1} \otimes h_{I_2}),
h_{J_1} \otimes \phi_{I_2 J_2} \rangle| 
\lesssim \widetilde{F}_1(I_1, J_1) \frac{\rs(I_1, J_1)^{\frac{n_1}{2}}}{\ird(I_1, J_1)^{\delta_1}}
\, \widetilde{F}_2(I_2, J_2) \rs(I_2, J_2)^{\frac{n_2}{2} + \frac{\delta_2}{2}}. 
\end{align}
\end{lemma}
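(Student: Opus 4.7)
The plan is to mirror the decomposition strategies of Lemmas \ref{lem:THH-3} and \ref{lem:THH-5}, adapted to the present mixed scenario. Using $I_1 \cap J_1 = \emptyset$, I would write $h_{J_1} = h_{J_1}\mathbf{1}_{3I_1 \setminus I_1} + h_{J_1}\mathbf{1}_{J_1 \setminus 3I_1}$, and using $\supp \phi_{I_2 J_2} \subset J_{I_2}^c \subset I_2^c$, I would write $\phi_{I_2 J_2} = \phi_{I_2 J_2}\mathbf{1}_{3I_2 \setminus I_2} + \phi_{I_2 J_2}\mathbf{1}_{(3I_2)^c}$. This produces four cross-pairings $\langle T(h_{I_1}\otimes h_{I_2}), (h_{J_1}\mathbf{1}_E)\otimes(\phi_{I_2 J_2}\mathbf{1}_F)\rangle$, each of which I would estimate by the compact full kernel representation combined with the appropriate one among the size, mixed size--H\"older, and H\"older conditions of $K$, depending on which of the cancellations of $h_{I_1}$ and $h_{I_2}$ is usable.

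Concretely, on the piece $(3I_1\setminus I_1)\times(3I_2\setminus I_2)$, I would apply the bare size condition; this term vanishes unless $\ird(I_1,J_1)\le 2$, so the desired factor $\ird(I_1,J_1)^{-\delta_1}$ costs only a harmless constant, and \eqref{def:Q} furnishes $\mathscr{Q}_1(I_1)\,\mathscr{Q}_2(I_2)$. On $(3I_1\setminus I_1)\times(3I_2)^c$, I would exploit the cancellation of $h_{I_2}$ via the mixed size--H\"older condition (size in $x_1$, H\"older in $x_2$), yielding $\mathscr{Q}_1(I_1)\,\mathscr{R}_2(I_2)$ through \eqref{def:Q} and \eqref{def:R}. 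On $(J_1\setminus 3I_1)\times(3I_2\setminus I_2)$, I would use the cancellation of $h_{I_1}$ together with the other mixed size--H\"older condition, bounded by $\mathscr{Q}_1(I_1,J_1)\,\mathscr{Q}_2(I_2)$ via \eqref{def:Q} and \eqref{def:QIJ}. On $(J_1\setminus 3I_1)\times(3I_2)^c$, both cancellations are available, so the full bi-parameter H\"older condition gives $\mathscr{Q}_1(I_1,J_1)\,\mathscr{R}_2(I_2)$. Absorbing the normalization factors $|I_i|^{-1/2}|J_i|^{-1/2}$ together with $\|\phi_{I_2 J_2}\|_{L^\infty}\lesssim|J_2|^{-1/2}$, and using $\rs(I_i,J_i)^{n_i/2}=|I_i|^{1/2}|J_i|^{-1/2}$, assembles to \eqref{eq:THH-6}.

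For the improved estimate \eqref{eq:THH-6d}, I would invoke the extra assumption $d_2:=\d(I_2,J_{I_2}^c)>\ell(I_2)^{1/2}\ell(J_2)^{1/2}\ge\ell(I_2)$. Since any $x_2\in J_{I_2}^c$ satisfies $\d(x_2,I_2)\ge d_2>\ell(I_2)$ whereas $x_2\in 3I_2\setminus I_2$ satisfies $\d(x_2,I_2)\le\ell(I_2)$, the support of $\phi_{I_2 J_2}$ sits inside $J_{I_2}^c\subset(3I_2)^c$; hence only the two pieces involving $(3I_2)^c$ survive. In those pieces the second-parameter integral now runs over $I_2\times J_{I_2}^c$ and is controlled by $\mathscr{R}_2(I_2,J_2)$ rather than $\mathscr{R}_2(I_2)$, and the improved bound \eqref{def:RIJ} supplies precisely the extra factor $[\ell(I_2)/\ell(J_2)]^{\delta_2/2}=\rs(I_2,J_2)^{\delta_2/2}$ needed to upgrade $\rs(I_2,J_2)^{n_2/2}$ to $\rs(I_2,J_2)^{n_2/2+\delta_2/2}$.

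The main obstacle I anticipate is purely bookkeeping: tracking which of the four kernel alternatives is invoked in each of the four rectangular pieces, and matching the resulting $\mathscr{Q}_i$/$\mathscr{R}_i$ estimates with the correct combinations of $\widetilde{F}_i$-factors and relative size/distance exponents so that everything assembles into the claimed right-hand side. The structural work has essentially been done in Lemmas \ref{lem:THH-3} and \ref{lem:THH-5}; in spirit the present lemma is a tensor product of their proofs, with the additional geometric observation above permitting the sharpening in \eqref{eq:THH-6d}.
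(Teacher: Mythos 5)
Your proposal is correct and follows essentially the same route as the paper: the identical four-way splitting of $h_{J_1}\otimes\phi_{I_2J_2}$ over $(3I_1\setminus I_1)$ versus $(J_1\setminus 3I_1)$ and $(3I_2\setminus I_2)$ versus $(3I_2)^c$, with the same assignment of size, mixed size--H\"older, and H\"older conditions to the four pieces and the same bounds $\mathscr{Q}_1(I_1)$, $\mathscr{Q}_1(I_1,J_1)$, $\mathscr{Q}_2(I_2)$, $\mathscr{R}_2(I_2)$, and (for the sharpened estimate) $\mathscr{R}_2(I_2,J_2)$ via \eqref{def:RIJ}. Your observation that under $\d(I_2,J_{I_2}^c)>\ell(I_2)^{1/2}\ell(J_2)^{1/2}$ the support of $\phi_{I_2J_2}$ lies in $(3I_2)^c$ is exactly what the paper exploits implicitly when it bounds $\mathscr{E}_1$ and $\mathscr{E}_2$ directly by $\mathscr{R}_2(I_2,J_2)$.
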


\begin{proof}
Noting that $\supp(\phi_{I_2 J_2}) \subset J_{I_2}^c \subset I_2^c$, we rewrite  
\begin{align}\label{TSP}
\mathscr{E} 
:= \langle T (h_{I_1} \otimes h_{I_2}),
h_{J_1} \otimes \phi_{I_2 J_2} \rangle 
= (\mathscr{E}_{1, 1} + \mathscr{E}_{1, 2}) 
+ (\mathscr{E}_{2, 1} + \mathscr{E}_{2, 2}) 
=: \mathscr{E}_1 + \mathscr{E}_2,   
\end{align}
where 
\begin{align*}
\mathscr{E}_{1, 1} 
&:= \langle T (h_{I_1} \otimes h_{I_2}), (h_{J_1} \mathbf{1}_{3I_1 \setminus I_1}) 
\otimes (\phi_{I_2 J_2} \mathbf{1}_{3I_2 \setminus I_2})\rangle, 
\\
\mathscr{E}_{1, 2}
&:= \langle T (h_{I_1} \otimes h_{I_2}), (h_{J_1} \mathbf{1}_{3I_1 \setminus I_1})
\otimes (\phi_{I_2 J_2} \mathbf{1}_{(3I_2)^c})\rangle, 
\\ 
\mathscr{E}_{2, 1}
&:= \langle T (h_{I_1} \otimes h_{I_2}), (h_{J_1} \mathbf{1}_{J_1 \setminus 3I_1}) 
\otimes (\phi_{I_2 J_2} \mathbf{1}_{3I_2 \setminus I_2})\rangle, 
\\
\mathscr{E}_{2, 2}
&:= \langle T (h_{I_1} \otimes h_{I_2}), (h_{J_1} \mathbf{1}_{J_1 \setminus 3I_1})
\otimes (\phi_{I_2 J_2} \mathbf{1}_{(3I_2)^c})\rangle. 
\end{align*}
We use the compact full kernel representation, the size condition, and \eqref{def:Q} to obtain
\begin{align}\label{NEN-2}
|\mathscr{E}_{1, 1}|  
\lesssim \prod_{i=1}^2 |I_i|^{-\frac12} |J_i|^{-\frac12} \mathscr{Q}_i(I_i)
\lesssim \widetilde{F}_1(I_1, J_1) \frac{\rs(I_1, J_1)^{\frac{n_1}{2}}}{\ird(I_1, J_1)^{\delta_1}}
\, \widetilde{F}_2(I_2) \rs(I_2, J_2)^{\frac{n_2}{2}}. 
\end{align}
Applying the compact full kernel representation, the cancellation of $h_{I_2}$, the mixed size-H\"{o}lder condition, \eqref{def:Q}, and \eqref{def:R}, we arrive at
\begin{align}\label{NEN-3}
|\mathscr{E}_{1, 2}| 
&\lesssim |I_1|^{-\frac12} |J_1|^{-\frac12} \mathscr{Q}_1(I_1) \,
|I_2|^{-\frac12} |J_2|^{-\frac12} \mathscr{R}_2(I_2)
\\ \nonumber
&\lesssim  \widetilde{F}_1(I_1, J_1) \frac{\rs(I_1, J_1)^{\frac{n_1}{2}}}{\ird(I_1, J_1)^{\delta_1}}
\, \widetilde{F}_2(I_2) \rs(I_2, J_2)^{\frac{n_2}{2}}.
\end{align}
Along with \eqref{def:Q} and \eqref{def:QIJ}, the compact full kernel representation and the mixed size-H\"{o}lder condition imply that
\begin{align}\label{NEN-5}
|\mathscr{E}_{2, 1}| 
&\lesssim  |I_1|^{-\frac12} |J_1|^{-\frac12} \mathscr{Q}_1(I_1, J_1)
|I_2|^{-\frac12} |J_2|^{-\frac12} \mathscr{Q}_2(I_2)
\\ \nonumber
&\lesssim \widetilde{F}_1(I_1, J_1) \frac{\rs(I_1, J_1)^{\frac{n_1}{2}}}{\ird(I_1, J_1)^{\delta_1}} \,
\widetilde{F}_2(I_2) \rs(I_2, J_2)^{\frac{n_2}{2}}.
\end{align}
Additionally, it follows from the compact full kernel representation, the H\"{o}lder condition, \eqref{def:QIJ}, and \eqref{def:R} that
\begin{align}\label{NEN-6}
|\mathscr{E}_{2, 2}| 
&\lesssim |I_1|^{-\frac12} |J_1|^{-\frac12} \mathscr{Q}_1(I_1, J_1)
|I_2|^{-\frac12} |J_2|^{-\frac12} \mathscr{R}_2(I_2)
\\ \nonumber
&\lesssim \widetilde{F}_1(I_1, J_1) \frac{\rs(I_1, J_1)^{\frac{n_1}{2}}}{\ird(I_1, J_1)^{\delta_1}} \,
\widetilde{F}_2(I_2) \rs(I_2, J_2)^{\frac{n_2}{2}}.
\end{align}
Thus, \eqref{eq:THH-6} is a consequence of \eqref{TSP}--\eqref{NEN-6}.

Now assume that $\d(I_2, J_{I_2}^c) > \ell(I_2)^{\frac12} \ell(J_2)^{\frac12}$. Then $\d(I_2, J_{I_2}^c) \ge \ell(I_2)$. By the compact full kernel representation, the mixed size-H\"{o}lder condition of $K$, \eqref{def:Q}, and \eqref{def:RIJ}, we have 
\begin{align}\label{NEN-7}
|\mathscr{E}_1| 
&\lesssim |I_1|^{-\frac12} |J_1|^{-\frac12} \mathscr{Q}_1(I_1) \,
|I_2|^{-\frac12} |J_2|^{-\frac12} \mathscr{R}_2(I_2, J_2)
\\ \nonumber
&\lesssim  \widetilde{F}_1(I_1, J_1) \frac{\rs(I_1, J_1)^{\frac{n_1}{2}}}{\ird(I_1, J_1)^{\delta_1}}
\, \widetilde{F}_2(I_2, J_2) \rs(I_2, J_2)^{\frac{n_2}{2} + \frac{\delta_2}{2}}.
\end{align}
On the other hand, by the compact full kernel representation, the H\"{o}lder condition of $K$, \eqref{def:QIJ}, and \eqref{def:RIJ}, we deduce  
\begin{align}\label{NEN-8}
|\mathscr{E}_2| 
&\lesssim |I_1|^{-\frac12} |J_1|^{-\frac12} \mathscr{Q}_1(I_1, J_1)
|I_2|^{-\frac12} |J_2|^{-\frac12} \mathscr{R}_2(I_2, J_2)
\\ \nonumber
&\lesssim \widetilde{F}_1(I_1, J_1) \frac{\rs(I_1, J_1)^{\frac{n_1}{2}}}{\ird(I_1, J_1)^{\delta_1}} \,
\widetilde{F}_2(I_2, J_2) \rs(I_2, J_2)^{\frac{n_2}{2} + \frac{\delta_2}{2}}.
\end{align}
Consequently, \eqref{eq:THH-6d} follows from \eqref{TSP} and \eqref{NEN-7}--\eqref{NEN-8}.
\end{proof}

Denote 
\begin{align*}
\mathscr{I}_{5, 0}^N
&:= \bigg|\sum_{\substack{J_1 \not\in \D_1(2N) \\ \text{or } J_2 \not\in \D_2(2N)}}
\sum_{\substack{I_1 \in \D_1 \\ \ell(I_1) \le \ell(J_1) \\ \rd(I_1, J_1) < 1 \\ I_1 \cap J_1 = \emptyset}}
\sum_{\substack{I_2 \in \D_2 \\ I_2 \subsetneq J_2}}
\langle T (h_{I_1} \otimes h_{I_2}), h_{J_1} \otimes 1 \rangle 
\langle h_{J_2} \rangle_{I_2} \, 
f_{I_1 I_2} \, g_{J_1 J_2} \bigg|. 
\end{align*}

\begin{lemma}\label{lem:5hjpi}
For any $\varepsilon>0$, there exists an $N_0>1$ so that for all $N \ge N_0$, 
\begin{align*}
\mathscr{I}_{5, 0}^N
\lesssim \varepsilon \|f\|_{L^2} \|g\|_{L^2}. 
\end{align*}
\end{lemma}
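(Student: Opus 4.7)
The plan is to mimic the argument in Lemma \ref{lem:hjpi}, replacing everywhere the separated-regime estimate \eqref{B1} by its non-separated counterpart \eqref{B2} from Lemma \ref{lem:HP}(b), namely
\[
\|b_{I_1 J_1}\|_{\BMO_{\D_2}} \lesssim \widetilde{F}_1(I_1, J_1) \frac{\rs(I_1, J_1)^{\frac{n_1}{2}}}{\ird(I_1, J_1)^{\delta_1}},
\]
and its refinement \eqref{PB2} for $P_N^{\perp} b_{I_1 J_1}$, where $b_{I_1 J_1} := \langle T^*(h_{J_1} \otimes 1), h_{I_1}\rangle$. The parametrization of the inner sum now runs over $I_1 \in J_1(k_1, 0, m_1)$ with $k_1 \ge 0$ and $1 \le m_1 \le 2^{k_1}$, with the cardinality bounds \eqref{car-IJK} at hand.

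First I will split $\mathscr{I}_{5, 0}^N \le \mathscr{I}_{5, 0}^{N, 1} + \mathscr{I}_{5, 0}^{N, 2}$, where $\mathscr{I}_{5, 0}^{N, 1}$ collects the terms with $J_1 \not\in \D_1(2N)$ and $J_2 \in \D_2$ arbitrary, and $\mathscr{I}_{5, 0}^{N, 2}$ collects those with $J_1 \in \D_1(2N)$ but $J_2 \not\in \D_2(2N)$. For $\mathscr{I}_{5, 0}^{N, 1}$, the identity
\[
\sum_{\substack{I_2, J_2 \in \D_2 \\ I_2 \subsetneq J_2}}\langle T(h_{I_1} \otimes h_{I_2}), h_{J_1} \otimes 1\rangle \langle h_{J_2}\rangle_{I_2} f_{I_1 I_2} g_{J_1 J_2} = \big\langle h_{J_1} \otimes \Pi_{b_{I_1 J_1}}^*(\langle f, h_{I_1}\rangle), g\big\rangle
\]
from \eqref{JDN-2} reduces matters to $\|\Pi_{b_{I_1 J_1}}^*\|_{L^2 \to L^2} \lesssim \|b_{I_1 J_1}\|_{\BMO_{\D_2}}$. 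Cauchy--Schwarz in $J_1$ and Minkowski in $I_1$, exactly as in \eqref{Nbb}, then reduce the task to a sum in $(k_1, m_1)$. Here I invoke Lemma \ref{lem:FE}(b): for $N \ge N_0$, every $J_1 \not\in \D_1(2N)$ with $\rd(I_1, J_1)<1$ satisfies either $\widetilde{F}_1(I_1, J_1) < \varepsilon$ or $\rs(I_1, J_1) \le 2^{-N}$ (i.e., $k_1 \ge N$). Splitting the sum accordingly, applying Cauchy--Schwarz as in \eqref{PJK}, and using the key geometric series bound \eqref{MDT}
\[
2^{-k_1}\sum_{1 \le m_1 \le 2^{k_1}} m_1^{-2\delta_1} \lesssim 2^{-2 k_1 \theta \delta_1}
\]
to absorb the $m_1$-summation, yields $\mathscr{I}_{5, 0}^{N, 1} \lesssim \varepsilon \|f\|_{L^2}\|g\|_{L^2}$ for $N$ large.

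For $\mathscr{I}_{5, 0}^{N, 2}$, I will split $b_{I_1 J_1} = P_N b_{I_1 J_1} + P_N^{\perp} b_{I_1 J_1}$, giving rise to $\mathscr{I}_{5, 0}^{N, 2, 1}$ and $\mathscr{I}_{5, 0}^{N, 2, 2}$ in analogy with \eqref{30N22} and \eqref{30N21}. For the orthogonal piece $\mathscr{I}_{5, 0}^{N, 2, 2}$, the bound \eqref{PB2} supplies the extra damping factor $\sup_{I_2 \not\in \D_2(N)} \widehat{F}_2(I_2)$ which vanishes as $N \to \infty$ by Lemma \ref{lem:FE}(d); the outer sum over $J_1$ is then controlled as in $\mathscr{I}_{5, 0}^{N, 1}$ above. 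For the projected piece $\mathscr{I}_{5, 0}^{N, 2, 1}$, only $I_2 \in \D_2(N)$ with $I_2 \subsetneq J_2 \not\in \D_2(2N)$ contribute; by Lemma \ref{lem:FE}(c) one has $\ell(J_2) > 2^{2N}$ and $\ell(J_2)/\ell(I_2) = 2^{k_2}$ with $k_2 > 2N - \ell_2$, while \eqref{IDN-2} bounds the count of such $I_2$ at scale $2^{\ell_2}$ by $N^{n_2} 2^{(N-\ell_2) n_2}$. Mimicking the bookkeeping of \eqref{30N21} with $I_1 \in J_1(k_1, 0, m_1)$ and $\ird(I_1,J_1) \ge m_1$, together with \eqref{MDT} to sum in $m_1$, produces a factor of $N^{1 + n_2/2} \, 2^{-N n_2/2}$, which is $\le \varepsilon$ once $N$ is sufficiently large.

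The main technical obstacle is the simultaneous bookkeeping of decay in the four parameters $(k_1, m_1; k_2, \ell_2)$: the loss $m_1^{-\delta_1}$ is only summable after being paired with the $2^{-k_1 n_1/2}$ gain through \eqref{MDT}, and one must verify that the cardinality estimate $\# J_1(k_1, 0, m_1) \lesssim (2^{k_1} + m_1)^{n_1 - 1}$ does not overwhelm this gain when passing through Cauchy--Schwarz. Once this is arranged exactly as in the proof of Lemma \ref{lem:hjpi}, combining $\mathscr{I}_{5, 0}^{N, 1}$, $\mathscr{I}_{5, 0}^{N, 2, 1}$, and $\mathscr{I}_{5, 0}^{N, 2, 2}$ gives $\mathscr{I}_{5, 0}^N \lesssim \varepsilon \|f\|_{L^2} \|g\|_{L^2}$ for all $N \ge N_0$, as required.
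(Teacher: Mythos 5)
Your proposal is correct and follows essentially the same route as the paper's proof: the same split into $\mathscr{I}_{5,0}^{N,1}$ and $\mathscr{I}_{5,0}^{N,2}$, the same paraproduct identity reducing to $\|b_{I_1J_1}\|_{\BMO_{\D_2}}$, the estimates \eqref{B2}/\eqref{PB2} with Lemma \ref{lem:FE} parts (b) and (d), and the $(k_1,m_1)$ bookkeeping via \eqref{car-IJK} and \eqref{MDT}, with the projected piece handled through \eqref{IDN-1}--\eqref{IDN-2} exactly as in \eqref{30N21}. The only nitpick is that the fact $\ell(J_2)>2^{2N}$ for $I_2\in\D_2(N)$, $I_2\subsetneq J_2\not\in\D_2(2N)$ is \eqref{IDN-1} rather than Lemma \ref{lem:FE}(c), but this does not affect the argument.
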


\begin{proof}
Write 
\begin{align*}
\mathscr{I}_{5, 0}^N
\le \bigg|\sum_{\substack{J_1 \not\in \D_1(2N) \\ J_2 \in \D_2}}
\sum_{\substack{I_1 \in \D_1 \\ \ell(I_1) \le \ell(J_1) \\ \rd(I_1, J_1) < 1 \\ I_1 \cap J_1 = \emptyset}}
\sum_{\substack{I_2 \in \D_2 \\ I_2 \subsetneq J_2}} \cdots \bigg|
+ \bigg|\sum_{\substack{J_1 \in \D_1(2N) \\ J_2 \not\in \D_2(2N)}}
\sum_{\substack{I_1 \in \D_1 \\ \ell(I_1) \le \ell(J_1) \\ \rd(I_1, J_1) < 1 \\ I_1 \cap J_1 = \emptyset}}
\sum_{\substack{I_2 \in \D_2 \\ I_2 \subsetneq J_2}} \cdots \bigg|
=: \mathscr{I}_{5, 0}^{N, 1} + \mathscr{I}_{5, 0}^{N, 2}. 
\end{align*}
Denote $b_{I_1 J_1} := \langle T^*(h_{J_1} \otimes 1), h_{I_1} \rangle$. As argued in \eqref{Nbb}, we utilize \eqref{B2} and Lemma \ref{lem:FE} part \eqref{list:FE-2} to obtain that for any $N \ge N_0$, 
\begin{align*}
\mathscr{I}_{5, 0}^{N, 1}
\lesssim \mathscr{I}_{5, 0}^{N, 1, 1} + \mathscr{I}_{5, 0}^{N, 1, 2}, 
\end{align*}
where 
\begin{align*}
\mathscr{I}_{5, 0}^{N, 1, 1}
:= \varepsilon \Bigg[ \sum_{J_1 \in \D_1} 
\bigg(\sum_{k_1 \ge 0}  \sum_{\substack{1 \le m_1 \le 2^{k_1} \\ I_1 \in J_1(k_1, 0, m_1)}} 
2^{-k_1 \frac{n_1}{2}} m_1^{- \delta_1} 
\|\langle f, h_{I_1}\rangle\|_{L^2(\R^{n_2})} \bigg)^2 
\Bigg]^{\frac12} \, \|g\|_{L^2}, 
\end{align*}
and 
\begin{align*}
\mathscr{I}_{5, 0}^{N, 1, 2}
:= \Bigg[ \sum_{J_1 \in \D_1} 
\bigg(\sum_{k_1 \ge N} \sum_{\substack{1 \le m_1 \le 2^{k_1} \\ I_1 \in J_1(k_1, 0, m_1)}}
2^{-k_1 \frac{n_1}{2}} m_1^{- \delta_1} 
\|\langle f, h_{I_1}\rangle\|_{L^2(\R^{n_2})} \bigg)^2 
\Bigg]^{\frac12} \, \|g\|_{L^2}. 
\end{align*}
The Cauchy--Schwarz inequality, \eqref{car-IJK}, and \eqref{MDT} imply  
\begin{align}\label{PJKK}
&\Bigg(\sum_{k_1 \ge 0}  \sum_{\substack{m_1 \le 2^{k_1} \\ I_1 \in J_1(k_1, 0, m_1)}}
2^{-k_1 \frac{n_1}{2}} m_1^{-\delta_1} 
\|\langle f, h_{I_1}\rangle\|_{L^2(\R^{n_2})} \Bigg)^2
\\ \nonumber
&\le \Bigg(\sum_{k_1 \ge 0} 
\bigg[\sum_{\substack{1 \le m_1 \le 2^{k_1} \\ I_1 \in J_1(k_1, 0, m_1)}}
2^{- k_1 n_1} m_1^{-2 \delta_1} \bigg]^{\frac12}
\bigg[\sum_{\substack{1 \le m_1 \le 2^{k_1} \\ I_1 \in J_1(k_1, 0, m_1)}}
\|\langle f, h_{I_1}\rangle\|_{L^2(\R^{n_2})}^2 \bigg]^{\frac12} \Bigg)^2 
\\ \nonumber
&\lesssim \Bigg(\sum_{k_1 \ge 0} 2^{-k_1 \theta \delta_1} 
\bigg[\sum_{\substack{1 \le m_1 \le 2^{k_1} \\ I_1 \in J_1(k_1, 0, m_1)}} 
\|\langle f, h_{I_1}\rangle\|_{L^2(\R^{n_2})}^2 \bigg]^{\frac12} \Bigg)^2
\\ \nonumber
&\lesssim \sum_{k_1 \ge 0} 2^{-k_1 \theta \delta_1} 
\sum_{\substack{1 \le m_1 \le 2^{k_1} \\ I_1 \in J_1(k_1, 0, m_1)}} 
\|\langle f, h_{I_1}\rangle\|_{L^2(\R^{n_2})}^2 
\end{align}
provided that 
\begin{align*}
\sum_{\substack{1 \le m_1 \le 2^{k_1} \\ I_1 \in J_1(k_1, 0, m_1)}}
2^{- k_1 n_1} m_1^{-2 \delta_1} 
\lesssim 2^{-k_1} \sum_{1 \le m_1 \le 2^{k_1}} m_1^{-2 \delta_1}
\lesssim 2^{-2k_1 \theta \delta_1}. 
\end{align*}
Thus, \eqref{PJKK} and \eqref{car-IJK} give 
\begin{align*}
\mathscr{I}_{5, 0}^{N, 1, 1}
&\lesssim \varepsilon \bigg[ 
\sum_{k_1 \ge 0} 2^{-k_1 \theta \delta_1} \sum_{I_1 \in \D_1} 
\sum_{\substack{1 \le m_1 \le 2^{k_1} \\ J_1 \in I_1(k_1, 0, m_1)}}
\|\langle f, h_{I_1} \rangle\|_{L^2(\R^{n_2})}^2  \bigg]^{\frac12} \, \|g\|_{L^2} 
\\
&\lesssim \varepsilon \bigg[ 
\sum_{k_1 \ge 0} 2^{-k_1 \theta \delta_1}   
\sum_{I_1 \in \D_1} \|\langle f, h_{I_1} \rangle\|_{L^2(\R^{n_2})}^2   
\bigg]^{\frac12} \, \|g\|_{L^2} 
\lesssim \varepsilon \|f\|_{L^2} \|g\|_{L^2}, 
\end{align*}
and for any $N \ge N_0$ sufficiently large, 
\begin{align*}
\mathscr{I}_{5, 0}^{N, 1, 2}
&\lesssim \bigg[ 
\sum_{k_1 \ge N} 2^{-k_1 \theta \delta_1}   
\sum_{I_1 \in \D_1} \|\langle f, h_{I_1} \rangle\|_{L^2(\R^{n_2})}^2   
\bigg]^{\frac12} \, \|g\|_{L^2} 
\\
&\lesssim 2^{-N \theta \delta_1/2} \|f\|_{L^2} \|g\|_{L^2} 
\le \varepsilon \|f\|_{L^2} \|g\|_{L^2}. 
\end{align*}
Therefore,  
\begin{align}\label{50N1}
\mathscr{I}_{5, 0}^{N, 1}
\lesssim \mathscr{I}_{5, 0}^{N, 1, 1} + \mathscr{I}_{5, 0}^{N, 1, 2} 
\lesssim \varepsilon \|f\|_{L^2} \|g\|_{L^2}. 
\end{align}

In order to estimate $\mathscr{I}_{5, 0}^{N, 2}$, we split 
\begin{align*}
\mathscr{I}_{5, 0}^{N, 2}
= \bigg|\sum_{J_1 \in \D_1(2N)}
\sum_{\substack{I_1 \in \D_1 \\ \ell(I_1) \le \ell(J_1) \\ \rd(I_1, J_1) < 1 \\ I_1 \cap J_1 = \emptyset}} 
\big\langle h_{J_1} \otimes P_{2N}^{\perp} \Pi_{b_{I_1 J_1}}^* (\langle f, h_{I_1}\rangle),  
g \big\rangle \bigg| 
\le \mathscr{I}_{5, 0}^{N, 2, 1} + \mathscr{I}_{5, 0}^{N, 2, 2}, 
\end{align*}
where 
\begin{align*}
\mathscr{I}_{5, 0}^{N, 2, 1}
&:= \bigg|\sum_{J_1 \in \D_1(2N)}
\sum_{\substack{I_1 \in \D_1 \\ \ell(I_1) \le \ell(J_1) \\ \rd(I_1, J_1) < 1 \\ I_1 \cap J_1 = \emptyset}} 
\big\langle h_{J_1} \otimes P_{2N}^{\perp} \Pi_{P_N b_{I_1 J_1}}^* (\langle f, h_{I_1}\rangle),  
g \big\rangle \bigg|, 
\\ 
\mathscr{I}_{5, 0}^{N, 2, 2}
&:= \bigg|\sum_{J_1 \in \D_1(2N)}
\sum_{\substack{I_1 \in \D_1 \\ \ell(I_1) \le \ell(J_1) \\ \rd(I_1, J_1) < 1 \\ I_1 \cap J_1 = \emptyset}} 
\big\langle h_{J_1} \otimes P_{2N}^{\perp} \Pi_{P_N^{\perp} b_{I_1 J_1}}^* (\langle f, h_{I_1}\rangle),  
g \big\rangle \bigg|.  
\end{align*}
Much as in \eqref{30N22}, we apply \eqref{PB2} and Lemma \ref{lem:FE} part \eqref{list:FE-4} to obtain that for any $N \ge N_0$ large enough, 
\begin{align*}
\mathscr{I}_{5, 0}^{N, 2, 2}
&\le \Bigg[ \sum_{J_1 \in \D_1} 
\bigg(\sum_{\substack{I_1 \in \D_1 \\ \ell(I_1) \le \ell(J_1) \\ \rd(I_1, J_1) < 1 \\ I_1 \cap J_1 = \emptyset}} 
\|P_{2N}^{\perp} \Pi_{P_N^{\perp} b_{I_1 J_1}}^*\|_{L^2 \to L^2} 
\|\langle f, h_{I_1}\rangle\|_{L^2(\R^{n_2})} \bigg)^2 
\Bigg]^{\frac12} \, \|g\|_{L^2}
\\
& \lesssim \mathscr{I}_{5, 0}^{N, 1, 1}  
\lesssim \varepsilon \|f\|_{L^2} \|g\|_{L^2}. 
\end{align*}
Analogously to \eqref{30N21}, the estimates \eqref{B2}, \eqref{car-IJK}, and \eqref{MDT} give  
\begin{align*}
\mathscr{I}_{5, 0}^{N, 2, 1}
&= \bigg|\sum_{\substack{J_1 \in \D_1(2N) \\ J_2 \not\in \D_2(2N)}}
\sum_{\substack{I_1 \in \D_1 \\ \ell(I_1) \le \ell(J_1) \\ \rd(I_1, J_1) < 1 \\ I_1 \cap J_1 = \emptyset}}
\sum_{\substack{I_2 \in \D_2(N) \\ I_2 \subsetneq J_2}} 
\langle b_{I_1 J_1}, h_{I_2} \rangle \langle h_{J_2} \rangle_{I_2} 
f_{I_1 I_2} g_{J_1 J_2}\bigg|
\\ 
&\lesssim \sum_{\substack{k_1 \ge 0 \\ 1 \le m_1 \le 2^{k_1}}} 
\sum_{\substack{-N \le \ell_2 \le N \\ k_2 > 2N - \ell_2}}  
2^{-k_1 \frac{n_1}{2}} 2^{-k_2 \frac{n_2}{2}} m_1^{- \delta_1} 
\\ 
&\quad\times \sum_{\substack{J_1 \in \D_1 \\ I_1 \in J_1(k_1, 0, m_1)}} 
\sum_{I_2 \in \D_2^{\ell_2}(N)} 
|f_{I_1 I_2}| |g_{J_1 I_2^{(k_2)}}|
\\ 
&\le \sum_{k_1 \ge 0}  
\sum_{\substack{-N \le \ell_2 \le N \\ k_2 > 2N - \ell_2}}  
2^{-k_1 \frac{n_1}{2}} 2^{-k_2 \frac{n_2}{2}} 
\bigg(\sum_{\substack{I_1 \in \D_1 \\ I_2 \in \D_2}} 
\sum_{\substack{1 \le m_1 \le 2^{k_1} \\ J_1 \in I_1(k_1, 0, m_1)}} 
|f_{I_1 I_2}|^2 \bigg)^{\frac12}
\\
&\quad\times   
\bigg(\sum_{\substack{J_1 \in \D_1 \\ J_2 \in \D_2}} 
\sum_{1 \le m_1 \le 2^{k_1}} m_1^{-2 \delta_1}  
\sum_{\substack{I_1 \in J_1(k_1, 0, m_1) \\ I_2 \in \D_2^{\ell_2}(N) \\ I_2^{(k_2)} = J_2}} 
|g_{J_1 J_2}|^2 \bigg)^{\frac12} 
\\ 
&\lesssim \sum_{k_1 \ge 0} 
\sum_{\substack{-N \le \ell_2 \le 2N \\ k_2 > 2N - \ell_2}} 
2^{- k_1 \theta \delta_1} 2^{-k_2 \frac{n_2}{2}}  
N^{\frac{n_2}{2}} 2^{(N-\ell_2) \frac{n_2}{2}}
\|f\|_{L^2} \|g\|_{L^2}
\\ 
&\lesssim N^{1 + \frac{n_2}{2}} 2^{-N \frac{n_2}{2}}
\|f\|_{L^2} \|g\|_{L^2}
\le \varepsilon \|f\|_{L^2} \|g\|_{L^2}, 
\end{align*}
provided $N \ge 2$ sufficiently large. Hence, 
\begin{align}\label{50N2}
\mathscr{I}_{5, 0}^{N, 2}
\le \mathscr{I}_{5, 0}^{N, 2, 1} + \mathscr{I}_{5, 0}^{N, 2, 2} 
\lesssim \varepsilon \|f\|_{L^2} \|g\|_{L^2}. 
\end{align}
As a consequence of \eqref{50N1} and \eqref{50N2}, there holds 
\begin{align*}
\mathscr{I}_{5, 0}^N 
\le \mathscr{I}_{5, 0}^{N, 1} + \mathscr{I}_{5, 0}^{N, 2} \lesssim \varepsilon \|f\|_{L^2} \|g\|_{L^2}.
\end{align*}
The proof is complete.  
\end{proof}

In view of \eqref{STH}, Lemmas \ref{lem:THH-66} and \ref{lem:THH-6}, and Lemma \ref{lem:FE} parts \eqref{list:FE-2}--\eqref{list:FE-4}, we have that for any $N \ge N_0$ large enough,
\begin{align*}
|\mathscr{I}_5^N|
= \bigg|\sum_{\substack{k_1 \ge 0 \\ k_2 \ge 0}} \sum_{1 \le m_1 \le 2^{k_1}}
\sum_{\substack{J_1 \not\in \D_1(2N) \\ \text{or } J_2 \not\in \D_2(2N)}}
\sum_{\substack{I_1 \in J_1(k_1, 0, m_1) \\ I_2 \in \D_2^{k_2}(J_2)}}
\G_{J_1 J_2}^{I_1 I_2} \, f_{I_1 I_2} \, g_{J_1 J_2} \bigg|
\lesssim \sum_{i=0}^5 \mathscr{I}_{5, i}^N,
\end{align*}
where
\begin{align*}
\mathscr{I}_{5, 1}^N
&:= \varepsilon
\sum_{\substack{k_1 \ge 0 \\ k_2 \ge 0}} \sum_{1 \le m_1 \le 2^{k_1}} 
\sum_{\substack{J_1 \in \D_1 \\ J_2 \in \D_2}} 
\sum_{\substack{I_1 \in J_1(k_1, 0, m_1) \\ I_2 \in \mathcal{B}_2^{k_2}(J_2)}} 
2^{-k_1 \frac{n_1}{2}} 2^{-k_2\frac{n_2}{2}} m_1^{-\delta_1}
|f_{I_1 I_2}| |g_{J_1 J_2}|,
\\
\mathscr{I}_{5, 2}^N
&:= \sum_{\substack{k_1 \ge N \\ k_2 \ge 0}} \sum_{1 \le m_1 \le 2^{k_1}} 
\sum_{\substack{J_1 \in \D_1 \\ J_2 \in \D_2}} 
\sum_{\substack{I_1 \in J_1(k_1, 0, m_1) \\ I_2 \in \mathcal{B}_2^{k_2}(J_2)}} 
2^{-k_1 \frac{n_1}{2}} 2^{-k_2\frac{n_2}{2}} m_1^{-\delta_1}
|f_{I_1 I_2}| |g_{J_1 J_2}|,
\\
\mathscr{I}_{5, 3}^N
&:= \sum_{\substack{k_1 \ge 0 \\ k_2 \ge N}} \sum_{1 \le m_1 \le 2^{k_1}} 
\sum_{\substack{J_1 \in \D_1 \\ J_2 \in \D_2}} 
\sum_{\substack{I_1 \in J_1(k_1, 0, m_1) \\ I_2 \in \mathcal{B}_2^{k_2}(J_2)}} 
2^{-k_1 \frac{n_1}{2}} 2^{-k_2\frac{n_2}{2}} m_1^{-\delta_1}
|f_{I_1 I_2}| |g_{J_1 J_2}|,
\\ 
\mathscr{I}_{5, 4}^N
&:= \varepsilon
\sum_{\substack{k_1 \ge 0 \\ k_2 \ge 1}} \sum_{1 \le m_1 \le 2^{k_1}} 
\sum_{\substack{J_1 \in \D_1 \\ J_2 \in \D_2}} 
\sum_{\substack{I_1 \in J_1(k_1, 0, m_1) \\ I_2 \in \mathcal{G}_2^{k_2}(J_2)}} 
2^{-k_1 \frac{n_1}{2}} 2^{-k_2 (\frac{n_2}{2} + \frac{\delta_2}{2})} m_1^{-\delta_1}
|f_{I_1 I_2}| |g_{J_1 J_2}|,
\\
\mathscr{I}_{5, 5}^N
&:= \sum_{\substack{k_1 \ge N \\ k_2 \ge 1}} \sum_{1 \le m_1 \le 2^{k_1}} 
\sum_{\substack{J_1 \in \D_1 \\ J_2 \in \D_2}} 
\sum_{\substack{I_1 \in J_1(k_1, 0, m_1) \\ I_2 \in \mathcal{G}_2^{k_2}(J_2)}} 
2^{-k_1 \frac{n_1}{2}} 2^{-k_2 (\frac{n_2}{2} + \frac{\delta_2}{2})} m_1^{-\delta_1}
|f_{I_1 I_2}| |g_{J_1 J_2}|,
\\ 
\mathscr{I}_{5, 6}^N
&:= \sum_{\substack{k_1 \ge 0 \\ k_2 \ge N}} \sum_{1 \le m_1 \le 2^{k_1}} 
\sum_{\substack{J_1 \in \D_1 \\ J_2 \in \D_2}} 
\sum_{\substack{I_1 \in J_1(k_1, 0, m_1) \\ I_2 \in \mathcal{G}_2^{k_2}(J_2)}} 
2^{-k_1 \frac{n_1}{2}} 2^{-k_2 (\frac{n_2}{2} + \frac{\delta_2}{2})} m_1^{-\delta_1}
|f_{I_1 I_2}| |g_{J_1 J_2}|. 
\end{align*}
Invoking the Cauchy--Schwarz inequality, \eqref{GB}, \eqref{car-IJK}, and \eqref{MDT}, we obtain
\begin{align*}
\mathscr{I}_{5, 1}^N
&\le \varepsilon
\sum_{k_1, k_2 \ge 0} 2^{-k_1 \frac{n_1}{2}} 2^{-k_2\frac{n_2}{2}}
\bigg(\sum_{\substack{I_1 \in \D_1 \\ I_2 \in \D_2}} 
\sum_{1 \le m_1 \le 2^{k_1}} 
\sum_{\substack{J_1 \in I_1(k_1, 0, m_1) \\ J_2 = I_2^{(k_2)}}}  
|f_{I_1 I_2}|^2 \bigg)^{\frac12}
\\
&\quad\times \bigg(\sum_{1 \le m_1 \le 2^{k_1}} m_1^{-2\delta_1}
\sum_{\substack{J_1 \in \D_1 \\ J_2 \in \D_2}} 
\sum_{\substack{I_1 \in J_1(k_1, 0, m_1) \\ I_2 \in \mathcal{B}_2^{k_2}(J_2)}}  
|g_{J_1 J_2}|^2 \bigg)^{\frac12}
\\
&\lesssim \varepsilon
\sum_{k_1, k_2 \ge 0} 2^{- \frac{k_2}{4}} 
\bigg(2^{-k_1} \sum_{1 \le m_1 \le 2^{k_1}} m_1^{-2\delta_1} \bigg)^{\frac12}
\\
&\quad\times \bigg(\sum_{\substack{I_1 \in \D_1 \\ I_2 \in \D_2}} |f_{I_1 I_2}|^2 \bigg)^{\frac12} 
\bigg(\sum_{\substack{J_1 \in \D_1 \\ J_2 \in \D_2}} |g_{J_1 J_2}|^2 \bigg)^{\frac12}
\\
&\lesssim \varepsilon \sum_{k_1, k_2 \ge 0} 
2^{-k_1 \theta \delta_1} 2^{- \frac{k_2}{4}}  
\|f\|_{L^2} \|g\|_{L^2}
\lesssim \varepsilon \|f\|_{L^2} \|g\|_{L^2}, 
\end{align*}
and 
\begin{align*}
\mathscr{I}_{5, 4}^N
&\le \varepsilon
\sum_{k_1, k_2 \ge 0} 2^{-k_1 \frac{n_1}{2}} 
2^{-k_2(\frac{n_2}{2} + \frac{\delta_2}{2})}
\bigg(\sum_{\substack{I_1 \in \D_1 \\ I_2 \in \D_2}} 
\sum_{1 \le m_1 \le 2^{k_1}} 
\sum_{\substack{J_1 \in I_1(k_1, 0, m_1) \\ J_2 = I_2^{(k_2)}}}  
|f_{I_1 I_2}|^2 \bigg)^{\frac12}
\\
&\quad\times \bigg(\sum_{1 \le m_1 \le 2^{k_1}} m_1^{-2\delta_1}
\sum_{\substack{J_1 \in \D_1 \\ J_2 \in \D_2}} 
\sum_{\substack{I_1 \in J_1(k_1, 0, m_1) \\ I_2 \in \mathcal{G}_2^{k_2}(J_2)}}  
|g_{J_1 J_2}|^2 \bigg)^{\frac12}
\\
&\lesssim \varepsilon
\sum_{k_1, k_2 \ge 0} 2^{- k_2 \frac{\delta_2}{2}} 
\bigg(2^{-k_1} \sum_{1 \le m_1 \le 2^{k_1}} m_1^{-2\delta_1} \bigg)^{\frac12}
\\
&\quad\times \bigg(\sum_{\substack{I_1 \in \D_1 \\ I_2 \in \D_2}} |f_{I_1 I_2}|^2 \bigg)^{\frac12} 
\bigg(\sum_{\substack{J_1 \in \D_1 \\ J_2 \in \D_2}} |g_{J_1 J_2}|^2 \bigg)^{\frac12}
\\
&\lesssim \varepsilon \sum_{k_1, k_2 \ge 0} 
2^{-k_1 \theta \delta_1} 2^{- k_2 \frac{\delta_2}{2}}  
\|f\|_{L^2} \|g\|_{L^2}
\lesssim \varepsilon \|f\|_{L^2} \|g\|_{L^2}.
\end{align*}
Analogously, \eqref{MDT} implies that for all $N \ge N_0$ large enough,
\begin{align*}
\mathscr{I}_{5, 2}^N
\lesssim \sum_{k_1 \ge N, k_2 \ge 0} 
2^{-k_1 \theta \delta_1} 2^{- \frac{k_2}{4}} \|f\|_{L^2} \|g\|_{L^2}
\lesssim 2^{-N \theta \delta_1} \|f\|_{L^2} \|g\|_{L^2}
\le \varepsilon \|f\|_{L^2} \|g\|_{L^2},
\end{align*}
\begin{align*}
\mathscr{I}_{5, 3}^N
\lesssim \sum_{k_1 \ge 0, k_2 \ge N} 
2^{-k_1 \theta \delta_1} 2^{- \frac{k_2}{4}} \|f\|_{L^2} \|g\|_{L^2}
\lesssim 2^{-N/4} \|f\|_{L^2} \|g\|_{L^2}
\le \varepsilon \|f\|_{L^2} \|g\|_{L^2},
\end{align*}
\begin{align*}
\mathscr{I}_{5, 5}^N 
&\lesssim \sum_{k_1 \ge N, k_2 \ge 0} 
2^{-k_1 \theta \delta_1} 2^{- k_2 \frac{\delta_2}{2}}  \|f\|_{L^2} \|g\|_{L^2}
\lesssim 2^{-N \theta \delta_1} \|f\|_{L^2} \|g\|_{L^2}
\le \varepsilon \|f\|_{L^2} \|g\|_{L^2}.
\end{align*}
and
\begin{align*}
\mathscr{I}_{5, 6}^N 
&\lesssim \sum_{k_1 \ge 0, k_2 \ge N} 
2^{-k_1 \theta \delta_1} 2^{- k_2 \frac{\delta_2}{2}}  \|f\|_{L^2} \|g\|_{L^2}
\lesssim 2^{-N \delta_2/2} \|f\|_{L^2} \|g\|_{L^2}
\le \varepsilon \|f\|_{L^2} \|g\|_{L^2}.
\end{align*}
Therefore, these estimates above and Lemma \ref{lem:5hjpi} yield 
\begin{align*}
\mathscr{I}_5^N 
\lesssim \sum_{i=0}^6 \mathscr{I}_{5, i}^N 
\lesssim \varepsilon \|f\|_{L^2} \|g\|_{L^2}.
\end{align*}

\subsection{$I_1 \subset J_1$ and $I_2 \subset J_2$}\label{sec:Inside}  
First, we handle the case $I_1 = J_1$ and $I_2 = J_2$.

\begin{lemma}\label{lem:THH-81}
For all $I_1 \in \D_1$ and $I_2 \in \D_2$, there holds 
\begin{align}\label{eq:thh81}
|\langle T(h_{I_1} \otimes h_{I_2}), h_{I_1} \otimes h_{I_2} \rangle|
\lesssim \prod_{i=1}^2 \widehat{F}_i(I_i) \rs(I_i, J_i)^{\frac{n_i}{2}}.
\end{align}
\end{lemma}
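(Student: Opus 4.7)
The natural first move is to expand each Haar function in terms of indicators of its children. Writing $h_{I_i} = \sum_{I'_i \in \ch(I_i)} \langle h_{I_i}\rangle_{I'_i} \mathbf{1}_{I'_i}$ with $|\langle h_{I_i}\rangle_{I'_i}| = |I_i|^{-1/2}$, one obtains
\begin{align*}
\langle T(h_{I_1} \otimes h_{I_2}), h_{I_1} \otimes h_{I_2} \rangle
= \sum_{\substack{I'_1, I''_1 \in \ch(I_1) \\ I'_2, I''_2 \in \ch(I_2)}} \varepsilon_{I'_1 I''_1} \varepsilon_{I'_2 I''_2} |I_1|^{-1} |I_2|^{-1}
\big\langle T(\mathbf{1}_{I'_1} \otimes \mathbf{1}_{I'_2}), \mathbf{1}_{I''_1} \otimes \mathbf{1}_{I''_2} \big\rangle,
\end{align*}
with signs $\varepsilon_{\cdot \cdot} \in \{\pm 1\}$. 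Since there are only $4^{n_1+n_2}$ terms, it suffices to bound each summand by $\widehat{F}_1(I_1)\widehat{F}_2(I_2) |I_1| |I_2|$.

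I would split the analysis according to whether $I'_i = I''_i$ or $I'_i \neq I''_i$ in each parameter $i=1,2$. When $I'_1 = I''_1$ and $I'_2 = I''_2$, the pairing is exactly of the form controlled by the weak compactness property (cf. Definition \ref{def:WCP}), so it is bounded by $F_1(I'_1)|I'_1| \, F_2(I'_2) |I'_2| \lesssim F_1(I_1) F_2(I_2) |I_1| |I_2|$. When exactly one parameter has disjoint children, say $I'_1 = I''_1$ but $I'_2 \neq I''_2$, then $I''_2 \subset 3I'_2 \setminus I'_2$ and the supports are disjoint in the second coordinate, so the compact partial kernel representation on the second parameter applies with kernel bound $C(\mathbf{1}_{I'_1}, \mathbf{1}_{I'_1}) \leq F_1(I'_1)|I'_1|$. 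Combining this with the size condition gives
\begin{align*}
|\langle T(\mathbf{1}_{I'_1} \otimes \mathbf{1}_{I'_2}), \mathbf{1}_{I'_1} \otimes \mathbf{1}_{I''_2})\rangle|
\lesssim F_1(I'_1)|I'_1| \, \mathscr{Q}_2(I'_2) \lesssim F_1(I_1) \widetilde{F}_2(I_2) |I_1| |I_2|
\end{align*}
by \eqref{def:Q}. The symmetric case is analogous with the roles of the parameters reversed.

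In the remaining case $I'_1 \neq I''_1$ and $I'_2 \neq I''_2$, supports are disjoint in both coordinates and so the compact full kernel representation is available. Applying the size condition and invoking \eqref{def:Q} in each parameter yields
\begin{align*}
|\langle T(\mathbf{1}_{I'_1} \otimes \mathbf{1}_{I'_2}), \mathbf{1}_{I''_1} \otimes \mathbf{1}_{I''_2})\rangle|
\lesssim \mathscr{Q}_1(I'_1) \mathscr{Q}_2(I'_2)
\lesssim \widetilde{F}_1(I_1)\widetilde{F}_2(I_2)|I_1||I_2|.
\end{align*}

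Summing the at most $4^{n_1+n_2}$ contributions, every term is dominated by $\widehat{F}_1(I_1)\widehat{F}_2(I_2) |I_1||I_2|$ because the definition
\[
\widehat{F}_i(I_i) := \widetilde{F}_i(I_i) + F_i(I_i) + \sum_{I'_i \in \ch(I_i)} F_i(I'_i)
\]
was engineered precisely to absorb the three possible sources (weak compactness on $I'_i$, size integral $\mathscr{Q}_i(I'_i)$, and mixed terms on children). Since $|I_1|^{-1}|I_2|^{-1}$ cancels $|I_1||I_2|$, this yields the stated bound (with $\rs(I_i,J_i)=1$ in the present case $I_i=J_i$). The only mild subtlety is verifying that $F_i(I'_i)$ and $\widetilde{F}_i(I'_i)$ for $I'_i \in \ch(I_i)$ are controlled by $\widehat{F}_i(I_i)$, which is immediate from the definition and the monotonicity built into $(F_{i,1},F_{i,2},F_{i,3})\in\mathscr{F}$ via property \eqref{list:P3}; no substantial obstacle arises.
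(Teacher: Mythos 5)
Your proposal is correct and follows essentially the same route as the paper's proof: the same expansion of $h_{I_1}\otimes h_{I_2}$ over pairs of children, the same four-way case split (weak compactness when both children coincide, partial kernel representation plus $\mathscr{Q}_i$ when exactly one pair is disjoint, full kernel representation plus $\mathscr{Q}_1\mathscr{Q}_2$ when both are), and the same absorption of all terms into $\widehat{F}_1(I_1)\widehat{F}_2(I_2)$. No gaps.
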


\begin{proof}
Write
\begin{align}\label{EQEQ}
\langle &T(h_{I_1} \otimes h_{I_2}), h_{I_1} \otimes h_{I_2} \rangle
\nonumber \\
&=\sum_{\substack{I'_1, I''_1 \in \ch(I_1) \\ I'_2, I''_2 \in \ch(I_2)}}
\langle h_{I_1} \rangle_{I'_1} \langle h_{I_2} \rangle_{I'_2}
\langle h_{I_1} \rangle_{I''_1} \langle h_{I_2} \rangle_{I''_2}
\langle T (\mathbf{1}_{I'_1} \otimes \mathbf{1}_{I'_2}),
\mathbf{1}_{I''_1} \otimes \mathbf{1}_{I''_2} \rangle.
\end{align}
The weak compactness property gives
\begin{align}\label{EQEQ-1}
|\langle T (\mathbf{1}_{I'_1} \otimes \mathbf{1}_{I'_2}),
\mathbf{1}_{I'_1} \otimes \mathbf{1}_{I'_2} \rangle|
\le F_1(I'_1) |I'_1| \, F_2(I'_2) |I'_2|
\lesssim \widehat{F}_1(I_1) |I_1| \, \widehat{F}_2(I_2) |I_2|.
\end{align}
If $I'_1=I''_1$ and $I'_2 \neq I''_2$, we use the compact partial kernel representation, the size condition of $K_{\mathbf{1}_{I'_1}, \mathbf{1}_{I'_1}}$, and \eqref{def:Q} to arrive at
\begin{align}\label{EQEQ-2}
|\langle &T(\mathbf{1}_{I'_1} \otimes \mathbf{1}_{I'_2}),
\mathbf{1}_{I'_1} \otimes \mathbf{1}_{I''_2} \rangle| 
\lesssim C(\mathbf{1}_{I'_1},  \mathbf{1}_{I'_1}) \mathscr{Q}_2(I'_2)
\\ \nonumber
&\lesssim F_1(I'_1) |I'_1| \, \widetilde{F}_2(I_2) |I_2|
\lesssim \widehat{F}_1(I_1) |I_1| \, \widetilde{F}_2(I_2) |I_2|.
\end{align}
Similarly, if $I'_1 \neq I''_1$ and $I'_2 = I''_2$, we have
\begin{align}\label{EQEQ-3}
|\langle T (\mathbf{1}_{I'_1} \otimes \mathbf{1}_{I'_2}),
\mathbf{1}_{I''_1} \otimes \mathbf{1}_{I'_2} \rangle|
\lesssim \widetilde{F}_1(I_1) |I_1| \, \widehat{F}_2(I_2) |I_2|.
\end{align}
If $I'_1 \neq I'_2$ and $I''_1 \neq I''_2$, then in light of \eqref{def:Q}, the compact full kernel representation and the size condition imply
\begin{align}\label{EQEQ-4}
|\langle T (\mathbf{1}_{I'_1} \otimes \mathbf{1}_{I'_2}),
\mathbf{1}_{I''_1} \otimes \mathbf{1}_{I''_2} \rangle|
\lesssim \prod_{i=1}^2 \mathscr{Q}_i(I'_i) 
\lesssim \prod_{i=1}^2 \widetilde{F}_i(I'_i) |I'_i|
\lesssim \prod_{i=1}^2 \widetilde{F}_i(I_i) |I_i|.
\end{align}
Therefore, \eqref{eq:thh81} is a consequence of \eqref{EQEQ}--\eqref{EQEQ-4}.
\end{proof}

Second, let us treat the cases (i) $I_1 = J_1$ and $I_2 \subsetneq J_2$, and (ii) $I_1 \subsetneq J_1$ and $I_2 = J_2$.  Write 
\begin{align}\label{TH42}
\langle T (h_{I_1} \otimes h_{I_2}), h_{I_1} \otimes h_{J_2} \rangle
&= \langle T (h_{I_1} \otimes h_{I_2}), h_{I_1} \otimes \phi_{I_2 J_2} \rangle
\\ \nonumber
&\quad+ \langle T (h_{I_1} \otimes h_{I_2}), h_{I_1} \otimes 1 \rangle \langle h_{J_2} \rangle_{I_2}, 
\end{align}
and 
\begin{align}\label{TH43}
\langle T (h_{I_1} \otimes h_{I_2}), h_{J_1} \otimes h_{I_2} \rangle
&= \langle T (h_{I_1} \otimes h_{I_2}), \phi_{I_1 J_1} \otimes h_{I_2} \rangle
\\ \nonumber
&\quad+ \langle T (h_{I_1} \otimes h_{I_2}), 1 \otimes h_{I_2} \rangle \langle h_{J_1} \rangle_{I_1}.
\end{align}

\begin{lemma}\label{lem:THH-82}
Let $I_1, J_1 \in \D_1$ and $I_2, J_2 \in \D_2$. The following hold: 
\begin{enumerate}
\item[(a)] If $I_2 \subsetneq J_2$, then 
\begin{align}\label{eq:THH821}
|\langle T (h_{I_1} \otimes h_{I_2}),
h_{I_1} \otimes \phi_{I_2 J_2} \rangle| 
\lesssim \widehat{F}_1(I_1)  
\widetilde{F}_2(I_2) \rs(I_2, J_2)^{\frac{n_2}{2}}. 
\end{align}

\item[(b)] If $I_2 \subsetneq J_2$ with $\d(I_2, J_{I_2}^c) > \ell(I_2)^{\frac12} \ell(J_2)^{\frac12}$, then  
\begin{align}\label{eq:THH822}
|\langle T (h_{I_1} \otimes h_{I_2}),
h_{I_1} \otimes \phi_{I_2 J_2} \rangle| 
\lesssim \widehat{F}_1(I_1)  
\widetilde{F}_2(I_2, J_2) \rs(I_2, J_2)^{\frac{n_2}{2} + \frac{\delta_2}{2}}. 
\end{align}

\item[(c)] If $I_1 \subsetneq J_1$, then 
\begin{align}\label{eq:THH823}
|\langle T (h_{I_1} \otimes h_{I_2}),
\phi_{I_1 J_1} \otimes h_{I_2} \rangle| 
\lesssim  \widetilde{F}_1(I_1) \rs(I_1, J_1)^{\frac{n_1}{2}} \widehat{F}_2(I_2). 
\end{align}

\item[(d)] If $I_1 \subsetneq J_1$ with $\d(I_1, J_{I_1}^c) > \ell(I_1)^{\frac12} \ell(J_1)^{\frac12}$, then 
\begin{align}\label{eq:THH824}
|\langle T (h_{I_1} \otimes h_{I_2}),
\phi_{I_1 J_1} \otimes h_{I_2} \rangle| 
\lesssim  \widetilde{F}_1(I_1) \rs(I_1, J_1)^{\frac{n_1}{2} + \frac{\delta_1}{2}} \widehat{F}_2(I_2). 
\end{align}
\end{enumerate} 
\end{lemma}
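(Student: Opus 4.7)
The plan is to mimic the strategy used in Lemmas \ref{lem:THH-3} and \ref{lem:THH-6}, but with the role played there by the separation between $I_1$ and $J_1$ replaced here by a ``diagonal'' decomposition of $h_{I_1}$ (for parts (a), (b)) or of $h_{I_2}$ (for parts (c), (d)). I shall describe (a) and (b) in detail; parts (c) and (d) follow by exactly symmetric reasoning with the roles of the two parameters swapped.

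For (a), first expand $h_{I_1} = \sum_{I'_1 \in \ch(I_1)} \langle h_{I_1}\rangle_{I'_1} \mathbf{1}_{I'_1}$ on both inputs, so that
\begin{align*}
\langle T(h_{I_1} \otimes h_{I_2}), h_{I_1} \otimes \phi_{I_2 J_2}\rangle
&= \sum_{I'_1, I''_1 \in \ch(I_1)} \langle h_{I_1}\rangle_{I'_1} \langle h_{I_1}\rangle_{I''_1}
\Bigl[\langle T(\mathbf{1}_{I'_1} \otimes h_{I_2}), \mathbf{1}_{I''_1} \otimes (\phi_{I_2 J_2} \mathbf{1}_{3I_2 \setminus I_2})\rangle \\
&\qquad\qquad\qquad\qquad + \langle T(\mathbf{1}_{I'_1} \otimes h_{I_2}), \mathbf{1}_{I''_1} \otimes (\phi_{I_2 J_2} \mathbf{1}_{(3I_2)^c})\rangle\Bigr],
\end{align*}
using that $\supp(\phi_{I_2 J_2}) \subset J_{I_2}^c \subset I_2^c$. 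Since each $|\langle h_{I_1}\rangle_{I'_1}| = |I_1|^{-1/2}$, and $|I'_1| \simeq |I_1|$, the prefactors produce $|I_1|^{-1} \cdot |I'_1| \lesssim 1$ after pairing with the volume factors from the matrix elements. The key observation is that in every pairing arising above, the two functions in the second parameter have \emph{disjoint} supports (namely $I_2$ vs. $3I_2 \setminus I_2$ or $I_2$ vs.\ $(3I_2)^c$), so the compact partial kernel representation in the second parameter is available whenever $I'_1 = I''_1$, and the compact full kernel representation is available whenever $I'_1 \ne I''_1$ (the supports being then disjoint in both parameters).

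When $I'_1 = I''_1$, the partial kernel representation on the second parameter combined with the size bound for the $3I_2 \setminus I_2$ piece, and the H\"{o}lder estimate using the cancellation of $h_{I_2}$ for the $(3I_2)^c$ piece, yield a bound of the form $C(\mathbf{1}_{I'_1}, \mathbf{1}_{I'_1})\bigl[\mathscr{Q}_2(I_2) + \mathscr{R}_2(I_2)\bigr] |I_2|^{-1/2}|J_2|^{-1/2}$, which by \eqref{def:Q}, \eqref{def:R} and the hypothesis \eqref{partial-4} is controlled by $F_1(I'_1) |I'_1| \, \widetilde{F}_2(I_2)\, \rs(I_2,J_2)^{n_2/2}$. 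When $I'_1 \ne I''_1$, so that $I''_1 \subset 3I'_1 \setminus I'_1$, the full kernel representation together with the cancellation of $h_{I_2}$ and the mixed size--H\"{o}lder or full H\"{o}lder condition (H\"{o}lder in the second variable, size or mixed in the first) yields the bound $\mathscr{Q}_1(I'_1)\bigl[\mathscr{Q}_2(I_2) + \mathscr{R}_2(I_2)\bigr] |I_2|^{-1/2}|J_2|^{-1/2} \lesssim \widetilde{F}_1(I_1)|I_1| \widetilde{F}_2(I_2) \rs(I_2,J_2)^{n_2/2}$ via \eqref{def:Q} and \eqref{def:R}. Summing the $2^{2n_1}$ contributions and using $F_1(I'_1) + \widetilde{F}_1(I_1) \le \widehat{F}_1(I_1)$ gives \eqref{eq:THH821}.

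For (b), the additional assumption $\d(I_2, J_{I_2}^c) > \ell(I_2)^{1/2} \ell(J_2)^{1/2}$ lets us \emph{upgrade} the estimate of the $(3I_2)^c$ piece. Indeed, since $\phi_{I_2 J_2}$ is supported in $J_{I_2}^c$ and hence at distance at least $\ell(I_2)^{1/2}\ell(J_2)^{1/2}$ from $I_2$, the integral $\mathscr{Q}_2(I_2) + \mathscr{R}_2(I_2)$ gets replaced throughout by $\mathscr{R}_2(I_2, J_2)$, which by \eqref{def:RIJ} is bounded by $\widetilde{F}_2(I_2, J_2)|I_2|\,[\ell(I_2)/\ell(J_2)]^{\delta_2/2}$. (The $3I_2 \setminus I_2$ piece does not actually appear because $\phi_{I_2J_2}\mathbf{1}_{3I_2\setminus I_2} = 0$ under this geometric assumption when $J_{I_2}$ contains $B(c_{I_2}, d_2)$ with $d_2 \ge \ell(I_2)$.) This produces the extra gain $\rs(I_2, J_2)^{\delta_2/2}$ in \eqref{eq:THH822}. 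Parts (c) and (d) are proved identically, interchanging the two parameters. The main obstacle is purely bookkeeping: namely, tracking the correct combination of $\widehat{F}_1(I_1)$ versus $\widetilde{F}_1(I_1)$ as one passes from the diagonal child contribution $I'_1 = I''_1$ (where the diagonal/weak-compactness quantities enter) to the off-diagonal contribution $I'_1 \ne I''_1$ (where only the smoother $\widetilde{F}_1$ appears), and verifying that the two always combine into $\widehat{F}_1(I_1)$.
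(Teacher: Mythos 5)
Your proposal is correct and follows essentially the same route as the paper: the expansion of $h_{I_1}$ over the children of $I_1$, the diagonal/off-diagonal dichotomy $I_1'=I_1''$ versus $I_1'\neq I_1''$ paired with the partial resp. full kernel representation, the splitting of $\phi_{I_2J_2}$ into the $3I_2$ and $(3I_2)^c$ pieces controlled by $\mathscr{Q}_2(I_2)$ and $\mathscr{R}_2(I_2)$, and the upgrade to $\mathscr{R}_2(I_2,J_2)$ via \eqref{def:RIJ} under the separation hypothesis in part (b). Your observation that $\phi_{I_2J_2}\mathbf{1}_{3I_2}=0$ when $\d(I_2,J_{I_2}^c)>\ell(I_2)$ is consistent with the paper's use of a single $\mathscr{R}_2(I_2,J_2)$ term in that case, and the bookkeeping combining $F_1(I_1')$ and $\widetilde F_1(I_1)$ into $\widehat F_1(I_1)$ is exactly as in the paper.
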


\begin{proof}
We rewrite
\begin{align}\label{U1}
\langle T (h_{I_1} \otimes h_{I_2}),
h_{I_1} \otimes \phi_{I_2 J_2} \rangle 
&= \sum_{I'_1, I''_1 \in \ch(I_1)} 
\langle T(\mathbf{1}_{I'_1} \otimes h_{I_2}), 
\mathbf{1}_{I''_1} \otimes \phi_{I_2 J_2} \rangle 
\langle h_{I_1} \rangle_{I'_1} \langle h_{I_1} \rangle_{I''_1}. 
\end{align}
If $I'_1 = I''_1$, then the compact partial kernel representation, \eqref{def:Q}, and \eqref{def:R} give 
\begin{align}\label{U2}
|\langle &T(\mathbf{1}_{I'_1} \otimes h_{I_2}), 
\mathbf{1}_{I''_1} \otimes \phi_{I_2 J_2} \rangle| 
\\ \nonumber
&\le |\langle T(\mathbf{1}_{I'_1} \otimes h_{I_2}), 
\mathbf{1}_{I'_1} \otimes (\phi_{I_2 J_2} \mathbf{1}_{3I_2}) \rangle| 
+ |\langle T(\mathbf{1}_{I'_1} \otimes h_{I_2}), 
\mathbf{1}_{I'_1} \otimes (\phi_{I_2 J_2} \mathbf{1}_{(3I_2)^c}) \rangle| 
\\ \nonumber
&\le C(\mathbf{1}_{I'_1}, \mathbf{1}_{I'_1}) 
[\mathscr{Q}_2(I_2) + \mathscr{R}_2(I_2)] |I_2|^{-\frac12} |J_2|^{-\frac12}
\lesssim F_1(I'_1) |I_1|\, \widetilde{F}_2(I_2) \rs(I_2, J_2)^{\frac{n_2}{2}}.
\end{align}
If $I'_1 \neq I''_1$, then $I''_1 \subset 3I'_1 \setminus I'_1$, and the compact full kernel representation, \eqref{def:Q}, and \eqref{def:R} imply 
\begin{align}\label{U3}
|\langle &T(\mathbf{1}_{I'_1} \otimes h_{I_2}), 
\mathbf{1}_{I''_1} \otimes \phi_{I_2 J_2} \rangle| 
\\ \nonumber
&\le |\langle T(\mathbf{1}_{I'_1} \otimes h_{I_2}), 
\mathbf{1}_{I''_1} \otimes (\phi_{I_2 J_2} \mathbf{1}_{3I_2}) \rangle| 
+ |\langle T(\mathbf{1}_{I'_1} \otimes h_{I_2}), 
\mathbf{1}_{I''_1} \otimes (\phi_{I_2 J_2} \mathbf{1}_{(3I_2)^c}) \rangle| 
\\ \nonumber
&\le \mathscr{Q}_1(I'_1) 
[\mathscr{Q}_2(I_2) + \mathscr{R}_2(I_2)] |I_2|^{-\frac12} |J_2|^{-\frac12}
\lesssim \widetilde{F}_1(I_1) |I_1|\, \widetilde{F}_2(I_2) \rs(I_2, J_2)^{\frac{n_2}{2}}.
\end{align}
Hence, \eqref{eq:THH821} follows from \eqref{U1}--\eqref{U3}. 

If $\d(I_2, J_{I_2}^c) > \ell(I_2)^{\frac12} \ell(J_2)^{\frac12}$, then the compact partial kernel representation and \eqref{def:RIJ} yield 
\begin{align}\label{U4}
|\langle T(\mathbf{1}_{I'_1} \otimes h_{I_2}), 
\mathbf{1}_{I'_1} \otimes \phi_{I_2 J_2} \rangle| 
&\lesssim C(\mathbf{1}_{I'_1}, \mathbf{1}_{I'_1})  
\mathscr{R}_2(I_2, J_2) |I_2|^{-\frac12} |J_2|^{-\frac12}
\\ \nonumber
&\lesssim F_1(I'_1) |I_1| \, \widetilde{F}_2(I_2, J_2) 
\rs(I_2, J_2)^{\frac{n_2}{2} + \frac{\delta_2}{2}}
\end{align}
and the compact full kernel representation, \eqref{def:Q}, and \eqref{def:RIJ} imply 
\begin{align}\label{U5}
|\langle T(\mathbf{1}_{I'_1} \otimes h_{I_2}), 
\mathbf{1}_{I''_1} \otimes \phi_{I_2 J_2} \rangle| 
&\lesssim \mathscr{Q}_1(I'_1)  
\mathscr{R}_2(I_2, J_2) |I_2|^{-\frac12} |J_2|^{-\frac12}
\\ \nonumber
&\lesssim \widetilde{F}_1(I_1) |I_1| \, \widetilde{F}_2(I_2, J_2) 
\rs(I_2, J_2)^{\frac{n_2}{2} + \frac{\delta_2}{2}}
\end{align}
Therefore, \eqref{eq:THH822} is a consequence of \eqref{U1} and \eqref{U4}--\eqref{U5}. The inequalities \eqref{eq:THH823} and \eqref{eq:THH824} can be shown symmetrically.
\end{proof}

Denote 
\begin{align*}
\mathscr{P}_1^N
&:= \bigg|\sum_{\substack{J_1 \not\in \D_1(2N) \\ \text{or } J_2 \not\in \D_2(2N)}}
\sum_{\substack{I_1 \in \D_1 \\ I_1 = J_1}}
\sum_{\substack{I_2 \in \D_2 \\ I_2 \subsetneq J_2}}
\langle T (h_{I_1} \otimes h_{I_2}), h_{I_1} \otimes 1 \rangle 
\langle h_{J_2} \rangle_{I_2} \, 
f_{I_1 I_2} \, g_{J_1 J_2} \bigg|, 
\\
\mathscr{P}_2^N
&:= \bigg|\sum_{\substack{J_1 \not\in \D_1(2N) \\ \text{or } J_2 \not\in \D_2(2N)}}
\sum_{\substack{I_1 \in \D_1 \\ I_1 \subsetneq J_1}}
\sum_{\substack{I_2 \in \D_2 \\ I_2 = J_2}}
\langle T (h_{I_1} \otimes h_{I_2}),  1 \otimes h_{I_2} \rangle 
\langle h_{J_1} \rangle_{I_1} \, 
f_{I_1 I_2} \, g_{J_1 J_2} \bigg|. 
\end{align*}

\begin{lemma}\label{lem:PP12}
For any $\varepsilon>0$, there exists an $N_0>1$ so that for all $N \ge N_0$, 
\begin{align*}
\mathscr{P}_i^N 
\lesssim \varepsilon \|f\|_{L^2} \|g\|_{L^2}, \qquad i=1, 2. 
\end{align*}
\end{lemma}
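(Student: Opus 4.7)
The plan is to reduce $\mathscr{P}_1^N$ (and symmetrically $\mathscr{P}_2^N$) to a one-parameter paraproduct pairing in the ``free'' variable, parametrised through the $\BMO_{\D_2}$-valued map $b_{I_1} := \langle T^*(h_{I_1}\otimes 1), h_{I_1}\rangle$ supplied by part (c) of Lemma \ref{lem:HP}; recall that $b_{I_1} \in \CMO_{\D_2}$ with $\|b_{I_1}\|_{\BMO_{\D_2}} \lesssim \widehat{F}_1(I_1)$ and $\|P_N^\perp b_{I_1}\|_{\BMO_{\D_2}} \lesssim \widehat{F}_1(I_1) \sup_{I_2 \not\in \D_2(N)}\widehat{F}_2(I_2)$. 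I will detail the argument for $\mathscr{P}_1^N$; the treatment of $\mathscr{P}_2^N$ is identical after swapping the roles of the two parameters and replacing $b_{I_1}$ by $\widetilde{b}_{I_2} := \langle T^*_2(1 \otimes h_{I_2}), h_{I_2}\rangle \in \CMO_{\D_1}$, obtained from the symmetric version of Lemma \ref{lem:HP}(c) applied to $T^*_2$ (whose product-CMO testing is provided by hypothesis \eqref{list-5}). Introducing the one-variable Haar-coefficient functions $f^{I_1}(x_2) := \sum_{I_2} f_{I_1 I_2} h_{I_2}(x_2)$ and $g^{I_1}(x_2) := \sum_{J_2} g_{I_1 J_2} h_{J_2}(x_2)$, the martingale-difference identity
\begin{align*}
\sum_{J_2 \supsetneq I_2} \langle h_{J_2}\rangle_{I_2} g_{I_1 J_2} = \langle g^{I_1}\rangle_{I_2}
\end{align*}
recasts each inner double sum over $I_2 \subsetneq J_2$ as a pairing with the one-parameter paraproduct $\Pi_{b_{I_1}}$ on $\R^{n_2}$.

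I will split $\mathscr{P}_1^N \le \mathscr{P}_1^{N, \mathrm{a}} + \mathscr{P}_1^{N, \mathrm{b}}$, where $\mathscr{P}_1^{N, \mathrm{a}}$ collects the terms with $J_1 = I_1 \not\in \D_1(2N)$ (and $J_2$ arbitrary) and $\mathscr{P}_1^{N, \mathrm{b}}$ collects those with $J_1 = I_1 \in \D_1(2N)$ and $J_2 \not\in \D_2(2N)$. For the first piece, the identity above yields
\begin{align*}
\mathscr{P}_1^{N, \mathrm{a}} = \Big|\sum_{I_1 \not\in \D_1(2N)} \langle \Pi_{b_{I_1}} g^{I_1}, f^{I_1}\rangle\Big|,
\end{align*}
and the estimate $\|\Pi_{b_{I_1}}\|_{L^2 \to L^2} \lesssim \widehat{F}_1(I_1)$ together with Cauchy--Schwarz in $I_1$ and Parseval's identity $\sum_{I_1} \|f^{I_1}\|_{L^2(\R^{n_2})}^2 = \|f\|_{L^2}^2$ reduces the bound to $(\sup_{I_1 \not\in \D_1(2N)} \widehat{F}_1(I_1)) \|f\|_{L^2} \|g\|_{L^2}$, which is $\leq \varepsilon \|f\|_{L^2}\|g\|_{L^2}$ for $N$ large by Lemma \ref{lem:FE} part \eqref{list:FE-4}.

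For the second piece, the $J_2 \not\in \D_2(2N)$ restriction converts $\langle g^{I_1}\rangle_{I_2}$ into $\langle P_{2N}^\perp g^{I_1}\rangle_{I_2}$ (one-parameter projection on $\R^{n_2}$), yielding
\begin{align*}
\mathscr{P}_1^{N, \mathrm{b}} = \Big|\sum_{I_1 \in \D_1(2N)} \langle \Pi_{b_{I_1}} P_{2N}^\perp g^{I_1}, f^{I_1}\rangle\Big|.
\end{align*}
I will further decompose $b_{I_1} = P_N b_{I_1} + P_N^\perp b_{I_1}$. The $P_N^\perp b_{I_1}$ contribution is controlled by the same Cauchy--Schwarz--Parseval step, since $\|\Pi_{P_N^\perp b_{I_1}}\|_{L^2 \to L^2} \lesssim \|P_N^\perp b_{I_1}\|_{\BMO_{\D_2}} \lesssim \widehat{F}_1(I_1) \sup_{I_2 \not\in \D_2(N)}\widehat{F}_2(I_2)$ tends to $0$ uniformly in $I_1$ by Lemma \ref{lem:FE} part \eqref{list:FE-4}. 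For the $P_N b_{I_1}$ piece, the Haar index $I_2$ is restricted to $\D_2(N)$ while $J_2 \supsetneq I_2$ must satisfy $J_2 \not\in \D_2(2N)$; as observed in \eqref{IDN-1}, these two conditions together force $\ell(J_2) > 2^{2N}$. Bounding $|\langle b_{I_1}, h_{I_2}\rangle| \lesssim \widehat{F}_1(I_1) |I_2|^{1/2}$ and $|\langle h_{J_2}\rangle_{I_2}| \leq |J_2|^{-1/2}$, then applying Cauchy--Schwarz in $(I_1, I_2, J_2)$, the cardinality estimate $\#\D_2^{\ell_2}(N) \lesssim N^{n_2} 2^{(N-\ell_2) n_2}$, and the geometric sum $\sum_{k_2 > 2N - \ell_2} 2^{-k_2 n_2/2}$, produces a bound of order $N^{1 + n_2/2} 2^{-N n_2/2} \|f\|_{L^2}\|g\|_{L^2}$---literally the same computation used to estimate $\mathscr{I}_{3,0}^{N, 2, 1}$ in \eqref{30N21}.

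The main obstacle is that the $\BMO_{\D_2}$-norm bound alone does not yield smallness for the ``finite'' part $P_N b_{I_1}$ of $b_{I_1}$, whose paraproduct norm remains comparable to $\widehat{F}_1(I_1)$ and does not decay in $N$; the two sources of smallness (the BMO tail of $b_{I_1}$ and the scale-gap between the Haar support of $P_N b_{I_1}$ and the surviving indices of $P_{2N}^\perp g^{I_1}$) have to be separated and estimated by completely different means---the first through product-CMO decay and the second through a direct geometric-cardinality computation disjoint from any BMO consideration.
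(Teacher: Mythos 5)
Your proposal is correct and follows essentially the same route as the paper: the same split into the regimes $I_1 \not\in \D_1(2N)$ versus $I_1 \in \D_1(2N)$, $J_2 \not\in \D_2(2N)$, the same reduction to the paraproduct with symbol $b_{I_1} = \langle T^*(h_{I_1}\otimes 1), h_{I_1}\rangle$ controlled by Lemma \ref{lem:HP}(c), the same further decomposition $b_{I_1} = P_N b_{I_1} + P_N^{\perp} b_{I_1}$, and the same cardinality/scale-gap computation (as in \eqref{30N21}) for the $P_N b_{I_1}$ piece. The only cosmetic differences are that you phrase the pairing through $\Pi_{b_{I_1}}$ rather than its adjoint $\Pi^*_{b_{I_1}}$, and your symbol for $\mathscr{P}_2^N$ should be read as the second-variable pairing of $T^*_1(1\otimes h_{I_2})$ (equivalently $T^*(1\otimes h_{I_2})$) with $h_{I_2}$ rather than involving $T^*_2$; neither affects the argument.
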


\begin{proof}
By symmetry, it suffices to bound $\mathscr{P}_1^N$. We split 
\begin{align*}
\mathscr{P}_1^N
\le \bigg|\sum_{\substack{I_1 \not\in \D_1(2N) \\ J_2 \in \D_2}} 
\sum_{\substack{I_2 \in \D_2 \\ I_2 \subsetneq J_2}} \cdots \bigg| 
+ \bigg|\sum_{\substack{I_1 \in \D_1(2N) \\ J_2 \not\in \D_2(2N)}} 
\sum_{\substack{I_2 \in \D_2 \\ I_2 \subsetneq J_2}} \cdots \bigg|  
=: \mathscr{P}_{1, 1}^N + \mathscr{P}_{1, 2}^N. 
\end{align*}
Let $b_{I_1} := \langle T^*(h_{I_1} \otimes 1), h_{I_1}\rangle$. As argued in \eqref{Nbb}, by \eqref{B3} and Lemma \ref{lem:FE} part \eqref{list:FE-4}, there exists an $N_0>1$ so that for any $N \ge N_0$, 
\begin{align}\label{P11N}
\mathscr{P}_{1, 1}^N
&= \bigg|\sum_{I_1 \not\in \D_1(2N)} \big\langle h_{I_1} \otimes 
\Pi_{b_{I_1}}^*(\langle f, h_{I_1} \rangle), g \big\rangle \bigg| 
\\ \nonumber
&\le \bigg(\sum_{I_1 \not\in \D_1(2N)} 
\|\Pi_{b_{I_1}}^*(\langle f, h_{I_1} \rangle)\|_{L^2(\R^{n_2})}^2 \bigg)^{\frac12}  \|g\|_{L^2}
\\ \nonumber
&\lesssim \bigg(\sum_{I_1 \not\in \D_1(2N)} \|b_{I_1}\|_{\BMO_{\D_2}}^2 
\|\langle f, h_{I_1} \rangle\|_{L^2(\R^{n_2})}^2 \bigg)^{\frac12}  \|g\|_{L^2}
\\ \nonumber
&\lesssim \sup_{I_1 \not\in \D_1(2N)} \widehat{F}_1(I_1) 
\bigg(\sum_{I_1 \in \D_1} \|\langle f, h_{I_1} \rangle\|_{L^2(\R^{n_2})}^2 \bigg)^{\frac12}  \|g\|_{L^2}
\\ \nonumber 
&\le \varepsilon \|f\|_{L^2} \|g\|_{L^2}. 
\end{align}
To proceed, we split  
\begin{align*}
\mathscr{P}_{1, 2}^N  
= \bigg|\sum_{I_1 \in \D_1(2N)} \big\langle h_{I_1} \otimes 
P_{2N}^{\perp} \Pi_{b_{I_1}}^* (\langle f, h_{I_1}\rangle),  
g \big\rangle \bigg|
\le \mathscr{P}_{1, 2}^{N, 1}  + \mathscr{P}_{1, 2}^{N, 2}, 
\end{align*}
where 
\begin{align*}
\mathscr{P}_{1, 2}^{N, 1}  
&:= \bigg|\sum_{I_1 \in \D_1(2N)} \big\langle h_{I_1} \otimes 
P_{2N}^{\perp} \Pi_{P_N b_{I_1}}^* (\langle f, h_{I_1}\rangle),  g \big\rangle \bigg|,  
\\
\mathscr{P}_{1, 2}^{N, 2} 
&:= \bigg|\sum_{I_1 \in \D_1(2N)} \big\langle h_{I_1} \otimes 
P_{2N}^{\perp} \Pi_{P_N^{\perp} b_{I_1}}^* (\langle f, h_{I_1}\rangle),  g \big\rangle \bigg|. 
\end{align*}
Similarly to \eqref{P11N}, there holds 
\begin{align*}
\mathscr{P}_{1, 2}^{N, 2}
&\lesssim \bigg(\sum_{I_1 \in \D_1} \|P_N^{\perp} b_{I_1}\|_{\BMO_{\D_2}}^2 
\|\langle f, h_{I_1} \rangle\|_{L^2(\R^{n_2})}^2 \bigg)^{\frac12}  \|g\|_{L^2}
\\ 
&\lesssim \sup_{I_2 \not\in \D_2(2N)} \widehat{F}_2(I_2) 
\bigg(\sum_{I_1 \in \D_1} \|\langle f, h_{I_1} \rangle\|_{L^2(\R^{n_2})}^2 \bigg)^{\frac12}  \|g\|_{L^2}
\\  
&\le \varepsilon \|f\|_{L^2} \|g\|_{L^2}. 
\end{align*}
Moreover, by \eqref{B3}, \eqref{IDN-1}, and \eqref{IDN-2}, we arrive at 
\begin{align*}
\mathscr{P}_{1, 2}^{N, 1}
&= \bigg|\sum_{\substack{I_1 \in \D_1(2N) \\ J_2 \not\in \D_2(2N)}}
\sum_{\substack{I_2 \in \D_2(N) \\ I_2 \subsetneq J_2}}
\langle b_{I_1}, h_{I_2} \rangle 
\langle h_{J_2} \rangle_{I_2} \, 
f_{I_1 I_2} \, g_{I_1 J_2} \bigg|
\\
&\le \sum_{\substack{I_1 \in \D_1 \\ J_2 \not\in \D_2(2N)}}
\sum_{\substack{I_2 \in \D_2(N) \\ I_2 \subsetneq J_2}}
\|b_{I_1}\|_{\BMO_{\D_2}}  \|h_{I_2}\|_{\mathrm{H}^1_{\D_2}}
|J_2|^{-\frac12}  |f_{I_1 I_2}|  |g_{I_1 J_2}|
\\
&\lesssim \sum_{\substack{-N \le \ell_2 \le N \\ k_2 > 2N - \ell_2}}  
2^{-k_2 \frac{n_2}{2}} \sum_{I_1 \in \D_1} 
\sum_{I_2 \in \D_2^{\ell_2}(N)} |f_{I_1 I_2}| |g_{I_1 I_2^{(k_2)}}| 
\\
&\le \sum_{\substack{-N \le \ell_2 \le N \\ k_2 > 2N - \ell_2}}  2^{-k_2 \frac{n_2}{2}}
\bigg(\sum_{\substack{I_1 \in \D_1 \\ I_2 \in \D_2}} |f_{I_1 I_2}|^2 \bigg)^{\frac12} 
\bigg(\sum_{\substack{I_1 \in \D_1 \\ J_2 \in \D_2}} 
\sum_{\substack{I_2 \in \D_2^{\ell_2}(N) \\ I_2^{(k_2)}=J_2}} 
|g_{I_1 J_2}|^2 \bigg)^{\frac12} 
\\
& \lesssim \sum_{\substack{-N \le \ell_2 \le N \\ k_2 > 2N - \ell_2}}  
2^{-k_2 \frac{n_2}{2}} N^{\frac{n_2}{2}} 2^{(N-\ell_2) \frac{n_2}{2}} 
\|f\|_{L^2} \|g\|_{L^2}
\\
&\lesssim N^{1 + \frac{n_2}{2}} 2^{-N \frac{n_2}{2}} \|f\|_{L^2} \|g\|_{L^2}
\le \varepsilon \|f\|_{L^2} \|g\|_{L^2}, 
\end{align*}
for all $N \ge 2$ large enough. Then, 
\begin{align}\label{P12N}
\mathscr{P}_{1, 2}^N
\le \mathscr{P}_{1, 2}^{N, 1}  + \mathscr{P}_{1, 2}^{N, 2} 
\lesssim \varepsilon \|f\|_{L^2} \|g\|_{L^2}. 
\end{align}
Hence, it follows from \eqref{P11N} and \eqref{P12N} that $\mathscr{P}_1^N \le \mathscr{P}_{1, 1}^N + \mathscr{P}_{1, 2}^N \lesssim \varepsilon \|f\|_{L^2} \|g\|_{L^2}$. 
\end{proof}

Third, we work in the scenario $I_1 \subsetneq J_1$ and $I_2 \subsetneq J_2$. As before, we have 
\begin{align}\label{TH4}
\langle T (h_{I_1} \otimes h_{I_2}), h_{J_1} \otimes h_{J_2} \rangle
&= \langle T (h_{I_1} \otimes h_{I_2}), \phi_{I_1 J_1} \otimes \phi_{I_2 J_2} \rangle
\\ \nonumber
&\quad+ \langle T (h_{I_1} \otimes h_{I_2}), \phi_{I_1 J_1} \otimes 1 \rangle \langle h_{J_2} \rangle_{I_2} 
\\ \nonumber
&\quad+ \langle T (h_{I_1} \otimes h_{I_2}), 1 \otimes \phi_{I_2 J_2} \rangle \langle h_{J_1} \rangle_{I_1}
\\ \nonumber
&\quad+ \langle T (h_{I_1} \otimes h_{I_2}), 1 \rangle \langle h_{J_1} \rangle_{I_1} \langle h_{J_2} \rangle_{I_2}.
\end{align}

\begin{lemma}\label{lem:THH-8}
Let $I_1, J_1 \in \D_1$ with $I_1 \subsetneq J_1$, and let $I_2, J_2 \in \D_2$ with $I_2 \subsetneq J_2$. Then the following statements hold: 
\begin{enumerate}
\item[(a)] One has 
\begin{align}\label{eq:THH81}
|\langle T (h_{I_1} \otimes h_{I_2}),
\phi_{I_1 J_1} \otimes \phi_{I_2 J_2} \rangle| 
\lesssim \prod_{i=1}^2 \widetilde{F}_i(I_i) \rs(I_i, J_i)^{\frac{n_i}{2}}. 
\end{align}

\item[(b)] If $\d(I_2, J_{I_2}^c) > \ell(I_2)^{\frac12} \ell(J_2)^{\frac12}$, then  
\begin{align}\label{eq:THH82}
|\langle T (h_{I_1} \otimes h_{I_2}),
\phi_{I_1 J_1} \otimes \phi_{I_2 J_2} \rangle| 
\lesssim \widetilde{F}_1(I_1) \rs(I_1, J_1)^{\frac{n_1}{2}} 
\widetilde{F}_2(I_2, J_2) \rs(I_2, J_2)^{\frac{n_2}{2} + \frac{\delta_2}{2}}. 
\end{align}

\item[(c)] If $\d(I_1, J_{I_1}^c) > \ell(I_1)^{\frac12} \ell(J_1)^{\frac12}$, then 
\begin{align}\label{eq:THH83}
|\langle T (h_{I_1} \otimes h_{I_2}),
\phi_{I_1 J_1} \otimes \phi_{I_2 J_2} \rangle| 
\lesssim \widetilde{F}_1(I_1, J_1) \rs(I_1, J_1)^{\frac{n_1}{2} + \frac{\delta_1}{2}}
\widetilde{F}_2(I_2) \rs(I_2, J_2)^{\frac{n_2}{2}}. 
\end{align}

\item[(d)] If $\d(I_i, J_{I_i}^c) > \ell(I_i)^{\frac12} \ell(J_i)^{\frac12}$ for each $i=1, 2$, then  
\begin{align}\label{eq:THH84}
|\langle T (h_{I_1} \otimes h_{I_2}),
\phi_{I_1 J_1} \otimes \phi_{I_2 J_2} \rangle| 
\lesssim \prod_{i=1}^2 \widetilde{F}_i(I_i, J_i) \rs(I_i, J_i)^{\frac{n_i}{2} + \frac{\delta_i}{2}}. 
\end{align}
\end{enumerate} 
\end{lemma}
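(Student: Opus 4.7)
The plan is to treat all four statements in parallel by decomposing each $\phi_{I_i J_i}$ along the near/far cut $\R^{n_i} = 3I_i \sqcup (3I_i)^c$ (recalling $\supp \phi_{I_i J_i} \subset J_{I_i}^c \subset I_i^c$), and then invoking the appropriate kernel hypothesis in each region, in complete analogy with the pattern already used in Lemmas~\ref{lem:THH-3}, \ref{lem:THH-5}, \ref{lem:THH-6}, and \ref{lem:THH-82}. The key observation for parts (b)--(d) is that when $\d(I_i, J_{I_i}^c) > \ell(I_i)^{\frac12}\ell(J_i)^{\frac12} \ge \ell(I_i)$ (which uses $\ell(I_i) \le \ell(J_i)$), the cube $3I_i$ is entirely contained in $J_{I_i}$, so $\phi_{I_i J_i}\mathbf{1}_{3I_i} \equiv 0$; consequently only the far piece survives in that coordinate and we can upgrade the estimate \eqref{def:R} to \eqref{def:RIJ}, which is exactly what gains the extra factor $\rs(I_i, J_i)^{\delta_i/2}$.

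For (a), I would write
\begin{align*}
\langle T(h_{I_1} \otimes h_{I_2}), \phi_{I_1 J_1} \otimes \phi_{I_2 J_2}\rangle
= \sum_{\alpha_1, \alpha_2 \in \{\mathrm{n}, \mathrm{f}\}}
\langle T(h_{I_1} \otimes h_{I_2}),
(\phi_{I_1 J_1}\mathbf{1}_{\Omega_1^{\alpha_1}}) \otimes (\phi_{I_2 J_2}\mathbf{1}_{\Omega_2^{\alpha_2}})\rangle,
\end{align*}
where $\Omega_i^{\mathrm{n}} := 3I_i \setminus I_i$ and $\Omega_i^{\mathrm{f}} := (3I_i)^c$. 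In the $(\mathrm{n}, \mathrm{n})$ piece I use the size condition of $K$ (no cancellation needed), bounding it by $|I_1|^{-\frac12}|J_1|^{-\frac12}|I_2|^{-\frac12}|J_2|^{-\frac12} \mathscr{Q}_1(I_1) \mathscr{Q}_2(I_2)$ and then invoking \eqref{def:Q} twice. In the $(\mathrm{n}, \mathrm{f})$ piece I exploit the cancellation of $h_{I_2}$ together with the mixed size-H\"older condition of $K$ in the $y_2$ variable, yielding $\mathscr{Q}_1(I_1) \mathscr{R}_2(I_2)$; the $(\mathrm{f}, \mathrm{n})$ piece is handled symmetrically. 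Finally, in the $(\mathrm{f}, \mathrm{f})$ piece I use both cancellations simultaneously with the full H\"older condition, giving $\mathscr{R}_1(I_1) \mathscr{R}_2(I_2)$. Applying \eqref{def:Q} and \eqref{def:R} in every case produces the uniform bound $\widetilde{F}_i(I_i)|I_i|$ in each variable, and multiplying by $\|\phi_{I_i J_i}\|_{L^\infty}^{\phantom{1}} \lesssim |J_i|^{-\frac12}$ (absorbed into $|I_i|^{-\frac12}|J_i|^{-\frac12}$) delivers the claimed $\prod_i \widetilde{F}_i(I_i)\rs(I_i, J_i)^{n_i/2}$.

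For (b), the hypothesis on the second coordinate kills $(\mathrm{n}, \cdot)$-type terms in the $I_2$-variable, and on the surviving $(\cdot, \mathrm{f})$ pieces I replace \eqref{def:R} by \eqref{def:RIJ}, now that $\d(I_2, J_{I_2}^c) > \ell(I_2)^{\frac12}\ell(J_2)^{\frac12}$. This converts the factor $\widetilde{F}_2(I_2)\rs(I_2, J_2)^{n_2/2}$ into $\widetilde{F}_2(I_2, J_2)\rs(I_2, J_2)^{n_2/2 + \delta_2/2}$, while the first variable is handled exactly as in (a). Part (c) is completely symmetric. Part (d) combines both improvements: only the $(\mathrm{f}, \mathrm{f})$ piece survives, and the full H\"older condition of $K$ together with \eqref{def:RIJ} in both parameters yields $\prod_i \widetilde{F}_i(I_i, J_i)\rs(I_i, J_i)^{n_i/2 + \delta_i/2}$.

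No serious obstacle is anticipated: the whole argument is bookkeeping that routes each of the (at most four) bilinear summands through the correct condition from Definition~\ref{def:full} and the corresponding integral estimate from Lemma~\ref{lem:PQR}. The mildest technical point is verifying that the geometric hypothesis $\d(I_i, J_{I_i}^c) > \ell(I_i)^{\frac12}\ell(J_i)^{\frac12}$ forces $3I_i \cap J_{I_i}^c = \emptyset$ so that the near piece indeed drops out; this is immediate from $\ell^\infty$ geometry together with $\ell(I_i) \le \ell(J_i)$, which gives $\ell(I_i)^{\frac12}\ell(J_i)^{\frac12} \ge \ell(I_i)$.
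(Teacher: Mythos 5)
Your proposal is correct and follows essentially the same route as the paper: the same four-fold near/far decomposition of $\phi_{I_1J_1}\otimes\phi_{I_2J_2}$ along $3I_i\setminus I_i$ versus $(3I_i)^c$, the same assignment of the size, mixed size--H\"older, and full H\"older conditions to the respective pieces, and the same substitution of \eqref{def:RIJ} for \eqref{def:R} under the hypothesis $\d(I_i, J_{I_i}^c) > \ell(I_i)^{\frac12}\ell(J_i)^{\frac12}$ (which, as you note, also makes the near piece in that coordinate vanish). No gaps.
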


\begin{proof}
Let us first prove \eqref{eq:THH81}. Noting that $\supp(\phi_{I_i J_i}) \subset J_{I_i}^c \subset I_i^c$ for each $i=1, 2$, we have   
\begin{align}\label{EE}
\mathscr{E} 
:= \langle T (h_{I_1} \otimes h_{I_2}),
\phi_{I_1 J_1} \otimes \phi_{I_2 J_2} \rangle 
= (\mathscr{E}_{1, 1} + \mathscr{E}_{1, 2}) 
+ (\mathscr{E}_{2, 1} + \mathscr{E}_{2, 2}) 
=: \mathscr{E}_1 + \mathscr{E}_2,   
\end{align}
where 
\begin{align*}
\mathscr{E}_{1, 1} 
&:= \langle T (h_{I_1} \otimes h_{I_2}), (\phi_{I_1 J_1} \mathbf{1}_{3I_1 \setminus I_1}) 
\otimes (\phi_{I_2 J_2} \mathbf{1}_{3I_2 \setminus I_2})\rangle, 
\\
\mathscr{E}_{1, 2}
&:= \langle T (h_{I_1} \otimes h_{I_2}), (\phi_{I_1 J_1} \mathbf{1}_{3I_1 \setminus I_1})
\otimes (\phi_{I_2 J_2} \mathbf{1}_{(3I_2)^c})\rangle, 
\\ 
\mathscr{E}_{2, 1}
&:= \langle T (h_{I_1} \otimes h_{I_2}), (\phi_{I_1 J_1} \mathbf{1}_{(3I_1)^c}) 
\otimes (\phi_{I_2 J_2} \mathbf{1}_{3I_2 \setminus I_2})\rangle, 
\\
\mathscr{E}_{2, 2}
&:= \langle T (h_{I_1} \otimes h_{I_2}), (\phi_{I_1 J_1} \mathbf{1}_{(3I_1)^c})
\otimes (\phi_{I_2 J_2} \mathbf{1}_{(3I_2)^c})\rangle. 
\end{align*}
By the compact full kernel representation, the size condition, and \eqref{def:Q}, we obtain
\begin{align}\label{EE-1}
|\mathscr{E}_{1, 1}|  
\lesssim \prod_{i=1}^2 |I_i|^{-\frac12} |J_i|^{-\frac12} \mathscr{Q}_i(I_i)
\lesssim \prod_{i=1}^2 \widetilde{F}_i(I_i) \rs(I_1, J_1)^{\frac{n_1}{2}}. 
\end{align}
It follows from the compact full kernel representation, the cancellation of $h_{I_2}$, the mixed size-H\"{o}lder condition, \eqref{def:Q}, and \eqref{def:R} that 
\begin{align}\label{EE-2}
|\mathscr{E}_{1, 2}| 
\lesssim \mathscr{Q}_1(I_1) \, \mathscr{R}_2(I_2)
\prod_{i=1}^2 |I_i|^{-\frac12} |J_i|^{-\frac12} 
\lesssim \prod_{i=1}^2 \widetilde{F}_i(I_i) \rs(I_1, J_1)^{\frac{n_1}{2}}.
\end{align}
Symmetrically, 
\begin{align}\label{EE-3}
|\mathscr{E}_{2, 1}| 
\lesssim \mathscr{R}_1(I_1) \mathscr{Q}_2(I_2) 
\prod_{i=1}^2 |I_i|^{-\frac12} |J_i|^{-\frac12}
\lesssim \prod_{i=1}^2 \widetilde{F}_i(I_i) \rs(I_1, J_1)^{\frac{n_1}{2}}.
\end{align}
By the compact full kernel representation, the H\"{o}lder condition, and \eqref{def:R}, there holds 
\begin{align}\label{EE-4}
|\mathscr{E}_{2, 2}| 
\lesssim \prod_{i=1}^2 \mathscr{R}_i(I_i) |I_i|^{-\frac12} |J_i|^{-\frac12} 
\lesssim \prod_{i=1}^2 \widetilde{F}_i(I_i) \rs(I_i, J_i)^{\frac{n_i}{2}}. 
\end{align}
Hence, \eqref{eq:THH81} immediately follows from \eqref{EE}--\eqref{EE-4}.

If $\d(I_2, J_{I_2}^c) > \ell(I_2)^{\frac12} \ell(J_2)^{\frac12} \ge \ell(I_2)$, then the compact full kernel representation, the mixed size-H\"{o}lder condition of $K$, \eqref{def:Q}, and \eqref{def:RIJ} imply  
\begin{align}\label{EE-5}
|\mathscr{E}_1| 
&\lesssim \mathscr{Q}_1(I_1) \mathscr{R}_2(I_2, J_2) 
\prod_{i=1}^2 |I_i|^{-\frac12} |J_i|^{-\frac12} 
\\ \nonumber
&\lesssim \widetilde{F}_1(I_1) \rs(I_1, J_1)^{\frac{n_1}{2}} 
\, \widetilde{F}_2(I_2, J_2) \rs(I_2, J_2)^{\frac{n_2}{2} + \frac{\delta_2}{2}}, 
\end{align}
and the H\"{o}lder condition of $K$, \eqref{def:R}, and \eqref{def:RIJ} give   
\begin{align}\label{EE-6}
|\mathscr{E}_2| 
&\lesssim \mathscr{R}_1(I_1) \mathscr{R}_2(I_2, J_2) 
\prod_{i=1}^2 |I_i|^{-\frac12} |J_i|^{-\frac12} 
\\ \nonumber
&\lesssim \widetilde{F}_1(I_1) \rs(I_1, J_1)^{\frac{n_1}{2}} 
\widetilde{F}_2(I_2, J_2) \rs(I_2, J_2)^{\frac{n_2}{2} + \frac{\delta_2}{2}}.
\end{align}
Thus, \eqref{EE} and \eqref{EE-5}--\eqref{EE-6} yield \eqref{eq:THH82} as desired. Symmetrically, one can show \eqref{eq:THH83}. 

Finally, \eqref{eq:THH84} is just a consequence of the compact full kernel representation, the H\"{o}lder condition of $K$, and \eqref{def:RIJ}.  
\end{proof}

Denote 
\begin{align*}
\mathscr{P}_3^N
&:= \bigg|\sum_{\substack{J_1 \not\in \D_1(2N) \\ \text{or } J_2 \not\in \D_2(2N)}}
\sum_{\substack{I_1 \in \D_1 \\ I_1 \subsetneq J_1}}
\sum_{\substack{I_2 \in \D_2 \\ I_2 \subsetneq J_2}}
\langle T (h_{I_1} \otimes h_{I_2}), \phi_{I_1 J_1} \otimes 1 \rangle 
\langle h_{J_2} \rangle_{I_2} \, 
f_{I_1 I_2} \, g_{J_1 J_2} \bigg|,  
\\
\mathscr{P}_4^N
&:= \bigg|\sum_{\substack{J_1 \not\in \D_1(2N) \\ \text{or } J_2 \not\in \D_2(2N)}}
\sum_{\substack{I_1 \in \D_1 \\ I_1 \subsetneq J_1}}
\sum_{\substack{I_2 \in \D_2 \\ I_2 \subsetneq J_2}}
\langle T (h_{I_1} \otimes h_{I_2}), 1 \otimes \phi_{I_2 J_2} \rangle 
\langle h_{J_1} \rangle_{I_1} \, 
f_{I_1 I_2} \, g_{J_1 J_2} \bigg|, 
\\  
\mathscr{P}_5^N
&:= \bigg|\sum_{\substack{J_1 \not\in \D_1(2N) \\ \text{or } J_2 \not\in \D_2(2N)}}
\sum_{\substack{I_1 \in \D_1 \\ I_1 \subsetneq J_1}}
\sum_{\substack{I_2 \in \D_2 \\ I_2 \subsetneq J_2}}
\langle T (h_{I_1} \otimes h_{I_2}), 1 \rangle 
\langle h_{J_1} \rangle_{I_1} \langle h_{J_2} \rangle_{I_2} \, 
f_{I_1 I_2} \, g_{J_1 J_2} \bigg|. 
\end{align*}

\begin{lemma}\label{lem:PP345}
For any $\varepsilon>0$, there exists an $N_0>1$ so that for all $N \ge N_0$, 
\begin{align*}
\mathscr{P}_i^N 
\lesssim \varepsilon \|f\|_{L^2} \|g\|_{L^2}, \qquad i=3, 4, 5. 
\end{align*}
\end{lemma}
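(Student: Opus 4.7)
The plan is to recognize each of $\mathscr{P}_3^N$, $\mathscr{P}_4^N$, $\mathscr{P}_5^N$ as an inner product against a paraproduct whose symbol lies in $\CMO$, and then dispatch them using Lemma \ref{lem:Pi} combined with the $P_N^{\perp}$-estimates of Lemma \ref{lem:HP} and the decay estimates of Lemma \ref{lem:FE}.

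For $\mathscr{P}_5^N$, rewrite $\langle T(h_{I_1}\otimes h_{I_2}), 1\rangle = \langle T^*1, h_{I_1}\otimes h_{I_2}\rangle$ and use the identity
$\sum_{J_1 \supsetneq I_1,\, J_2 \supsetneq I_2} \langle h_{J_1}\rangle_{I_1} \langle h_{J_2}\rangle_{I_2} g_{J_1 J_2} = \langle g\rangle_{I_1\times I_2}$,
which is valid because $\langle h_{J_i}\rangle_{I_i} = 0$ unless $J_i \supsetneq I_i$. The restriction ``$J_1 \notin \D_1(2N)$ or $J_2 \notin \D_2(2N)$'' is then exactly what selects $P_{2N}^{\perp} g$, giving $\mathscr{P}_5^N = |\langle \Pi_{T^*1}(P_{2N}^{\perp} g), f\rangle|$. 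Since $T^*1 \in \CMO(\R^{n_1}\times \R^{n_2})$ by \eqref{list-5}, Lemma \ref{lem:Pi} says that $\Pi_{T^*1}^*$ is compact on $L^2$; because $P_N \to I$ strongly and a compact operator composed with a strongly convergent family converges in norm, one obtains $\|P_{2N}^{\perp}\Pi_{T^*1}^*\|_{L^2\to L^2} \to 0$, and duality yields $\|\Pi_{T^*1} P_{2N}^{\perp}\|_{L^2\to L^2} \to 0$, so $\mathscr{P}_5^N \le \varepsilon\|f\|_{L^2}\|g\|_{L^2}$ for $N$ sufficiently large.

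For $\mathscr{P}_3^N$ (and $\mathscr{P}_4^N$ by the $1\leftrightarrow 2$ symmetry), set $b_{I_1 J_1} := \langle T^*(\phi_{I_1 J_1}\otimes 1), h_{I_1}\rangle$, $\widetilde{f}_{I_1} := \langle f, h_{I_1}\rangle$, and $\widetilde{g}_{J_1} := \langle g, h_{J_1}\rangle$, and split $\mathscr{P}_3^N \le \mathscr{P}_{3,a}^N + \mathscr{P}_{3,b}^N$ according as $J_1 \notin \D_1(2N)$ (with $J_2$ unrestricted) or $J_1 \in \D_1(2N)$ together with $J_2 \notin \D_2(2N)$. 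Collapsing the inner summation over $I_2 \subsetneq J_2$ by the one-parameter analogue of the paraproduct identity above, one obtains
\begin{align*}
\mathscr{P}_{3,a}^N
&= \bigg|\sum_{J_1 \notin \D_1(2N)} \sum_{I_1 \subsetneq J_1}
\langle \Pi_{b_{I_1 J_1}}^* \widetilde{f}_{I_1}, \widetilde{g}_{J_1}\rangle \bigg|,
\\
\mathscr{P}_{3,b}^N
&= \bigg|\sum_{J_1 \in \D_1(2N)} \sum_{I_1 \subsetneq J_1}
\langle P_{2N}^{\perp}\Pi_{b_{I_1 J_1}}^* \widetilde{f}_{I_1}, \widetilde{g}_{J_1}\rangle \bigg|,
\end{align*}
where the inner products are taken in $L^2(\R^{n_2})$. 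In $\mathscr{P}_{3,a}^N$, each summand is bounded by $\|b_{I_1 J_1}\|_{\BMO_{\D_2}}\|\widetilde{f}_{I_1}\|\|\widetilde{g}_{J_1}\|$, and Lemma \ref{lem:HP}(d)(e) quantifies $\|b_{I_1 J_1}\|_{\BMO_{\D_2}}$ according as $I_1 \in \mathcal{B}_1^{k_1}(J_1)$ or $\mathcal{G}_1^{k_1}(J_1)$. Cauchy--Schwarz on the pair $(I_1, J_1)$, together with the cardinality bounds \eqref{GB}, sums the geometric factors $2^{-k_1/4}$ (bad) and $2^{-k_1\delta_1/2}$ (good); smallness in $N$ comes from a further dichotomy: if $I_1 \in \D_1(N)$, then $I_1 \subsetneq J_1 \notin \D_1(2N)$ forces $k_1 \ge N$ by Lemma \ref{lem:FE}(c); if $I_1 \notin \D_1(N)$, then Lemma \ref{lem:FE}(d) yields $\sup_{I_1 \notin \D_1(N)} \widehat{F}_1(I_1) \to 0$.

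For $\mathscr{P}_{3,b}^N$, the one-parameter analogue of Lemma \ref{lem:Pi} gives $\|P_{2N}^{\perp} \Pi_{b_{I_1 J_1}}^*\|_{L^2(\R^{n_2})\to L^2(\R^{n_2})} \lesssim \|P_{2N}^{\perp} b_{I_1 J_1}\|_{\BMO_{\D_2}}$, and the $P_N^{\perp}$-estimates in Lemma \ref{lem:HP}(d)(e) introduce precisely the extra factor $\sup_{I_2 \notin \D_2(2N)} \widehat{F}_2(I_2)$, which tends to $0$ by Lemma \ref{lem:FE}(d). The residual $(I_1, J_1)$-summation is bounded uniformly in $N$ by $\|f\|_{L^2}\|g\|_{L^2}$ via the same good/bad decomposition and Cauchy--Schwarz as for $\mathscr{P}_{3,a}^N$, yielding $\mathscr{P}_{3,b}^N \le \varepsilon\|f\|_{L^2}\|g\|_{L^2}$ for $N \ge N_0$. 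The main obstacle is the bookkeeping: each of the four sub-cases---good/bad cube in direction $1$ crossed with in-grid/out-of-grid---must inherit a decay factor summable in the geometric indices and vanishing as $N \to \infty$, which requires carefully matching the dichotomies of Lemma \ref{lem:HP} with those of Lemma \ref{lem:FE}.
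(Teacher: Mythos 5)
Your treatment of $\mathscr{P}_5^N$ is correct and coincides with the paper's: $\mathscr{P}_5^N=|\langle P_{2N}^{\perp}\Pi_{T^*1}^* f, g\rangle|$ and Lemma \ref{lem:Pi} (whose proof already shows $\|P_N^{\perp}\Pi_b\|_{L^2\to L^2}\to 0$) finishes it. Your $\mathscr{P}_{3,a}^N$ is also essentially the paper's $\mathscr{P}_{3,1}^N$, modulo one imprecision: in the good-cube case the bound \eqref{B5} involves $\widetilde{F}_1(I_1,J_1)$, which is \emph{not} dominated by $\widehat{F}_1(I_1)$ (since $\widetilde{F}_{1,3}(I_1\Cup J_1)\ge\widetilde{F}_{1,3}(I_1)$ when $I_1\subset J_1$), so the dichotomy you want there is Lemma \ref{lem:FE}\eqref{list:FE-2} ($\widetilde{F}_1(I_1,J_1)<\varepsilon$ or $\rs(I_1,J_1)\le 2^{-N}$), not part \eqref{list:FE-4}.

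The genuine gap is in $\mathscr{P}_{3,b}^N$. The inequality you invoke, $\|P_{2N}^{\perp}\Pi_{b}^*\|_{L^2\to L^2}\lesssim\|P_{2N}^{\perp}b\|_{\BMO_{\D_2}}$, is false. The identity underlying Lemma \ref{lem:Pi} is $P_N^{\perp}\Pi_b=\Pi_{P_N^{\perp}b}$, whose adjoint form is $\Pi_b^*P_N^{\perp}=\Pi_{P_N^{\perp}b}^*$ — the projection lands on the \emph{input} of $\Pi_b^*$. In $\mathscr{P}_{3,b}^N$ the restriction $J_2\notin\D_2(2N)$ projects the \emph{output} of $\Pi_{b_{I_1J_1}}^*$, and $P_{2N}^{\perp}\Pi_b^*\ne\Pi_{P_{2N}^{\perp}b}^*$: take $b=h_{I_0}$ with $I_0\in\D_2(2N)$, so $P_{2N}^{\perp}b=0$, yet $P_{2N}^{\perp}\Pi_b^*h_{I_0}=P_{2N}^{\perp}(\mathbf{1}_{I_0}/|I_0|)\ne 0$, since the indicator has Haar coefficients at all coarser scales. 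This is exactly why the paper splits $b_{I_1J_1}=P_Nb_{I_1J_1}+P_N^{\perp}b_{I_1J_1}$: the $P_N^{\perp}b_{I_1J_1}$ piece is killed by the trivial bound $\|P_{2N}^{\perp}\Pi_{P_N^{\perp}b}^*\|\le\|\Pi_{P_N^{\perp}b}^*\|\lesssim\|P_N^{\perp}b\|_{\BMO_{\D_2}}$ together with \eqref{PB4}--\eqref{PB5}, while the $P_Nb_{I_1J_1}$ piece cannot be handled by any symbol-norm estimate and instead requires the scale-separation argument: $I_2\in\D_2(N)$, $J_2\notin\D_2(2N)$, $I_2\subsetneq J_2$ force $\ell(J_2)>2^{2N}$, and combining the factor $|\langle h_{J_2}\rangle_{I_2}|\simeq|J_2|^{-1/2}$ with the counting bound \eqref{IDN-2} yields the decay $N^{1+\frac{n_2}{2}}2^{-N\frac{n_2}{2}}$. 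Your proposal omits this step entirely, so the estimate of $\mathscr{P}_{3,b}^N$ (and the symmetric piece of $\mathscr{P}_4^N$) is incomplete as written.
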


\begin{proof}
Let $\varepsilon>0$. By Lemma \ref{lem:Pi} and the fact that $b := T^*1 \in \CMO(\R^{n_1} \times \R^{n_2})$, there exists an $N_0>1$ so that for all $N \ge N_0$, 
\begin{align*}
\mathscr{P}_5^N
= \big|\langle P_{2N}^{\perp} \Pi_b^* f, g \rangle\big| 
\le \|P_{2N}^{\perp} \Pi_b^*\|_{L^2 \to L^2} \|f\|_{L^2} \|g\|_{L^2}
\le \varepsilon \|f\|_{L^2} \|g\|_{L^2}.
\end{align*}
By symmetry between $\mathscr{P}_3^N$ and $\mathscr{P}_4^N$, it suffices to estimate $\mathscr{P}_3^N$. We split 
\begin{align*}
\mathscr{P}_3^N
\le \bigg|\sum_{\substack{J_1 \not\in \D_1(2N) \\ J_2 \in \D_2}}
\sum_{\substack{I_1 \in \D_1 \\ I_1 \subsetneq J_1}}
\sum_{\substack{I_2 \in \D_2 \\ I_2 \subsetneq J_2}} \cdots \bigg| 
+ \bigg|\sum_{\substack{J_1 \in \D_1(2N) \\ J_2 \not\in \D_2(2N)}}
\sum_{\substack{I_1 \in \D_1 \\ I_1 \subsetneq J_1}}
\sum_{\substack{I_2 \in \D_2 \\ I_2 \subsetneq J_2}} \cdots \bigg| 
=: \mathscr{P}_{3, 1}^N + \mathscr{P}_{3, 2}^N. 
\end{align*}
Let $b_{I_1 J_1} := \langle T^*(\phi_{I_1 J_1} \otimes 1), h_{I_1} \rangle$. As done in \eqref{JDN-2} and \eqref{Nbb}, we invoke \eqref{B4}, \eqref{B5}, and Lemma \ref{lem:FE} parts \eqref{list:FE-2}--\eqref{list:FE-4} to arrive at  
\begin{align*}
\mathscr{P}_{3, 1}^N 
\lesssim \sum_{i=1}^4 \mathscr{P}_{3, 1}^{N, i}, 
\end{align*}
where 
\begin{align*}
\mathscr{P}_{3, 1}^{N, 1}
&:= \varepsilon \bigg[ \sum_{J_1 \in \D_1} 
\bigg(\sum_{k_1 \ge 1}  \sum_{I_1 \in \mathcal{B}_1^{k_1}(J_1)} 
2^{-k_1 \frac{n_1}{2}}  
\|\langle f, h_{I_1}\rangle\|_{L^2(\R^{n_2})} \bigg)^2 
\bigg]^{\frac12} \, \|g\|_{L^2}, 
\\ 
\mathscr{P}_{3, 1}^{N, 2}
&:= \bigg[ \sum_{J_1 \in \D_1} 
\bigg(\sum_{k_1 \ge N}  \sum_{I_1 \in \mathcal{B}_1^{k_1}(J_1)} 
2^{-k_1 \frac{n_1}{2}}  
\|\langle f, h_{I_1}\rangle\|_{L^2(\R^{n_2})} \bigg)^2 
\bigg]^{\frac12} \, \|g\|_{L^2}, 
\\ 
\mathscr{P}_{3, 1}^{N, 3}
&:= \varepsilon \Bigg[ \sum_{J_1 \in \D_1} 
\bigg(\sum_{k_1 \ge 1} \sum_{I_1 \in \mathcal{G}_1^{k_1}(J_1)}
2^{-k_1(\frac{n_1}{2} + \frac{\delta_1}{2})}  
\|\langle f, h_{I_1}\rangle\|_{L^2(\R^{n_2})} \bigg)^2 
\Bigg]^{\frac12} \, \|g\|_{L^2}, 
\\ 
\mathscr{P}_{3, 1}^{N, 4}
&:= \Bigg[ \sum_{J_1 \in \D_1} 
\bigg(\sum_{k_1 \ge N} \sum_{I_1 \in \mathcal{G}_1^{k_1}(J_1)}
2^{-k_1(\frac{n_1}{2} + \frac{\delta_1}{2})}  
\|\langle f, h_{I_1}\rangle\|_{L^2(\R^{n_2})} \bigg)^2 
\Bigg]^{\frac12} \, \|g\|_{L^2}. 
\end{align*}
Applying the Cauchy--Schwarz inequality twice and \eqref{GB}, we obtain that for any $N \ge 1$ large enough, 
\begin{align}\label{P31N2}
\mathscr{P}_{3, 1}^{N, 2}
&\lesssim \bigg[ \sum_{J_1 \in \D_1} 
\bigg(\sum_{k_1 \ge N} 2^{-\frac{k_1}{4}} 
\Big(\sum_{I_1 \in \mathcal{B}_1^{k_1}(J_1)} 
\|\langle f, h_{I_1}\rangle\|_{L^2(\R^{n_2})}^2 \Big)^{\frac12} \bigg)^2 
\bigg]^{\frac12} \, \|g\|_{L^2}
\\ \nonumber
&\lesssim \bigg[ \sum_{J_1 \in \D_1} 
\sum_{k_1 \ge N} 2^{-\frac{k_1}{4}} 
\sum_{I_1 \in \mathcal{B}_1^{k_1}(J_1)} 
\|\langle f, h_{I_1}\rangle\|_{L^2(\R^{n_2})}^2 
\bigg]^{\frac12} \, \|g\|_{L^2}
\\ \nonumber
&\le \bigg[  \sum_{k_1 \ge N} 2^{-\frac{k_1}{4}} 
\sum_{I_1 \in \D_1} \sum_{J_1=I_1^{(k_1)}} 
\|\langle f, h_{I_1}\rangle\|_{L^2(\R^{n_2})}^2 
\bigg]^{\frac12} \, \|g\|_{L^2}
\\ \nonumber 
&\lesssim 2^{-N/8} \|f\|_{L^2} \|g\|_{L^2}
\le \varepsilon \|f\|_{L^2} \|g\|_{L^2}, 
\end{align}
and 
\begin{align}\label{P31N4}
\mathscr{P}_{3, 1}^{N, 4}
&\lesssim \bigg[ \sum_{J_1 \in \D_1} 
\bigg(\sum_{k_1 \ge N} 2^{- k_1 \frac{\delta_1}{2}} 
\Big(\sum_{I_1 \in \mathcal{G}_1^{k_1}(J_1)} 
\|\langle f, h_{I_1}\rangle\|_{L^2(\R^{n_2})}^2 \Big)^{\frac12} \bigg)^2 
\bigg]^{\frac12} \, \|g\|_{L^2}
\\ \nonumber 
&\lesssim \bigg[ \sum_{J_1 \in \D_1} 
\sum_{k_1 \ge N} 2^{- k_1 \frac{\delta_1}{2}} 
\sum_{I_1 \in \mathcal{G}_1^{k_1}(J_1)} 
\|\langle f, h_{I_1}\rangle\|_{L^2(\R^{n_2})}^2 
\bigg]^{\frac12} \, \|g\|_{L^2}
\\ \nonumber 
&\le \bigg[  \sum_{k_1 \ge N} 2^{- k_1 \frac{\delta_1}{2}}  
\sum_{I_1 \in \D_1} \sum_{J_1=I_1^{(k_1)}} 
\|\langle f, h_{I_1}\rangle\|_{L^2(\R^{n_2})}^2 
\bigg]^{\frac12} \, \|g\|_{L^2}
\\ \nonumber 
&\lesssim 2^{-N \delta_1/4} \|f\|_{L^2} \|g\|_{L^2}
\le \varepsilon \|f\|_{L^2} \|g\|_{L^2}. 
\end{align}
Similarly, 
\begin{align*}
\mathscr{P}_{3, 1}^{N, i}  
\lesssim \varepsilon \|f\|_{L^2} \|g\|_{L^2}, \qquad i=1, 3. 
\end{align*}
Gathering these estimates above, one has 
\begin{align}\label{P31N}
\mathscr{P}_{3, 1}^N 
\lesssim \sum_{i=1}^4 \mathscr{P}_{3, 1}^{N, i} 
\lesssim  \varepsilon \|f\|_{L^2} \|g\|_{L^2}.  
\end{align}

Next, let us estimate $\mathscr{P}_{3, 2}^N$. We have 
\begin{align*}
\mathscr{P}_{3, 2}^N 
&= \bigg|\sum_{J_1 \in \D_1(2N)}
\sum_{\substack{I_1 \in \D_1 \\ I_1 \subsetneq J_1}} 
\big\langle h_{J_1} \otimes P_{2N}^{\perp} \Pi_{b_{I_1 J_1}}^* (\langle f, h_{I_1}\rangle),  
g \big\rangle \bigg| 
\le \mathscr{P}_{3, 2}^{N, 1} + \mathscr{P}_{3, 2}^{N, 2},   
\end{align*}
where 
\begin{align*}
\mathscr{P}_{3, 2}^{N, 1}  
&:= \bigg|\sum_{J_1 \in \D_1(2N)}
\sum_{\substack{I_1 \in \D_1 \\ I_1 \subsetneq J_1}} 
\big\langle h_{J_1} \otimes P_{2N}^{\perp} \Pi_{P_N b_{I_1 J_1}}^* (\langle f, h_{I_1}\rangle),  
g \big\rangle \bigg|,  
\\
\mathscr{P}_{3, 2}^{N, 2}  
&:= \bigg|\sum_{J_1 \in \D_1(2N)}
\sum_{\substack{I_1 \in \D_1 \\ I_1 \subsetneq J_1}} 
\big\langle h_{J_1} \otimes P_{2N}^{\perp} \Pi_{P_N^{\perp} b_{I_1 J_1}}^* (\langle f, h_{I_1}\rangle),  
g \big\rangle \bigg|.   
\end{align*}
As argued in \eqref{30N22}, by \eqref{PB4}, \eqref{PB5}, and Lemma \ref{lem:FE} part \eqref{list:FE-4}, there exists an $N_0 \ge 1$ so that for all $N \ge N_0$,  
\begin{align}\label{P32N2}
\mathscr{P}_{3, 2}^{N, 2} 
&\lesssim \Bigg[ \sum_{J_1 \in \D_1} 
\bigg(\sum_{\substack{I_1 \in \D_1 \\ I_1 \subsetneq J_1}} 
\|P_N^{\perp} b_{I_1 J_1}\|_{\BMO_{\D_2}}  
\|\langle f, h_{I_1}\rangle\|_{L^2(\R^{n_2})} \bigg)^2 
\Bigg]^{\frac12} \, \|g\|_{L^2}
\\ \nonumber  
& \lesssim \varepsilon \bigg[ \sum_{J_1 \in \D_1} 
\bigg(\sum_{k_1 \ge 1}  \sum_{I_1 \in \mathcal{B}_1^{k_1}(J_1)} 
2^{-k_1 \frac{n_1}{2}}  
\|\langle f, h_{I_1}\rangle\|_{L^2(\R^{n_2})} \bigg)^2 \bigg]^{\frac12} \, \|g\|_{L^2}  
\\ \nonumber 
&\quad+ \varepsilon \Bigg[ \sum_{J_1 \in \D_1} 
\bigg(\sum_{k_1 \ge 1} \sum_{I_1 \in \mathcal{G}_1^{k_1}(J_1)}
2^{-k_1(\frac{n_1}{2} + \frac{\delta_1}{2})}  
\|\langle f, h_{I_1}\rangle\|_{L^2(\R^{n_2})} \bigg)^2 
\Bigg]^{\frac12} \, \|g\|_{L^2}
\\ \nonumber 
&\lesssim \varepsilon \|f\|_{L^2} \|g\|_{L^2},  
\end{align}
where the last two summation terms were estimated as in \eqref{P31N2} and \eqref{P31N4}.

On the other hand, in light of \eqref{B4}, \eqref{B5}, and \eqref{IDN-1}, we apply the same strategy as in \eqref{30N21} to conclude 
\begin{align*}
\mathscr{P}_{3, 2}^{N, 1} 
&= \bigg|\sum_{\substack{J_1 \in \D_1(2N) \\ J_2 \not\in \D_2(2N)}}
\sum_{\substack{I_1 \in \D_1 \\ I_1 \subsetneq J_1}}
\sum_{\substack{I_2 \in \D_2(N) \\ I_2 \subsetneq J_2}}
\langle b_{I_1 J_1}, h_{I_2} \rangle 
\langle h_{J_2} \rangle_{I_2} \, 
f_{I_1 I_2} \, g_{J_1 J_2} \bigg|
\lesssim \mathscr{P}_{3, 2}^{N, 1, 1} + \mathscr{P}_{3, 2}^{N, 1, 2},  
\end{align*} 
where 
\begin{align*}
\mathscr{P}_{3, 2}^{N, 1, 1} 
&:= \sum_{k_1 \ge 1} \sum_{\substack{-N \le \ell_2 \le N \\ k_2 > 2N-\ell_2}} 
\sum_{\substack{J_1 \in \D_1 \\ I_1 \in \mathcal{B}_1^{k_1}(J_1)}} 
\sum_{I_2 \in \D_2^{\ell_2}(N)} 
2^{-k_1 \frac{n_1}{2}} 2^{-k_2 \frac{n_2}{2}} 
|f_{I_1 I_2}|  |g_{J_1 I_2^{(k_2)}}|, 
\\
\mathscr{P}_{3, 2}^{N, 1, 2} 
&:= \sum_{k_1 \ge 1} \sum_{\substack{-N \le \ell_2 \le N \\ k_2 > 2N-\ell_2}} 
\sum_{\substack{J_1 \in \D_1 \\ I_1 \in \mathcal{G}_1^{k_1}(J_1)}} 
\sum_{I_2 \in \D_2^{\ell_2}(N)} 
2^{-k_1 (\frac{n_1}{2} + \frac{\delta_1}{2})} 2^{-k_2 \frac{n_2}{2}} 
|f_{I_1 I_2}|  |g_{J_1 I_2^{(k_2)}}|. 
\end{align*}
By the Cauchy--Schwarz inequality, \eqref{IDN-2}, and \eqref{GB}, we obtain that for any $N \ge 2$ large enough,  
\begin{align*}
\mathscr{P}_{3, 2}^{N, 1, 1} 
&\le \sum_{k_1 \ge 1} \sum_{\substack{-N \le \ell_2 \le N \\ k_2 > 2N-\ell_2}} 
2^{-k_1 \frac{n_1}{2}} 2^{-k_2 \frac{n_2}{2}} 
\bigg(\sum_{\substack{I_1 \in \D_1 \\ I_2 \in \D_2}}
\sum_{J_1 = I_1^{(k_1)}} |f_{I_1 I_2}|^2 \bigg)^{\frac12}
\\
&\quad\times \bigg(\sum_{\substack{J_1 \in \D_1 \\ J_2 \in \D_2}}
\sum_{\substack{I_1 \in \mathcal{B}_1^{k_1}(J_1) \\ I_2 \in \D_2^{\ell_2}(N) \\ I_2^{(k_2)} = J_2}}  
|g_{J_1 J_2}|^2 \bigg)^{\frac12}
\\
&\lesssim \sum_{k_1 \ge 1} \sum_{\substack{-N \le \ell_2 \le N \\ k_2 > 2N-\ell_2}} 
2^{-\frac{k_1}{4}} 2^{-k_2 \frac{n_2}{2}} N^{\frac{n_2}{2}} 2^{(N-\ell_2) \frac{n_2}{2}} 
\|f\|_{L^2} \|g\|_{L^2}
\\
&\lesssim N^{1 + \frac{n_2}{2}} 2^{-N \frac{n_2}{2}} \|f\|_{L^2} \|g\|_{L^2}
\le \varepsilon \|f\|_{L^2} \|g\|_{L^2}, 
\end{align*}
and 
\begin{align*}
\mathscr{P}_{3, 2}^{N, 1, 2} 
&\le \sum_{k_1 \ge 1} \sum_{\substack{-N \le \ell_2 \le N \\ k_2 > 2N-\ell_2}} 
2^{-k_1 (\frac{n_1}{2} + \frac{\delta_1}{2})} 2^{-k_2 \frac{n_2}{2}} 
\bigg(\sum_{\substack{I_1 \in \D_1 \\ I_2 \in \D_2}}
\sum_{J_1 = I_1^{(k_1)}} |f_{I_1 I_2}|^2 \bigg)^{\frac12}
\\
&\quad\times \bigg(\sum_{\substack{J_1 \in \D_1 \\ J_2 \in \D_2}}
\sum_{\substack{I_1 \in \mathcal{G}_1^{k_1}(J_1) \\ I_2 \in \D_2^{\ell_2}(N) \\ I_2^{(k_2)} = J_2}}  
|g_{J_1 J_2}|^2 \bigg)^{\frac12}
\\
&\lesssim \sum_{k_1 \ge 1} \sum_{\substack{-N \le \ell_2 \le N \\ k_2 > 2N-\ell_2}} 
2^{-k_1 \frac{\delta_1}{2}} 2^{-k_2 \frac{n_2}{2}} 
N^{\frac{n_2}{2}} 2^{(N-\ell_2) \frac{n_2}{2}} 
\|f\|_{L^2} \|g\|_{L^2}
\\
&\lesssim N^{1 + \frac{n_2}{2}} 2^{-N \frac{n_2}{2}} \|f\|_{L^2} \|g\|_{L^2}
\le \varepsilon \|f\|_{L^2} \|g\|_{L^2}. 
\end{align*}
Then, there holds 
\begin{align*}
\mathscr{P}_{3, 2}^{N, 1} 
\le \mathscr{P}_{3, 2}^{N, 1, 1}  + \mathscr{P}_{3, 2}^{N, 1, 2} 
\lesssim \varepsilon \|f\|_{L^2} \|g\|_{L^2},  
\end{align*}
which together with \eqref{P32N2} gives 
\begin{align}\label{P32N}
\mathscr{P}_{3, 2}^N 
\le \mathscr{P}_{3, 2}^{N, 1}  + \mathscr{P}_{3, 2}^{N, 2} 
\lesssim \varepsilon \|f\|_{L^2} \|g\|_{L^2}. 
\end{align}
Consequently, from \eqref{P31N} and \eqref{P32N}, we conclude that 
\begin{align*}
\mathscr{P}_3^N 
\le \mathscr{P}_{3, 1}^N  + \mathscr{P}_{3, 2}^N \lesssim \varepsilon \|f\|_{L^2} \|g\|_{L^2}.
\end{align*}
This completes the proof. 
\end{proof}

Finally, let us turn to the estimate for $\mathscr{I}_6^N$. It follows from Lemma \ref{lem:THH-81}, \eqref{TH42}, \eqref{TH43}, Lemmas \ref{lem:THH-82} and \ref{lem:PP12}, \eqref{TH4}, Lemmas \ref{lem:THH-8} and \ref{lem:PP345} that for all $N \ge N_0$ large enough,
\begin{align*}
|\mathscr{I}_6^N| 
= \Bigg|\sum_{\substack{k_1 \ge 0 \\ k_2 \ge 0}} 
\sum_{\substack{J_1 \not\in \D_1(2N) \\ \text{or } J_2 \not\in \D_2(2N)}}
\sum_{\substack{I_1 \in \D_1^{k_1}(J_1) \\ I_2 \in \D_2^{k_2}(J_2)}}
\G_{J_1 J_2}^{I_1 I_2} \, f_{I_1 I_2} \, g_{J_1 J_2} \Bigg|
\lesssim \sum_{i=1}^5 \mathscr{P}_i^N 
+ \sum_{i=1}^8 \mathscr{I}_{6, i}^N,
\end{align*}
where
\begin{align*}
\mathscr{I}_{6, 1}^N 
&:= \varepsilon \sum_{\substack{k_1 \ge 0 \\ k_2 \ge 0}} 
\sum_{\substack{J_1 \in \D_1 \\ J_2 \in \D_2}}
\sum_{\substack{I_1 \in \mathcal{B}_1^{(k_1)}(J_1) \\ I_2 \in \mathcal{B}_2^{(k_2)}(J_2)}}
2^{-k_1 \frac{n_1}{2}} 2^{-k_2 \frac{n_2}{2}} |f_{I_1 I_2}| |g_{J_1 J_2}|,
\\
\mathscr{I}_{6, 2}^N 
&:= \sum_{\substack{k_1 \ge N \\ \text{or } k_2 \ge N}} 
\sum_{\substack{J_1 \in \D_1 \\ J_2 \in \D_2}}
\sum_{\substack{I_1 \in \mathcal{B}_1^{(k_1)}(J_1) \\ I_2 \in \mathcal{B}_2^{(k_2)}(J_2)}}
2^{-k_1 \frac{n_1}{2}} 2^{-k_2 \frac{n_2}{2}} |f_{I_1 I_2}| |g_{J_1 J_2}|,
\\ 
\mathscr{I}_{6, 3}^N 
&:= \varepsilon \sum_{\substack{k_1 \ge 1 \\ k_2 \ge 0}} 
\sum_{\substack{J_1 \in \D_1 \\ J_2 \in \D_2}}
\sum_{\substack{I_1 \in \mathcal{B}_1^{(k_1)}(J_1) \\ I_2 \in \mathcal{G}_2^{(k_2)}(J_2)}}
2^{-k_1 \frac{n_1}{2}} 2^{-k_2 (\frac{n_2}{2} + \frac{\delta_2}{2})} |f_{I_1 I_2}| |g_{J_1 J_2}|,
\\ 
\mathscr{I}_{6, 4}^N 
&:= \sum_{\substack{k_1 \ge N \\ \text{or } k_2 \ge N}}  
\sum_{\substack{J_1 \in \D_1 \\ J_2 \in \D_2}}
\sum_{\substack{I_1 \in \mathcal{B}_1^{(k_1)}(J_1) \\ I_2 \in \mathcal{G}_2^{(k_2)}(J_2)}}
2^{-k_1 \frac{n_1}{2}} 2^{-k_2 (\frac{n_2}{2} + \frac{\delta_2}{2})} |f_{I_1 I_2}| |g_{J_1 J_2}|,
\\
\mathscr{I}_{6, 5}^N 
&:= \varepsilon \sum_{k_1 \ge 1, k_2 \ge 0}  
\sum_{\substack{J_1 \in \D_1 \\ J_2 \in \D_2}}
\sum_{\substack{I_1 \in \mathcal{G}_1^{(k_1)}(J_1) \\ I_2 \in \mathcal{B}_2^{(k_2)}(J_2)}}
2^{-k_1 (\frac{n_1}{2} + \frac{\delta_1}{2})} 2^{-k_2 \frac{n_2}{2}}  |f_{I_1 I_2}| |g_{J_1 J_2}|,
\\ 
\mathscr{I}_{6, 6}^N 
&:= \sum_{\substack{k_1 \ge N \\ \text{or }k_2 \ge N}}  
\sum_{\substack{J_1 \in \D_1 \\ J_2 \in \D_2}}
\sum_{\substack{I_1 \in \mathcal{G}_1^{(k_1)}(J_1) \\ I_2 \in \mathcal{B}_2^{(k_2)}(J_2)}}
2^{-k_1 (\frac{n_1}{2} + \frac{\delta_1}{2})} 2^{-k_2 \frac{n_2}{2}}  |f_{I_1 I_2}| |g_{J_1 J_2}|,
\\
\mathscr{I}_{6, 7}^N 
&:= \varepsilon \sum_{k_1 \ge 1, k_2 \ge 1}  
\sum_{\substack{J_1 \in \D_1 \\ J_2 \in \D_2}}
\sum_{\substack{I_1 \in \mathcal{G}_1^{(k_1)}(J_1) \\ I_2 \in \mathcal{G}_2^{(k_2)}(J_2)}}
2^{-k_1 \frac{n_1}{2}} 2^{-k_2 (\frac{n_2}{2} + \frac{\delta_2}{2})} |f_{I_1 I_2}| |g_{J_1 J_2}|,
\\ 
\mathscr{I}_{6, 8}^N 
&:= \sum_{\substack{k_1 \ge N \\ \text{or } k_2 \ge N}} 
\sum_{\substack{J_1 \in \D_1 \\ J_2 \in \D_2}}
\sum_{\substack{I_1 \in \mathcal{G}_1^{(k_1)}(J_1) \\ I_2 \in \mathcal{G}_2^{(k_2)}(J_2)}}
2^{-k_1 (\frac{n_1}{2} + \frac{\delta_1}{2})} 
2^{-k_2 (\frac{n_2}{2} + \frac{\delta_2}{2})} 
|f_{I_1 I_2}| |g_{J_1 J_2}|. 
\end{align*}
By the Cauchy--Schwarz inequality and \eqref{GB}, we obtain
\begin{align*}
\mathscr{I}_{6, 1}^N 
&\le \varepsilon \sum_{k_1, k_2 \ge 0} 
2^{-k_1 \frac{n_1}{2}} 2^{-k_2 \frac{n_2}{2}}
\bigg[\sum_{\substack{I_1 \in \D_1 \\ I_2 \in \D_2}}
\sum_{\substack{J_1 = I_1^{(k_1)} \\ J_2=I_2^{(k_2)}}}
|f_{I_1 I_2}|^2 \bigg]^{\frac12} 
\bigg[\sum_{\substack{J_1 \in \D_1 \\ J_2 \in \D_2}}
\sum_{\substack{I_1 \in \mathcal{B}_1^{(k_1)}(J_1) \\ I_2 \in \mathcal{B}_2^{(k_2)}(J_2)}}
|g_{J_1 J_2}|^2 \bigg]^{\frac12} 
\\
&\lesssim \varepsilon \sum_{k_1, k_2 \ge 0} 
2^{- \frac{k_1}{4}} 2^{- \frac{k_2}{4}}
\bigg[\sum_{\substack{I_1 \in \D_1 \\ I_2 \in \D_2}} 
|f_{I_1 I_2}|^2 \bigg]^{\frac12} 
\bigg[\sum_{\substack{J_1 \in \D_1 \\ J_2 \in \D_2}}
|g_{J_1 J_2}|^2 \bigg]^{\frac12} 
\\
&\lesssim \varepsilon \|f\|_{L^2} \|g\|_{L^2}, 
\end{align*}
and for any $N \ge N_0$ large enough,
\begin{align*}
\mathscr{I}_{6, 4}^N 
&\le \sum_{\substack{k_1 \ge N \\ \text{or } k_2 \ge N}} 
2^{-k_1 \frac{n_1}{2}} 2^{-k_2 (\frac{n_2}{2} + \frac{\delta_2}{2})}
\bigg[\sum_{\substack{I_1 \in \D_1 \\ I_2 \in \D_2}}
\sum_{\substack{J_1 = I_1^{(k_1)} \\ J_2=I_2^{(k_2)}}}
|f_{I_1 I_2}|^2 \bigg]^{\frac12} 
\bigg[\sum_{\substack{J_1 \in \D_1 \\ J_2 \in \D_2}}
\sum_{\substack{I_1 \in \mathcal{B}_1^{(k_1)}(J_1) \\ I_2 \in \mathcal{G}_2^{(k_2)}(J_2)}}
|g_{J_1 J_2}|^2 \bigg]^{\frac12} 
\\
&\lesssim \sum_{\substack{k_1 \ge N \\ \text{or } k_2 \ge N}}  
2^{- \frac{k_1}{4}} 2^{- k_2 \frac{\delta_2}{2}}
\bigg[\sum_{\substack{I_1 \in \D_1 \\ I_2 \in \D_2}} 
|f_{I_1 I_2}|^2 \bigg]^{\frac12} 
\bigg[\sum_{\substack{J_1 \in \D_1 \\ J_2 \in \D_2}}
|g_{J_1 J_2}|^2 \bigg]^{\frac12} 
\\
&\lesssim (2^{-N/4} + 2^{-N\delta_2/2}) \|f\|_{L^2} \|g\|_{L^2}
\le \varepsilon \|f\|_{L^2} \|g\|_{L^2}.
\end{align*}
Similarly, one can obtain the same bound for other terms $\mathscr{I}_{6, i}^N$, $i=2, 3, 5, 6, 7, 8$. Therefore, together with Lemmas \ref{lem:PP12} and \ref{lem:PP345}, we conclude that for all $N \ge N_0$ large enough,
\begin{align*}
\mathscr{I}_6^N
\lesssim \sum_{i=1}^5 \mathscr{P}_i^N 
+ \sum_{i=1}^8 \mathscr{I}_{6, i}^N
\lesssim \varepsilon \|f\|_{L^2} \|g\|_{L^2}.
\end{align*}
So far, we have already shown \eqref{reduction-3}.

\section*{Acknowledgements}
The authors would like to thank the referee for careful reading and valuable comments, which lead to the improvement of this paper.


\end{document}